\newcommand\blfootnote[1]{%
  \begingroup
  \renewcommand\thefootnote{}\footnote{#1}%
  \addtocounter{footnote}{-1}%
  \endgroup
}
\long\def\/*#1*/{}
\newsavebox\myboxA
\newsavebox\myboxB
\newlength\mylenA
\newcommand*\xoverline[2][0.75]{%
    \sbox{\myboxA}{$\m@th#2$}%
    \setbox\myboxB\null
    \ht\myboxB=\ht\myboxA%
    \dp\myboxB=\dp\myboxA%
    \wd\myboxB=#1\wd\myboxA
    \sbox\myboxB{$\m@th\overline{\copy\myboxB}$}
    \setlength\mylenA{\the\wd\myboxA}
    \addtolength\mylenA{-\the\wd\myboxB}%
    \ifdim\wd\myboxB<\wd\myboxA%
       \rlap{\hskip 0.5\mylenA\usebox\myboxB}{\usebox\myboxA}%
    \else
        \hskip -0.5\mylenA\rlap{\usebox\myboxA}{\hskip 0.5\mylenA\usebox\myboxB}%
    \fi}
\numberwithin{equation}{section}
\newcommand{\cG}{\mathcal{G}}
\def\sss{\scriptscriptstyle}
\newcommand{\ubar}[1]{\underaccent{\bar}{#1}}
\newcommand*{\Mtilde}[1]{\skew{5}{\tilde}{#1}}
\newcommand{\prob}[1]{\ensuremath{\mathbbm{P}\left(#1\right)}}
\newcommand{\expt}[1]{\ensuremath{\mathbbm{E}\left[#1\right]}}
\newcommand{\var}[1]{\ensuremath{\mathrm{Var}\left(#1\right)}}
\newcommand{\refl}[1]{\ensuremath{\mathrm{refl}(#1)}}
\newcommand{\floor}[1]{\ensuremath{\left\lfloor #1 \right\rfloor}}
\newcommand{\ind}[1]{\ensuremath{\mathbbm{1}_{\left\{#1\right\}}}}
\newcommand{\pto}{\ensuremath{\xrightarrow{\mathbbm{P}}}}
\newcommand{\dto}{\ensuremath{\xrightarrow{d}}}
\newcommand{\asto}{\ensuremath{\xrightarrow{\sss \mathrm{a.s.}}}}
\newcommand{\surp}[1]{\ensuremath{\mathrm{SP}(#1)}}
\newcommand{\ord}{\ensuremath{\mathrm{ord}}}
\newcommand{\PR}{\ensuremath{\mathbbm{P}}}
\newcommand{\E}{\ensuremath{\mathbbm{E}}}
\newcommand{\R}{\ensuremath{\mathbbm{R}}}
\newcommand{\N}{\ensuremath{\mathbbm{N}}}
\newcommand{\e}{\ensuremath{\mathrm{e}}}
\newcommand{\1}{\ensuremath{\mathbbm{1}}}
\newcommand{\OP}{\ensuremath{O_{\sss\PR}}}
\newcommand{\oP}{\ensuremath{o_{\sss\PR}}}
\newcommand{\thetaP}{\ensuremath{\Theta_{\sss\PR}}}
\newcommand{\CM}{\ensuremath{\mathrm{CM}_n(\boldsymbol{d})}}
\newcommand{\shortarrow}{{\sss\downarrow}}
\newcommand{\dif}{\mathrm{d}}
\newcommand{\bld}[1]{\boldsymbol{#1}}
\newcommand{\bZ}{\mathbf{Z}}
\newcommand{\im}{\mathrm{i}}
\newcommand{\biS}{\bar{\mathbf{S}}_{\infty}^\lambda}
\newcommand{\iS}{\bar{S}_{\infty}^\lambda}
\newcommand{\cA}{\mathcal{A}}
\newcommand{\sF}{\mathscr{F}}
\newcommand{\cT}{\mathcal{T}}
\newtheorem{theorem}{Theorem}
\newtheorem{algo}{Algorithm}
\newtheorem{lemma}[theorem]{Lemma}
\newtheorem{proposition}[theorem]{Proposition}
\newtheorem{corollary}[theorem]{Corollary}
\newtheorem{assumption}{Assumption}
\newtheorem{remark}{Remark}
\newtheorem{fact}{Fact}
\newtheorem{defn}{Definition}
\let\plainqed\qedsymbol
\newcommand{\claimqed}{$\lrcorner$}
\newenvironment{claimproof}{\begin{proof}\renewcommand{\qedsymbol}{\claimqed}}{\end{proof}\renewcommand{\qedsymbol}{\plainqed}}
\definecolor{aqua}{rgb}{0.0, 1.0, 1.0}
\begin{document}

\title[Heavy-tailed configuration models at criticality]{Heavy-tailed configuration models at criticality}
\author[Dhara]{Souvik Dhara$^{1,2}$}
\author[van der Hofstad]{Remco van der Hofstad$^3$}
\author[van Leeuwaarden]{Johan S.H. van Leeuwaarden$^{3,4}$}
\author[Sen]{Sanchayan Sen$^5$}
\blfootnote{$^1$Department of Mathematics, Massachusetts Institute of Technology}
\blfootnote{$^2$Microsoft Research -- New England}
\blfootnote{$^3$Department of Mathematics and Computer Science, Eindhoven University of Technology}
\blfootnote{$^4$Department of Econometrics and Operations Research, Tilburg University}
\blfootnote{$^5$Department of Mathematics, Indian Institute of Science}
\date{\today}
\maketitle 
\blfootnote{\emph{Correspondence to:} \href{mailto:dharasouvik1991@gmail.com}{S. Dhara}.}
\blfootnote{\emph{Emails:}  \href{mailto:sdhara@mit.edu}{sdhara@mit.edu}, \href{mailto:r.w.v.d.hofstad@tue.nl}{r.w.v.d.hofstad@tue.nl}, \href{mailto:j.s.h.vanleeuwaarden@uvt.nl}{j.s.h.vanleeuwaarden@uvt.nl}, \href{mailto:sanchayan.sen1@gmail.com}{sanchayan.sen1@gmail.com}}
\blfootnote{2010 \emph{Mathematics Subject Classification.} Primary: 60C05, 05C80.}
\blfootnote{\emph{Keywords and phrase}. Critical configuration model, heavy-tailed degree, thinned L\'evy process, augmented multiplicative coalescent, universality, critical percolation}
\begin{abstract} 
 We study the critical behavior of the component sizes for the configuration model when the tail of the degree distribution of a randomly chosen vertex is a regularly-varying function with exponent $\tau-1$, where $\tau\in (3,4)$.
 The component sizes are shown to be of the order $n^{(\tau-2)/(\tau-1)}L(n)^{-1}$ for some slowly-varying function $L(\cdot)$.
 We show that the re-scaled  ordered component sizes converge in distribution to the ordered excursions of a thinned L\'evy process.
 This proves that the scaling limits for the component sizes for these heavy-tailed configuration models are in a different universality class compared to the Erd\H{o}s-R\'enyi random graphs. 
 Also the joint  re-scaled vector of ordered component sizes and their surplus edges is shown to have a distributional limit under a strong topology. 
 Our proof resolves a conjecture by Joseph, \emph{Ann. Appl. Probab.} (2014) about the scaling limits of uniform simple graphs with i.i.d.~degrees in the critical window, and sheds light on the relation between the scaling limits obtained by Joseph and in this paper, which appear to be quite different.
  Further, we use percolation to study the evolution of the component sizes and the  surplus edges within the critical scaling window, which is shown to converge in finite dimension to the augmented multiplicative coalescent process introduced by Bhamidi et. al., \emph{Probab. Theory Related Fields} (2014).
  The main results of this paper  are proved under rather general assumptions on the vertex degrees. 
  We also discuss how these assumptions are satisfied by some of the frameworks that have been studied previously.
\end{abstract}
\section{Introduction}\label{sec:intro}
Most random graph models posses a phase-transition property: there is a model-dependent parameter $\theta$ and a critical value $\theta_c$ such that whenever $\theta >\theta_c$, the largest component of the graph contains a positive proportion of vertices with high probability (w.h.p) and when $\theta\leq\theta_c$, the largest component is of smaller order than the size of the graph w.h.p. 
The random graph is called \emph{critical} when $\theta=\theta_c$. The study of critical random graphs started in the 1990s with the works of \citet{B84}, \citet{L90}, \citet{JKLP93} and \citet{A97} for Erd\H{o}s-R\'enyi random graphs. 
A large body of subsequent work in \cite{NP10b,R12,Jo10,BHL10,BBW12,DHLS15,HJL10} showed that the behavior of a wide array of random graphs at criticality is universal in the sense that  certain graph properties do not depend on the precise description of the model.
One of these universal features is that the scaling limit of the large component sizes, for many graph models, is identical to that of the Erd\H{o}s-R\'enyi random graphs.
All these universality results are obtained under the common assumption that the degree distribution is light-tailed, i.e., the asymptotic degree distribution has sufficiently large moments. 
For critical configuration models, a finite third-moment condition proves to be crucial~\cite{DHLS15}.
However, empirical studies of real-world networks from various fields including physics, biology, computer science \cite{BA99,AB02,KKRRT99,SFFF03,FFF99,SHL16} show that the  degree distribution is often heavy-tailed and of power-law  type.
 A first work towards understanding the critical behavior in the \emph{heavy-tailed} network models is~\cite{BHL12}, which showed that, for rank-one inhomogeneous random graphs, when the weight distribution follows a power-law with exponent $\tau\in (3,4)$, the component sizes and the scaling limits turn out to be quite different from that of the Erd\H{o}s-R\'enyi random graph. This revealed an entirely new universality class for the phase transition of heavy-tailed random graphs.
 In this paper, we study the configuration model with heavy-tailed power-law degrees. The configuration model is the canonical model for generating a random \emph{multi}-graph with a prescribed degree sequence. This model was introduced by \citet{B80} to generate a uniform simple $d$-regular graph on $n$ vertices, when $dn$ is even. The idea was later generalized to general degree sequences by \citet{MR95} and others. 
We will denote the multi-graph generated by the configuration model on the vertex set $[n] = \{1,\dots,n\}$ with the degree sequence $\boldsymbol{d}$  by $\mathrm{CM}_n(\boldsymbol{d})$. The configuration model, conditioned on simplicity, yields a uniform simple graph with the same degree sequence, which explains its popularity. \vspace{.1cm} \\
\noindent {\bf Our main contribution.}
Let $D_n$ be the degree of a uniformly chosen vertex, independently of the random graph $\mathrm{CM}_n(\boldsymbol{d})$. 
The main goal of this paper is to obtain various asymptotic results for the component sizes of  $\mathrm{CM}_n(\boldsymbol{d})$ when $\prob{D_n\geq k}\sim L_0(k)/k^{\tau-1}$ for some $\tau\in (3,4)$ and $L_0(\cdot)$ a slowly-varying function (here $\sim$ denotes an unspecified approximation that will be defined in more detail below). 
In fact, under a general set of assumptions (see Assumptions~\ref{assumption1}~and~\ref{assumption2}), we prove the following:
\begin{enumerate}[(1)]
\item The largest connected components are of the order $n^{(\tau-2)/(\tau-1)}L(n)^{-1}$ and the width of the scaling window is of the order $n^{(\tau-3)/(\tau-1)}L(n)^{-2}$ for some slowly-varying function $L(\cdot)$. 
\item The joint distribution of the re-scaled component sizes and the surplus edges converges in distribution to a suitable limiting random vector in a strong topology.
It turns out that the scaling limits for the re-scaled ordered component sizes can be described in terms of the ordered excursions of a certain thinned L\'evy process that only depends on the asymptotics of the \emph{high}-degree vertices, which is also the case in \cite{BHL12}. 
Further, the scaling limits for the surplus edges can be described by Poisson random variables with the parameters being the areas under the excursions of the thinned L\'evy process. 
\item The results hold conditioned on the graph being simple, thus solving \cite[Conjecture 8.5]{Jo10} in this case.
\item The scaling limits also hold for the graphs obtained by performing critical percolation on a supercritical graph. 
The percolation clusters can be coupled in a natural way using the Harris coupling. This enables us to study the \emph{evolution} of the component sizes and the surplus edges as a dynamic process in the critical window.
The evolution of the component sizes and surplus edges is shown to converge to a version of the \emph{augmented multiplicative coalescent} process that was first introduced in \cite{BBW12}. 
In fact, our results imply that there exists a version of the augmented multiplicative coalescent process whose one-dimensional distribution can be described by the excursions of a thinned L\'evy process and a Poisson process with the intensity being proportional to the thinned L\'evy process, which is also novel. 
\end{enumerate}
Thus, this paper provides a detailed understanding about the critical component sizes and surplus edges for the heavy-tailed graphs in the critical window. 
Before stating our main results precisely, we introduce some notation and concepts.
\subsection{The model}  Consider $n$ vertices labeled by $[n]:=\{1,2,...,n\}$ and a non-increasing sequence of degrees $\boldsymbol{d} = ( d_i )_{i \in [n]}$ such that $\ell_n = \sum_{i \in [n]}d_i$ is even. For notational convenience, we suppress the dependence of the degree sequence on $n$. The configuration model on $n$ vertices having degree sequence $\boldsymbol{d}$ is constructed as follows:
 \begin{itemize}
 \item[] Equip vertex $j$ with $d_{j}$ stubs, or \emph{half-edges}. Two half-edges create an edge once they are paired. Thus, an edge is a pair of half-edges. Initially we have $\ell_n=\sum_{i \in [n]}d_i$ unpaired half-edges. Pick any one half-edge and pair it with a uniformly chosen half-edge from the remaining unpaired half-edges and keep repeating the above procedure until all the unpaired half-edges are exhausted. 
 \end{itemize}
  Note that the graph constructed by the above procedure may contain self-loops or multiple edges. It can be shown that conditionally on $\mathrm{CM}_{n}(\boldsymbol{d})$ being simple, the law of such graphs is uniform over all possible simple graphs with degree sequence $\boldsymbol{d}$ \cite[Proposition 7.13]{RGCN1}. 
 It was further shown in~\cite{J09c} that, under very general assumptions, the asymptotic probability of the graph being simple is positive. 

\subsection{Definition and notation}\label{sec:notation}
We use the standard notation of $\xrightarrow{\sss\PR}$, and $\dto$ to denote convergence in probability and in distribution, respectively.
The topology needed for the  convergence in distribution will always be specified unless it is  clear from the context. 
We often use the Bachmann–Landau notation $O(\cdot)$, $o(\cdot)$ for large $n$ asymptotics of real numbers.
The notation $A_n \sim B_n$ will be used to say that $A_n/B_n\to 1$. We say that a sequence of events $(\mathcal{E}_n)_{n\geq 1}$ occurs with high probability~(w.h.p) with respect to the probability measures $(\mathbbm{P}_n)_{n\geq 1}$  when $\mathbbm{P}_n\big( \mathcal{E}_n \big) \to 1$. Define $f_n = O_{\sss\mathbbm{P}}(g_n)$ when  $ ( |f_n|/|g_n| )_{n \geq 1} $ is tight; $f_n =o_{\sss\mathbbm{P}}(g_n)$ when $f_n/g_n  \xrightarrow{\sss\PR} 0 $  whp; $f_n =\thetaP(g_n)$ if both $f_n=\OP(g_n) $ and $g_n=\OP(f_n)$. For a random variable $X$ and a distribution $F$, we write $X\sim F$ to denote that $X$ has distribution $F$. Denote
\begin{equation}
\ell^p_{\shortarrow}:= \big\{ \mathbf{x}= (x_1, x_2, x_3, ...): x_1 \geq x_2 \geq x_3 \geq ... \text{ and } \sum_{i=1}^{\infty} x_{i}^p < \infty \big\}
\end{equation} with the $p$-norm metric $d(\mathbf{x}, \mathbf{y})= \big( \sum_{i=1}^{\infty} |x_i-y_i|^p \big)^{1/p}$. Let $\ell^2_{\shortarrow} \times \mathbbm{N}^{\infty}$  denote the product topology of $\ell^2_{\shortarrow}$ and $\mathbbm{N}^{\infty}$ with $\mathbbm{N}^{\infty}$ denoting the sequences on $\mathbbm{N}$ endowed with the product topology. Define also
\begin{equation}\mathbb{U}_{\shortarrow}:= \big\{ ((x_i,y_i))_{i=1}^{\infty}\in  \ell^2_{\shortarrow} \times \mathbbm{N}^{\infty}: \sum_{i=1}^{\infty} x_iy_i < \infty \text{ and } y_i=0 \text{ whenever } x_i=0 \; \forall i   \big\}
\end{equation} with the metric \begin{equation} \label{defn_U_metric}\mathrm{d}_{\mathbb{U}}((\mathbf{x}_1, \mathbf{y}_1), (\mathbf{x}_2, \mathbf{y}_2)):= \bigg( \sum_{i=1}^{\infty} (x_{1i}-x_{2i})^2 \bigg)^{1/2}+ \sum_{i=1}^{\infty} \big| x_{1i} y_{1i} - x_{2i}y_{2i}\big|. 
\end{equation} Further, let $\mathbb{U}^0_{\shortarrow} \subset \mathbb{U}_{\shortarrow}$ be given by \begin{equation}\mathbb{U}^0_{\shortarrow}:= \big\{((x_i,y_i))_{i=1}^{\infty}\in\mathbb{U}_{\shortarrow} : \text{ if } x_k = x_m, k \leq m,\text{ then }y_k \geq y_m\big\}.
\end{equation}  Let $(\mathbb{U}^0_{\shortarrow})^k$ denote the $k$-fold product space of $\mathbb{U}^0_{\shortarrow}$. For any $\mathbf{z}\in \mathbb{U}_{\shortarrow}$, $\ord(\mathbf{z})$ will denote the element of $\mathbb{U}^0_{\shortarrow}$ obtained by suitably ordering the coordinates of $\mathbf{z}$. 

 We often use the boldface notation $\mathbf{X}$ for the process $( X(s) )_{s \geq 0}$, unless stated otherwise. $\mathbb{D}[I,E]$ will denote the space of c\`adl\`ag functions from an interval $I$ to the metric space $E=(E,\mathrm{d})$ equipped with the Skorohod $J_1$-topology. 
Consider a decreasing sequence $ \boldsymbol{\theta}=(\theta_1,\theta_2,\dots)\in \ell^3_{\shortarrow}\setminus \ell^2_{\shortarrow}$. Denote   $\mathcal{I}_i(s):=\ind{\xi_i\leq s }$ where $\xi_i\sim \mathrm{Exp}(\theta_i/\mu)$ independently, and $\mathrm{Exp}(r)$ denotes the exponential distribution with rate $r$.  Consider the process \begin{equation}\label{defn::limiting::process}
\bar{S}^\lambda_\infty(t) =  \sum_{i=1}^{\infty} \theta_i\left(\mathcal{I}_i(t)- (\theta_i/\mu)t\right)+\lambda t,
\end{equation}for some $\lambda\in\mathbbm{R}, \mu >0$, and define the reflected version of $\bar{S}_\infty^{\lambda}(t)$ by
\begin{equation} \label{defn::reflected-Levy}
 \refl{ \bar{S}_\infty^{\lambda}(t)}= \bar{S}_\infty^{\lambda}(t) - \min_{0 \leq u \leq t} \bar{S}_\infty^{\lambda}(u).
\end{equation}The process of the form \eqref{defn::limiting::process} was termed \emph{thinned} L\'evy processes in \cite{BHL12} (see also \cite{AHKL16,HKL14}), since the summands are thinned versions of  Poisson processes.
For any function $f\in \mathbb{D}[0,\infty)$,  define $\ubar{f}(x)=\inf_{y\leq x}f(y)$.  $\mathbb{D}_+[0,\infty)$ is the subset of $\mathbb{D}[0,\infty)$ consisting of functions with positive jumps only. Note that $\ubar{f}$ is continuous when $f\in \mathbb{D}_+[0,\infty)$. An \emph{excursion} of a function $f\in \mathbb{D}_+[0,T]$ is an interval $(l,r)$ such that 
\begin{equation}\label{def:excursion}\min\{f(l-),f(l)\}=\ubar{f}(l)=\ubar{f}(r)=\min\{f(r-),f(r)\} \quad \text{and}\quad f(x)>\ubar{f}(r),\ \forall x\in (l,r)\subset [0,T].
\end{equation}Excursions of a function $f\in \mathbb{D}_+[0,\infty)$ are defined similarly.
 We will use $\gamma$ to denote an excursion, as well as the length of the excursion $\gamma$, to simplify notation. 

 Also, define the counting process $\mathbf{N}$ to be the Poisson process that has intensity $\refl{ \bar{S}_\infty^{\lambda}(t)}$ at time $t$ conditionally on $( \bar{S}_\infty^{\lambda}(u) )_{u \leq t}$. Formally, $\mathbf{N}$ is characterized as the counting process for which 
\begin{equation} \label{defn::counting-process}
N(t) - \int\limits_{0}^{t}\refl{ \bar{S}_\infty^{\lambda}(u)}\dif u
\end{equation} is a martingale.  We use  the notation $N(\gamma)$ to denote the number of marks in the interval $\gamma$.

Finally, we define a Markov process $(\mathbf{Z}(s))_{s\in\R}$ on $\mathbbm{D}(\R,\mathbb{U}^0_{\shortarrow})$, called the augmented multiplicative coalescent (AMC) process. Think of  a collection of particles in a system with $\mathbf{X}(s)$ describing their masses and $\mathbf{Y}(s)$ describing an additional attribute at time $s$. Let $K_1,K_2>0$ be constants. The evolution of the system takes place according to the following rule at time $s$:
\begin{itemize}
\item[$\rhd$] For $i\neq j$, at rate $K_1X_i(s)X_j(s)$,  the $i^{th}$ and $j^{th}$ components merge and create a new component of mass $X_i(s)+X_j(s)$ and attribute $Y_i(s)+Y_j(s)$.  
\item[$\rhd$] For any $i\geq 1$, at rate $K_2X_i^2(s)$, $Y_i(s)$ increases to $Y_i(s)+1$. 
\end{itemize}Of course, at each event time, the indices are re-organized to give a proper element of $\mathbb{U}^0_{\shortarrow}$.
This process was first introduced in \cite{BBW12} to study the joint behavior of the component sizes and the surplus edges over the critical window.
 In \cite{BBW12}, the authors extensively study the properties of the  standard version of AMC, i.e., the case  $K_1=1,K_2=1/2$ and showed in \citep[Theorem 3.1]{BBW12} that this is a (nearly) Feller process, a property that will play a crucial rule in the final part of this paper.

\begin{remark}\normalfont Notice that the summation term in \eqref{defn::limiting::process}, after replacing $\theta_i$ by $\mu\theta_i$, is of the form 
\begin{equation}V^{\boldsymbol{\theta}}(s)= \mu^{\alpha}\sum_{i=1}^{\infty} \big(\theta_i\ind{\xi_i\leq s}-\theta_i^2s \big),
\end{equation} where $\xi_i\sim \mathrm{Exp}(\theta_i)$ independently over $i$ and  $\boldsymbol{\theta}\in \ell^3_{\shortarrow}\setminus\ell^2_{\shortarrow}$. Therefore, by \cite[ Lemma 1]{AL98}, the process $\refl{\bar{\mathbf{S}}_\infty^{\lambda}}$ has no infinite excursions almost surely and only finitely many excursions of length at least $ \delta$, for any $\delta >0$.
\end{remark}

\subsection{Main results for critical configuration models}\label{sec:results}
 Throughout this paper we will use the shorthand notation
\begin{subequations}
\begin{equation}\label{eqn:notation-const}
 \alpha= 1/(\tau-1),\qquad \rho=(\tau-2)/(\tau-1),\qquad \eta=(\tau-3)/(\tau-1),
\end{equation}
\begin{equation}\label{eqn:notation-power*}
a_n= n^{\alpha}L(n),\qquad b_n=n^{\rho}(L(n))^{-1},\qquad c_n=n^{\eta} (L(n))^{-2},
\end{equation}
\end{subequations}where $\tau\in (3,4)$ and $L(\cdot)$ is a slowly-varying function.
We state our results under the following assumptions:

\begin{assumption}\label{assumption1}
\normalfont Fix $\tau \in (3,4)$. Let $\boldsymbol{d}=(d_1,\dots,d_n)$ be a degree sequence such that the following conditions hold:
\begin{enumerate}[(i)] 
\item \label{assumption1-1} (\emph{High-degree vertices}) For any fixed $i\geq 1$, 
\begin{equation}\label{defn::degree}
 \frac{d_i}{a_n}\to \theta_i,
\end{equation}where $\boldsymbol{\theta}=(\theta_1,\theta_2,\dots)\in \ell^3_{\shortarrow}\setminus \ell^2_{\shortarrow}$. 
\item \label{assumption1-2} (\emph{Moment assumptions}) Let $D_n$ denote the degree of a vertex chosen uniformly at random from $[n]$, independently of $\mathrm{CM}_n(\boldsymbol{d})$. Then, $D_n\xrightarrow{\sss d} D$, for some integer-valued random variable $D$ and 
\begin{equation}
 \frac{1}{n}\sum_{i\in [n]}d_i\to \mu:=\expt{D}, \qquad \frac{1}{n}\sum_{i\in [n]}d_i^2\to \E[D^2],\qquad  \lim_{K\to\infty}\limsup_{n\to\infty}a_n^{-3} \sum_{i=K+1}^{n} d_i^3=0.
\end{equation}
\item \label{assumption1-3} (\emph{Critical window}) For some $\lambda \in \mathbbm{R}$,
\begin{equation}\label{critical-window}
 \nu_n(\lambda):=\frac{\sum_{i\in [n]}d_i(d_i-1)}{\sum_{i\in [n]}d_i}=1+\lambda c_n^{-1}+o(c_n^{-1}).
\end{equation}
\item \label{assumption1-4} Let $n_1$ be the number of vertices of degree-one. Then $n_1=\Theta(n)$, which is equivalent to assuming that $\prob{D=1}>0$.
\end{enumerate}
\end{assumption} 
Note that Assumption~\ref{assumption1}~\eqref{assumption1-1}-\eqref{assumption1-2} implies $\liminf_{n\to\infty}\E[D_n^3]=\infty$. 
The following three results hold for any $\mathrm{CM}_n(\boldsymbol{d})$ satisfying Assumption~\ref{assumption1}:

\begin{theorem}\label{thm::conv:component:size} Consider $\mathrm{CM}_n(\boldsymbol{d})$ with the degrees satisfying {\rm Assumption~\ref{assumption1}}. Denote the $i^{th}$-largest cluster of $\mathrm{CM}_n(\boldsymbol{d})$ by $\mathscr{C}_{\sss (i)}$. Then, 
 \begin{equation}
  \left( b_n^{-1}|\mathscr{C}_{\sss (i)}|\right)_{i\geq 1}\dto\left(\gamma_i(\lambda)\right)_{i\geq 1},
 \end{equation}
 with respect to the $\ell^2_{\shortarrow}$-topology where $\gamma_i(\lambda)$ is the length of the $i^{th}$ largest excursion of the process $\bar{\mathbf{S}}_\infty^{\lambda} $, while $b_n$ and the constants $\lambda, \mu$ are defined in \eqref{eqn:notation-power*} and {\rm Assumption~\ref{assumption1}}.
\end{theorem}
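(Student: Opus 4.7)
The plan is to analyze a breadth-first exploration of $\CM$ with size-biased vertex selection. Let $v_{(1)},\dots,v_{(n)}$ denote the order in which vertices are discovered and let $S_n(k)$ be the number of active half-edges after $k$ vertices have been explored. The ordered connected components of $\CM$ are then in bijection with the successive excursions of $S_n(\cdot)$ above its running minimum, and surplus edges appear as self-pairings inside an excursion. Rescale to the cluster scale by $\bar{S}_n(t):=a_n^{-1}S_n(\lfloor tb_n\rfloor)$; using the identities $a_nb_n=n$ and $b_n=a_nc_n$, every contribution to $\bar{S}_n$ becomes of order one.

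The key analytic step is the functional convergence $\bar{S}_n \dto \bar{S}^\lambda_\infty$ in $\mathbb{D}[0,T]$ under Skorohod $J_1$-topology, for every $T>0$. I would decompose the increments of $S_n$ into three pieces. First, each hub vertex $i$ (with $d_i/a_n\to\theta_i$) is first discovered at step $T_i^n$ satisfying $T_i^n/b_n \dto \xi_i\sim\mathrm{Exp}(\theta_i/\mu)$, jointly in $i$, because at every step the conditional probability of next picking an unexplored vertex $j$ is approximately $d_j/\ell_n$; since a hub can be selected only once, its contribution is the thinned indicator $\theta_i\mathcal{I}_i(t)$ rather than a Poisson process. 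Second, the martingale compensator of the hub contributions is $-(t/\mu)\sum_i\theta_i^2$, coming from $d_i^2/\ell_n\sim\theta_i^2/(\mu c_n)$ summed over $b_nt$ steps and normalized by $a_n^{-1}$. Third, the non-hub mean per-step increment equals $\nu_n(\lambda)-1=\lambda/c_n+o(c_n^{-1})$ by Assumption~\ref{assumption1}(\ref{assumption1-3}), accumulating to the linear drift $\lambda t$ after rescaling. The martingale fluctuations of the light-vertex part and of the surplus-edge counter are shown to vanish on $[0,T]$ using the $\ell^3$-tail bound $a_n^{-3}\sum_{i>K}d_i^3\to 0$ from Assumption~\ref{assumption1}(\ref{assumption1-2}), combined with a truncation at hub index $K$ and then sending $K\to\infty$.

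Once functional convergence is obtained, a continuous-mapping argument produces finite-dimensional convergence of $(b_n^{-1}|\mathscr{C}_{(i)}|)_{i\le k}$ to $(\gamma_i(\lambda))_{i\le k}$ for every fixed $k$: since $\bar{S}^\lambda_\infty\in\mathbb{D}_+[0,\infty)$ and, by the remark following \eqref{defn::reflected-Levy}, $\refl{\bar{S}^\lambda_\infty}$ has almost surely only finitely many excursions of length $\ge\delta$ and no infinite excursion, the map from a trajectory on $[0,T]$ to its ordered excursion lengths exceeding $\delta$ is continuous at the limit, and taking $T\to\infty$ then $\delta\to 0$ covers all relevant excursions. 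The main obstacle, and the hard part of the proof, is upgrading this to convergence in $\ell^2_\shortarrow$: one needs the uniform tail bound $\lim_{K\to\infty}\limsup_n\PR\big(b_n^{-2}\sum_{i>K}|\mathscr{C}_{(i)}|^2>\varepsilon\big)=0$. The plan here is a truncation/coupling argument in which the $K$ highest-degree vertices of $\CM$ are removed, showing that the largest component of the residual (``de-hubbed'') graph is $\oP(b_n)$ as $K\to\infty$; combined with the elementary bound $\sum_{i>K}|\mathscr{C}_{(i)}|^2\le |\mathscr{C}_{(K+1)}|\cdot n$ and $\sum_i|\mathscr{C}_{(i)}|=n$, this yields the required tail estimate. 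This is the heavy-tailed analogue of the $\ell^2$-tightness arguments of Aldous and of \cite{BHL12,DHLS15}; the thinned, non-Poisson structure forces us to control hubs above and below the truncation level simultaneously, which is precisely where the moment bounds in Assumption~\ref{assumption1}(\ref{assumption1-2}) enter essentially.
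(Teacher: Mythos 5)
Your overall architecture coincides with the paper's: the same exploration process and rescaling, the same decomposition of $\bar S_n$ into hub indicators (thinned, not Poissonian), their compensator, and the drift $\lambda t$ coming from Assumption~\ref{assumption1}~\eqref{assumption1-3}, with the light-vertex remainder controlled by a (super)martingale inequality after truncating at hub index $K$ and using the third-moment tail condition; finite-dimensional convergence of excursion lengths then follows by a continuous-mapping argument (the paper spends Lemmas~\ref{lem::good:function:continuity} and \ref{lem:levy-as-good} making the continuity of the excursion map rigorous, which you wave at but do not substantiate, and it also needs Lemma~\ref{lem:surp:poisson-conv} to pass from edge counts to vertex counts).

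The genuine gap is in your $\ell^2_{\shortarrow}$-tightness step. You propose to combine $|\mathscr{C}_{\sss(K+1)}|=\oP(b_n)$ (after de-hubbing) with the bound $\sum_{i>K}|\mathscr{C}_{\sss(i)}|^2\leq |\mathscr{C}_{\sss(K+1)}|\cdot n$. This gives only $\sum_{i>K}|\mathscr{C}_{\sss(i)}|^2=\oP(nb_n)$, whereas the target is $\oP(b_n^2)$, and $nb_n\gg b_n^2$ since $b_n=o(n)$. For your route to work you would need $|\mathscr{C}_{\sss(K+1)}|=\oP(b_n^2/n)=\oP(c_n)$, which is false: the graph obtained by removing the top $K$ hubs is subcritical with $1-\nu_n^{\sss[K]}\asymp c_n^{-1}\sum_{i\leq K}\theta_i^2$ and maximal degree $\asymp\theta_{K+1}a_n$, so its largest component is of order $a_nc_n/\sum_{i\leq K}\theta_i^2=b_n/\sum_{i\leq K}\theta_i^2$ --- small compared to $b_n$ only through the divergence of $\sum_{i\leq K}\theta_i^2$, and never as small as $c_n$. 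The correct argument (Proposition~\ref{prop-l2-tightness} and Lemma~\ref{lem::tail_sum_squares} in the paper) replaces the crude product bound by the exact susceptibility identity
\begin{equation*}
\E\Bigl[\sum_{i\geq 1}\bigl|\mathscr{C}_{\sss(i)}^{\sss[K]}\bigr|^2\Bigr]=n\,\E\bigl[\bigl|\mathscr{C}^{\sss[K]}(V_n)\bigr|\bigr],
\end{equation*}
together with Janson's bound $\E[|\mathscr{C}(V_n)|]\leq 1+\E[D_n]/(1-\nu_n)$ applied to the de-hubbed configuration model. This yields $n\E[|\mathscr{C}^{\sss[K]}(V_n)|]\leq C nc_n/\bigl(-\lambda+C'\sum_{i\leq K}\theta_i^2\bigr)=Cb_n^2/\bigl(-\lambda+C'\sum_{i\leq K}\theta_i^2\bigr)$, which is $o(b_n^2)$ as $K\to\infty$ precisely because $\boldsymbol{\theta}\notin\ell^2_{\shortarrow}$; Markov's inequality then gives the required uniform tail bound. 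So the de-hubbing idea is the right one, but you must exploit the second-moment/susceptibility identity rather than bounding the sum of squares by the largest residual component times $n$.
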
  

\begin{theorem}\label{thm:spls}
 Consider $\mathrm{CM}_n(\boldsymbol{d})$ with the degrees satisfying {\rm Assumption~\ref{assumption1}}. Let $\mathrm{SP}(\mathscr{C}_{\sss (i)})$ denote the number of surplus edges in  $\mathscr{C}_{\sss (i)}$  and $\mathbf{Z}_n:= \ord( b_n^{-1}|\mathscr{C}_{\sss (i)}|,\mathrm{SP}(\mathscr{C}_{\sss (i)}))_{i\geq 1}$, $\mathbf{Z}:=\ord(\gamma_i(\lambda), N(\gamma_i(\lambda)))_{i\geq 1}$. Then, as $n\to\infty$,
 \begin{equation}\label{thm:eqn:spls}
  \mathbf{Z}_n\dto\mathbf{Z}
 \end{equation}with respect to the $\mathbb{U}^0_{\shortarrow}$ topology, where $\mathbf{N}$ is  defined in \eqref{defn::counting-process}.
\end{theorem}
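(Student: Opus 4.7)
The plan is to strengthen the exploration-process proof of Theorem~\ref{thm::conv:component:size} by adding a surplus-edge counting process to the picture, and then invoke an essentially continuous map that sends a path together with a counting process to (ordered excursion lengths, marks per excursion). Concretely, I would work with the standard breadth-first (or depth-first) exploration of $\mathrm{CM}_n(\boldsymbol{d})$ via random half-edge pairing: at each step $k$, one picks an ``active'' half-edge, pairs it with a uniformly chosen unpaired half-edge, and either discovers a new vertex (contributing its degree minus one to the supply of active half-edges) or closes a cycle, producing a \emph{surplus edge}. If $S_n(k)$ denotes the standard exploration walk, whose excursions (above past minima) are the component sizes, I would let $N_n(k)$ be the number of surplus edges that have occurred by step $k$. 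Under the rescaling $t\mapsto b_n^{-1}\cdot k$ and $S_n\mapsto c_n S_n$ used for Theorem~\ref{thm::conv:component:size}, one expects the joint convergence
\begin{equation}
\Bigl( c_n^{-1}S_n(\lfloor b_n t\rfloor),\; N_n(\lfloor b_n t\rfloor)\Bigr)_{t\ge 0}\;\dto\; \bigl(\bar S^\lambda_\infty(t),\, N(t)\bigr)_{t\ge 0}
\end{equation}
in $\mathbb{D}([0,\infty),\R\times\N)$ equipped with $J_1$.

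For the first coordinate I would reuse the scaling limit of the exploration walk that drives Theorem~\ref{thm::conv:component:size}. For the second, conditional on the past of the exploration at step $k$, a surplus edge is produced with probability equal to (active half-edges)/(unpaired half-edges), i.e. essentially $\refl{S_n(k)}/\ell_n$. Hence the compensator of $N_n$ with respect to the exploration filtration satisfies, after the same time change,
\begin{equation}
\sum_{k\le b_n t}\frac{\refl{S_n(k)}}{\ell_n-2k}\;\approx\;\frac{b_n c_n}{\ell_n}\int_0^t\refl{c_n^{-1}S_n(\lfloor b_n u\rfloor)}\,du,
\end{equation}
and by the choice of $a_n,b_n,c_n$ in \eqref{eqn:notation-power*} combined with Assumption~\ref{assumption1}\eqref{assumption1-2}, $b_nc_n/\ell_n\to 1/\mu$ times the right constant so that the compensator converges to $\int_0^{\cdot}\refl{\bar S^\lambda_\infty(u)}du$. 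A standard martingale-functional-CLT argument (e.g.\ Rebolledo, or an Ethier--Kurtz thinning-of-Poisson argument), together with the joint tightness of $S_n$ and the fact that its limit lives in $\mathbb{D}_+[0,\infty)$, then yields joint convergence to the Cox process $N$ of \eqref{defn::counting-process}.

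Given this joint functional limit, Theorem~\ref{thm:spls} reduces to showing that the map $\Psi:(f,M)\mapsto \ord\bigl((\gamma_i,M(\gamma_i))_{i\ge 1}\bigr)$, sending a c\`adl\`ag function with only positive jumps and an $\N$-valued counting process to the ordered pairs of excursion lengths and their numbers of marks, is continuous at $(\bar S^\lambda_\infty,N)$ for the $\mathbb{U}^0_{\shortarrow}$ topology. Continuity of the excursion-length coordinate is already embedded in the proof of Theorem~\ref{thm::conv:component:size} (the limit has distinct excursion lengths almost surely, and no infinite excursion, by the Remark following \eqref{defn::counting-process}); adding the counting process is routine on any fixed large time interval $[0,T]$, since only finitely many excursions exceed any $\delta>0$ and $N$ places only finitely many marks on $[0,T]$ a.s.

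The main obstacle, and the step that needs the most care, is upgrading product-topology convergence to the strong $\mathbb{U}^0_{\shortarrow}$ topology, which demands control of $\sum_i x_iy_i$. After handling the top excursions via continuity, one must prove that
\begin{equation}
\lim_{T\to\infty}\limsup_{n\to\infty}\mathbb{P}\Bigl(\sum_{i:\;b_n^{-1}|\mathscr{C}_{(i)}|\le \delta\text{ or explored after }b_nT}b_n^{-1}|\mathscr{C}_{(i)}|\cdot \surp{\mathscr{C}_{(i)}}>\varepsilon\Bigr)=0
\end{equation}
for every $\varepsilon>0$, together with the analogous statement for the limiting object (which follows because $\sum_i\gamma_i(\lambda)N(\gamma_i)<\infty$ a.s.\ by Fubini applied to $\int\refl{\bar S^\lambda_\infty(t)}dt$ restricted to the relevant excursions). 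For the graph side, I would bound $\surp{\mathscr{C}_{(i)}}$ in terms of the number of ``bad'' pairings within excursion $i$ of $S_n$ and use the explicit compensator above together with the $\ell^2_\shortarrow$-tail estimate on excursion sizes already established for Theorem~\ref{thm::conv:component:size}; the expected contribution of a single excursion $\gamma$ is of order $\gamma\cdot\sup_{t\in\gamma}\refl{S_n(t)}/\ell_n$, which is at most $\gamma^2/\ell_n$ up to constants, and summing this over small components gives the desired tightness. Once this tail estimate is in place, together with the finite-dimensional/$J_1$ joint convergence of $(S_n,N_n)$, the continuous-mapping argument delivers \eqref{thm:eqn:spls}.
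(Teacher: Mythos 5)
Your overall architecture coincides with the paper's: joint convergence of the exploration walk and the surplus-counting process via its compensator (this is Lemma~\ref{lem:surp:poisson-conv}, proved by exactly the Cox-process argument you sketch), continuity of the map to ordered excursion lengths and marks (Lemma~\ref{lem::good:function:continuity}, Remark~\ref{rem:excursion-area-cont}), product-topology convergence (Lemma~\ref{lem:spls:prod}), and finally a uniform tail bound on $\sum_{i:|\mathscr{C}_{\sss(i)}|\le\delta b_n}|\mathscr{C}_{\sss(i)}|\,\surp{\mathscr{C}_{\sss(i)}}$ (Proposition~\ref{prop-surp-u-0}). (A minor slip: the spatial rescaling of the walk is $a_n^{-1}$, not $c_n^{-1}$, and the compensator constant is $a_nb_n/\ell_n\to 1/\mu$; $b_nc_n/\ell_n\to 0$.)

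The genuine gap is in your proof of the tail bound, which you yourself identify as the crux. You propose to bound the conditional expected surplus of an excursion of length $\gamma$ by $\gamma\cdot\sup_{t\in\gamma}\refl{S_n(t)}/\ell_n\le C\gamma^2/\ell_n$ and then sum. This does not close: summing $\gamma_i^3/\ell_n$ over components with $\gamma_i\le\delta b_n$ gives at most $(\delta b_n/\ell_n)\sum_i\gamma_i^2$, and since $\sum_i\gamma_i^2\ge |\mathscr{C}_{\sss(1)}|^2=\Theta_{\sss\PR}(b_n^2)$ while $\E\big[\sum_i\gamma_i^2\big]=n\,\E[|\mathscr{C}(V_n)|]=O(nc_n)$, the resulting bound is of order $\delta b_nc_n$, which is $\gg b_n$ because $c_n\to\infty$. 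The deterministic inequality $\sup_{t\in\gamma}\refl{S_n(t)}\le 2\gamma$ (the walk decreases by at most $2$ per step) is far too lossy for small excursions: a typical excursion of length $m$ has height of order $m^{1/(\tau-2)}\ll m$, and it is exactly this gain that makes the sum converge. The paper's Lemma~\ref{lem:sp-cv-n} captures it by optional stopping --- starting the exploration from a size-biased vertex, $\PR(\sup_l S_n(l)\ge H)\le\E[D_n']/H$ --- combined with a union bound over the times of the $K$ surplus edges (each costing at most $O(H/\ell_n)$ on the event that the walk stays below $H$) and a simultaneous dyadic decomposition in $K$ and in the component-size scale, with the tuned choices $H=a_nK^{1.1}/\sqrt{\delta}$ and $\delta_K=\delta K^{-0.12}$. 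Without an estimate of this type your argument only yields tightness in the product topology, not in $\mathbb{U}^0_{\shortarrow}$. A second, smaller omission: Lemma~\ref{lem:sp-cv-n} and the susceptibility bound require $\limsup_n c_n(\nu_n-1)<0$, so for $\lambda\ge 0$ the paper must first remove the components containing the first $R$ high-degree vertices (using $\boldsymbol{\theta}\notin\ell^2_{\shortarrow}$ to make the remaining graph subcritical) before applying the estimate; your proposal does not address this reduction.
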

\begin{theorem}\label{thm:simple-graph}
 The results in {\rm Theorem~\ref{thm::conv:component:size}} and {\rm Theorem~\ref{thm:spls}} also hold for $\mathrm{CM}_n(\boldsymbol{d})$ conditioned on simplicity.
\end{theorem}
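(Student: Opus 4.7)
The plan is to transfer Theorems~\ref{thm::conv:component:size} and~\ref{thm:spls} from the multigraph $\CM$ to the uniform simple graph, which is precisely the conditional law of $\CM$ given the event $\mathcal{S}_n:=\{\CM\text{ is simple}\}$. The strategy hinges on two facts: (i) $\PR(\mathcal{S}_n)$ is asymptotically bounded away from zero, and (ii) $\mathcal{S}_n$ is asymptotically independent of the macroscopic component vector $\mathbf{Z}_n$.

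For (i), I would apply Janson's criterion~\cite{J09c}. Assumption~\ref{assumption1}\eqref{assumption1-2} yields $n^{-1}\sum_{i}d_i^2\to \E[D^2]<\infty$; Assumption~\ref{assumption1}\eqref{assumption1-1} gives $d_1=\Theta(a_n)=o(\sqrt n)$ since $\alpha=1/(\tau-1)<1/2$ when $\tau>3$; and Assumption~\ref{assumption1}\eqref{assumption1-3} fixes $\nu_n(\lambda)\to 1$. Janson's theorem then delivers the joint convergence $(L_n,M_n)\dto (L,M)$ to independent Poisson random variables, where $L_n$ and $M_n$ denote the number of self-loops and of multi-edge pairs of $\CM$, respectively. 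In particular, $\PR(\mathcal{S}_n)=\PR(L_n=M_n=0)\to \exp(-3/4)=:p_\infty>0$.

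For (ii), the plan is to construct a switching coupling $(\CM,\Mtilde{\mathcal{G}}_n)$ in which $\Mtilde{\mathcal{G}}_n$ has the law of a uniform simple graph with degree sequence $\boldsymbol d$, and which agrees with $\CM$ outside the neighbourhoods of its self-loops and multi-edges. Concretely, I would unpair the $\OP(1)$ offending half-edges and rematch them with uniformly chosen fresh half-edges, iterating until no defect remains, in the spirit of Janson's switching. The resulting pairing is exchangeable, so $\Mtilde{\mathcal{G}}_n$ has the uniform simple law. Crucially, each individual rewire places a half-edge at a uniformly chosen slot among the $\ell_n-\OP(1)=\Theta(n)$ available ones, so its probability of landing in any fixed critical component---which contains $\thetaP(b_n)$ half-edges---is $\thetaP(b_n/n)=\thetaP(n^{-1/(\tau-1)})=\oP(1)$. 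A union bound over the $\OP(1)$ rewires and the $O(1)$ largest components shows that, with high probability, none of the macroscopic components is touched, so $\mathbf{Z}_n(\CM)=\mathbf{Z}_n(\Mtilde{\mathcal{G}}_n)$ w.h.p. Combined with (i), this yields the joint convergence $(\mathbf{Z}_n,\mathbbm{1}_{\mathcal{S}_n})\dto (\mathbf{Z},B)$ with $B\sim\mathrm{Bernoulli}(p_\infty)$ independent of $\mathbf{Z}$.

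Once (i) and (ii) are in hand, for any bounded continuous test function $f$,
\begin{equation*}
\E[f(\mathbf{Z}_n)\mid \mathcal{S}_n]=\frac{\E\bigl[f(\mathbf{Z}_n)\mathbbm{1}_{\mathcal{S}_n}\bigr]}{\PR(\mathcal{S}_n)}\longrightarrow \frac{p_\infty\,\E[f(\mathbf{Z})]}{p_\infty}=\E[f(\mathbf{Z})],
\end{equation*}
which gives Theorem~\ref{thm:simple-graph}. The main obstacle will be the switching argument in (ii): producing a rewiring scheme that yields exactly the uniform simple distribution (rather than an approximation) requires a careful exchangeable construction, and the perturbation bound must control not only the $\ell^2_\shortarrow$-norm of the component sizes but also the surplus counts in the $\mathbb{U}^0_\shortarrow$-topology. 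However, since a single rewire alters at most one edge (hence at most one surplus), and since, as above, the rewired half-edges miss the critical components with probability $1-\oP(1)$, the $\mathbb{U}^0_\shortarrow$-perturbation is also $\oP(1)$ and the argument extends uniformly to Theorem~\ref{thm:spls}.
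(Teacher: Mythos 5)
Your overall strategy---show that the simplicity event is asymptotically decoupled from the macroscopic component structure, then divide out by $\PR(\mathcal{S}_n)\to e^{-3/4}>0$---is the right target and matches what the paper ultimately establishes, and your step (i) is sound. The paper, however, proves the decoupling without constructing any coupling: it works directly with the exploration process $\bar{\mathbf{S}}_n$. First it shows (using \eqref{eq:prob-ind} and Assumption~\ref{assumption1}\eqref{assumption1-2}) that the expected number of self-loops and multiple edges discovered before time $Tb_n$ tends to zero, so w.h.p.\ none of them touch the components captured by the exploration. Then, conditionally on $\mathscr{F}_{T_e}$ (the first post-$Tb_n$ component-completion), the unexplored graph $\mathcal{G}^*$ is again a configuration model whose criticality parameter converges to $1$, so $\PR(\mathcal{G}^*\text{ simple}\mid\mathscr{F}_{T_e})\pto e^{-3/4}$. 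This yields $\E[f(\bar{\mathbf{S}}_n^T)\ind{\text{simple}}]=\E[f(\bar{\mathbf{S}}_n^T)]\PR(\text{simple})+o(1)$, which is exactly the asymptotic independence required; tightness under $\PR_s$ is then lifted from tightness under $\PR$ using $\liminf\PR(\mathcal{S}_n)>0$.

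The genuine gap in your proposal is step (ii): the claim that ``unpair the offending half-edges, rematch them uniformly, iterate until no defect remains'' produces a pairing whose law is the uniform simple law. This does not follow from ``exchangeability.'' The conditional law of the configuration-model pairing given simplicity is uniform over simple-compatible pairings because the simplicity event has the same symmetry as the unconditioned model, not because any exchangeable tweak preserves it. Your iterative uniform rematching moves probability mass from non-simple pairings into simple ones in a way that is not balanced: a non-simple matching may be one rewire away from one simple matching and many rewires from another, so the output is biased. McKay--Wormald switchings compare enumeration ratios asymptotically; they do not give an exact finite-step coupling with the uniform simple graph, and making a rigorous coupling (via rejection or a reversible chain run to stationarity) would require substantially more work and a separate total-variation estimate. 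Your remaining ingredients---Janson's $e^{-3/4}$, the observation that a single rewire lands in a fixed macroscopic component with probability $\Theta(b_n/n)=o(1)$, and the union bound over $\OP(1)$ rewires---are sound and echo the moment computation the paper does carry out; but as written the proof rests on an unestablished sampling identity, so it does not go through.
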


\begin{remark}\normalfont
The only previous work to understand  the critical behavior of the configuration model with heavy-tailed degrees was by \citet{Jo10} where the degrees were assumed to be an i.i.d.~sample from an exact power-law distribution and the results were obtained for the component sizes of $\mathrm{CM}_n(\boldsymbol{d})$ (as in Theorem~\ref{thm::conv:component:size}). 
We will see that Assumption~\ref{assumption1} is satisfied for i.i.d.~degrees in Section~\ref{sec:iid-degrees}. Thus, a quenched version (conditional on the degrees) of \cite[Theorem~8.3]{Jo10} follows from our results. Further, if the degrees are chosen approximately as the weights chosen in \cite{BHL12}, then our results continue to hold. This sheds light on the relation between the scaling limits in \cite{BHL12}~and~\cite{Jo10} (see Remark~\ref{rem:jos-vs-this}).
Moreover, Theorem~\ref{thm:simple-graph} resolves \cite[Conjecture 8.5]{Jo10}.
\end{remark}

\begin{remark}\label{rem:gen-functional1}\normalfont The conclusions of Theorems~\ref{thm::conv:component:size},~\ref{thm:spls},~and~\ref{thm:simple-graph} hold for more general functionals of the components. Suppose that each vertex $i$ has a weight $w_i$ associated to it and let $\mathscr{W}_i$ denote the total weight of the component $\mathscr{C}_{\sss (i)}$, i.e., $\mathscr{W}_i = \sum_{k\in \mathscr{C}_{\sss (i)}} w_k$. Then, under some regularity conditions on the weight sequence $\bld{w}=(w_i)_{i\in [n]}$, in Section~\ref{sec:comp-functional} we will  show  that the scaling limit for $\mathbf{Z}^w_n:= \ord( b_n^{-1}\mathscr{W}_i,\mathrm{SP}(\mathscr{C}_{\sss (i)}))_{i\geq 1}$ is given by  $\mathbf{Z} = \ord(\kappa\gamma_i(\lambda), N(\gamma_i(\lambda)))_{i\geq 1}$, where the constant $\kappa$ is given by  $\kappa = \lim_{n\to\infty}\sum_{i\in [n]}d_iw_i/\sum_{i\in[n]}d_i$. Observe that, for $w_i= \ind{d_i=k}$, $\mathscr{W}_i$ gives the asymptotic number of vertices of degree $k$ in the $i^{th}$ largest component.
\end{remark}

\begin{remark}\normalfont It might not be immediate why we should work with Assumption~\ref{assumption1}. We will see in Section~\ref{sec:example} that Assumption~\ref{assumption1} is satisfied by the degree sequences in some important and natural cases. The reason to write the assumptions in this form is to make the properties of the degree distribution explicit (e.g. in terms of moment conditions and the asymptotics of the highest degrees) that jointly lead to this universal critical limiting behavior. We explain the significance of Assumption~\ref{assumption1} in more detail in Section~\ref{sec:discussion}.
\end{remark}

\subsection{Percolation on heavy-tailed configuration models} 
Percolation refers to deleting each edge of a graph independently with probability $1-p$.  Consider  percolation on a configuration model $\mathrm{CM}_n(\boldsymbol{d})$ under the following assumptions:
\begin{assumption} \label{assumption2} \normalfont 
 \begin{enumerate}[(i)]
  \item \label{assumption2-1} Assumption~\ref{assumption1}~\eqref{assumption1-1},~and~\eqref{assumption1-2} hold for the degree sequence and $\mathrm{CM}_n(\boldsymbol{d})$ is super-critical, i.e.,
  \begin{equation}
   \nu_n=\frac{\sum_{i\in [n]}d_i(d_i-1)}{\sum_{i\in [n]}d_i}\to \nu =\frac{\expt{D(D-1)}}{\expt{D}}>1.
  \end{equation}
  \item \label{assumption2-2} (\emph{Critical window for percolation}) The percolation parameter $p_n$ satisfies
  \begin{equation}
  p_{n}=p_n(\lambda):=\frac{1}{\nu_n} \big( 1+ \lambda c_n^{-1}+o(c_n^{-1}) \big) \quad \text{for some } \lambda \in \R.
  \end{equation}
  \end{enumerate}
 \end{assumption}
Let $\mathrm{CM}_n(\boldsymbol{d},p_n(\lambda))$ denote the graph obtained through percolation on $\mathrm{CM}_n(\boldsymbol{d})$ with bond retention probability $p_n(\lambda)$. The following result gives the asymptotics for the ordered component sizes and the surplus edges for $\mathrm{CM}_n(\boldsymbol{d},p_n(\lambda))$:
\begin{theorem}\label{thm:percolation}
Consider $\mathrm{CM}_n(\boldsymbol{d},p_n(\lambda))$ satisfying {\rm Assumption~\ref{assumption2}}.
Let $\tilde{\mathbf{S}}_{\infty}^{\lambda}$ denote the process in \eqref{defn::limiting::process} with $\theta_i$ replaced by $\theta_i/\sqrt{\nu}$, let
 $\mathscr{C}_{\sss(i)}^p$ denote the $i^{th}$ largest component of $\mathrm{CM}_n(\boldsymbol{d},p_n)$ and let $\mathbf{Z}_n^p(\lambda):=\ord( b_n^{-1}|\mathscr{C}_{\sss(i)}^p|,\mathrm{SP}(\mathscr{C}_{\sss(i)}^p))_{i\geq 1}$,  $\mathbf{Z}^p(\lambda):=\ord((\nu^{1/2}\tilde{\gamma}_i(\lambda),N(\tilde{\gamma}_i(\lambda)))_{i\geq 1}$, where $\tilde{\gamma}_i(\lambda)$ is the largest excursion of $\tilde{\mathbf{S}}_{\infty}^{\lambda}$. Then, for any $\lambda\in \mathbbm{R}$, as $n\to \infty$,
\begin{equation}\label{eqn:perc:limit}
 \mathbf{Z}_n^p(\lambda)\dto \mathbf{Z}^p(\lambda)
\end{equation}with respect to the $\mathbb{U}^0_{\shortarrow}$ topology.
\end{theorem}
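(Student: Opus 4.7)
My plan is to deduce Theorem~\ref{thm:percolation} from Theorem~\ref{thm:spls} by coupling percolation on $\mathrm{CM}_n(\boldsymbol{d})$ to a standard configuration model on an enlarged vertex set. I use the half-edge explosion coupling: for each half-edge $e$ of $\mathrm{CM}_n(\boldsymbol{d})$, independently retain it at its host vertex with probability $\sqrt{p_n}$, and otherwise detach it, creating a new vertex of degree one carrying $e$. Uniformly pairing the resulting $\ell_n$ half-edges yields a graph on $n':=n+M_n$ vertices with law $\mathrm{CM}_{n'}(\tilde{\boldsymbol{d}}')$, whose first $n$ degrees are independent $\mathrm{Bin}(d_i,\sqrt{p_n})$ variables and whose last $M_n=\ell_n-\sum_i\tilde{d}_i$ degrees equal one. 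The subgraph on $[n]$ has exactly the law of $\mathrm{CM}_n(\boldsymbol{d},p_n)$; since the new vertices are pendant leaves, deleting them leaves connectivity and surplus counts on $[n]$ intact, so $\mathrm{SP}(\mathscr{C}^p_{(i)})=\mathrm{SP}(\tilde{\mathscr{C}}_{(i)})$ and $|\mathscr{C}^p_{(i)}|=|\tilde{\mathscr{C}}_{(i)}\cap[n]|$ after matching orderings.

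Next, I verify Assumption~\ref{assumption1} for $\tilde{\boldsymbol{d}}'$ in probability. By the law of large numbers, $n'/n\to\beta:=1+(1-1/\sqrt{\nu})\mu$, so by regular variation $a_{n'}/a_n$, $b_{n'}/b_n$, $c_{n'}/c_n$ all have deterministic positive limits. Part~(i) follows by binomial concentration with $\tilde{d}_i/a_{n'}\to\tilde{\theta}_i:=\theta_i/(\sqrt{\nu}\beta^{\alpha})$; part~(ii) by binomial moment formulas combined with the moment hypotheses of Assumption~\ref{assumption2}; part~(iv) is immediate from $M_n=\thetaP(n')$. The critical-window condition~(iii) is the central calculation: using $\sum_i\tilde{d}_i'=\ell_n$ and that the leaves contribute zero to $d(d-1)$,
\begin{equation*}
\tilde{\nu}_{n'}=\frac{\sum_{i\in[n]}\tilde{d}_i(\tilde{d}_i-1)}{\ell_n}=p_n\,\frac{\sum_{i\in[n]}d_i(d_i-1)}{\ell_n}+\oP(c_n^{-1})=p_n\nu_n+\oP(c_n^{-1})=1+\lambda c_n^{-1}+\oP(c_n^{-1}),
\end{equation*}
which, through $c_{n'}^{-1}=\beta^{-\eta}c_n^{-1}(1+o(1))$, becomes $\tilde{\nu}_{n'}=1+\tilde{\lambda}c_{n'}^{-1}+\oP(c_{n'}^{-1})$ with $\tilde{\lambda}=\lambda\beta^{\eta}$. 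The $\oP$ fluctuation is controlled by a Chebyshev estimate: $\mathrm{Var}(\tilde{d}_i(\tilde{d}_i-1))=O(d_i^3)$ and $\sum_i d_i^3=O(a_n^3)$ with $a_n^3/n^2=o(c_n^{-2})$ for the relevant exponents $\alpha,\eta$.

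With Assumption~\ref{assumption1} verified in probability, I apply Theorem~\ref{thm:spls} conditionally on $\tilde{\boldsymbol{d}}'$ to obtain joint $\mathbb{U}^0_{\shortarrow}$ convergence of the rescaled component sizes and surplus counts of $\mathrm{CM}_{n'}(\tilde{\boldsymbol{d}}')$ to the excursion lengths and Poisson marks of the thinned L\'evy process with parameters $\{\tilde{\theta}_i\}$, drift $\tilde{\lambda}$, and mean $\tilde{\mu}=\mu/\beta$. To pass from $|\tilde{\mathscr{C}}_{(i)}|$ to $|\mathscr{C}^p_{(i)}|=|\tilde{\mathscr{C}}_{(i)}\cap[n]|$, I invoke the generalised functional extension in Remark~\ref{rem:gen-functional1} with weights $w_j=\mathbbm{1}\{j\in[n]\}$; the limiting constant is $\kappa=\sum_{j\in[n]}\tilde{d}_j/\ell_n\to 1/\sqrt{\nu}$, yielding $b_{n'}^{-1}|\mathscr{C}^p_{(i)}|\to\tilde{\gamma}_i(\tilde{\lambda})/\sqrt{\nu}$. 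Rescaling through $b_{n'}/b_n\to\beta^{\rho}$ and using the scaling properties of the thinned L\'evy process under the induced rescalings of $\tilde{\theta}_i,\tilde{\lambda},\tilde{\mu}$ (together with the identities $\alpha+\rho=1$ and $\rho-\alpha=\eta$) reconciles the resulting limit with the stated $\sqrt{\nu}\tilde{\gamma}_i(\lambda)$, where $\tilde{\gamma}_i(\lambda)$ is the excursion of the thinned L\'evy process with parameters $\theta_i/\sqrt{\nu}$ and drift $\lambda$.

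The main obstacle will be the quenched application of Theorem~\ref{thm:spls} to the random sequence $\tilde{\boldsymbol{d}}'$, and within that, the propagation of the $\oP(c_n^{-1})$ error in the critical-window computation through the proof of Theorem~\ref{thm:spls}; this likely requires a version of Theorem~\ref{thm:spls} robust to random perturbations of the hypotheses. A secondary point is the matching of orderings: the indexing by size in $\mathrm{CM}_n(\boldsymbol{d},p_n)$ differs from the indexing in $\mathrm{CM}_{n'}(\tilde{\boldsymbol{d}}')$ by a count of attached pendant leaves, but since in each large component the leaf count is asymptotically proportional to the restricted size with the same universal constant, the two orderings agree asymptotically, and the continuity of $\ord$ in the $\mathbb{U}^0_{\shortarrow}$ metric transfers the convergence.
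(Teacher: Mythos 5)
Your approach follows the paper's proof in its essential structure: both use Janson's explosion coupling (the paper's Algorithm~\ref{algo:perc}), verify that the exploded degree sequence $\tilde{\boldsymbol{d}}$ on $\tilde{n}$ vertices satisfies Assumption~\ref{assumption1}, apply Theorem~\ref{thm:spls} to $\mathrm{CM}_{\tilde{n}}(\tilde{\boldsymbol{d}})$, and then transfer the result to the percolated graph. The meaningful deviations are worth noting. \emph{First}, to recover the percolated component sizes from the exploded-graph components, you apply Theorem~\ref{thm:comp-functionals} directly with the weight $w_j=\mathbbm{1}\{j\in[n]\}$; the paper instead uses (S3$'$) (replace deletion of the red leaves by deletion of a uniformly random subset of $n_+$ degree-one vertices, which has the same law by exchangeability of same-degree vertices) together with Corollary~\ref{cor:degree-k} applied to degree-one vertices, arriving at the identity $|\mathscr{C}^p_{\sss(i)}|=\sqrt{p_n}\,|\tilde{\mathscr{C}}_{\sss(i)}|+\oP(b_n)$ via \eqref{degree_one_vertices}. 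These are two legitimate instantiations of the same weight machinery, and yours is arguably more direct since it avoids the (S3$'$) exchangeability step; both depend on Lemma~\ref{lem:weight-approx-size}. \emph{Second}, you verify the hypotheses ``in probability'' and then flag the quenched application of Theorem~\ref{thm:spls} as the main obstacle. The paper removes this obstacle by establishing the moment, window, and high-degree conditions \emph{$\PR_p$-almost surely} via McDiarmid's inequality, binomial concentration, and Borel--Cantelli (eqs.\ \eqref{eqn::prob:ineq:third}, \eqref{perc:2nd-moment}, \eqref{eq:third-moment-perc}, \eqref{eq:assump-1-perc}), so that Theorem~\ref{thm:spls} applies conditionally on the explosion realisation without any ``robustness'' extension. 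This is a genuine gap in your writeup: convergence of the hypotheses in probability does not by itself license the conditional use of Theorem~\ref{thm:spls}, and you should upgrade to a.s.\ convergence (Borel--Cantelli on the binomial concentration bounds does this for free).

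One more caution on the final scaling. Carrying out the rescaling you describe explicitly (with $g=\beta^{-\alpha}$, $a=\beta^{\rho}$, and drift $\lambda'=\lambda\beta^{\eta-\rho+\alpha}=\lambda$ since $\eta=\rho-\alpha$) gives $\bar{S}^{\tilde{\lambda}}_{(\tilde{\theta},\tilde{\mu})}(\cdot)\stackrel{d}{=}\beta^{-\alpha}\tilde{\mathbf{S}}^\lambda_{\infty}(\beta^{\rho}\,\cdot)$, so the exploded-process excursion lengths are $\beta^{-\rho}\tilde{\gamma}_i(\lambda)$. Combined with $b_{n'}/b_n\to\beta^{\rho}$ and $\kappa\to1/\sqrt{\nu}$, one obtains $b_n^{-1}|\mathscr{C}^p_{\sss(i)}|\to\tilde{\gamma}_i(\lambda)/\sqrt{\nu}$ --- which matches what the paper's own computation in \eqref{degree_one_vertices} yields, namely $b_n^{-1}|\mathscr{C}^p_{\sss(i)}|\to\sqrt{p}\,\tilde{\gamma}_i(\lambda)=\nu^{-1/2}\tilde{\gamma}_i(\lambda)$, but \emph{not} the $\nu^{1/2}\tilde{\gamma}_i(\lambda)$ appearing in the theorem statement (apparently a sign typo in the exponent). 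So you should not take the stated $\sqrt{\nu}$ as the target to ``reconcile'' with: carry the algebra through honestly and you will land on $\nu^{-1/2}$, consistent with the paper's argument.
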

 Now, consider a graph $\mathrm{CM}_n(\boldsymbol{d})$ satisfying Assumption~\ref{assumption2}~\eqref{assumption2-1}. To any edge $(ij)$ between vertices $i$ and $j$ (if any), associate an independent uniform$[0,1]$ random variable $U_{(ij)}$. Note that the graph obtained by keeping only those edges satisfying $U_{(ij)}\leq p_n(\lambda)$ is distributed as $\mathrm{CM}_n(\boldsymbol{d},p_n(\lambda))$. This construction naturally couples the graphs $(\mathrm{CM}_n(\boldsymbol{d},p_n(\lambda)))_{\lambda\in\R}$ using the same set of uniform random variables. This is known as the {\em Harris coupling}. Our next result shows that the evolution of the component sizes and the surplus edges of  $\mathrm{CM}_n(\boldsymbol{d},p_n(\lambda))$, as $\lambda$ varies, can be described by a version of the augmented multiplicative coalescent process described in Section~\ref{sec:notation}:
\begin{theorem}\label{thm:mul:conv} Suppose that {\rm Assumption~\ref{assumption2}} holds, and $\ell_n/n = \mu +o(n^{-\zeta})$ for some $\eta<\zeta<1/2$.
Fix any $k\geq 1$, $-\infty<\lambda_1<\dots<\lambda_k<\infty$. Then, there exists a version $\mathbf{AMC}=(\mathrm{AMC}(\lambda))_{\lambda\in\R}$ of the augmented multiplicative coalescent such that, as $n\to\infty$,
\begin{equation}
 \left(\mathbf{Z}_n^p(\lambda_1), \dots\mathbf{Z}_n^p(\lambda_k)\right) \dto \left(\mathbf{AMC}(\lambda_1),\dots,\mathbf{AMC}(\lambda_k)\right)
\end{equation}with respect to the $(\mathbb{U}^0_{\shortarrow})^k$ topology, where at each $\lambda$, $\mathrm{AMC}(\lambda)$ is distributed as the limiting object in~\eqref{eqn:perc:limit}.
\end{theorem}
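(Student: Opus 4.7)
The plan is to combine the marginal convergence from Theorem~\ref{thm:percolation} with the (near-)Feller property of the augmented multiplicative coalescent established in \cite[Theorem~3.1]{BBW12}, via a comparison argument that treats $(\mathbf{Z}_n^p(\lambda))_{\lambda}$ as an approximate AMC across the critical window.

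First I would identify the AMC parameters. In the $(U_{(ij)})$-coupling, edge thresholds are crossed at $\lambda$-rate $p_n'(\lambda)=c_n^{-1}/\nu_n$. Conditionally on the current percolated configuration, the expected number of CM-edges between two distinct components $\mathscr{C}_i^p$ and $\mathscr{C}_j^p$ is approximately $d_{\mathscr{C}_i^p}d_{\mathscr{C}_j^p}/\ell_n$ by standard configuration-model estimates. Since $n_1=\Theta(n)$ by Assumption~\ref{assumption1}\eqref{assumption1-4}, the total degree of a macroscopic component satisfies $d_{\mathscr{C}}\approx \mu|\mathscr{C}|$; combined with the algebraic identity $b_n^2=nc_n$ (which follows at once from \eqref{eqn:notation-const}--\eqref{eqn:notation-power*}), this yields a merging rate per unit $\lambda$ of $(\mu/\nu)X_iX_j$ in the rescaled coordinates $X_i:=b_n^{-1}|\mathscr{C}_i^p|$, and analogously a within-component surplus-creation rate of $(\mu/(2\nu))X_i^2$. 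This pins down the AMC parameters to $K_1=\mu/\nu$ and $K_2=\mu/(2\nu)$.

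Next I would let $\widehat{\mathbf{AMC}}_n(\cdot)$ denote a genuine AMC with these parameters, started from the random initial condition $\mathbf{Z}_n^p(\lambda_1)$. Theorem~\ref{thm:percolation} gives $\mathbf{Z}_n^p(\lambda_1)\dto \mathrm{AMC}(\lambda_1)$ in $\mathbb{U}^0_{\shortarrow}$, and the Feller property of AMC then delivers
\begin{equation*}
\big(\widehat{\mathbf{AMC}}_n(\lambda_1),\dots,\widehat{\mathbf{AMC}}_n(\lambda_k)\big)\dto \big(\mathrm{AMC}(\lambda_1),\dots,\mathrm{AMC}(\lambda_k)\big).
\end{equation*}
It then suffices to prove the coupling estimate $\mathrm{d}_{\mathbb{U}}\big(\mathbf{Z}_n^p(\lambda_j),\widehat{\mathbf{AMC}}_n(\lambda_j)\big)\pto 0$ for each $j$, which I would establish by driving both processes from a common family of Poisson event clocks, synchronizing them whenever their component structures agree and bounding the rate discrepancies between synchronizations.

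The main obstacle will be obtaining this coupling estimate uniformly across the finite window $[\lambda_1,\lambda_k]$. Three sources of error need to be controlled: (i) the mean-field approximation $d_\mathscr{C}\approx\mu|\mathscr{C}|$, whose error is quantified by Assumption~\ref{assumption1}\eqref{assumption1-2} together with the refined hypothesis $\ell_n/n=\mu+o(n^{-\zeta})$ (the latter is what keeps the $\nu_n$-drift negligible on the scale $c_n^{-1}$ of the window); (ii) half-edge depletion in the CM pairing, giving subleading corrections to the edge count between components; and (iii) the $\ell^2_{\shortarrow}$-dust beyond the top-$K$ components. Errors (i)--(ii) can be made $o(1)$ uniformly over $\lambda\in[\lambda_1,\lambda_k]$ by standard CM concentration; the truly delicate point is (iii), namely ruling out that a high-degree vertex absent from the top-$K$ components at $\lambda_1$ aggregates enough mass at some intermediate $\lambda$ to create a macroscopic component not reflected in $\widehat{\mathbf{AMC}}_n$. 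Controlling this requires the tail condition $\lim_{K\to\infty}\limsup_n a_n^{-3}\sum_{i>K}d_i^3=0$ to dominate the hub contributions uniformly in $\lambda$, followed by a truncation at depth $K$ and a $K\to\infty$ limit justified by the $\ell^2_{\shortarrow}$-convergence already provided by Theorem~\ref{thm:percolation}.
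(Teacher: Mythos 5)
Your high-level strategy — use Theorem~\ref{thm:percolation} for the marginals, the near-Feller property of the AMC from \cite{BBW12} for propagation, and a coupling that controls discrepancies across the window — is aligned in spirit with the paper, but it glosses over the central obstacle and is missing the construction that makes the argument work. The process $(\mathrm{CM}_n(\boldsymbol{d},p_n(\lambda)))_{\lambda}$ under the $U_{(ij)}$-coupling is \emph{not} Markov in $\lambda$ given only the percolated graph: which components merge next depends on which CM-edges remain unretained, information not recoverable from the current cluster structure. Consequently ``driving both processes from a common family of Poisson event clocks'' has no direct meaning for $\mathbf{Z}_n^p(\cdot)$, and the ``rate discrepancies between synchronizations'' you propose to bound are not rates of any conditional evolution. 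The paper gets around exactly this by introducing the dynamic half-edge construction $\mathcal{G}_n(t)$ of Algorithm~\ref{algo:dyn-cons-2}, which \emph{is} Markov, proving the sandwich coupling $\mathcal{G}_n(t_n(\lambda)-\varepsilon_n)\subset \mathrm{CM}_n(\boldsymbol{d},p_n(\lambda))\subset\mathcal{G}_n(t_n(\lambda)+\varepsilon_n)$ uniformly over the window (Proposition~\ref{prop:coupling-whp}; this is where the extra hypothesis $\ell_n/n=\mu+o(n^{-\zeta})$ is actually consumed), and then comparing $\mathcal{G}_n$ to the undepleted system of Algorithm~\ref{algo:modify-dyn-cons}, which is an exact AMC in the \emph{open half-edge} coordinate. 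Lemma~\ref{thm:open-comp} then converts open half-edges back to component sizes, and Lemmas~\ref{lem:bad-estimation}--\ref{lem:large-comp-contain} control the ``bad edges'' introduced by the modification. Your proposal has no analogue of this Markov surrogate, and without one the coupling estimate $\mathrm{d}_{\mathbb{U}}(\mathbf{Z}_n^p(\lambda_j),\widehat{\mathbf{AMC}}_n(\lambda_j))\pto 0$ is not something you can prove by rate comparison.

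A secondary but real error: your rate identification relies on $d_{\mathscr{C}}\approx \mu|\mathscr{C}|$ for macroscopic components, deduced from $n_1=\Theta(n)$. This is false. Vertices enter large critical components in a size-biased manner, so by the weight-convergence argument (Remark~\ref{rem:gen-functional1} and Lemma~\ref{lem:weight-approx-size} with $w_i=d_i$) the total degree of a large component scales as $(\E[D^2]/\E[D])\,|\mathscr{C}|=(\nu+1)|\mathscr{C}|$, not $\mu|\mathscr{C}|$. Having $n_1=\Theta(n)$ does not make the within-component degree distribution match the ambient one. Correspondingly, the ``mass'' the paper tracks is the number of open half-edges $\mathcal{O}_i$, which Lemma~\ref{thm:open-comp} shows is $\kappa|\mathscr{C}_{(i)}|+o_{\PR}(b_n)$ for a constant $\kappa$ determined by the percolation construction, and the AMC constants in \eqref{rate:function}--\eqref{rate:function-spls} involve $\nu/(\mu(\nu-1)^2)$ and $\kappa^2$, not $\mu/\nu$. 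Since Theorem~\ref{thm:mul:conv} only asserts existence of a version of the AMC with the marginal law from Theorem~\ref{thm:percolation}, the exact constants are not by themselves fatal, but the miscalculation signals that the degree bookkeeping (and hence the precise form of the comparison you would need) is not under control.
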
  
\begin{remark}
\normalfont
 Theorem~\ref{thm:mul:conv} also holds when $\E[D_n^3]\to \E[D^3]<\infty$ with $\alpha=\eta=1/3$, $\rho =2/3$ and $L(n)=1$. This improves \cite[Theorem 3.7]{DHLS15}, which was proved only for the cluster sizes.
\end{remark}
\begin{remark}\normalfont Theorem~\ref{thm:mul:conv}, in fact, shows that there exists a version of the AMC process whose distribution at each fixed $\lambda$ can be described by the excursions of a thinned L\'evy process and an associated Poisson process. 
This did not appear in \cite{BBW12,BM14}, since the scaling limits in those settings were described in terms of the excursions of a Brownian motion with negative parabolic drift.   
\end{remark}
\begin{remark}\label{rem:additional-assumption}\normalfont The additional assumption in Theorem~\ref{thm:mul:conv} about the asymtotics  $\ell_n/n$ is required only in one place for Proposition~\ref{prop:coupling-whp} and the rest of the proof works under Assumption~\ref{assumption2} only. That is why we have separated this assumption from the set of conditions in Assumption~\ref{assumption2}. 
It is worthwhile mentioning that the condition is minor, e.g., we will see that this condition is satisfied under the two widely studied set ups in Section~\ref{sec:example}. 
\end{remark}

\begin{remark}\normalfont 
As we will see in Section~\ref{sec:conv-amc}, the proof of Theorem~\ref{thm:mul:conv} can be extended to more general functionals of the components. For example, the evolution of the number of degree $k$ vertices along with the surplus edges can also be described by an AMC process. The key idea here is that these component functionals become approximately proportional to the component sizes in the critical window  and thus the scaling limit for the component functionals becomes a constant multiple of the scaling limit for the component sizes.
\end{remark}

\section{Important examples}
\subsection{Power-law degrees with small perturbation}\label{sec:example}
As discussed in the introduction, our main goal is to obtain results for the critical configuration model with $\prob{D_n\geq k}\sim L_0(k)k^{-(\tau-1)}$ for some $\tau\in (3,4)$. 
Here, we consider such an example and show that the conditions of Assumption~\ref{assumption1} are satisfied. Thus, the results in Section~\ref{sec:results} hold for \CM\   in the following set-up that is closely related to the model studied in~\cite{BHL12} for rank-1 inhomogeneous random graphs.
\par Fix $\tau\in (3,4)$. Suppose that $F$ is the distribution function of a discrete non-negative random variable $D$ such that
 \begin{equation} \label{defn::F}
 G(x)=1-F(x)= \frac{C_FL_0(x)}{x^{\tau-1}},
 \end{equation}
where $L_0(\cdot)$ is a slowly-varying function so that the tail of the distribution is decaying  like a regularly-varying function. Recall that the inverse of a locally bounded non-increasing function $f:\R\to\R$ is defined as $f^{-1}(x):=\inf\{y:f(y)\leq x\}$. Therefore, using \cite[Theorem 1.5.12]{BGT89}, \begin{equation}\label{eqn:inverse}
 G^{-1}(x)=\frac{C_F^{1/(\tau-1)}L(1/x)}{x^{1/(\tau-1)}}(1+o(1))\quad \text{as } x\to 0,
\end{equation}where $L(\cdot)$ is another slowly-varying function. Note that \cite[Theorem 1.5.12]{BGT89} is stated for positive exponents only. Since our exponent is negative, the asymptotics in \eqref{eqn:inverse} holds for $x\to 0$. 
Suppose that the random variable $D$ is such that
\begin{equation}\label{defn:critical}
 \nu=\frac{\expt{D(D-1)}}{\expt{D}}=1.
\end{equation}Define the degree sequence $\boldsymbol{d}_{\lambda}$ by taking the degree of the $i^{th}$ vertex to be
 \begin{equation}\label{defn::degree:powerlaw}
  d_i=d_i(\lambda):=G^{-1}(i/n)+\delta_{i,n}(\lambda),
  \end{equation}where the $\delta_{i,n}(\lambda)$'s are non-negative integers satisfying the asymptotic equivalence 
  \begin{equation}\label{defn:delta:i} \delta_{i,n}(\lambda) \sim \lambda G^{-1}(i/n) c_n^{-1},\quad \text{as } n\to\infty.
  \end{equation}The $\delta_{i,n}(\lambda)$'s are chosen in such a way that Assumption~\ref{assumption1}~\eqref{assumption1-4} is satisfied.
Fix any $K\geq 1$. 
Notice that  \eqref{eqn:inverse} and \eqref{defn:delta:i} imply that, for all large enough $n$ (independently of $K$), the first $K$ largest degrees $(d_i)_{i\in [K]}$ satisfy 
\begin{equation}\label{defn:high-degree}
 d_i= \left(\frac{n^{\alpha}C_F^{\alpha}L(n/i)}{i^{\alpha}}\right) \left( 1+\lambda c_n^{-1}+o(c_n^{-1})\right).
\end{equation}
Therefore, $\boldsymbol{d}_{\lambda}$ satisfies Assumption~\ref{assumption1}~\eqref{assumption1-1} with $\theta_i=(C_F/i)^{\alpha}$. 
The next two lemmas verify Assumption~\ref{assumption1}~\eqref{assumption1-2},~\eqref{assumption1-3}:

\begin{lemma} \label{lem::order_moments}The degree sequence $\boldsymbol{d}_{\lambda}$ defined in \eqref{defn::degree:powerlaw} satisfies {\rm Assumption~\ref{assumption1}~\eqref{assumption1-2}}.
\end{lemma}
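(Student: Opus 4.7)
The plan is to reduce everything to Riemann-sum approximations of moments of the limiting variable $D$, and then absorb the perturbation $\delta_{i,n}(\lambda)$ into a negligible error. Write $\tilde d_i:=G^{-1}(i/n)$ so that $d_i=\tilde d_i+\delta_{i,n}(\lambda)$ with $\delta_{i,n}(\lambda)=O(c_n^{-1}\tilde d_i)$ uniformly in $i$ (this uniform bound follows from \eqref{defn:delta:i}, inspecting the definition, and monotonicity of $G^{-1}$, together with $c_n^{-1}\to 0$). Since the $d_i$ differ from the $\tilde d_i$ by a factor $1+o(1)$ \emph{uniformly} in $i$, it suffices to prove each of the four claims with $\tilde d_i$ in place of $d_i$.

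\textbf{Convergence in distribution of $D_n$.} Let $U_n$ be uniform on $\{1/n,2/n,\dots,1\}$ and $U$ be uniform on $(0,1]$. Then $D_n\stackrel{d}{=}G^{-1}(U_n)$ and $G^{-1}(U)\stackrel{d}{=}D$ by the standard quantile construction. Since $G^{-1}$ is left-continuous and hence continuous almost everywhere on $(0,1]$, the continuous mapping theorem gives $D_n\dto D$.

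\textbf{Convergence of first two moments.} Writing $\tfrac{1}{n}\sum_{i\in[n]}\tilde d_i^{\,p}=\E[G^{-1}(U_n)^p]$ for $p=1,2$, we need uniform integrability of $\{G^{-1}(U_n)^p\}$. By \eqref{eqn:inverse}, $G^{-1}(x)^p = C_F^{\alpha p}L(1/x)^p x^{-\alpha p}(1+o(1))$ as $x\downarrow 0$, and for $p=1,2$ we have $\alpha p=p/(\tau-1)<1$ (since $\tau>3$ forces $\alpha<1/2$), so by Potter bounds the family $\{G^{-1}(U_n)^p\}_n$ is dominated by an integrable function. Hence $\tfrac{1}{n}\sum\tilde d_i^{\,p}\to \E[D^p]$, and these equal $\mu$ and $\E[D^2]$ respectively.

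\textbf{Truncated third-moment tail.} Using \eqref{defn:high-degree} (or more precisely \eqref{eqn:inverse} applied to $x=i/n$), for each fixed $K$ and all large $n$
\begin{equation*}
a_n^{-3}\sum_{i=K+1}^{n}\tilde d_i^{\,3}
\;\le\; C\sum_{i=K+1}^{n}\frac{1}{i^{3\alpha}}\Bigl(\frac{L(n/i)}{L(n)}\Bigr)^{3}.
\end{equation*}
Since $\tau\in(3,4)$, we have $3\alpha=3/(\tau-1)\in(1,3/2)$, so the exponent $3\alpha$ exceeds $1$. By Potter's bounds, for any $\varepsilon>0$ with $3\alpha-\varepsilon>1$, the ratio $L(n/i)/L(n)$ is bounded by $C_\varepsilon \max\{(i/n)^{-\varepsilon},(i/n)^{\varepsilon}\}\le C_\varepsilon\bigl((n/i)^{\varepsilon}+1\bigr)$ uniformly for $1\le i\le n$. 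Plugging this in and estimating the sum,
\begin{equation*}
\limsup_{n\to\infty}a_n^{-3}\sum_{i=K+1}^{n}\tilde d_i^{\,3}\;\le\; C'\sum_{i=K+1}^{\infty}\frac{1}{i^{3\alpha-3\varepsilon}},
\end{equation*}
which tends to $0$ as $K\to\infty$. This establishes the last condition in Assumption~\ref{assumption1}~\eqref{assumption1-2}.

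\textbf{Putting back the perturbation.} Finally we transfer each of the above from $\tilde d_i$ to $d_i$. For the moment sums, expand $d_i^p=\tilde d_i^{\,p}(1+O(c_n^{-1}))^p$ uniformly in $i$, so $\tfrac{1}{n}\sum d_i^p=\tfrac{1}{n}\sum \tilde d_i^{\,p}+O(c_n^{-1})\cdot \tfrac{1}{n}\sum \tilde d_i^{\,p}$, and the error vanishes. For $D_n$, the uniform relative perturbation plus the integer-rounding encoded in the $\delta_{i,n}$ does not change the limiting distribution since we can apply Slutsky. For the third-moment tail, the uniform factor $(1+o(1))^3$ only changes the estimates by a constant close to $1$. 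This establishes all parts of Assumption~\ref{assumption1}~\eqref{assumption1-2}.

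The main obstacle is the third-moment tail bound: one needs Potter's bounds applied simultaneously across the entire range $K<i\le n$, and it is essential that $\tau<4$ so that $3\alpha>1$ makes the corresponding series convergent; this is precisely the place where the heavy-tail exponent enters critically.
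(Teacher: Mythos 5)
Your overall plan mirrors the paper's proof (reduce to $\tilde d_i=G^{-1}(i/n)$, handle the first and second moments by Riemann-sum/integral comparison, and use Potter's bounds for the truncated third-moment condition), and the observation that $\tau<4\Rightarrow 3\alpha>1$ is exactly the right way to see why the tail sum converges. However, the third-moment step as written has a genuine error in the application of Potter's bounds, and an overclaim of uniformity that the paper handles by splitting the sum.

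First, the direction of the Potter bound is wrong. Potter's theorem compares $L(y)/L(x)$ via the \emph{ratio} $x/y$ (or $y/x$). With $y=n/i$ and $x=n$, we have $x/y=i$, so the bound reads $L(n/i)/L(n)\le A\,i^{\delta}$, \emph{not} $C_{\varepsilon}(n/i)^{\varepsilon}$. Your version would produce a factor $n^{3\varepsilon}$ in front of $\sum_{i>K}i^{-3\alpha-3\varepsilon}$, which diverges as $n\to\infty$; the displayed conclusion $C'\sum_{i>K}i^{-(3\alpha-3\varepsilon)}$ is what the \emph{correct} Potter inequality yields, so the calculation as written does not actually lead to the bound you state. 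Second, Potter bounds only hold when both arguments exceed a threshold $X$, so the claim ``uniformly for $1\le i\le n$'' is not justified: for $i>n/X$ one has $n/i<X$ and $L(n/i)$ is not controlled by Potter. The paper resolves this by splitting $\sum_{i>K}=\sum_{K<i\le nC^{-1}}+\sum_{i>nC^{-1}}$; on the first range it uses Potter ($L(n/i)/L(n)<Ai^{\delta}$ with $3\delta<3\alpha-1$), and on the second it uses that $L$ is bounded on $[1,C]$ (\cite[Lemma 1.3.2]{BGT89}) together with $\sum_{i>nC^{-1}}i^{-3\alpha}=O(n^{1-3\alpha})\to 0$. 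Your perturbation and first/second-moment arguments are fine in spirit (the paper phrases the moment convergence via explicit integral sandwiches rather than uniform integrability, but these are equivalent here); the third-moment paragraph is the one that needs to be repaired.
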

\begin{proof}
 Note that, by \eqref{defn:high-degree}, $d_1^2=o(n)$.
 Also, since $G^{-1}$ is non-increasing 
 \begin{equation}
  \int_{0}^1G^{-1}(x)\dif x-\frac{d_1}{n} \leq \frac{1}{n}\sum_{i\in [n]} G^{-1}(i/n)\leq \int_{0}^1G^{-1}(x)\dif x.
 \end{equation}
  Therefore,
 \begin{equation}\label{tot-deg-error-1}
  \frac{1}{n}\sum_{i\in [n]} d_i=\frac{1}{n}\sum_{i\in [n]}G^{-1}(i/n)(1+O(c_n^{-1}))= \int_{0}^1G^{-1}(x)\dif x +O(d_1/n)+O(c_n^{-1})= \expt{D}+O(b_n^{-1}).
 \end{equation} 
 Similarly, $\sum_{i\in [n]}d_i^2=n\E[D^2]+O(d_1^2)=n\E[D^2]+o(n)$. 
 To prove the condition involving the third-moment, we use Potter's theorem \cite[Theorem 1.5.6]{BGT89}. 
 First note that $3\alpha-1 = (4-\tau)/(\tau-1)>0$ since $\tau\in (3,4)$. 
 Fix $0<\delta<\alpha -1/3$ and $A>1$ and choose $C=C(\delta,A)$ such that for all $i\leq nC^{-1}$, $L(n/i)/L(n)< Ai^{\delta}$. Therefore, \eqref{eqn:inverse} implies 
 \begin{equation}\label{eq:3rd-moment-example}
 a_n^{-3} \sum_{i>K}d_i^3\leq A \sum_{i>K}i^{-3\alpha+3\delta}+ \frac{\sup_{1\leq x\leq C}L(x)^3}{L(n)^3} \sum_{i>nC^{-1}}i^{-3\alpha}.
 \end{equation} 
 From our choice of $\delta$, $-3\alpha+3\delta <-1$ and therefore $\sum_{i\geq 1}i^{-3\alpha+3\delta}<\infty$. By \cite[Lemma 1.3.2]{BGT89},  $\sup_{1\leq x\leq C}L(x)^3<\infty$. Moreover, $\sum_{i>nC^{-1}}i^{-3\alpha}=O(n^{1-3\alpha})$ and $1-3\alpha<0$. Thus,  the proof follows by first taking $n\to \infty$ and then $K\to\infty$. 
\end{proof}
\begin{lemma}\label{lem::nu-n} The degree sequence $\boldsymbol{d}_{\lambda}$ defined in \eqref{defn::degree:powerlaw} satisfies  {\rm Assumption~\ref{assumption1}~\eqref{assumption1-3}}, i.e., there exists $\lambda_0\in \R$ such that
 \begin{equation} \label{eqn::lem::nu_n}
  \nu_n(\lambda)= 1+(\lambda+\lambda_0) c_n^{-1}+o(c_n^{-1}).
 \end{equation}
\end{lemma}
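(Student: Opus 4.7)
The plan is to expand $\nu_n(\lambda)=\sum_i d_i(d_i-1)/\sum_i d_i$ to precision $o(c_n^{-1})$ by combining the decomposition $d_i=\tilde d_i+\delta_{i,n}(\lambda)$, where $\tilde d_i:=G^{-1}(i/n)$, with Karamata-type sharp asymptotics for the bare sums $\tilde S_p:=\sum_{i\in[n]}\tilde d_i^p$, and then using the criticality identity $\E[D^2]=2\mu$ (equivalent to \eqref{defn:critical}) to cancel the zeroth-order contributions.

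First I would expand $\sum_i d_i=\tilde S_1+\sum_i\delta_{i,n}$ and $\sum_i d_i^2=\tilde S_2+2\sum_i\tilde d_i\delta_{i,n}+\sum_i\delta_{i,n}^2$. Aggregating the pointwise asymptotic \eqref{defn:delta:i} yields $\sum_i\delta_{i,n}=\lambda c_n^{-1}\tilde S_1(1+o(1))$ and $\sum_i\tilde d_i\delta_{i,n}=\lambda c_n^{-1}\tilde S_2(1+o(1))$, while $\sum_i\delta_{i,n}^2=O(c_n^{-2}\tilde S_2)=o(nc_n^{-1})$ is negligible.

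Next I would estimate the two bare sums with different sharpness. For $\tilde S_1$, the Riemann-sum bound already appearing in the proof of Lemma~\ref{lem::order_moments} gives $\tilde S_1/n=\mu+O(d_1/n)=\mu+O(n^{\alpha-1}L(n))$; since $\alpha-1+\eta=-1/(\tau-1)<0$, this error is $o(c_n^{-1})$ and does not enter the critical-window expansion. For $\tilde S_2$ the Riemann-sum error is itself of order $c_n^{-1}$ and its leading coefficient must be extracted. Since $G^{-1}(x)^2$ is regularly varying of index $-2\alpha\in(-1,0)$ near $0$, Karamata's theorem (cf.\ \cite[Theorem 1.5.11]{BGT89}) gives $\int_0^{1/n}G^{-1}(x)^2\,\mathrm{d}x\sim (C_F^{2\alpha}/(1-2\alpha))\,c_n^{-1}$; combining this with a trapezoidal/telescoping bound on the bulk Riemann error over $(1/n,1)$, whose leading $c_n^{-1}$ coefficient can be read off from $(1/n)G^{-1}(1/n)^2\sim C_F^{2\alpha}c_n^{-1}$ via the regular-variation shape of $G^{-1}$, yields $\tilde S_2/n=\E[D^2]+\kappa\, c_n^{-1}+o(c_n^{-1})$ for an explicit constant $\kappa$.

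Substituting into $\nu_n(\lambda)-1=(S_2-2S_1)/S_1$ and cancelling the constant part via $\E[D^2]=2\mu$ produces $\nu_n(\lambda)-1=(a\lambda+\kappa/\mu)\,c_n^{-1}+o(c_n^{-1})$ for an explicit positive constant $a$ coming from the $\delta_{i,n}$-perturbation. Setting $\lambda_0:=\kappa/(a\mu)$ and absorbing $a$ by an affine relabeling of the critical-window parameter produces the claimed form $1+(\lambda+\lambda_0)c_n^{-1}+o(c_n^{-1})$.

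The hard part is the sharp Karamata analysis of $\tilde S_2$: the $O(c_n^{-1})$ bound is immediate from monotonicity of $G^{-1}$, but extracting the leading coefficient to precision $o(c_n^{-1})$ requires both the boundary Karamata estimate for $\int_0^{1/n}G^{-1}(x)^2\,\mathrm{d}x$ and a careful control of the bulk Riemann error on $(1/n,1)$, which itself carries a non-trivial $c_n^{-1}$ piece. A secondary subtlety is upgrading the pointwise equivalence \eqref{defn:delta:i} to an aggregate asymptotic; this can be enforced by taking, e.g., $\delta_{i,n}:=\lfloor\lambda\tilde d_i c_n^{-1}\rfloor$ plus a bounded correction on the degree-one vertices in order to satisfy {\rm Assumption~\ref{assumption1}}~\eqref{assumption1-4}.
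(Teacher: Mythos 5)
Your proposal is correct and follows essentially the same route as the paper: both reduce the problem to the $c_n^{-1}$-order Riemann-sum error of $\frac{1}{n}\sum_{i}G^{-1}(i/n)^2$ against $\int_0^1G^{-1}(u)^2\,\dif u$ concentrated at the high-degree end (the paper extracts the constant by truncating at the first $K$ indices and letting $n\to\infty$ then $K\to\infty$, which is the rigorous form of your Karamata-plus-telescoping step and yields $\lambda_0$ as the convergent series $-\frac{C_F^{2\alpha}}{\E[D]}\sum_{i\ge1}\big(\int_{i-1}^{i}u^{-2\alpha}\dif u-i^{-2\alpha}\big)$), and both treat the $\delta_{i,n}$ perturbation as an overall factor $1+\lambda c_n^{-1}$ on the degrees before cancelling the constant term via criticality. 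Your explicit tracking of the coefficient multiplying $\lambda$ and of the pointwise-to-aggregate upgrade of \eqref{defn:delta:i} is, if anything, more careful than the paper's one-line reduction $\nu_n(\lambda)=\nu_n(1+\lambda c_n^{-1})+o(c_n^{-1})$.
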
 
\begin{proof}
 Firstly, Lemma \ref{lem::order_moments} guarantees  the convergence of the second moment of the degree sequence. However, \eqref{eqn::lem::nu_n} is more about obtaining sharper asymptotics for $\nu_n(\lambda)$. We use similar arguments as in  \cite[Lemma 2.2]{BHL12}. Denote  $\nu_n:=\nu_n(0)$. Note that $\nu_n(\lambda)=\nu_n(1+\lambda c_n^{-1})+o(c_n^{-1})$, so it is enough to verify that \begin{equation}
\nu_n=1+\lambda_0c_n^{-1}+o(c_n^{-1}).
\end{equation}
Consider $d_i(0)$ as given in \eqref{defn::degree:powerlaw} with $\lambda=0$. Lemma~\ref{lem::order_moments} implies
\begin{equation}
 \nu_n = \frac{\sum_{i\in [n]} d_i(0)^2}{n\expt{D}}-1+o(c_n^{-1}).
\end{equation}
 Fix any $K\geq 1$.  We have
 \begin{equation}
  \int_{K/n}^{1} G^{-1}(u)^2 \dif u - \frac{d_K^2}{n}\leq \frac{1}{n}\sum_{i=K+1}^n d_i^2 \leq \int_{K/n}^{1} G^{-1}(u)^2 \dif u.
 \end{equation}Now by \eqref{defn::degree:powerlaw}, $d_K^2/n= \Theta(K^{-2\alpha}L(n/K)^2n^{-\eta})$. Therefore, 
 \begin{equation} \label{expr::nu_difference}
  \nu-\nu_n = \frac{1}{\expt{D}}\left( \sum_{i=1}^{K} \int_{(i-1)/n}^{i/n}G^{-1}(u)^2 \dif u- \frac{1}{n}\sum_{i=1}^K d_i^2 \right)+O(K^{-2\alpha}L(n/K)^2n^{-\eta}).
 \end{equation} Again, using \eqref{defn::degree:powerlaw}, 
 \begin{equation}\label{expr::first_sum}
  \frac{1}{n}\sum_{i=1}^K d_i^2  
  = n^{-\eta} \sum_{i=1}^K \left( \frac{C_F}{i}\right)^{2\alpha}L(n/i)^2+o(c_n^{-1})=c_n^{-1}\sum_{i=1}^K \left( \frac{C_F}{i}\right)^{2\alpha}+\varepsilon(c_n,K),
 \end{equation} where the last equality follows using the fact that $L(\cdot)$ is a slowly-varying function. Note that the error term $\varepsilon(c_n,K)$ in \eqref{expr::first_sum} satisfies $\lim_{n\to\infty} c_n\varepsilon(c_n,K)=0$ for each fixed $K\geq 1$. Again,
 \begin{equation} \label{expr::second_sum}
 \begin{split}
   \sum_{i=1}^K \int_{(i-1)/n}^{i/n} G^{-1}(u)^2 \dif u &= n^{-\eta} \sum_{i=1}^K \int_{i-1}^{i}\left( \frac{C_F}{u}\right)^{2\alpha}L(n/u)^2\dif u+o(c_n^{-1})\\
   &=c_n^{-1}\sum_{i=1}^K \int_{i-1}^{i}\left( \frac{C_F}{u}\right)^{2\alpha}\dif u +\varepsilon'(c_n,K),
   \end{split}
 \end{equation}where $\lim_{n\to\infty} c_n\varepsilon'(c_n,K)=0$ for each fixed $K\geq 1$.
 Thus combining \eqref{expr::nu_difference}, \eqref{expr::first_sum}, and  \eqref{expr::second_sum} and first letting $n\to \infty$ and then $K\to\infty$, we get
 \begin{equation}
  \lim_{n\to\infty} c_n(\nu_n-\nu)=\lambda_0,
 \end{equation}where 
 \begin{equation}\lambda_0 =-\frac{C_F^{2\alpha}}{\expt{D}}\sum_{i=1}^{\infty} \left(  \int_{i-1}^{i}u^{-2\alpha}\dif u-i^{-2\alpha} \right).
 \end{equation}
 Using Euler-Maclaurin summation \cite[Page 333]{H49} it can be seen that  $\lambda_0$ is finite which completes the proof. 
\end{proof}

\begin{remark}
 \normalfont
 Note that if we add approximately $cnc_n^{-1}$ ($c>0$ is a constant) ones in the degree sequence given in \eqref{defn::degree:powerlaw}, then  we end up with another configuration model for which $\lim_{n\to\infty}c_n(\nu_n-\nu)= \zeta'$ with $\zeta> \zeta'$. Similarly, deleting $cnc_n^{-1}$ ones from the degree sequence increases the new $\zeta$ value. This gives an obvious way to perturb the degree sequence in such a way that the configuration model is in different locations within the critical scaling window. In our proofs, we will only use the precise asymptotics of the \emph{high}-degree vertices. Thus, a small (suitable) perturbation in the degrees of the \emph{low}-degree vertices does not change the scaling behavior fundamentally, except for a change in the location inside the scaling window.
\end{remark}  

\begin{remark} \normalfont If $\nu$ in \eqref{defn:critical} is larger than one, then the degree sequence satisfies Assumption~\ref{assumption2}. Therefore, the results for critical percolation also hold in this setting. 
\eqref{tot-deg-error-1} implies that the additional assumption in Theorem~\ref{thm:mul:conv} is also satisfied.

\end{remark}
\subsection{Random degrees sampled from a power-law distribution}\label{sec:iid-degrees}
  We now consider the set-up discussed in \cite{Jo10}. Let $\Delta_1,\dots, \Delta_n$ be i.i.d.~samples from a distribution $F$, where $F$ is defined in \eqref{defn::F}. Therefore, the asymptotic relation in \eqref{eqn:inverse} holds.
   Consider the random degree sequence $\boldsymbol{d}$ where $d_i=\Delta_{\sss(i)}$, $\Delta_{\sss(i)}$ being the $i^{th}$ order statistic of $(\Delta_1,\dots, \Delta_n)$. We show that $\boldsymbol{d}$ satisfies Assumption~\ref{assumption1} almost surely under a suitable coupling. We use a coupling from \cite[Section 13.6]{B68}. 
   Let $(E_1,E_2,\dots )$ be an i.i.d.~sequence of unit rate exponential random variables and let $\Gamma_i:= \sum_{j=1}^iE_j$. Let
  \begin{equation}
   \bar{d}_i=G^{-1}(\Gamma_i/\Gamma_{n+1}).
  \end{equation}   
It can be checked that $(d_1,\dots,d_n)\stackrel{\sss d}{=}(\bar{d}_1,\dots,\bar{d}_n)$ and therefore we will ignore the bar in the subsequent notation. Note that, by the strong law of large numbers, $\Gamma_{n+1}/n \asto 1$. Thus,  for each fixed $i\geq 1$, $\Gamma_{n+1}/(n\Gamma_i)\asto 1/\Gamma_i$. 
Using \eqref{eqn:inverse}, we see that $\boldsymbol{d}$ satisfies Assumption~\ref{assumption1}~\eqref{assumption1-1} almost surely under this coupling with $\theta_i=(C_F/\Gamma_i)^{\alpha}$. 
To see that $\bld{\theta} \in \ell^3_{\shortarrow} \setminus \ell^2_{\shortarrow}$ almost surely, we need to show that 
\begin{gather}\label{eq:gamma-summable}
\PR\bigg(\sum_{i=1}^{\infty}\Gamma_i^{-3\alpha}<\infty\bigg)=1,\quad 
 \PR\bigg(\sum_{i=1}^{\infty}\Gamma_i^{-2\alpha} = \infty\bigg)=1.
\end{gather} 
\eqref{eq:gamma-summable} follows easily from the observations $2\alpha<1<3\alpha$ and $\Gamma_i/i\asto 1$ as $i\to\infty$.

Next, the first two conditions of Assumption~\ref{assumption1}~\eqref{assumption1-2} are trivially satisfied by $\boldsymbol{d}$  almost surely using the strong law of large numbers.
 To see the third condition, using \eqref{eq:gamma-summable}, and $\Gamma_{n+1}/n\asto 1$, we can use arguments identical to \eqref{eq:3rd-moment-example} to show that $\lim_{K\to\infty}\limsup_{n\to\infty} a_n^{-3}\sum_{i>K}d_i^3=0$ on the event $\{\sum_{i=1}^{\infty}\Gamma_i^{-3\alpha}<\infty\}\cap \{\Gamma_{n+1}/n\to 1\}$. Thus, we have shown that the third condition of Assumption~\ref{assumption1}~\eqref{assumption1-2} holds almost surely.
\par To see Assumption~\ref{assumption1}~\eqref{assumption1-3}, an argument similar to Lemma~\ref{lem::nu-n} can be carried out to prove that 
\begin{equation}
 \lim_{n\to\infty}c_n(\nu_n-\nu)\asto \Lambda_0,
\end{equation}where 
\begin{equation}\label{defn:Lambda-0}\Lambda_0:= -\frac{C_F^{2\alpha}}{\expt{D}}\sum_{i=1}^{\infty} \left(  \int_{\Gamma_{i-1}}^{\Gamma_i}u^{-2\alpha}\dif u-\Gamma_i^{-2\alpha} \right).
\end{equation}Therefore, the results in Section~\ref{sec:results} hold conditionally on the degree sequence if we assume the degrees to be an i.i.d.~sample from a distribution of the form \eqref{defn::F}. 
For the percolation results, notice that the additional condition in Theorem~\ref{thm:mul:conv} is a direct consequence of the convergence rates of sums of i.i.d.~sequence of random variables~\cite[Corollary 3.22]{K2006}.

\begin{remark}\label{rem:jos-vs-this}\normalfont Let us recall the limiting object obtained in \cite[Theorem 8.1]{Jo10} and compare this with the limiting object $\bar{\mathbf{S}}_\infty^{\sss \Lambda_0}$, defined in \eqref{defn::limiting::process} with $\theta_i=(C_F/\Gamma_i)^{\alpha}$ and $\Lambda_0$ given by \eqref{defn:Lambda-0}. We will prove an analogue of \cite[Theorem 8.1]{Jo10} in Theorem~\ref{thm::convegence::exploration_process}. Although we use a different exploration process from \cite{Jo10}, the fact that the component sizes are \emph{huge} compared to the number of cycles in a component, one can prove Theorem~\ref{thm::convegence::exploration_process} for the exploration process in \cite{Jo10} also. 
This will indirectly imply that \citeauthor{Jo10}'s limiting object obeys the law of $\bar{\mathbf{S}}_\infty^{\sss \Lambda_0}$, averaged out over the $\Gamma$-values. 
This is counter intuitive, given the vastly different descriptions of the two processes; for example our process does not have independent increments. 
We do not have a direct way to prove the above mentioned claim. 
\end{remark}
\section{Discussion}\label{sec:discussion}
\noindent {\bf Assumptions on the degree distribution.} Let us now briefly explain the significance of Assumption~\ref{assumption1}. Unlike the finite third-moment case \cite{DHLS15}, the high-degree vertices dictate the scaling limit in Theorem~\ref{thm::conv:component:size} and therefore it is essential to fix their asymptotics through Assumption~\ref{assumption1}~\eqref{assumption1-1}. Assumption~\ref{assumption1}~\eqref{assumption1-3} defines the critical window of the phase transition and Assumption~\ref{assumption1}~\eqref{assumption1-4} is reminiscent of the fact that a configuration model with negligibly small amount of degree-one vertices is always supercritical. Assumption~\ref{assumption1}~\eqref{assumption1-2} states the finiteness of the first two moments of the degree distribution and fixes the asymptotic order of the third-moment. The order of the third-moment is crucial in our case. The derivation of the scaling limits for the components sizes is based on the analysis of a walk which encodes the information about the component sizes in terms of the excursions above its past minima \cite{A97,R12,DHLS15,BHL12,BHL10}. Now, the increment distribution turns out to be the size-biased distribution with the sizes being the degrees. Therefore, the third-moment assumption controls the variance of the increment distribution. Another viewpoint is that the components can be locally approximated by a branching process $\mathcal{X}_n$ with the variance of the same order as the third moment of the degree distribution. Thus Assumption~\ref{assumption1}~\eqref{assumption1-2} controls the order of the survival probability of $\mathcal{X}_n$, which is intimately related to the asymptotic size of the largest components. \vspace{.2cm}
\\ 
 {\bf Connecting the barely subcritical and supercritical regimes.} The barely subcritical (supercritical) regime corresponds to the case when $\nu_n(\lambda_n)=1+\lambda_nc_n^{-1}$ for some $\lambda_n\to -\infty$ ($\lambda_n\to \infty$) and $\lambda_n = o(c_n)$. \citet{J08} showed that the size of the $k^{th}$ largest cluster for a subcritical configuration model (i.e., the case $\nu_n\to\nu$ and $\nu<1$) is $d_k/(1-\nu)$ (see \cite[Remark 1.4]{J08}). In \cite{BDHS17}, we show that this is indeed the case for the entire barely subcritical regime, i.e., the size of the $k^{th}$ largest cluster is $d_k/(1-\nu_n(\lambda_n))=\Theta(b_n|\lambda_n|^{-1})$.
 In the barely supercritical case, the giant component can be \emph{locally} approximated  by a  branching process $\mathcal{X}_n$ having variance of the order $a_n^3/n$ and the size of the giant component is of the order $n\rho_n$, where $\rho_n$ is the survival probability of $\mathcal{X}_n$~\cite{HJL16}. 
The asymptotic size of the giant component turns out to be $\Theta(b_n|\lambda_n|)$.
 Therefore, the fact that the sizes of the maximal components in the critical scaling window are $\Theta(b_n)$ for $\lambda_n=\Theta(1)$ proves a continuous phase transition property for the configuration model within the whole critical regime. 
  \vspace{.2cm}\\
{\bf Percolation.} The main reason to study percolation in this paper is to understand the evolution of the component sizes and the surplus edges over the critical window in Theorem~\ref{thm:mul:conv}. 
It turns out that a precise characterization of the evolution of the percolation clusters is necessary to understand the minimal spanning tree of the giant component with i.i.d.~continuous weights on each edge~\cite{ABGM13}.
Also, since the percolated configuration model is again a configuration model \cite{F07,J09}, the natural way to study the evolution of the clusters sizes of configuration models over the critical window is through percolation. \vspace{.2cm} \\
{\bf Universality.} The limiting object in Theorem~\ref{thm::conv:component:size} is identical to that in \cite[Theorem 1.1]{BHL12} for rank-one inhomogeneous random graphs. 
Thus, $\mathrm{CM}_n(\boldsymbol{d})$ with regularly-varying tails lies in the domain of attraction of the new universality class studied in \cite{BHL12}. This is again confirming the predictions made by statistical physicists that the nature of the phase transition does not depend on the precise details of the model. Our scaling limit fits into the general class of limits predicted in \cite{AL98}. 
In the notation of \cite[(6)]{AL98}, the scaling limits $\mathrm{CM}_n(\boldsymbol{d})$, under Assumption~\ref{assumption1}, give rise to the case $\kappa=0$. 
To understand this, let us discuss some existing works. In \cite{A97,AP00,DHLS15,Jo10,BHL10}, the limiting component sizes are  described by the excursions of a Brownian motion with a parabolic drift. 
All these models have a common property: if the component sizes in the barely subcritical regime are viewed as masses then (i) these masses merge as approximate multiplicative coalescents in the critical window, and (ii) each individual mass is negligible/``dust" compared to the sum of squares of the masses in the barely subcritical regime. Indeed, (ii) has been established in \cite[(10)]{A97}, \cite[(4)]{AP00}. 
In the case of \cite{BHL12} and this paper, the barely subcritical component sizes do not become negligible due to the existence of the high-degree vertices (see \cite[Theorem 1.3]{BHL12}). As discussed in \cite[Section 1.4]{AL98}, these \emph{large} barely subcritical clusters can be thought of as nuclei, not interacting with each other and ``sweeping up the smaller clusters in such a way that the relative masses converge''.  It will be fascinating to find a class of random graphs, used to model real-life networks, that has both the nuclei and a good amount of dust in the barely subcritical regime, so that the scaling limits predicted by \cite{AL98} can be observed in complete generality.
 \vspace{.2cm}\\
{\bf Component sizes and the width of the critical window}. We have already discussed how the width of the scaling window and the order of the maximal degrees should lead the asymptotic size of the components to be of the order $b_n$. For the finite third-moment case, the size of the largest component is of the order $n^{2/3}\gg b_n$. We do not have an  intuitive explanation to explain the reduced sizes of the components except for the fact that a similar property is true for the survival probability of a slightly supercritical branching process. The width of the critical window decreases by a factor of $L(n)^{-2}$ as compared to \cite{BHL12} if the size of the high-degree vertices increases by a factor of $L(n)$ (see \eqref{eqn:notation-power*}). Indeed, an increase in the degrees of the high-degree vertices is expected to start the merging of the barely subcritical nuclei earlier, resulting in an increase in the width of the critical window. The fact that the width decreases by a factor of $L(n)^{-2}$ comes out of our calculations. \vspace{.2cm} \\
 {\bf Open problems}. \\ (i) A natural question is to study what the component sizes, viewed as metric spaces, look like.  Recently, \cite{BHS15} studied this problem for rank-1 inhomogeneous random graphs for heavy-tailed weights. In  \cite{BDHS17,BDHS18}, we show that the metric space structure of $\mathrm{CM}_n(\boldsymbol{d})$ is in the  same universality class of the rank-one inhomogeneous model, as shown in \cite{BHS15}. This is the first step in understanding the minimal spanning tree problem (see \cite{ABGM13}) for $\CM$.\\
(ii) As discussed in Section~\ref{sec:iid-degrees} (see Remark~\ref{rem:jos-vs-this}), it will be interesting to get a direct proof of the fact that the limiting object in \cite[Theorem 8.1]{Jo10} is obtained by averaging the distribution of $\mathbf{S}_{\infty}^{\sss \Lambda_0}$ over the collections $(\Gamma_i)_{i\geq 1}$. \\
 (iii) We have only shown the finite-dimensional convergence in Theorem~\ref{thm:mul:conv}. 
 It is an open question to obtain a suitable tightness criterion that would imply the process level convergence of the vector of component sizes and surplus edges over the whole critical window.
\vspace{.2cm}\\
{\bf Overview of the proofs}.  The proofs of Theorems~\ref{thm::conv:component:size} and \ref{thm:spls} consist of two important steps. First, we define an exploration algorithm on the graph that explores one edge of the graph at each step. The algorithm produces a walk, that we call  exploration process, that encodes the information about the number of edges in the explored components in terms of the hitting times to its past minima. In Section~\ref{sec:conv-expl}, the exploration process, suitably rescaled, is shown to converge. The surplus edges in the components are asymptotically negligible  compared to the component sizes; these two facts together give us the finite-dimensional scaling limit of the re-scaled component sizes. The proof of Theorem~\ref{thm::conv:component:size} follows from the asymptotics of the susceptibility function in Section~\ref{sec:conv-comp-size}.
 The joint convergence of the component sizes and surplus edges is proved by verifying a uniform tightness condition on the surplus edges in Section~\ref{sec:surplus}. Then, in Section~\ref{sec:simple-graphs}, we exploit the idea that the large components are explored before any self-loops or multiple edges are created and conclude the proof of Theorem~\ref{thm:simple-graph}. The proof of Theorem~\ref{thm:percolation} is completed by showing that the percolated configuration model is again a configuration model satisfying Assumption~\ref{assumption1}. Section~\ref{sec:conv-amc} is devoted to the proof of Theorem~\ref{thm:mul:conv} which exploits different properties of the augmented multiplicative coalescent process.
\section{Convergence of the exploration process}\label{sec:conv-expl}
  We start by describing how the connected components in the graph can be explored while generating the random graph simultaneously:
\begin{algo}[Exploring the graph]\label{algo-expl}\normalfont  Consider the configuration model $\mathrm{CM}_{n}(\boldsymbol{d})$. The algorithm carries along vertices that can be alive, active, exploring and killed, and half-edges that can be alive, active or killed. 
We sequentially explore the graph as follows:
\begin{itemize}
\item[(S0)] At stage $i=0$, all the vertices and the half-edges are \emph{alive} but none of them are \emph{active}. Also, there are no \emph{exploring} vertices. 
\item[(S1)]  At each stage $i$, if there is no active half-edge at stage $i$, choose a vertex $v$ proportional to its degree among the alive (not yet killed) vertices and declare all its half-edges to be \emph{active} and declare $v$ to be \emph{exploring}. If there is an active vertex but no exploring vertex, then declare the \emph{smallest} vertex to be exploring.
\item[(S2)] At each stage $i$, take an active half-edge $e$ of an exploring vertex $v$ and pair it uniformly to another alive half-edge $f$. Kill $e,f$. If $f$ is incident to a vertex $v'$ that has not been discovered before, then declare all the half-edges incident to $v'$ active (if any), except $f$. 
If $\mathrm{degree}(v')=1$ (i.e. the only half-edge incident to $v'$ is $f$) then kill $v'$. Otherwise, declare $v'$ to be active and larger than all other vertices that are alive. After killing $e$, if $v$ does not have another active half-edge, then kill $v$ also.

\item[(S3)] Repeat from (S1) at stage $i+1$ if not all half-edges are already killed.
\end{itemize}
\end{algo}
Algorithm~\ref{algo-expl} gives a \emph{breadth-first} exploration of the connected components of $\mathrm{CM}_n(\boldsymbol{d})$. Define the exploration process by
   \begin{equation}\label{defn:exploration:process}
    S_n(0)=0,\quad
     S_n(l)=S_n(l-1)+d_{(l)}J_l-2,
    \end{equation} where $J_l$ is the indicator that a new vertex is discovered at time $l$ and $d_{(l)}$ is the degree of the new vertex chosen at time $l$ when $J_l=1$.  Suppose $\mathscr{C}_{k}$ is the $k^{th}$ connected component explored by the above exploration process and define
$\tau_{k}=\inf \big\{ i:S_{n}(i)=-2k \big\}.$
Then  $\mathscr{C}_{k}$ is discovered between the times $\tau_{k-1}+1$ and $\tau_k$, and  $\tau_{k}-\tau_{k-1}-1$ gives the total number of edges in $\mathscr{C}_k$.
 Call a vertex \emph{discovered} if it is either active or killed. Let $\mathscr{V}_l$ denote the set of vertices discovered up to time $l$ and $\mathcal{I}_i^n(l):=\ind{i\in\mathscr{V}_l}$. Note that 
   \begin{equation}
    S_n(l)= \sum_{i\in [n]} d_i \mathcal{I}_i^n(l)-2l=\sum_{i\in [n]} d_i \left( \mathcal{I}_i^n(l)-\frac{d_i}{\ell_n}l\right)+\left( \nu_n(\lambda)-1\right)l.
   \end{equation} 
   Recall the notation in \eqref{eqn:notation-power*}. Define the re-scaled version $\bar{\mathbf{S}}_n$ of $\mathbf{S}_n$ by $\bar{S}_n(t)= a_n^{-1}S_n(\lfloor b_nt \rfloor)$. Then, by Assumption~\ref{assumption1}~\eqref{assumption1-3},
   \begin{equation} \label{eqn::scaled_process}
    \bar{S}_n(t)= a_n^{-1} \sum_{i\in [n]}d_i\left( \mathcal{I}_i^n(tb_n)-\frac{d_i}{\ell_n}tb_n \right)+\lambda t +o(1).
   \end{equation}Note the similarity between the expressions in \eqref{defn::limiting::process} and \eqref{eqn::scaled_process}. We will prove the following:
   \begin{theorem} \label{thm::convegence::exploration_process} Consider the process $\bar{\mathbf{S}}_n:= (\bar{S}_n(t))_{t\geq 0}$ defined in \eqref{eqn::scaled_process} and recall the definition of  $\bar{\mathbf{S}}_\infty^\lambda:=  (\bar{S}_\infty^\lambda(t))_{t\geq 0} $ from \eqref{defn::limiting::process}. Then, 
 \begin{equation}
  \bar{\mathbf{S}}_n \dto \bar{\mathbf{S}}_\infty^\lambda
 \end{equation} with respect to the Skorohod $J_1$-topology.
\end{theorem}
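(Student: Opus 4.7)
The plan is to truncate the representation \eqref{eqn::scaled_process} at a finite level $K$, prove $J_1$-convergence of the truncated process to its counterpart in \eqref{defn::limiting::process}, and control both the prelimit and limit tails (the former by a martingale variance estimate, the latter by orthogonality in $\ell^3$), letting $K\to\infty$ last. Fix $K\geq 1$ and $T>0$ and write $\bar{S}_n= \bar{S}_n^{\sss (K)}+\bar{R}_n^{\sss (K)}+\lambda t + o(1)$, where
\begin{equation*}
\bar{S}_n^{\sss (K)}(t):= a_n^{-1}\sum_{i=1}^{K}d_i\Big(\mathcal{I}_i^n(\lfloor tb_n\rfloor)-\tfrac{d_i}{\ell_n}tb_n\Big),
\end{equation*}
and $\bar{R}_n^{\sss (K)}$ is the analogous tail over $i>K$. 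Decompose the limit analogously as $\bar{S}_\infty^\lambda = \bar{S}_\infty^{\sss (K)}+\bar{R}_\infty^{\sss (K)}+\lambda t$; an $L^2$ bound on the independent summands in \eqref{defn::limiting::process} gives $\sup_{t\leq T}|\bar{R}_\infty^{\sss (K)}(t)|\pto 0$ as $K\to\infty$, using $\boldsymbol{\theta}\in\ell^3_\shortarrow$.

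For the top-$K$ part, let $T_i^n$ denote the discovery step of vertex $i$ in Algorithm~\ref{algo-expl}. Since in (S2) the partner half-edge $f$ is chosen uniformly among the $A_k=\ell_n-2k-1$ alive half-edges, the conditional chance that an undiscovered vertex $i$ is discovered at step $k+1$ equals $d_i/A_k=(d_i/\ell_n)(1+o(1))$ uniformly in $k\leq Tb_n=o(n)$. Using $a_nb_n=n$ and $\ell_n/n\to\mu$, this geometric hitting time rescaled by $b_n^{-1}$ converges to $\xi_i\sim \mathrm{Exp}(\theta_i/\mu)$. Joint convergence of $(T_i^n/b_n)_{i\leq K}$ to \emph{independent} exponentials follows because the probability that two hubs are discovered within $[0,Tb_n]$ at the same step is $O(d_id_j b_n/\ell_n^2)=o(1)$. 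Combined with $d_i/a_n\to\theta_i$ and $d_i^2b_n/(\ell_na_n)\to\theta_i^2/\mu$, continuous mapping gives $\bar{S}_n^{\sss (K)}\dto\bar{S}_\infty^{\sss (K)}$ in the $J_1$-topology, since both sides are linear drifts plus exactly $K$ jumps on $[0,T]$ whose locations and heights agree asymptotically.

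The tail $\bar{R}_n^{\sss (K)}$ is handled via a martingale approximation. Set $p_i^n(k):=d_i(1-\mathcal{I}_i^n(k))/A_k$, so that $M_i^n(l):=\mathcal{I}_i^n(l)-\sum_{k<l}p_i^n(k)$ is an $\mathcal{F}_l$-martingale; since at most one indicator $\mathcal{I}_i^n$ increments per step, a direct computation yields $\E[(M_i^n(k{+}1){-}M_i^n(k))(M_j^n(k{+}1){-}M_j^n(k))\given\mathcal{F}_k]=-p_i^n(k)p_j^n(k)\leq 0$ for $i\neq j$. Hence the predictable quadratic variation of $N_n^{\sss (K)}(l):=a_n^{-1}\sum_{i>K}d_iM_i^n(l)$ satisfies, at $l=\lfloor Tb_n\rfloor$,
\begin{equation*}
\langle N_n^{\sss (K)}\rangle_l\leq a_n^{-2}\sum_{k<l}\sum_{i>K}d_i^2 p_i^n(k)\leq \frac{l\,\sum_{i>K}d_i^3}{a_n^{2}\min_{k<l}A_k} = O\Big(a_n^{-3}\sum_{i>K}d_i^3\Big),
\end{equation*}
which tends to $0$ as $n\to\infty$ followed by $K\to\infty$ by Assumption~\ref{assumption1}~\eqref{assumption1-2}. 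Doob's $L^2$-inequality then gives $\sup_{t\leq T}|N_n^{\sss (K)}(t)|\pto 0$, and the residual difference $\bar{R}_n^{\sss (K)}-N_n^{\sss (K)}$ (arising from replacing $d_i(1-\mathcal{I}_i^n(k))/A_k$ by $d_i/\ell_n$ in the compensator) is bounded in expectation by $O(a_n^{-1}(b_n/\ell_n)^2(\sum_{i>K}d_i^2+\sum_{i>K}d_i^3))$, which similarly vanishes via $a_nb_n=n$ and Assumption~\ref{assumption1}~\eqref{assumption1-2}.

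Combining the three steps via the triangle inequality in the $J_1$-metric and the standard $K\to\infty$ truncation principle delivers $\bar{\mathbf{S}}_n\dto\bar{\mathbf{S}}_\infty^\lambda$. The hard part is Step~3: setting up the martingale cleanly in the face of the time-varying denominator $A_k$ and the size-biased root selection in (S1), and then pinning the predictable quadratic variation to precisely the quantity $a_n^{-3}\sum_{i>K}d_i^3$ controlled by the third-moment hypothesis. A secondary subtlety is verifying joint asymptotic independence of the hub discovery times in Step~2: although heuristically clear from the size-biased reordering picture, it requires a careful decoupling using $d_i=O(a_n)=o(n)$ for all $i\leq K$.
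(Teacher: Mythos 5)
Your proof follows essentially the same route as the paper's: truncate at level $K$, establish convergence of the top-$K$ indicator part, control the tail uniformly on $[0,T]$ by a martingale-type maximal inequality pinned to $a_n^{-3}\sum_{i>K}d_i^3$, and send $K\to\infty$ last. Your exact-compensator/Doob-$L^2$/residual-drift packaging of the tail bound is a reformulation of the paper's treatment, which works directly with the sub-martingale $M_n^K(l)=a_n^{-1}\sum_{i>K}d_i\big(\mathcal{I}_i^n(l)-d_il/\ell_n\big)$, bounds $|\E[M_n^K(l)]|$ and $\var{M_n^K(l)}$ separately, and applies the super-martingale maximal inequality \eqref{eqn:supmg:ineq}; both routes control exactly the same quantity.

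Two places in the sketch need more care. First, the asymptotic independence of the hub discovery times: ruling out ties is necessary but not sufficient, since it does not by itself show that the joint law factorizes. The paper's Lemma~\ref{lem::convergence_indicators} does this by writing $\prob{\mathcal{I}_i^n(t_ib_n)=0\;\forall i\leq K}$ as an explicit product over exploration steps and showing it converges to $\exp\big(-\mu^{-1}\sum_{i\leq K}\theta_it_i\big)$, the survival function of independent exponentials; your argument should be fleshed out along those lines (the key input is that the per-step hazard $d_i/(\ell_n-\Theta(b_n))=d_i/\ell_n(1+o(1))$ is essentially unaffected by the exploration history on the time scale $O(b_n)$). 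Second, your $A_k=\ell_n-2k-1$ omits the $\upsilon_l$-correction in \eqref{eqn:increment:indicator} — the root-selection steps of Algorithm~\ref{algo-expl} do not kill half-edges — so $M_i^n$ as you wrote it is only approximately a martingale; using the exact conditional probability from \eqref{eqn:increment:indicator} as the compensator repairs this without changing any of the bounds.
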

 The proof of Theorem~\ref{thm::convegence::exploration_process} is completed by showing that the summation term in  \eqref{eqn::scaled_process} is predominantly carried by the first \emph{few} terms and the limit of the first few terms gives rise to the limiting process given in \eqref{defn::limiting::process}. 
 Fix $K\geq 1$ to be large. 
 Denote by $\mathscr{F}_l$ the sigma-field containing the information generated up to time $l$ by Algorithm~\ref{algo-expl}. 
 Also, let $\Upsilon_l$ denote the set of time points  up to time $l$ when a component was discovered and $\upsilon_l=|\Upsilon_l|$. 
 Note that we have lost $2(l-\upsilon_l)$ half-edges by time $l$. 
 Thus, on the set $\{\mathcal{I}_i^n(l)=0\}$,
  \begin{equation}\label{eqn:increment:indicator}
  \begin{split}
   \prob{\mathcal{I}_i^n(l+1)=1 \mid \mathscr{F}_l}=
   \begin{cases} \frac{d_i}{\ell_n-2(l-\upsilon_l)-1} & \text{ if } l\notin \Upsilon_l,\\
    \frac{d_i}{\ell_n-2(l-\upsilon_l)} & \text{ otherwise. } 
   \end{cases}
   \end{split}
  \end{equation}
Let $\ell_n(T) = \ell_n -2Tb_n+1$. Then,  uniformly over $l\leq Tb_n$, 
  \begin{equation}\label{eq:prob-ind}
  \prob{\mathcal{I}_i^n(l+1)=1\mid \mathscr{F}_l} \leq \frac{d_i}{\ell_n(T)}\quad\text{ on the set } \{\mathcal{I}_i^n(l)=0\}.
  \end{equation} 
Also, $\mathcal{I}_i^n(l+1) -\mathcal{I}_i^n(l) =0$ on the set  $\{\mathcal{I}_i^n(l)=1\}$.
 Denote $
  M_n^K(l)=a_n^{-1} \sum_{i\in [n]}d_i\big( \mathcal{I}_i^n(l)-\frac{d_i}{\ell_n(T)}l \big).$ Then,
 \begin{equation}
 \begin{split}
  &\E\big[M_n^K(l+1)-M_n^K(l) \mid \mathscr{F}_l\big]=\E\bigg[\sum_{i=K+1}^na_n^{-1}d_i \left(\mathcal{I}^n_i(l+1)-\mathcal{I}_i^n(l)-\frac{d_i}{\ell_n(T)}\right)\mid \mathscr{F}_l\bigg]\\
  &= \sum_{i=K+1}^n a_n^{-1}d_i \left(\E\big[\mathcal{I}^n_i(l+1)\mid \mathscr{F}_l\big]\ind{\mathcal{I}_i^n(l)=0} - \frac{d_i}{\ell_n(T)} \right)\leq 0.
  \end{split}
 \end{equation} Thus $(M_n^K(l))_{l= 1}^{Tb_n}$ is a supermartingale.  Further, \eqref{eqn:increment:indicator} implies that, uniformly for all $l\leq Tb_n$,
  \begin{equation}\label{prob-ind-lb}
   \prob{\mathcal{I}_i^n(l)=0} \leq \left(1-\frac{d_i}{\ell_n} \right)^l.
  \end{equation} 
  Thus, Assumption~\ref{assumption1}~\eqref{assumption1-2} gives
  \begin{equation} \begin{split}
    \big| \E[M_n^K(l)]\big|&= a_n^{-1} \sum_{i=K+1}^n d_i\left( \frac{d_i}{\ell_n(T)}l  -\prob{\mathcal{I}_i^n(l)=1}\right)
    \\& \leq a_n^{-1} \sum_{i=K+1}^n d_i\bigg| 1-\left(1-\frac{d_i}{\ell_n} \right)^l-\frac{d_i}{\ell_n}l \bigg|+a_n^{-1}l\sum_{i\in [n]}d_i^2\left(\frac{1}{\ell_n(T)}-\frac{1}{\ell_n}\right)\\
    &\leq \frac{l^2}{2\ell_n^2 a_n } \sum_{i=K+1}^n d_i^3+o(1) 
    \\&\leq \frac{T^2n^{2\rho}n^{3\alpha}L(n)^3}{2\ell_n^2L(n)^2n^{\alpha}L(n)}\left( a_n^{-3}\sum_{i=K+1}^{n}d_i^3\right)+o(1)=C\bigg(a_n^{-3}\sum_{i=K+1}^{n}d_i^3\bigg)+o(1),
  \end{split}
  \end{equation} for some constant $C>0$, where we have used the fact that 
  \begin{equation}
  a_n^{-1}l\sum_{i\in [n]}d_i^2\Big(\frac{1}{\ell_n(T)}-\frac{1}{\ell_n}\Big)=O(n^{2\rho+1-\alpha-2}/L(n)^3)=O(n^{(\tau-4)/(\tau-1)}/L(n)^3)=o(1),
  \end{equation} uniformly for $l\leq Tb_n$. Therefore, uniformly over $l\leq Tb_n$,
  \begin{equation}\label{expectation::M_n^K}
   \lim_{K\to\infty}\limsup_{n\to\infty}\big| \E[M_n^K(l)]\big|=0.
  \end{equation} Now, note that for any $(x_1,x_2,\dots)$, $0\leq a+b \leq x_i$ and $a,b>0$ one has $\prod_{i=1}^R(1-a/x_i)(1-b/x_i)\geq \prod_{i=1}^R (1-(a+b)/x_i)$. Thus, by \eqref{eqn:increment:indicator}, for all $l\geq 1$ and $i\neq j$, 
  \begin{equation}\label{neg:correlation}
  \prob{\mathcal{I}_i^n(l)=0, \mathcal{I}_j^n(l)=0}\leq \prob{\mathcal{I}_i^n(l)=0}\prob{\mathcal{I}_j^n(l)=0}
  \end{equation} and therefore $\mathcal{I}_i^n(l)$ and $\mathcal{I}^n_j(l)$ are negatively correlated. Observe also that, uniformly over $l\leq Tb_n$, 
  \begin{equation}\label{var-ind-ub}
   \var{\mathcal{I}_i^n(l)}\leq  \prob{\mathcal{I}_i^n(l)=1} \leq \sum_{l_1=1}^l\prob{\text{vertex  }i \text{ is first discovered at stage }l_1 }\leq \frac{ld_i }{\ell_n(T)}.
  \end{equation}  
  Therefore, using the negative correlation in \eqref{neg:correlation}, uniformly over $l\leq Tb_n$, 
  \begin{equation} \label{variance::M_n^k}
   \begin{split}
    \var{M_n^K(l)}&\leq a_n^{-2}\sum_{i=K+1}^n d_i^2 \var{\mathcal{I}_i^n(l)} \leq \frac{l}{\ell_n(T)a_n^2}\sum_{i=K+1}^n d_i^3 \leq Ca_n^{-3}\sum_{i=K+1}^{n}d_i^3,
   \end{split}
  \end{equation}for some constant $C>0$ and by using Assumption~\ref{assumption1}~\eqref{assumption1-2} again,
  \begin{equation}
   \lim_{K\to\infty}\limsup_{n\to\infty} \var{M_n^K(l)}= 0,
  \end{equation}uniformly for $l\leq Tb_n$.
 Now we can use the super-martingale inequality \cite[Lemma 2.54.5]{RW94} stating that for any super-martingale $(M(t))_{t\geq 0}$, with $M(0)=0$, 
 \begin{equation}\label{eqn:supmg:ineq}
  \varepsilon \prob{\sup_{s\leq t}|M(s)|>3\varepsilon}\leq 3\expt{|M(t)|}\leq 3\left(|\expt{M(t)}|+\sqrt{\var{M(t)}}\right).
 \end{equation}
  Using \eqref{expectation::M_n^K}, \eqref{variance::M_n^k}, and \eqref{eqn:supmg:ineq}, together with the fact that $(-M_n^K(l))_{l=1}^{Tb_n}$ is a super-martingale, we get, for any $\varepsilon >0 $,
 \begin{equation} \label{tail::martingale}
  \lim_{K\to\infty}\limsup_{n\to\infty}\PR\bigg(\sup_{l\leq Tb_n}|M_n^K(l)|> \varepsilon \bigg)=0.
 \end{equation}
Define the truncated exploration process
\begin{equation}\label{eqn::truncated_scaled_process}
  \bar{S}_n^K(t)= a_n^{-1} \sum_{i=1}^Kd_i\left( \mathcal{I}_i^n(tb_n)-\frac{d_i}{\ell_n}tb_n \right)+\lambda t.
\end{equation}
 Define $\mathcal{I}_i^n(tb_n)=\mathcal{I}_i^n(\floor{tb_n})$ and recall that $\mathcal{I}_i(s):=\ind{\xi_i\leq s }$ where $\xi_i\sim \mathrm{Exp}(\theta_i/\mu)$. 
 \begin{lemma} \label{lem::convergence_indicators}
   Fix any $K\geq 1$. As $n\to\infty$,
   \begin{equation}
    \left( \mathcal{I}_i^n(tb_n) \right)_{i\in[K],t\geq 0} \dto \left( \mathcal{I}_i(t) \right)_{i\in[K],t\geq 0}.
   \end{equation}
 \end{lemma}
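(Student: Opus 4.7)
The plan is to reduce the process-level convergence to the joint convergence in distribution of finitely many real-valued discovery times, and then establish the latter by a direct conditional computation based on \eqref{eqn:increment:indicator}. For each $i \in [K]$, the map $t \mapsto \mathcal{I}_i^n(tb_n)$ is a non-decreasing $\{0,1\}$-valued c\`adl\`ag step function with a single jump at $\tau_i^n := b_n^{-1}\inf\{l \geq 0 : \mathcal{I}_i^n(l)=1\}$, and the limit $t \mapsto \mathcal{I}_i(t)$ has the analogous form with a single a.s.\ finite jump at $\xi_i$. Hence joint convergence in $\mathbb{D}[0,\infty)^K$ reduces to convergence in distribution of $(\tau_i^n)_{i \in [K]}$ to $(\xi_i)_{i\in [K]}$ in $[0,\infty)^K$, and it is enough to show
\begin{equation*}
\lim_{n\to\infty}\PR\bigl(\tau_i^n > t_i \text{ for all } i \in [K]\bigr) = \prod_{i=1}^K e^{-\theta_i t_i/\mu}
\end{equation*}
for every $(t_1,\ldots,t_K) \in [0,\infty)^K$, the $\xi_i$ being independent with $\xi_i \sim \mathrm{Exp}(\theta_i/\mu)$.

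For the marginal statement, fix $i\in[K]$ and $t\geq 0$. The identity \eqref{eqn:increment:indicator} gives, uniformly on $\{\mathcal{I}_i^n(l)=0\}$ for $l\leq Tb_n$,
\begin{equation*}
1 - \frac{d_i}{\ell_n-2Tb_n-1} \leq \PR\bigl(\mathcal{I}_i^n(l+1)=0\,\big|\,\mathscr{F}_l\bigr) \leq 1 - \frac{d_i}{\ell_n}.
\end{equation*}
Using $d_i/a_n \to \theta_i$, $\ell_n/n\to\mu$, and the algebraic identity $a_nb_n=n$, one obtains $d_ib_n/\ell_n\to \theta_i/\mu$, and multiplying the bounds $\lfloor tb_n\rfloor$ times yields $\PR(\tau_i^n>t)\to e^{-\theta_i t/\mu}$.

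For joint factorization, the key observation is that at most one new vertex is discovered in any step of \emph{Algorithm}~\ref{algo-expl}, so for distinct indices $i_1,\ldots,i_K$ the events $\{\mathcal{I}_{i_j}^n(l+1)=1,\mathcal{I}_{i_j}^n(l)=0\}$ are pairwise disjoint. Consequently, on $\bigcap_j \{\mathcal{I}_{i_j}^n(l)=0\}$,
\begin{equation*}
\PR\Bigl(\bigcap_j\{\mathcal{I}_{i_j}^n(l+1)=0\}\,\Big|\,\mathscr{F}_l\Bigr) = 1 - \sum_{j=1}^K \PR\bigl(\mathcal{I}_{i_j}^n(l+1)=1\,\big|\,\mathscr{F}_l\bigr) = 1 - \sum_j \frac{d_{i_j}}{\ell_n}(1+o(1)).
\end{equation*}
Iterating this over $l$, with the time-dependent restriction that the sum over $j$ includes only those indices for which $l<t_{i_j}b_n$, and passing to the limit $n\to\infty$, produces a product that telescopes to $\prod_j e^{-\theta_{i_j}t_{i_j}/\mu}$ after splitting the time axis according to the ordering of the $t_{i_j}$'s. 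The Skorohod reduction to convergence of jump times is standard for monotone $\{0,1\}$-valued step functions with an a.s.\ finite jump.

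The main obstacle is the careful control of accumulated error along the $\Theta(b_n)$-step iteration. The per-step multiplicative error from replacing $\ell_n - 2(l-\upsilon_l) - \epsilon$ by $\ell_n$ is $O(d_ib_n/\ell_n^2)$, so the cumulative discrepancy in the logarithm after $b_n$ steps is $O(d_ib_n^2/\ell_n^2) = \Theta(n^{\alpha+2\rho-2}/L(n)) = \Theta(n^{-1/(\tau-1)}/L(n))$, which vanishes precisely because $\alpha+2\rho-2 = -1/(\tau-1)<0$. This algebraic identity between the scaling exponents and the moment order is what makes the exponential approximation viable throughout the critical window, and is the one place where the $a_n,b_n,c_n$ of \eqref{eqn:notation-power*} and the moment conditions of Assumption~\ref{assumption1} must be balanced carefully.
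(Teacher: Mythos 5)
Your proposal is correct and follows essentially the same route as the paper: both reduce the process-level statement to convergence of the joint survival probabilities $\PR(\mathcal{I}_i^n(t_ib_n)=0,\ \forall i\in[K])$ (your jump-time reformulation is the same reduction for monotone $\{0,1\}$-valued step processes), and both evaluate these via the one-step conditional probabilities from \eqref{eqn:increment:indicator}, using that at most one vertex is discovered per step to get the product formula, and then controlling the cumulative error over $\Theta(b_n)$ steps exactly as you do. Your explicit exponent bookkeeping $\alpha+2\rho-2=-1/(\tau-1)<0$ makes precise the $o(1)$ that the paper absorbs into its logarithmic expansion.
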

\begin{proof} By noting that $(\mathcal{I}_i^n(tb_n))_{t\geq 0}$ are indicator processes, it is enough to show that 
\begin{equation}
 \prob{\mathcal{I}_i^n(t_ib_n)=0\ \forall i\in [K]} \to \prob{\mathcal{I}_i(t_i)=0\ \forall i\in [K]} = \exp \Big( -\mu^{-1}\sum_{i=1}^{K} \theta_it_i\Big).
\end{equation} for any $t_1,\dots,t_K\in \mathbbm{R}$. Now, 
\begin{equation} \label{lem::eqn::expression1}
 \prob{\mathcal{I}_i^n(m_i)=0,\ \forall i\in [K]}=\prod_{l=1}^{\infty}\Big(1-\sum_{i\leq K:l\leq m_i}\frac{d_i}{\ell_n-\Theta(l)} \Big),
\end{equation}where the $\Theta(l)$ term arises from the expression in \eqref{eqn:increment:indicator} and we note that $\upsilon_l\leq l$. Taking logarithms on both sides of \eqref{lem::eqn::expression1} and using the fact that $l\leq \max m_i=\Theta(b_n)$ we get 
\begin{equation}\label{lem::eqn::ex1}
 \begin{split}
  \prob{\mathcal{I}_i^n(m_i)=0\, \forall i\in [K]}&= \exp\Big( - \sum_{l=1}^{\infty}\sum_{i\leq K:l\leq m_i} \frac{d_i}{\ell_n}+o(1) \Big)= \exp\Big( -\sum_{i\in [K]} \frac{d_im_i}{\ell_n} +o(1) \Big).
 \end{split}
\end{equation} Putting $m_i=t_ib_n$, Assumption~\ref{assumption1}~\eqref{assumption1-1},~\eqref{assumption1-2} gives
\begin{equation} \label{lem::eqn::expression2}
 \frac{m_id_i}{\ell_n}= \frac{\theta_it_i}{\mu} (1+o(1)).
\end{equation}
Hence \eqref{lem::eqn::expression2}, and \eqref{lem::eqn::ex1} complete the proof of Lemma \ref{lem::convergence_indicators}.
\end{proof}

\begin{proof}[Proof of Theorem~\ref{thm::convegence::exploration_process}]
 The proof of Theorem~\ref{thm::convegence::exploration_process} now follows from \eqref{eqn::scaled_process}, \eqref{tail::martingale} and Lemma~\ref{lem::convergence_indicators} by first taking the limit as $n\to \infty$ and then taking the limit as $K\to\infty$.
\end{proof}

For future purposes, we also describe the scaling limit of the reflected process: 
\begin{theorem}\label{thm:conv:refl:process} Recall the definition of $\refl{\bar{\mathbf{S}}_\infty^\lambda}$ from \eqref{defn::reflected-Levy}. As $n\to\infty$,
\begin{equation}
 \refl{\bar{\mathbf{S}}_n} \dto \refl{\bar{\mathbf{S}}_\infty^\lambda}.
\end{equation}
\end{theorem}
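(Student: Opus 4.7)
The plan is to deduce Theorem~\ref{thm:conv:refl:process} from Theorem~\ref{thm::convegence::exploration_process} by applying the continuous mapping theorem to the Skorohod reflection functional $\phi:\mathbb{D}[0,\infty)\to\mathbb{D}[0,\infty)$ defined by $\phi(f)(t):=f(t)-\ubar{f}(t)$. Since Theorem~\ref{thm::convegence::exploration_process} already furnishes $\bar{\mathbf{S}}_n\dto\bar{\mathbf{S}}_\infty$ in the Skorohod $J_1$ topology, everything reduces to showing that $\phi$ is $J_1$-continuous (in fact globally, so that the continuity-point condition in CMT is automatic).

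The key step is the $J_1$-continuity of $\phi$ on $\mathbb{D}[0,T]$ for each $T>0$. Let $f_n\to f$ in $J_1$; by definition there exist increasing homeomorphisms $\mu_n$ of $[0,T]$ fixing the endpoints, with $\|\mu_n-\mathrm{id}\|_\infty\to 0$ and $\|f_n\circ\mu_n-f\|_\infty\to 0$. Because $\mu_n$ is an increasing bijection with $\mu_n(0)=0$, the running infimum commutes with the time change:
\begin{equation*}
\ubar{f_n}(\mu_n(t))\;=\;\inf_{s\leq\mu_n(t)}f_n(s)\;=\;\inf_{u\leq t}f_n(\mu_n(u))\;=\;\ubar{f_n\circ\mu_n}(t).
\end{equation*}
Since $g\mapsto\ubar{g}$ is $1$-Lipschitz in the uniform norm,
\begin{equation*}
\|\ubar{f_n}\circ\mu_n-\ubar{f}\|_\infty\;=\;\|\ubar{f_n\circ\mu_n}-\ubar{f}\|_\infty\;\leq\;\|f_n\circ\mu_n-f\|_\infty\;\to\;0.
\end{equation*}
Subtracting the two uniform estimates yields $\|\phi(f_n)\circ\mu_n-\phi(f)\|_\infty\to 0$ with the \emph{same} time change, so $\phi(f_n)\to\phi(f)$ in $J_1[0,T]$. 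Standard localisation gives continuity on $\mathbb{D}[0,\infty)$, and the continuous mapping theorem delivers $\refl{\bar{\mathbf{S}}_n}=\phi(\bar{\mathbf{S}}_n)\dto\phi(\bar{\mathbf{S}}_\infty)=\refl{\bar{\mathbf{S}}_\infty}$.

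A complementary observation: from \eqref{defn::limiting::process} all jumps of $\bar{\mathbf{S}}_\infty$ come from the indicator increments $\theta_i\mathcal{I}_i(\cdot)$, each of size $+\theta_i$, so $\bar{\mathbf{S}}_\infty\in\mathbb{D}_+[0,\infty)$ almost surely and, by the note following the definition of $\mathbb{D}_+[0,\infty)$ in Section~\ref{sec:notation}, $\ubar{\bar{S}_\infty}$ is continuous. This regularity of the limiting path is what lets the excursions of $\refl{\bar{\mathbf{S}}_\infty}$ be defined unambiguously in the subsequent theorems, but it is not strictly needed for the present argument. The only real obstacle is the $J_1$-continuity of $\phi$; the commutation of $\ubar{\cdot}$ with monotone time changes, together with the $1$-Lipschitz property of the running-infimum map in the uniform norm, makes this step essentially mechanical.
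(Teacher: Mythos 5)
Your argument takes the same overall route as the paper—deduce the result from Theorem~\ref{thm::convegence::exploration_process} via the continuous mapping theorem applied to the Skorohod reflection $\phi(f)=f-\ubar{f}$—but whereas the paper's proof is a one-line citation of Whitt (\cite[Theorem 13.5.1]{W02}) for the $J_1$-Lipschitz continuity of $\phi$, you supply a short self-contained proof of exactly that fact. The two observations you isolate (the running infimum commutes with any monotone time change fixing $0$, and $g\mapsto\ubar{g}$ is $1$-Lipschitz in uniform norm) give that $\phi$ is Lipschitz on $(\mathbb{D}[0,T],J_1)$ with constant $2$, using the \emph{same} time change $\mu_n$ as for $f_n\to f$, which is precisely the content of Whitt's theorem. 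The localisation to $\mathbb{D}[0,\infty)$ is also fine: $\phi$ looks only backwards in time, so $\phi(f)\vert_{[0,T]}=\phi(f\vert_{[0,T]})$, and any continuity point $T$ of $f$ is a continuity point of $\phi(f)$ (since then $\ubar{f}(T-)=\ubar{f}(T)$), so the dense set of good truncation points survives. Your closing observation that $\bar{\mathbf{S}}_\infty\in\mathbb{D}_+[0,\infty)$ and hence $\ubar{\bar{S}_\infty}$ is continuous is correct and, as you note, not needed here; it only becomes relevant when one later works with excursions of $\refl{\bar{\mathbf{S}}_\infty}$. In short, the proof is sound; the only difference from the paper is that you prove the cited lemma rather than quoting it, which makes the argument more self-contained at negligible extra cost.
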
 
\begin{proof}
 This follows from Theorem~\ref{thm::convegence::exploration_process} and the fact that the reflection is Lipschitz continuous with respect to the Skorohod $J_1$-topology (see \cite[Theorem 13.5.1]{W02}).
\end{proof}

\section{Convergence of component sizes}\label{sec:conv-comp-size} In this section, we complete the proof of Theorem~\ref{thm::conv:component:size}. First, we prove a tail summability condition that ensures that the vector of ordered component sizes is tight in $\ell^2_{\shortarrow}$. 
This also implies that Algorithm~\ref{algo-expl} explores the \emph{large} components before time $Tb_n$ for large $T$. 
Next, we show that the function mapping an element of $\mathbb{D}[0,\infty)$ to its largest excursions is continuous on a special subset $A$ of $\mathbb{D}[0,\infty)$ and the process $\refl{\bar{\mathbf{S}}_{\infty}}$ has sample paths in $A$ almost surely. Therefore, Theorem~\ref{thm::convegence::exploration_process} gives the scaling limit of the number of edges in the components ordered as a non-increasing sequence. 
Finally, we show that the number of surplus edges discovered up to time  $Tb_n$ are negligible and thus the convergence of the component sizes in Theorem~\ref{thm::conv:component:size} follows.

\subsection{Tightness of the component sizes}
The following proposition establishes a uniform tail summability condition that is required for the tightness of the (scaled) ordered vector of component size with respect to the $\ell^2_{\shortarrow}$ topology: 
\begin{proposition}\label{prop-l2-tightness} For any $\varepsilon >0$,
\begin{equation}
 \lim_{K\to\infty}\limsup_{n\to\infty}\PR\bigg(\sum_{i>K}|\mathscr{C}_{\sss(i)}|^2>\varepsilon b_n^2\bigg)=0.
\end{equation}
\end{proposition}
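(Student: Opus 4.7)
My plan combines a finite-dimensional convergence of the largest component sizes with a uniform tightness bound on the total susceptibility $s_2(n):=\sum_{i\geq 1}|\mathscr{C}_{\sss(i)}|^2=\sum_{v\in[n]}|\mathscr{C}(v)|$, and reads the tail off the telescoping identity
\[
\sum_{i>K}|\mathscr{C}_{\sss(i)}|^2 \;=\; s_2(n)\;-\;\sum_{i\leq K}|\mathscr{C}_{\sss(i)}|^2.
\]
Granting the two convergences $s_2(n)/b_n^2\dto\sum_{i\geq 1}\gamma_i(\lambda)^2$ and $\sum_{i\leq K}|\mathscr{C}_{\sss(i)}|^2/b_n^2\dto\sum_{i\leq K}\gamma_i(\lambda)^2$ for every fixed $K$, the tail converges in distribution to $\sum_{i>K}\gamma_i(\lambda)^2$, which vanishes in probability as $K\to\infty$ because $(\gamma_i(\lambda))_{i\geq 1}\in\ell^2_\shortarrow$ almost surely by \cite[Lemma~1]{AL98} (as already recalled in the remark following \eqref{defn::counting-process}).

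For the finite-dimensional step, Theorem~\ref{thm:conv:refl:process} gives $\refl{\bar{\mathbf{S}}_n}\dto\refl{\bar{\mathbf{S}}_\infty}$ in the $J_1$-topology, and since $\refl{\bar{\mathbf{S}}_\infty}$ a.s.\ has no infinite excursion and only finitely many excursions exceeding any fixed $\delta>0$ on each compact interval, the map sending a c\`adl\`ag path to its ordered excursion lengths is a.s.\ continuous at $\refl{\bar{\mathbf{S}}_\infty}$ in $J_1$; hence the first $K$ excursion lengths of $\refl{\bar{\mathbf{S}}_n}$ converge jointly to $(\gamma_i(\lambda))_{i\leq K}$. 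Because $\tau_k-\tau_{k-1}-1$ equals the edge-count of $\mathscr{C}_k$ and differs from $|\mathscr{C}_k|$ only by $\mathrm{SP}(\mathscr{C}_k)-1=O_{\sss\PR}(1)$ per component (controlled in Section~\ref{sec:surplus}), this transfers to $(b_n^{-1}|\mathscr{C}_{\sss(i)}|)_{i\leq K}\dto (\gamma_i(\lambda))_{i\leq K}$.

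For the susceptibility bound, I would show $\E[s_2(n)]=O(b_n^2)$ by writing $s_2(n)=n\,\E[|\mathscr{C}(V)|]$ for a uniform $V\in[n]$ and dominating the BFS-tree rooted at $V$ by a Galton--Watson process with offspring $D_n^\star-1$ (size-biased $D_n$ minus one) of mean $\nu_n=1+O(c_n^{-1})$. A truncated first-moment computation---stopping the exploration once the walk drops sufficiently below its running minimum so that the residual is genuinely subcritical---yields $\E[|\mathscr{C}(V)|]=O(c_n)$, and the identity $nc_n=b_n^2$ (immediate from \eqref{eqn:notation-power*}) gives $\E[s_2(n)]=O(b_n^2)$. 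Markov then makes $s_2(n)/b_n^2$ tight in $\R$; the finite-dimensional step together with Fatou yields the liminf inequality $\liminf_n s_2(n)/b_n^2\geq \sum_i\gamma_i(\lambda)^2$ in distribution, while a matching $\limsup_n\E[s_2(n)]/b_n^2\leq \E[\sum_i\gamma_i(\lambda)^2]$ from the same branching-process comparison forces $s_2(n)/b_n^2\dto \sum_i\gamma_i(\lambda)^2$, completing the argument.

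The main obstacle will be the susceptibility estimate in the barely supercritical regime ($\lambda>0$), where the dominating Galton--Watson process may have infinite expected total progeny. This calls for a carefully designed truncation---e.g.\ stopping the exploration once $\bar{\mathbf{S}}_n$ has fallen sufficiently below its running minimum---or a direct martingale/second-moment argument on the exploration walk that fully exploits the vanishing normalized third-moment tail $a_n^{-3}\sum_{i>K}d_i^3$ from Assumption~\ref{assumption1}\eqref{assumption1-2} to recover the correct $c_n$-scaling without picking up spurious large-degree contributions.
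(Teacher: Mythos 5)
The central gap is the susceptibility estimate $\E[s_2(n)]=O(b_n^2)$, equivalently $\E[|\mathscr{C}(V_n)|]=O(c_n)$, for the \emph{full} graph. A branching-process domination with mean $\nu_n=1+\lambda c_n^{-1}+o(c_n^{-1})$ only yields a usable bound of the form $\E[|\mathscr{C}(V_n)|]\le 1+\E[D_n]/(1-\nu_n)$ when $\nu_n<1$, i.e.\ essentially only for $\lambda<0$; for $\lambda\ge 0$ the dominating process has infinite expected total progeny and the bound is vacuous. You flag this as ``the main obstacle'' but do not resolve it, and your entire telescoping scheme --- tightness of $s_2(n)/b_n^2$, the matching $\limsup$ of expectations against $\E[\sum_i\gamma_i(\lambda)^2]$, and the uniform integrability needed to subtract the head terms --- hinges on exactly this estimate. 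The paper's proof sidesteps the problem: it removes the edges incident to vertices $1,\dots,K$, observes that the result is again a configuration model whose criticality parameter drops to $\nu_n-C_1c_n^{-1}\sum_{i\le K}\theta_i^2$, and uses $\boldsymbol{\theta}\notin\ell^2_{\shortarrow}$ to make this uniformly below $1$ for large $K$ \emph{regardless of the sign of $\lambda$}. Janson's deterministic susceptibility bound then gives $\E[|\mathscr{C}^{\sss[K]}(V_n)|]\le Cc_n/(-\lambda+C_1\sum_{i\le K}\theta_i^2)$, which is $o(c_n)$ as $K\to\infty$; combined with $nc_n=b_n^2$, Markov's inequality, and the sandwich $\sum_{i>K}|\mathscr{C}_{\sss(i)}|^2\le\sum_{i\ge1}|\mathscr{C}_{\sss(i)}^{\sss[K]}|^2$, this bounds the tail directly, with no need to control the full susceptibility or prove convergence of the head.

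A secondary problem is circularity: the finite-dimensional convergence $(b_n^{-1}|\mathscr{C}_{\sss(i)}|)_{i\le K}\dto(\gamma_i(\lambda))_{i\le K}$ that you take as an input is Theorem~\ref{thm::component-sizes-finite-dim}, whose proof uses Lemma~\ref{lem:no-large-comp-later} (no large component is first explored after time $Tb_n$), which in turn rests on the tail bound \eqref{tail-sum-squares-K} underlying the very proposition you are proving. Convergence of the excursions of $\refl{\bar{\mathbf{S}}_n}$ on a compact time window does not by itself identify the globally largest components; ruling out large components discovered late is precisely what the removed-hubs estimate provides. So even the ``granted'' inputs of your argument secretly require the paper's key lemma.
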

Roughly speaking, the proof is based on the fact that the graph, obtained by removing a large number of high-degree vertices, yields a graph that approaches subcriticality. More precisely,  we prove Lemma~\ref{lem::tail_sum_squares} below to complete the proof of Proposition~\ref{prop-l2-tightness}. This fact is not true for the finite third-moment setting  \cite{DHLS15}. 
However, since the large-degree vertices guide the scaling behavior in the infinite third-moment case, the observation in Lemma~\ref{lem::tail_sum_squares} saves some computational complexity, and gives a different proof of the $\ell^2_{\shortarrow}$ tightness than the aproach with size-biased point processes originally proposed in \cite{A97}. 
\begin{lemma} \label{lem::tail_sum_squares} Consider $\mathrm{CM}_n(\boldsymbol{d})$ satisfying {\rm Assumption~\ref{assumption1}}.
Let $\mathcal{G}^{\sss[K]}$ be the random graph obtained by removing all edges attached to vertices $1,\dots,K$ and let $\boldsymbol{d}'$ be the obtained degree sequence. Suppose $V_n$ is a random vertex of $\mathcal{G}^{\sss[K]}$ chosen independently of the graph and let $\mathscr{C}^{\sss[K]}(V_n)$ be the corresponding component. Let $\{\mathscr{C}_{\sss (i)}^{\sss [K]}:i\geq 1 \}$ be the components of $\mathcal{G}^{\sss [K]}$, ordered according to their sizes. Then,
\begin{equation}\label{expt-cvn-K-removed}
\lim_{K\to\infty} \limsup_{n\to\infty} c_n^{-1}\expt{|\mathscr{C}^{\sss [K]}(V_n)|}=0.
\end{equation}Consequently, for any $\varepsilon > 0$,
\begin{equation}\label{tail-sum-squares-K}
 \lim_{K\to\infty}\limsup_{n\to\infty} \PR\bigg(\sum_{i\geq 1}\big|\mathscr{C}_{\sss (i)}^{\sss [K]}\big|^2> \varepsilon b_n^2\bigg)=0.
\end{equation}
\end{lemma}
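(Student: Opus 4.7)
The plan is to first prove the expectation bound \eqref{expt-cvn-K-removed} via a subcritical-regime argument, and then derive the tightness statement \eqref{tail-sum-squares-K} by combining Markov's inequality with the identity $\sum_{i\geq 1}|\mathscr{C}_{\sss(i)}^{\sss[K]}|^2=\sum_{v\in[n]}|\mathscr{C}^{\sss[K]}(v)|$. The key heuristic is that removing the $K$ highest-degree vertices drives the effective criticality parameter strictly below $1$, rendering $\mathcal{G}^{\sss[K]}$ uniformly subcritical for large $K$.

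For the first step, I would set
\begin{equation*}
\nu_n^{\sss[K]}:=\frac{\sum_{i>K}d_i(d_i-1)}{\ell_n}=\nu_n-\frac{\sum_{i=1}^K d_i(d_i-1)}{\ell_n},
\end{equation*}
and use Assumption~\ref{assumption1}~\eqref{assumption1-1},~\eqref{assumption1-2} to deduce $d_i^2\sim\theta_i^2 a_n^2$ for each fixed $i$, together with $\ell_n\sim\mu n$. Combined with the identity $a_n^2/n=c_n^{-1}$ (immediate from \eqref{eqn:notation-power*}), this yields
\begin{equation*}
c_n\big(1-\nu_n^{\sss[K]}\big)\to \mu^{-1}\sum_{i=1}^K\theta_i^2-\lambda \quad\text{as }n\to\infty,
\end{equation*}
which diverges to $+\infty$ as $K\to\infty$ because $\boldsymbol{\theta}\in\ell^3_{\shortarrow}\setminus\ell^2_{\shortarrow}$. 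In particular, $\nu_n^{\sss[K]}<1$ eventually in $n$ for all sufficiently large $K$.

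For the second step, I would apply a standard path-counting upper bound. For $v,u\in[n]$, the expected number of length-$k$ self-avoiding paths from $v$ to $u$ in $\mathcal{G}^{\sss[K]}$ (whose $k-1$ internal vertices lie in $\{K+1,\dots,n\}$) is bounded, via the pairing formula for $\mathrm{CM}_n(\boldsymbol{d})$, by
\begin{equation*}
\frac{d_v d_u}{\prod_{j=0}^{k-1}(\ell_n-2j-1)}\sum_{v_1,\dots,v_{k-1}>K}\prod_{i=1}^{k-1}d_{v_i}(d_{v_i}-1)\;\leq\; \frac{d_v d_u}{\ell_n}\big(\nu_n^{\sss[K]}\big)^{k-1}(1+o(1)),
\end{equation*}
using $\sum_{w>K}d_w(d_w-1)=\ell_n\nu_n^{\sss[K]}$ to factor the intermediate sum. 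Summing the resulting geometric series over $k\geq 1$, then over $u\in[n]$, and averaging over $v\in[n]$ (noting $\ell_n/n\to\mu$) yields
\begin{equation*}
\E\big[|\mathscr{C}^{\sss[K]}(V_n)|\big]\leq 1+\frac{C}{1-\nu_n^{\sss[K]}}
\end{equation*}
for a universal constant $C>0$. Combined with the first step, this gives $c_n^{-1}\E[|\mathscr{C}^{\sss[K]}(V_n)|]\to 0$ after letting first $n\to\infty$ and then $K\to\infty$, which is \eqref{expt-cvn-K-removed}.

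The consequence \eqref{tail-sum-squares-K} follows by noting that every $v\in[n]$ satisfies $|\mathscr{C}^{\sss[K]}(v)|=|\mathscr{C}_{\sss(i)}^{\sss[K]}|$ for the component containing $v$, so $\sum_v|\mathscr{C}^{\sss[K]}(v)|=\sum_{i\geq 1}|\mathscr{C}_{\sss(i)}^{\sss[K]}|^2$; taking expectations gives $\E[\sum_i|\mathscr{C}_{\sss(i)}^{\sss[K]}|^2]=n\,\E[|\mathscr{C}^{\sss[K]}(V_n)|]$. Applying Markov's inequality and the identity $n/b_n^2=c_n^{-1}$ (from \eqref{eqn:notation-power*}) then yields
\begin{equation*}
\PR\bigg(\sum_{i\geq 1}|\mathscr{C}_{\sss(i)}^{\sss[K]}|^2>\varepsilon b_n^2\bigg)\leq\frac{c_n^{-1}\E[|\mathscr{C}^{\sss[K]}(V_n)|]}{\varepsilon},
\end{equation*}
which tends to $0$ after letting $n\to\infty$ and then $K\to\infty$ by \eqref{expt-cvn-K-removed}. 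I expect the main technical obstacle to be the rigorous verification of the path-counting bound: one must carefully handle the falling-factorial denominator and show that restricting all intermediate vertices to $\{K+1,\dots,n\}$ produces the ratio $\nu_n^{\sss[K]}$ (rather than $\nu_n$) up to lower-order corrections, with additional control needed for repeated vertices and half-edges consumed earlier along the path.
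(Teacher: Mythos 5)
Your proposal is correct and takes essentially the same route as the paper: remove the $K$ highest-degree vertices to push the criticality parameter strictly below $1$, bound $\E[|\mathscr{C}^{\sss[K]}(V_n)|]$ via a susceptibility estimate, and finish by Markov's inequality with the identity $\E[\sum_i|\mathscr{C}_{\sss(i)}^{\sss[K]}|^2]=n\E[|\mathscr{C}^{\sss[K]}(V_n)|]$. The one point where you diverge is the second step: the paper observes that, conditionally on the set of removed half-edges, $\mathcal{G}^{\sss[K]}$ is itself a configuration model with degree sequence $\boldsymbol{d}'$, and then invokes Janson's bound $\E[|\mathscr{C}(V_n)|]\leq 1+\E[D_n]/(1-\nu_n)$ \cite[Lemma 5.2]{J09b} as a black box applied to $\mathcal{G}^{\sss[K]}$, whereas you re-derive that bound from scratch by counting self-avoiding paths in the original $\mathrm{CM}_n(\boldsymbol{d})$ restricted to intermediate vertices in $\{K+1,\dots,n\}$. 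These are really the same argument (Janson's proof is itself path counting), but the paper's route is cleaner because it sidesteps the technical care you correctly flag as the bottleneck — controlling the falling-factorial denominator and repeated half-edges — which is exactly what the cited lemma packages up. Also note a small cosmetic difference: your $\nu_n^{\sss[K]}$ keeps $\ell_n$ in the denominator (suited to path counting in the original model), while the paper's $\nu_n^{\sss[K]}$ uses the reduced degree total $\sum_i d_i'$ (the actual criticality parameter of the sub-configuration model $\mathcal{G}^{\sss[K]}$); both are bounded away from $1$ uniformly in $n$ for $K$ large because $\boldsymbol{\theta}\notin\ell^2_{\shortarrow}$, so either serves the purpose.
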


\begin{proof}
We make use of a result due to \citet{J09b} regarding bounds on the susceptibility functions for the configuration model. 
In fact, \cite[Lemma 5.2]{J09b} shows that, for any configuration model $\mathrm{CM}_n(\boldsymbol{d})$ with $\nu_n<1$, 
 \begin{equation}\label{bound::expt-cluster-size}
  \expt{|\mathscr{C}(V_n)|} \leq 1+\frac{\expt{D_n}}{1-\nu_n}.
 \end{equation}  Now, conditional on the set of removed half-edges, $\mathcal{G}^{\sss [K]}$ is still a configuration model with some degree sequence $\boldsymbol{d}'$ with $d_i'\leq d_i$ for all $i\in [n]\setminus [K]$ and $d_i'=0$ for $i\in [K]$. Further, the criticality parameter $\nu^{\sss [K]}_n$ of $\mathcal{G}^{\sss [K]}$ satisfies 
 \begin{equation}\label{eqn:nu-K}
  \begin{split}
   \nu^{\sss [K]}_n&= \frac{\sum_{i\in [n]} d_i'(d'_i-1)}{\sum_{i\in [n]} d_i'}\leq \frac{\sum_{i\in [n]}d_i(d_i-1)-\sum_{i=1}^Kd_i(d_i-1)}{\ell_n-2\sum_{i=1}^Kd_i}\\
   &=\nu_n-C_1n^{2\alpha -1}L(n)^2\sum_{i\leq K}\theta_i^2=\nu_n-C_1c_n^{-1}\sum_{i\leq K}\theta_i^2
  \end{split}
 \end{equation}for some constant $C_1>0$.
 Since $\boldsymbol{\theta}\notin \ell^2_{\shortarrow}$, $K$ can be chosen large enough such that $\nu^{\sss[K]}_n < 1$ uniformly for all $n$. Also $\sum_{i\in [n]}d'_i=\ell_n+o(n)$ for each fixed $K$. Let $\mathbbm{E}_K[\cdot]$ denote the conditional expectation, conditioned on the set of removed half-edges. Using \eqref{bound::expt-cluster-size} on $\mathcal{G}^{\sss [K]}$, we get
 \begin{equation}
 \begin{split}
  \mathbbm{E}_K\big[|\mathscr{C}^{\sss[K]}(V_n)|\big]&\leq \frac{C_2}{1-\nu^{\sss[K]}_n}\leq \frac{C_2}{1-\nu_n+C_1c_n^{-1}\sum_{i\leq K}\theta_i^2}\leq  \frac{C_2c_n}{-\lambda+C_1\sum_{i\leq K}\theta_i^2},
 \end{split}
 \end{equation}for some constant $C_2>0$. Using the fact that $\boldsymbol{\theta}\notin \ell^2_{\shortarrow}$, this concludes the proof of \eqref{expt-cvn-K-removed}. The proof of \eqref{tail-sum-squares-K} follows from \eqref{expt-cvn-K-removed} by using the Markov inequality and the observation that
 \begin{equation} \label{comp-vs-rnd-choice}
 \mathbbm{E}\bigg[\sum_{i\geq 1}|\mathscr{C}_{\sss (i)}^{\sss [K]}|^2\bigg]= n\expt{|\mathscr{C}^{\sss [K]}(V_n)|},
 \end{equation}as well as $b_n^2 = nc_n$ by \eqref{eqn:notation-power*}.
\end{proof} 
\begin{proof}[Proof of Proposition~\ref{prop-l2-tightness}] Denote the sum of squares of the component sizes excluding the components containing vertices $1,2,\dots, K$ by $\mathscr{S}_K$.
 Note that \begin{equation}
 \sum_{i>K}|\mathscr{C}_{\sss (i)}|^2\leq \mathscr{S}_K\leq \sum_{i\geq 1} |\mathscr{C}_{\sss(i)}^{\sss [K]}|^2. 
 \end{equation}  Thus, Proposition~\ref{prop-l2-tightness} follows from Lemma~\ref{lem::tail_sum_squares}.
\end{proof}

\subsection{Large components are explored early}
As remarked at the beginning of Section~\ref{sec:conv-comp-size}, an important consequence of Proposition~\ref{prop-l2-tightness} is that after time $\Theta(b_n)$, Algorithm~\ref{algo-expl} does not explore large components. The precise statement needed to complete our proof is given below. This is an essential ingredient to conclude the convergence of the component sizes from the convergence of the exploration process since Theorem~\ref{thm::convegence::exploration_process} only gives information about the components explored on the time scale of the order $b_n$.
\begin{lemma}\label{lem:no-large-comp-later} Let $\mathscr{C}_{\max}^{\sss \geq T}$ be the largest among those components which are started exploring after time $Tb_n$ by {\rm Algorithm~\ref{algo-expl}}. Then, for any $\varepsilon >0$,
\begin{equation}
 \lim_{T\to\infty}\limsup_{n\to\infty}\prob{|\mathscr{C}_{\max}^{\sss \geq T}|>\varepsilon b_n}=0.
\end{equation}
\end{lemma}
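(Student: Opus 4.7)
The plan is to combine Lemma~\ref{lem::tail_sum_squares} with the marginal convergence of indicators in Lemma~\ref{lem::convergence_indicators}. The key structural observation is: any connected component of $\mathrm{CM}_n(\bld{d})$ disjoint from $[K]=\{1,\dots,K\}$ coincides, unchanged, with a connected component of $\mathcal{G}^{\sss[K]}$, since deleting all edges incident to $[K]$ does not touch any edge with both endpoints outside $[K]$. Thus, on the high-probability event provided by Lemma~\ref{lem::tail_sum_squares}, every component of $\mathrm{CM}_n(\bld{d})$ avoiding $[K]$ has at most $\varepsilon b_n$ vertices, once $K$ is chosen large enough.

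Given $\varepsilon,\delta>0$, the first step is to use Lemma~\ref{lem::tail_sum_squares} to pick $K$ such that $\PR\bigl(\sum_{i\geq 1}|\mathscr{C}^{\sss[K]}_{\sss(i)}|^2>\varepsilon^2 b_n^2\bigr)<\delta/2$ for all sufficiently large $n$; in particular, $\max_i|\mathscr{C}^{\sss[K]}_{\sss(i)}|\leq\varepsilon b_n$ on this event. For this fixed $K$, the second step is to apply Lemma~\ref{lem::convergence_indicators} together with a union bound to get
\begin{equation*}
\limsup_{n\to\infty}\PR\bigl(\exists\,i\leq K:\mathcal{I}^n_i(Tb_n)=0\bigr)\leq\sum_{i=1}^{K}\mathrm{e}^{-\theta_i T/\mu},
\end{equation*}
which is smaller than $\delta/2$ once $T$ is taken large (as each $\theta_i>0$). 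Hence, with probability at least $1-\delta/2$, every vertex of $[K]$ has been discovered by Algorithm~\ref{algo-expl} before time $Tb_n$.

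The final step is to observe that the component containing a vertex $i\in[K]$ is started being explored no later than the discovery time of $i$: whether $i$ is selected in step (S1) as the starting vertex of a new exploration or is reached in step (S2) by a half-edge pairing during an ongoing exploration, the relevant component has already begun being explored by the time $i$ is discovered. Consequently, on the intersection of the two events above, any component started after time $Tb_n$ is necessarily disjoint from $[K]$, and so is a component of $\mathcal{G}^{\sss[K]}$ with at most $\varepsilon b_n$ vertices. This yields $\PR(|\mathscr{C}^{\sss\geq T}_{\max}|>\varepsilon b_n)<\delta$ for all large $n$, as desired. No substantial obstacle is anticipated beyond this bookkeeping; the only mild subtlety is making the identification ``component disjoint from $[K]$ $=$ component of $\mathcal{G}^{\sss[K]}$'' precise, but this is immediate from how $\mathcal{G}^{\sss[K]}$ is defined.
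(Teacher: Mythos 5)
Your proof is correct and follows essentially the same route as the paper: both condition on the high-probability event that all of $[K]$ is discovered by time $Tb_n$ (so that every component started after $Tb_n$ avoids $[K]$, lives unchanged in $\mathcal{G}^{\sss[K]}$, and inherits the bound from Lemma~\ref{lem::tail_sum_squares}), and then control the complement via a union bound. The only difference is cosmetic: the paper bounds $\prob{j\text{ undiscovered by time }Tb_n}$ directly by $\bigl(1-d_j/(\ell_n-\Theta(Tb_n))\bigr)^{Tb_n}\leq\e^{-CT}$, while you invoke the marginal convergence from Lemma~\ref{lem::convergence_indicators} to obtain $\e^{-\theta_i T/\mu}$, which yields the same conclusion.
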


\begin{proof}
 Define the event $\mathscr{A}_{\sss K,T}^n:= \{\text{all the vertices of }[K] \text{ are explored before time }Tb_n\}$. Recall the definition of $\mathscr{C}_{\sss (i)}^{\sss [K]}$ from Lemma~\ref{lem::tail_sum_squares}. Firstly, note that 
 \begin{equation}\label{eq:CgeqT1}
  \prob{|\mathscr{C}_{\max}^{\sss \geq T}|>\varepsilon b_n, \ \mathscr{A}_{\sss K,T}^n}\leq \PR\bigg(\sum_{i\geq 1}\big|\mathscr{C}_{\sss (i)}^{\sss [K]}\big|^2> \varepsilon^2 b_n^2\bigg).
 \end{equation}Moreover, using \eqref{eqn:increment:indicator} and the fact that $d_jb_n=\Theta(n)$, we get 
 \begin{equation}\label{eq:CgeqT2}
  \begin{split} \prob{(\mathscr{A}_{\sss K,T}^{n})^c}&=\prob{\exists j\in [K]: j \text{ is not explored before }Tb_n}\\
  &\leq \sum_{j=1}^K \prob{j \text{ is not explored before }Tb_n}\\
  &\leq \sum_{j=1}^K\left(1-\frac{d_j}{\ell_n-\Theta(Tb_n)} \right)^{Tb_n}\leq \sum_{j=1}^K \e^{-CT},
  \end{split}
 \end{equation} where $C>0$ is a constant that may depend on $K$. Now, by \eqref{eq:CgeqT1},
 \begin{equation}
  \prob{|\mathscr{C}_{\max}^{\sss \geq T}|>\varepsilon b_n}\leq \PR\bigg(\sum_{i\geq 1}\big|\mathscr{C}_{\sss (i)}^{\sss [K]}\big|^2> \varepsilon^2 b_n^2\bigg) + \prob{(\mathscr{A}_{\sss K,T}^{n})^c}.
 \end{equation} The proof follows by taking $\limsup_{n\to\infty}$, $\lim_{T\to\infty}$, $\lim_{K\to\infty}$ respectively and using  \eqref{tail-sum-squares-K}, \eqref{eq:CgeqT2}. 
\end{proof}

\subsection{Sample path properties} Recall the definition of an excursion from \eqref{def:excursion}. Define the set of excursions of a function $f$ by
\begin{equation}
\mathcal{E}:= \{(l,r): (l,r) \text{ is an excursion of }f\}.
\end{equation}  
We also denote  the set of excursion end-points by $\mathcal{Y}$, i.e.,
\begin{equation}
\mathcal{Y}:= \{r>0: (l,r)\in \mathcal{E}\}.
\end{equation}
\begin{defn}\label{defn::good_function}\normalfont A  function $f\in \mathbb{D}_+[0,T]$ is said to be \emph{good} if the following holds:
\begin{enumerate}[(a)]
\item  $\mathcal{Y}$ does not have an isolated point and the complement of $\cup_{(l,r)\in \mathcal{E}}(l,r)$ has Lebesgue measure zero;
\item $f$ does not attain a local minimum at any point of $\mathcal{Y}$.
\end{enumerate} 
\end{defn}

\begin{remark}\label{rem:cont-at-r} \normalfont We claim that if a function $f\in \mathbb{D}_+[0,T]$ is good, then $f$ is continuous on $\mathcal{Y}$. To see this, fix  any $\delta>0$ and denote the set of excursions of length at least $\delta$ by $\mathcal{E}_\delta$. Let $r$ be the excursion endpoint of an excursion in $\mathcal{E}_\delta$ and suppose that $f(r)>f(r-)$. Thus, there is no excursion endpoint in $(r-\delta,r)$. Moreover, since $f$ is right-continuous, there exists $\delta '>0 $ such that $f(x)>f(r-)+\varepsilon$ for all $x\in (r,r+\delta')$, where $\varepsilon = (f(r)-f(r-))/2>0$. Thus there is no excursion endpoint on $(r-\delta,r+\delta')$ and thus $r$ is an isolated point contradicting Definition~\ref{defn::good_function}. We conclude that $f$ is continuous at excursion endpoints of the excursions in $\mathcal{E}_{\delta}$, and since $\delta>0$ is arbitrary the claim is established.
\end{remark}
Let $\mathcal{L}_i(f)$ be the length of the $i^{th}$ largest excursion of $f$ and define $\Phi_m:\mathbb{D}_+[0,T]\to \mathbbm{R}^m$ by 
 \begin{equation}
 \Phi_m(f)= (\mathcal{L}_1(f), \mathcal{L}_2(f), \dots, \mathcal{L}_m(f)).
 \end{equation}Note that $\Phi_m(\cdot)$ is well-defined for any good function  defined in  Definition~\ref{defn:good_function-infty}.  
 \begin{lemma}\label{lem::good:function:continuity} Suppose that $f\in \mathbb{D}_+[0,T]$ is good. Then, $\Phi_m$ is continuous at $f$ with respect to the subspace topology on $\mathbb{D}_+[0,T]$ induced by the Skorohod $J_1$-topology.
 \end{lemma}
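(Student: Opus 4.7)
My plan is to reduce to uniform convergence via a time-change argument and then establish matching lower and upper bounds on the top-$m$ excursion lengths of the reflected process. By the definition of Skorohod $J_1$-convergence, pick homeomorphisms $\lambda_n:[0,T]\to[0,T]$ with $\|\lambda_n-\mathrm{id}\|_\infty\to 0$ and $\|f_n\circ\lambda_n-f\|_\infty\to 0$. Excursions of $f_n$ correspond bijectively via $\lambda_n$ to excursions of $f_n\circ\lambda_n$, and their lengths agree up to $2\|\lambda_n-\mathrm{id}\|_\infty=o(1)$, so it suffices to prove $\Phi_m(h_n)\to\Phi_m(f)$ when $h_n\to f$ uniformly. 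Because $f\in\mathbb{D}_+[0,T]$ has only positive jumps, $\ubar{f}$ is continuous, and hence uniform convergence transfers to $\ubar{h_n}\to\ubar{f}$ uniformly, and so to $g_n:=h_n-\ubar{h_n}\to g:=f-\ubar{f}$ uniformly. Both $g_n$ and $g$ are lower semi-continuous (c\`adl\`ag with non-negative jumps), and the excursions of $h_n$ and $f$ are precisely the connected components of the open sets $\{g_n>0\}$ and $\{g>0\}$.

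For the lower bound $\liminf_n\mathcal{L}_i(g_n)\geq\mathcal{L}_i(g)$, fix $\varepsilon>0$ and let $(l_i,r_i)_{i=1}^m$ denote the top-$m$ excursions of $g$ with lengths $\mathcal{L}_i=r_i-l_i$. By lower semi-continuity, $g$ attains a strictly positive infimum $\delta_i>0$ on the compact interval $K_i:=[l_i+\varepsilon,r_i-\varepsilon]\subset(l_i,r_i)$. Uniform convergence then gives $g_n\geq \delta_i/2$ on $K_i$ for $n$ large, so each $K_i$ lies inside a single excursion $E_i^n$ of $g_n$ of length at least $\mathcal{L}_i-2\varepsilon$. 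The $E_i^n$ are pairwise disjoint (as the $K_i$ are), and reordering by length yields $\mathcal{L}_i(g_n)\geq\mathcal{L}_i(g)-2\varepsilon$; sending $\varepsilon\to 0$ gives the desired bound.

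The matching upper bound is where goodness enters crucially. Property (b), combined with $f\geq\ubar{f}(r_i)$ on $[0,r_i]$, forces $\ubar{f}(r_i+\eta)<\ubar{f}(r_i)$ for every $\eta>0$ at each $r_i\in\mathcal{Y}$; a symmetric statement at $l_i$ follows from (a), since otherwise a flat stretch of $\ubar{f}$ just before $l_i$ would either produce a positive-measure gap in $\bigcup_{(l,r)\in\mathcal{E}}(l,r)$ or contain smaller excursions whose right endpoints inherit the drop from (b). Given $\varepsilon>0$, pick $\tilde l_i\in(l_i-\varepsilon,l_i)$ and $\tilde r_i\in(r_i,r_i+\varepsilon)$ with $\ubar{f}(\tilde l_i)>\ubar{f}(l_i)>\ubar{f}(\tilde r_i)$; uniform convergence transfers these strict inequalities to $\ubar{h_n}$ eventually. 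Since each strict drop of the non-increasing $\ubar{h_n}$ is attained at a point where $h_n=\ubar{h_n}$, i.e.\ $g_n=0$, this yields zeros of $g_n$ in both $(\tilde l_i,l_i]$ and $[r_i,\tilde r_i)$, sealing the windows $W_i:=[\tilde l_i,\tilde r_i]$. Combined with the lower bound -- which guarantees $W_i$ contains one \emph{main} $g_n$-excursion of length within $O(\varepsilon)$ of $\mathcal{L}_i(g)$, any other $g_n$-excursion in $W_i$ having total length $O(\varepsilon)$ -- and a compactness argument over the complement $[0,T]\setminus\bigcup_i W_i$ (covering it by finitely many intervals across each of which $\ubar{f}$ drops by a uniform positive amount, again via (a) and (b)), every excursion of $g_n$ outside $\bigcup_i W_i$ has length at most $\mathcal{L}_m(g)+O(\varepsilon)$. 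A simple counting then yields $\mathcal{L}_i(g_n)\leq\mathcal{L}_i(g)+O(\varepsilon)$ for all $i\leq m$, and $\varepsilon\to 0$ closes the proof.

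The principal obstacle is the upper bound: without goodness, one cannot preclude $g_n$ from developing a spuriously long excursion that merges across a measure-zero gap in $g$, or from splitting a single $g$-excursion into two $g_n$-excursions of comparable length. Property (b) is precisely the tool that quantitatively forces $\ubar{f}$ to drop at each excursion endpoint, a feature preserved under uniform convergence that propagates into zeros of $g_n$, while (a) is what allows this pointwise drop to be upgraded via compactness into a uniform control across the whole of $[0,T]$.
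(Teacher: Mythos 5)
Your proof is correct in substance and rests on the same two-sided mechanism as the paper's: a lower bound obtained by showing that each true excursion stays uniformly above its minimum level on a slightly shrunk compact subinterval (so that $f_n$ must have an excursion nearly covering it), and an upper bound obtained by using goodness to force the running minimum to strictly drop in every right neighbourhood of an excursion endpoint, a feature that survives the convergence and prevents $f_n$ from developing spurious long excursions. The packaging differs in two useful ways. First, you dispose of the Skorohod time changes once and for all at the outset (excursion lengths of $f_n$ and $f_n\circ\lambda_n$ differ by at most $2\|\lambda_n-\mathrm{id}\|_\infty$) and then work with uniform convergence of the reflected processes $g_n=h_n-\ubar{h_n}$, whereas the paper carries the $\Lambda_n$'s through every estimate; your reduction is cleaner and makes the role of the continuous functional $f\mapsto\ubar{f}$ explicit. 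Second, your upper bound is organized around sealed windows $W_i$ about the top-$m$ excursions plus a dense net of drop points on the complement, which makes the case of general $m$ explicit where the paper only treats $m=1$ and asserts the rest is similar. Your observation that excursions of a good function cannot abut (a shared endpoint would force $\ubar{f}$ to drop inside the next excursion, where it is constant) is needed for the left-endpoint seal and is handled only implicitly in the paper.

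One assertion is wrong as stated and should be repaired: a c\`adl\`ag function with only positive jumps is \emph{upper}, not lower, semicontinuous (at a jump time $t$ one has $g(t-)<g(t)$, so $\liminf_{s\to t}g(s)<g(t)$). Consequently $\{g_n>0\}$ need not be open (take $g=\1_{[1,T]}$), and the identification of excursions with connected components only holds up to endpoint conventions — harmless for lengths, but not for the reason you give. More importantly, the claim that $g$ ``attains a strictly positive infimum'' on $K_i=[l_i+\varepsilon,r_i-\varepsilon]$ does not follow from semicontinuity in the direction you have: for a general $f\in\mathbb{D}_+$ the infimum of $g$ over $K_i$ equals $\min_{t\in K_i}\bigl(g(t)\wedge g(t-)\bigr)$ and could vanish through a left limit even though $g>0$ pointwise on the excursion. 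The conclusion you need is exactly the unjustified ``choose $\delta>0$'' step in the paper's own proof, so you are no worse off, but the correct justification is that $f$ and its left limits stay strictly above the excursion level on compact subintervals of the excursion (for the limit processes at hand this holds almost surely), not lower semicontinuity of $g$.
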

\begin{proof}
 We extend the arguments of \cite[Proposition 22]{NP10b}. The proof  here is for $m=1$ and  similar arguments hold for $m>1$.  Let $\mathfrak{L}$ denote the set of continuous functions $\Lambda:\mathbbm{R}_+\to\mathbbm{R}_+$  that are strictly increasing and $\Lambda(0)=0, \Lambda(T)=T$.   Suppose $E_1=(l,r)$ is the longest excursion of $f$ on $[0,T]$, thus $\Phi_1(f)=r-l$. For any $\varepsilon > 0$ (small), choose $\delta >0$ such that 
 \begin{equation}\label{f-large-interval}
 f(x)> \min\{f(r-),f(r)\}+\delta\quad \forall x\in (l+\varepsilon,r-\varepsilon).
 \end{equation}
 Let $||\cdot||$ denote the sup-norm on $[0,T]$.  Take any sequence of functions $f_n\in \mathbb{D}_+[0,T]$ such that $f_n\to f$, i.e., there exists $\{\Lambda_n\}_{n\geq 1} \subset \mathfrak{L}$ such that for all large enough $n$, 
 \begin{equation}\label{f_n-f-close}
 ||f_n\circ \Lambda_n-f||< \frac{\delta}{6}\  \text{ and }\  ||\Lambda_n-I||< \varepsilon,
 \end{equation}where $I$ is the identity function. 
 Now, by Remark~\ref{rem:cont-at-r}, $f$ is continuous at $r$. This implies that $f(r-)=f(r)$, and using \eqref{f-large-interval} and \eqref{f_n-f-close}, for all large enough $n$,
 \begin{equation} \label{liminf_f_excursion}
  f_n(y)> f_n\circ \Lambda_n(r)+\frac{2\delta}{3}\quad \forall y \in (l+2\varepsilon, r-2\varepsilon).
 \end{equation}Further, using the continuity of $f$ at $r$, $f_n(r)\to f(r)$ and thus, for all sufficiently large $n$,
 \begin{equation}
  |f_n\circ\Lambda_n(r)-f_n(r)|\leq |f_n\circ\Lambda_n(r)-f(r)|+|f_n(r)-f(r)|< \frac{\delta}{3}.
 \end{equation} Hence, \eqref{liminf_f_excursion} implies that, for all sufficiently large $n$,
 \begin{equation}
  f_n(y)> f_n(r)+\frac{\delta}{3}\quad \forall y\in (l+2\varepsilon, r-2\varepsilon).
 \end{equation} Thus, for any $\varepsilon>0$, we have
\begin{equation}\label{eq:liminf-exc}
\liminf_{n\to\infty}\Phi_1(f_n)\geq r-l-4\varepsilon= \Phi_1(f)-4\varepsilon.
\end{equation}
Now we turn to a suitable upper bound on $\limsup_{n\to\infty}\Phi_1(f_n)$. First, we claim that one can find $r_1,\dots,r_k\in \mathcal{Y}$ such that $r_1\leq \Phi_1(f)+\varepsilon, T-r_k< \Phi_1(f)+\varepsilon, $ and $r_i-r_{i-1}\leq \Phi_1(f)+\varepsilon, \forall i=2, \dots,k$. The claim is a consequence of Definition~\ref{defn::good_function}~(a). 
 Now, Definition \ref{defn::good_function}~(b) implies that for any small $\varepsilon >0$, there exists $\delta >0$ and $x_i\in (r_i,r_i+\varepsilon)$ such that $f(r_i)-f(x_i)> \delta$ $\forall i$. Again, since $r_i$ is a continuity point of $f$, $f_n(r_i)\to f(r_i)$. Thus, using \eqref{f_n-f-close}, for all large enough $n$,
 \begin{equation}
  f_n(r_i)-f_n(\Lambda_n(x_i))> \frac{\delta}{2}.
 \end{equation}
 Now, $\Lambda_n(x_i)\in (r_i,r_i+2\varepsilon)$ for all sufficiently large $n$, since $x_i\in (r_i,r_i+\varepsilon)$. Thus, for all large enough $n$, there exists a point $z_i^n\in (r_i,r_i+2\varepsilon)$ such that
 \begin{equation}
  f_n(r_i)-f_n(z_i^n)> \frac{\delta}{2}.
 \end{equation}Also the function $f_n$ only has positive jumps and $\ubar{f}_n(r_i)\to \ubar{f}(r_i)$, as $\ubar{f}_n$ is continuous, where we recall that $\ubar{f}(x)=\inf_{y\leq x}f(y)$. Therefore, $f_n$ must have an excursion ending point in $(r_i,r_i+2\varepsilon)$ for all large enough $n$. Also, using the fact that the complement of $\cup_{(l,r)\in \mathcal{E}}(l,r)$ has Lebesgue measure zero, $f$ has an excursion endpoint  $r_i^0\in(l_i-\varepsilon,l_i)$. The previous argument shows that $f_n$ has to have an excursion endpoint in $(r_i^0, r_i^0+2\varepsilon)$ and thus in $(l_i-\varepsilon,l_i+2\varepsilon)$, for all large $n$. Therefore, for any $\varepsilon >0$,
 \begin{equation}\label{eq:limsup-exc}
 \limsup_{n\to\infty} \Phi_1(f_n)\leq \Phi_1(f)+3\varepsilon.
\end{equation}  Hence the proof follows from  \eqref{eq:liminf-exc} and \eqref{eq:limsup-exc}.
\end{proof}

\begin{remark}\label{rem:excursion-area-cont} \normalfont For $f\in\mathbb{D}_{+}[0,T]$, let $\mathcal{A}_i(f)$ denote the area under the excursion $\mathcal{L}_i(f)$. Let $(f_n)_{n\geq 1}$ be a sequence of functions on $f\in\mathbb{D}_{+}[0,\infty)$ such that $f_n\to f$, with respect to the Skorohod $J_1$-topology, where $f$ is good. 
Then, \eqref{f_n-f-close}, \eqref{eq:liminf-exc} and \eqref{eq:limsup-exc} also implies that $(\mathcal{A}_1(f_n),\dots,\mathcal{A}_m(f_n))$ converges to $(\mathcal{A}_1(f),\dots,\mathcal{A}_m(f))$, for any $m\geq 1$.
\end{remark}

\begin{defn}\label{defn:good_function-infty}\normalfont A stochastic process $\mathbf{X}\in \mathbb{D}_+[0,\infty)$ is said to be good if 
\begin{enumerate}[(a)]
\item the sample paths are good almost surely when restricted to $[0,T]$, for every fixed $T>0$;
\item $\mathbf{X}$ does not have an infinite excursion almost surely;
\item for any $\varepsilon >0$, $\mathbf{X}$ has only finitely many excursions of length more than $\varepsilon$ almost surely.
\end{enumerate} 
\end{defn}
\begin{lemma}\label{lem:levy-as-good}The thinned L\'evy process $\bar{\mathbf{S}}_\infty^\lambda$ defined in \eqref{defn::limiting::process} is good.
\end{lemma}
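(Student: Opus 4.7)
The plan is to verify the three conditions of Definition~\ref{defn:good_function-infty}. Conditions (b) and (c) are already recorded in the remark following~\eqref{defn::limiting::process}, which invokes \cite[Lemma~1]{AL98}: the reflected version $\refl{\bar{\mathbf{S}}_\infty^\lambda}$ almost surely has no infinite excursion above~$0$ and only finitely many excursions of length at least $\delta$ for each $\delta > 0$. Since excursions of $f$ relative to $\ubar{f}$ correspond bijectively to excursions of $\mathrm{refl}(f)$ above~$0$, (b) and (c) transfer immediately.

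For condition (a), fix $T>0$ and write $R(t) := \refl{\bar{\mathbf{S}}_\infty^\lambda}(t)$ on $[0,T]$. The statement that $[0,T]\setminus\bigcup_{(l,r)\in\mathcal{E}}(l,r)$ has Lebesgue measure zero is equivalent to $\mathrm{Leb}\{t\in[0,T] : R(t)=0\}=0$ a.s., and by Fubini it suffices to show that $\prob{R(t)=0}=0$ for Lebesgue-a.e.~$t\in (0,T]$. I would do this by proving that $\bar{S}_\infty^\lambda(t)$ has an absolutely continuous distribution for each $t>0$: its characteristic function factorises over the independent coordinates $\xi_i$, and the $\boldsymbol{\theta}\in\ell^3_\shortarrow\setminus \ell^2_\shortarrow$ assumption produces enough decay at infinity for integrability. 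A short conditioning argument on the running minimum $\ubar{\bar{S}_\infty^\lambda}(t)$ then rules out an atom of $R(t)$ at~$0$.

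The remaining parts of Definition~\ref{defn::good_function}---no isolated points in $\mathcal{Y}$ and no local minimum of $\bar{S}_\infty^\lambda$ at any point of $\mathcal{Y}$---both reduce to the regularity statement that, almost surely at every $r$ with $R(r)=0$, $\inf_{u\in (r,r+\delta)} \bar{S}_\infty^\lambda(u) < \bar{S}_\infty^\lambda(r)$ for every $\delta > 0$. I would establish this by a second-moment/martingale oscillation argument. For large $K$, consider the tail process $W_K(s) := \sum_{i>K}\theta_i(\mathcal{I}_i(r+s)-\mathcal{I}_i(r)) - \sum_{i>K}(\theta_i^2/\mu)s$, which on the event $\{\mathcal{I}_i(r)=0\ \forall i>K\}$ is a compensated jump process whose quadratic variation on $[0,\delta]$ grows at rate comparable to $\delta\sum_{i>K}\theta_i^2$. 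Since $\boldsymbol{\theta}\notin\ell^2_\shortarrow$, this variance diverges in $K$ and Doob's maximal inequality forces $\inf_{s\in[0,\delta]} W_K(s)$ to be arbitrarily negative with probability close to one. The top $K$ coordinates contribute at most a bounded perturbation depending on $K$ and $\delta$, while the net drift $\lambda s$ is $O(\delta)$; balancing $K$ against $\delta$ yields the strict undercut of $\bar{S}_\infty^\lambda(r)$ in every right-neighborhood of~$r$, giving both remaining statements in~(a) simultaneously.

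The main obstacle is this last oscillation argument: $\bar{\mathbf{S}}_\infty^\lambda$ is not a Lévy process (each type-$i$ jump occurs at a single exponential time $\xi_i$) and is not time-homogeneous, so the classical Lévy-process regularity theorem at the origin cannot be invoked directly. The argument has to be executed by hand, exploiting the memorylessness of the exponentials---conditional on the past up to time~$r$, each $\xi_i$ with $\xi_i>r$ satisfies $\xi_i-r\sim \mathrm{Exp}(\theta_i/\mu)$ independently---so that the coordinates still active after~$r$ furnish a ``fresh'' compensated-jump structure with unbounded $\ell^2$-variance, sufficient to push the reflected process strictly back through~$0$ instantly.
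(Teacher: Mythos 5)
Your reduction of conditions (b), (c) and of the two path\nobreakdash-regularity statements in Definition~\ref{defn::good_function} to a single ``instantaneous undercut'' claim at excursion endpoints is fine, and matches the structure of the paper's proof. The problem is that your proof of the undercut itself rests on a miscalculation. The predictable quadratic variation of the tail martingale $\sum_{i>K}\theta_i\big(\mathcal{I}_i(s)-(\theta_i/\mu)s\big)$ is $\sum_{i>K}\theta_i^2\cdot(\theta_i/\mu)\int_0^s\ind{\xi_i>u}\dif u\leq s\sum_{i>K}\theta_i^3/\mu$ (jump size squared times jump rate), not $s\sum_{i>K}\theta_i^2$. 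Since $\boldsymbol{\theta}\in\ell^3_{\shortarrow}$, this is finite and tends to $0$ as $K\to\infty$: the tail fluctuation is the \emph{small} perturbation, not the divergent one. What diverges (because $\boldsymbol{\theta}\notin\ell^2_{\shortarrow}$) is the compensator drift $-\sum_{i\leq K}(\theta_i^2/\mu)s$ of the \emph{head} coordinates, which acts as a pure negative drift on the event that none of the first $K$ clocks ring in $[r,r+s]$. So the roles of head and tail in your balancing are reversed, and the conclusion you draw from Doob's inequality is also inverted: Doob bounds the probability that a martingale fluctuates far from $0$; it cannot force $\inf_{s\leq\delta}W_K(s)$ to be very negative, and for large $K$ that infimum is in fact close to $0$. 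A corrected by-hand argument would fix $t$ small, choose $K=K(t)$ so that no jumps occur among $i\leq K$ w.h.p.\ while $t\sum_{i\leq K}\theta_i^2/\mu$ is large compared to $\big(t\sum_{i>K}\theta_i^3/\mu\big)^{1/2}$, and apply Chebyshev to the tail.

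The paper avoids this computation entirely with a domination trick you are missing: replace each single-jump indicator $\mathcal{I}_i$ by a rate-$\theta_i/\mu$ Poisson process $\mathcal{N}_i$ to obtain a genuine spectrally positive L\'evy process $\mathbf{L}$ with $S_\infty^\lambda(t)\leq L(t)$ under the natural coupling; Bertoin's regularity theorem then gives $\inf\{t>0:L(t)<0\}=0$ a.s., and restarting at an excursion endpoint (using exactly the memorylessness you describe, which turns the post-$\mathcal{T}$ process into another thinned L\'evy process) transfers the conclusion to $\mathbf{S}_\infty^\lambda$. Separately, for the Lebesgue-null part of Definition~\ref{defn::good_function}(a), the paper quotes \cite[Proposition 14(b)]{AL98}, i.e.\ $\prob{S_\infty^\lambda(u)=\inf_{u'\leq u}S_\infty^\lambda(u')}=0$ directly; your route via absolute continuity of the one-dimensional marginal does not by itself rule out mass on the event $\{S_\infty^\lambda(t)=\ubar{S}_\infty^\lambda(t)\}$, since the running minimum is strongly correlated with $S_\infty^\lambda(t)$, and the ``short conditioning argument'' would need to be spelled out (it is essentially the same regularity statement you are trying to prove).
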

\begin{proof}
 Let us make use of the properties of the process $\bar{\mathbf{S}}_\infty^\lambda$ that were established in \citep{AL98}. $\bar{\mathbf{S}}_\infty^\lambda$ satisfies Definition~\ref{defn:good_function-infty}~(b),(c) by \cite[(8)]{AL98}.   The fact that the excursion endpoints of $\bar{\mathbf{S}}_\infty^\lambda$ do not have any isolated points almost surely follows directly from \citep[Proposition 14 (d)]{AL98}. Further,  \citep[Proposition 14 (b)]{AL98} implies that, for any $u>0$, $\prob{\bar{S}_\infty^\lambda(u)=\inf_{u'\leq u}\bar{S}_\infty^\lambda(u')}=0$. Taking the integral with respect to the Lebesgue measure and interchanging the limit by using Fubini's theorem, we conclude that almost surely 
 \begin{equation}
  \int_0^T \ind{\bar{S}_\infty^\lambda(u)=\inf_{u'\leq u}\bar{S}_\infty^\lambda(u')}\mathrm{d}u=0,
 \end{equation}which verifies Definition~\ref{defn::good_function}~(a). 
 Next, in order to verify Definition~\ref{defn::good_function}~(b), let $r\in \mathcal{R}$ be any excursion end-point of $\biS$. 
It is enough to show that 
\begin{eq}\label{eq:non-perfect-levy}
\inf \{t>0: \iS(r+t) - \iS(r)<0\}=0, \quad \text{almost surely.}
\end{eq}
Let $V_r = \{i: \mathcal{I}_i (r) = 1\}$. 
Thus conditional on the sigma-field $\sigma (\bar{S}_\infty^\lambda(s):s\leq r)$, the process $(\iS(r+t) - \iS(r))_{t\geq 0}$ is distributed as $\hat{\mathbf{S}}_{\infty}^\lambda$ given by  
\begin{eq}
\hat{S}_{\infty}^\lambda(t) =  \sum_{i\notin V_r} \theta_i\left(\mathcal{I}_i(t)- (\theta_i/\mu)t\right)+\lambda t.
\end{eq}
 Now, let $\mathbf{L}$ be the L\'evy process defined as
 \begin{equation}\label{defn:perfect-levy}
  L(t)=\sum_{i\notin V_r} \theta_i\left(\mathcal{N}_i(t)- (\theta_i/\mu)t\right)+\lambda t,
 \end{equation}where $(\mathcal{N}_i(t))_{t\geq 0}$ is a Poisson process with rate $\theta_i$ which are independent for different $i$. Via the natural coupling that states $\mathcal{I}_i(t)\leq \mathcal{N}_i(t)$, we can assume that $\hat{S}_\infty^\lambda(t)\leq L(t)$ for all $t>0$. Using \cite[Theorem VII.1]{Ber96},
 \begin{equation}\label{defn:perfect-levy-1}
  \inf \{t>0: L(t)<0\}=0, \quad \text{almost surely,}
 \end{equation}
and thus, \eqref{eq:non-perfect-levy} follows, and the proof is complete. 
\end{proof}

\subsection{Finite-dimensional convergence}
As described in Section~\ref{sec:conv-expl}, the excursion lengths of the exploration process $\bar{\mathbf{S}}_n$ give the total number of edges in the explored components. Lemma~\ref{lem:surp:poisson-conv} below estimates the number of surplus edges in the components explored upto time $\Theta(b_n)$. This enables us to compute the scaling limits for the component sizes using the results from the previous section and complete the proof of Theorem~\ref{thm::conv:component:size}.

\begin{lemma} \label{lem:surp:poisson-conv} Let $N_n^\lambda(k)$ be the number of surplus edges discovered up to time $k$ and $\bar{N}^\lambda_n(u) = N_n^\lambda(\lfloor ub_n \rfloor)$. Then, as $n\to\infty$,
 \begin{equation}\label{eq:limit-joint-comp-sp}
 (\bar{\mathbf{S}}_n,\bar{\mathbf{N}}_n^\lambda)\dto (\bar{\mathbf{S}}_{\infty}^\lambda,\mathbf{N}),
 \end{equation} where $\mathbf{N}$ is the counting process  defined in \eqref{defn::counting-process}.
 \end{lemma}

\begin{proof}
 We write $
N_n^{\lambda}(l)=\sum_{i=2}^l \xi_i$,
where $\xi_i=\ind{\mathscr{V}_i=\mathscr{V}_{i-1}}$. Let $A_i$ denote the number of active half-edges after stage $i$ while implementing Algorithm~\ref{algo-expl}. Note that 
\begin{equation}\label{eq:increment-surplus-prob}
 \prob{\xi_i=1\vert \mathscr{F}_{i-1}}=\frac{A_{i-1}-1}{\ell_n-2i-1}= \frac{A_{i-1}}{\ell_n}(1+O(i/n))+O(n^{-1}),
\end{equation} uniformly for $i\leq Tb_n$ for any $T>0$. 
Therefore, the instantaneous rate of change of the re-scaled process $\bar{\mathbf{N}}_n^{\lambda}$ at time $t$, conditional on the past, is 
\begin{equation}\label{eqn:intensity}
 b_n\frac{A_{\floor{tb_n}}}{n\mu}\left( 1+o(1)\right) +o(1)= \frac{1}{\mu}\refl{\bar{S}_n(t)}\left( 1+o(1)\right) +o(1).
\end{equation} 
We first argue that, for any fixed $u>0$, 
\begin{equation}\label{1d-tight-surplus}
\big(\bar{N}_n^\lambda (u)\big)_{n\geq 1} \text{ is tight in }\R_+.
\end{equation}
Fix $\varepsilon>0$. 
Using Theorem~\ref{thm:conv:refl:process}, and the fact that the supremum of a process is continuous with respect to the Skorohod $J_1$-topology \cite[Theorem 13.4.1]{W02}, we can choose $K\geq 1$ large enough so that 
\begin{eq}\label{eq:tightness-supremum}
\limsup_{n\to\infty} \PR\Big(\sup_{i\leq \floor{ub_n}} A_{i}>K a_n\Big) <\varepsilon.
\end{eq}
Fix times $0<l_1<\dots<l_m\leq \floor{ub_n}$, and let $\cA(l_1,\dots,l_m)$
denote the event that the surplus edges appear at times $l_1,\dots,l_m$ and $A_{l_{j}-1}\leq K a_n$ for all $j \in [m]$. Then,
\begin{eq}\label{eq:ub-surplus-restricted-suprema}
&\PR\bigg(\sum_{i=2}^{\floor{ub_n}}\xi_i \geq m, \text{ and }\sup_{i\leq \floor{ub_n}} A_{i}\leq K a_n\bigg) \leq \sum_{0<l_1<\dots<l_m\leq \floor{ub_n}} \PR(\cA(l_1,\dots,l_m)) \\
&\leq \sum_{0<l_1<\dots<l_m\leq \floor{ub_n}} \E \big[\PR(\text{surplus created at }l_m\vert \sF_{l_m-1})\ind{A_{l_m - 1} \leq K a_n} \mathbbm{1}_{\cA(l_1,\dots,l_{m-1})}\big] 
\\
&\leq \frac{Ka_n}{\ell_n -2\floor{ub_n} +1} \sum_{0<l_1<\dots<l_m\leq \floor{ub_n}} \PR(\cA(l_1,\dots,l_{m-1})).
\end{eq}
Continuing the iteration in the last step, it follows that
\begin{eq}\label{ub-tightness-sup-bounded}
\PR\bigg(\sum_{i=2}^{\floor{ub_n}}\xi_i \geq m, \text{ and }\sup_{i\leq \floor{ub_n}} A_{i}\leq K a_n\bigg)  \leq (1+o(1))\Big(\frac{K a_n}{\ell_n}\Big)^m \frac{\floor{ub_n}\dots (\floor{ub_n} - m+1)}{m!},
\end{eq}
which tends to zero in the iterated limit $\lim_{m\to\infty}\limsup_{n\to\infty}$. 
An application of \eqref{eq:tightness-supremum} now yields~\eqref{1d-tight-surplus}.

Next, let $\mathbf{S}_n'$ be the process obtained by discarding the points where a surplus edge was added. More precisely, if $\zeta_ l = S_n(l) - S_n(l-1)$, then  we can define $S_n'(l) = S_n'(l-1) +\zeta'_{l}$, where 
\begin{equation}\label{eq:defn-zetal}
\zeta'_l = \zeta_{k_l}, \quad \text{with} \quad k_l = \inf\{j>k_{l-1}: \zeta_j \neq -2\}, \ k_0 =0.
\end{equation} 
Let $\bar{S}_n'(t) = a_n^{-1} S_n'(\floor{tb_n})$. 
Also, let $d_{J_1, T}$ denote the metric for the  Skorohod $J_1$-topology on $\mathbb{D}([0,T], \R)$.
We claim that, for any $T>0$ and $\varepsilon>0$,
\begin{equation}\label{eq:Sn-Snprime-close}
\lim_{n\to\infty}\PR\big( d_{J_1,T} (\bar{\mathbf{S}}_n',\bar{\mathbf{S}}_n) > \varepsilon\big) = 0.
\end{equation}
First, let $1\leq l_1<\dots<l_K\leq \floor{Tb_n}$ denote the times where the  surplus edges have occurred. 
Also, let $\cA$ be the good event that $l_{j}+1<l_{j+1}$ for all $j\leq K$, i.e., none of the surplus edges occur in consecutive steps.
Note that
\begin{eq}\label{eq:good-event-surplus}
&\PR\Big(\cA^c\bigcap \Big\{\sup_{i\leq \floor{Tb_n}} A_{i}\leq K a_n\Big\} \Big) \leq Tb_n \Big(\frac{Ka_n}{\ell_n}\Big)^2 = O(b_n^{-1}),
\end{eq}and thus using \eqref{eq:tightness-supremum}, $\PR(\cA^c) \to 0$. 
We now restrict ourselves to $\cA$. 
Putting $l_0 =0$ and $l_{K+1} = \floor{Tb_n}+1$, let 
\begin{eq}\label{eq:time-change-definition}
\Lambda_n(l) = 
\begin{cases}
l+j-1 \quad &\text{for } l_{j-1}<l<l_j\\
l_j+j-1 \quad &\text{for } l = l_j - 0.5\\
l_j+j \quad &\text{for } l = l_j.
\end{cases}
\end{eq}
$\Lambda_n(t)$ is obtained by linearly interpolating between the values specified by \eqref{eq:time-change-definition}. 
Also, note that the definition of $\Lambda_n$ works well on $\cA$, and on $\cA^c$, we define $\Lambda_n(t) =t$.
Using \eqref{1d-tight-surplus} and  \eqref{eq:good-event-surplus} it immediately follows that 
\begin{eq}\label{eq:modified-J1-1}
\sup_{l\leq Tb_n} |\Lambda_n(l) - l | = \oP(b_n). 
\end{eq}
Moreover, the occurrence of each surplus edge causes $|S_n'(l) - S_n(\Lambda_n(l))|$ to increase by at most 2, so that  
\begin{eq}\label{eq:modified-J1-2}
\sup_{l\leq Tb_n} |S_n'(l) - S_n(\Lambda_n(l))| = \oP(a_n). 
\end{eq}
Then, \eqref{eq:Sn-Snprime-close} follows by combining \eqref{eq:modified-J1-1} and \eqref{eq:modified-J1-2}. 
We now proceed to complete the proof of Lemma~\ref{lem:surp:poisson-conv}.
Let us start by setting up some notation for the rest of the proof. Fix $T>0$, $k\geq 0$ and let $\mathrm{Surp}_T = \{l_1,\dots ,l_k\}$ be the surplus generation times, where $1\leq l_1 <l_2<\dots<l_k \leq \floor{Tb_n}+k$. Let $(z_l)_{l\leq \floor{Tb_n}+k}$ be a sequence of integers such that $z_{l_i} = -2$ and  $z_l\geq -1$ for $l\notin \{l_1,\dots,l_k\}$. 
Thus $(z_l)_{l\leq \floor{Tb_n}+k }$ represents the increments of a sample path of $S_n$ which has explored $k$ surplus edges, and $\mathrm{Surp}_T $ is the set of times when surplus edges are found. 
Next, $(z_l')_{l\leq \floor{Tb_n}}$ denote the sequence obtained from $(z_l)_{l\leq \floor{Tb_n}+k}$ by deleting the $-2$'s. 
Thus, $(z_l')_{l\leq \floor{Tb_n}}$ corresponds to the increments of a sample path of $S_n'$. 
Recall that $\zeta_l = S_n(l) - S_n(l-1)$. 
Thus, 
\begin{eq}\label{sample-path-prob-conditional}
&\PR\bigg(N_n^{\lambda}(\floor{Tb_n}+k) = k \ \Big\vert\   (S_n'(l))_{l\leq \floor{Tb_n}}= \Big(\sum_{j\leq l}z_j'\Big)_{l\leq \floor{Tb_n}}, N_n^{\lambda}(\floor{Tb_n}+k)  \leq \omega_n\bigg) \\
& = \sum_{1\leq l_1<\dots<l_k\leq Tb_n+k}\PR\bigg(\text{surplus occurs only at times }l_1, \dots,l_k\ \bigg\vert\  \substack{\big(S_n'(l)\big)_{l\leq Tb_n} = \big(\sum_{j\leq l}z_j'\big)_{l\leq \floor{Tb_n}},\\ N_n^\lambda(\floor{T b_n}+k)\leq \omega_n}\bigg)\\
& = \sum_{1\leq l_1<\dots<l_k\leq Tb_n+k} \frac{\PR(\zeta_l = z_{l}, \ \text{ for all } 1\leq l\leq \floor{Tb_n}+k)}{\PR(\big(S_n'(l)\big)_{l\leq Tb_n} = \big(\sum_{j\leq l}z_j'\big)_{l\leq \floor{Tb_n}}, N_n^\lambda(\floor{T b_n}+k)\leq \omega_n)}
\end{eq}
Define $m_1= \#\{i\in [n]:d_i = z_1+2\}$, and for $l\notin\mathrm{Surp}_T$, we denote 
$m_l = \#\{i\in [n]:d_i = z_l+2\} - \#\{j< l:z_j = z_l\}.$
Thus, $m_l$ gives the number of degree $z_l+2$ vertices in the system at time $l$, which are potential candidates to cause a jump of $z_l$ at time $l$.
Next, let $a_l$ denote the number of active half-edges at time $l$ when the exploration process takes the path $(z_l)_{l\leq \floor{Tb_n}+k}$, and $a_l' = S_n'(l)-\min_{j<l} S_n'(j)$. 
Now, 
\begin{eq}\label{exact-prob-sample-path}
&\PR(\zeta_l = z_l, \ \forall l\leq \floor{Tb_n}+k) = \frac{\prod_{l\notin \mathrm{Surp}_T} m_l \times \prod_{j=1}^k(a_{l_j-1}-1)}{(\ell_n - 1)(\ell_n-3)\cdots (\ell_n - 2\floor{Tb_n}-2k+1)} \\
& = \frac{\prod_{l\notin \mathrm{Surp}_T} m_l \times \prod_{j=1}^k(a_{l_j-1}-1)}{(\ell_n - 1)\cdots (\ell_n - 2\floor{Tb_n}+1)} \times  (1+o(1))\prod_{j=1}^k\frac{a_{l_j-1}'}{\ell_n},
\end{eq}where the $o(1)$ term above is uniform in  $k\leq \omega_n = \log n$. 
Thus, 
\begin{eq}\label{sample-path-prob-conditional-2}
\eqref{sample-path-prob-conditional}& =(1+o(1))\frac{\sum_{1\leq l_1<\dots<l_k\leq \floor{Tb_n}+k}\prod_{j=1}^k\frac{a_{l_j-1}'}{\ell_n^k} }{\sum_{r=0}^{\omega_n}\sum_{1\leq l_1<\dots<l_r\leq  \floor{Tb_n}+r}\prod_{j=1}^r\frac{a_{l_j-1}'}{\ell_n^r}} =: (1+o(1)) \frac{\beta_{n,k}}{\sum_{r=0}^\infty \beta_{n,r}},
\end{eq}where $\beta_{n,r} = 0$ for $r>\omega_n$. 
Now, Theorem~\ref{thm::convegence::exploration_process} together with \eqref{eq:Sn-Snprime-close} also implies that 
$\bar{\mathbf{S}}_n' \xrightarrow{\sss d} \mathbf{S}^\lambda_\infty$ with respect to the Skorohod $J_1$-topology. 
Since the reflection of a process is continuous in Skorohod $J_1$-topology (see \cite[Lemma 13.5.1]{W02}) it also follows that $\mathrm{refl}(\bar{\mathbf{S}}_n')  \xrightarrow{\sss d} \mathrm{refl} (\mathbf{S}^\lambda_\infty )$. 
Thus,
\begin{eq}\label{eq:beta-S-joint-convergence}
\Big((\beta_{n,r})_{r\geq 0}, (\bar{S}_n'(u))_{u\leq T} \Big) \dto \bigg(\Big(\frac{1}{r!}\Big(\frac{1}{\mu} \int_0^T \refl{\bar{S}_\infty^\lambda(u)}\dif u\Big)^r\Big)_{r\geq 0}, (\bar{S}_\infty^\lambda(u))_{u\leq T} \bigg),
\end{eq}
where the convergence of $(\beta_{n,r})_{r\geq 0}$ holds with respect to the product topology on $\R^\infty$.
Next, let us ensure that $\sum_{r=0}^\infty \beta_{n,r}$ in \eqref{sample-path-prob-conditional} converges to the desired quantity. 
To this end, consider a probability space where the convergence of \eqref{eq:beta-S-joint-convergence} holds almost surely. 
On this space, $\sup_{l\leq Tb_n+k} \refl{S_n'(l)}\leq 2 (\sup_{l\leq Tb_n+k} S_n'(l) +\omega_n)=: X_n(T)$, and thus
\begin{eq}
\beta_{n,r} \leq \frac{(Tb_n+\omega_n)^r}{r!} \frac{X_n(T)^r}{\ell_n^r}. 
\end{eq}
Since $a_n^{-1}\sup_{l\leq Tb_n+k} S_n'(l)$ converges, an application of Dominated Convergence Theorem yields that 
\begin{eq}\label{eq:normalization-surplus-limit}
\sum_{r\geq 0} \beta_{n,r} \asto \sum_{r\geq 0}\frac{1}{r!}\Big(\frac{1}{\mu} \int_0^T \refl{\bar{S}_\infty^\lambda(u)}\dif u\Big)^r = \exp \bigg(\frac{1}{\mu} \int_0^T \refl{\bar{S}_\infty^\lambda(u)}\dif u\bigg).
\end{eq}
Next, for bounded continuous functions $\phi_1: \mathbb{D}([0,T], \R) \to \R$ and $\phi_2: \N \to \R$, 
\begin{eq}
&\E \big[\phi_1\big(\big(\bar{S}_n'(u)\big)_{u\leq T}\big)\phi_2(\bar{N}_n^{\lambda}(T)) \big] \\
&=  o(1) +\E \big[\phi_1\big(\big(\bar{S}_n'(u)\big)_{u\leq T}\big)\phi_2(\bar{N}_n^{\lambda}(T)) \ind{N_n^{\lambda}(\floor{Tb_n} + k)  \leq \omega_n}\big]\\
& = o(1) + \E \bigg[\phi_1\big(\big(\bar{S}_n'(u)\big)_{u\leq T}\big)\ind{N_n^{\lambda}(\floor{Tb_n} + k)   \leq \omega_n}\times (1+o(1)) \frac{\sum_{k\geq 0}\phi_2(k) \beta_{n,k}}{\sum_{r\geq 0} \beta_{n,r}}\bigg] \\ 
& = o(1) + \E \bigg[\phi_1\big(\big(\bar{S}_n'(u)\big)_{u\leq T}\big)\times \frac{\sum_{k\geq 0}\phi_2(k) \beta_{n,k}}{\sum_{r\geq 0} \beta_{n,r}}\bigg] \to \E \big[\phi_1\big(\big(\bar{S}_\infty^\lambda(u)\big)_{u\leq T}\big)\phi_2(N(T)) \big],
\end{eq}where $N(T)$, conditionally on $(\bar{S}_\infty^\lambda(u))_{u\leq T}$, is distributed as Poisson$(\frac{1}{\mu}\int_0^T\refl{\bar{S}_\infty^\lambda(u)} \dif u)$.
We have used \eqref{1d-tight-surplus} in the third step, and the final step follows by combining \eqref{eq:beta-S-joint-convergence} and \eqref{eq:normalization-surplus-limit}.
Hence, we have shown that, for any $T>0$, 
\begin{eq}\label{total-limit-law}
\Big(\big(\bar{S}_n'(u)\big)_{u\leq T},\bar{N}_n^{\lambda}(T)\Big)\dto \Big(\big(\bar{S}_\infty^\lambda(u)\big)_{u\leq T},N(T)\Big).
\end{eq}
Next, let $U_1^n<U_2^n<...$ denote the location of surplus edges in the process $S_n$. 
Then, using \eqref{exact-prob-sample-path} yields
\begin{eq}\label{location-law}
&\PR\Big(U_j^n = l_j, \text{ for all }j\in [k]\Big\vert \big(\bar{S}_n'(u)\big)_{u\leq T},\bar{N}_n^{\lambda}(T) = k\Big)\\
& = (1+o(1)) \frac{\frac{1}{\ell_n^k}\prod_{j=1}^k (A_{l_j}-1)}{\sum_{1\leq l_1'<\dots < l_k'\leq \floor{Tb_n}+k}\frac{1}{\ell_n^k}\prod_{j=1}^k (A_{l_j'}-1)}.
\end{eq}
From this, it can be seen that the law of $b_n^{-1} (U_j^n)_{j\in [k]}$, conditionally on $(\bar{S}_n'(u))_{u\leq T}$, and $\bar{N}_n^{\lambda}(T) = k$, converges to the order-statistics of $k$ i.i.d.~random variables with density $\frac{\ind{u\in [0,T]} \refl{\bar{S}_\infty^\lambda (u)}}{\int_0^T\refl{\bar{S}_\infty^\lambda (u)}\dif u}$. 
Now, combining \eqref{total-limit-law}, \eqref{location-law} together with \eqref{eq:Sn-Snprime-close} completes the proof of Lemma~\ref{lem:surp:poisson-conv}.
\end{proof}

\begin{remark}\normalfont In \cite{DHLS15} and an earlier version of the paper, we concluded the proof of Lemma~\ref{lem:surp:poisson-conv} from the convergence of compensators only. 
We thank Lorenzo Federico and Tim Hulshof for pointing out a gap in this argument.
Indeed, for Erd\H{o}s-R\'enyi random graphs \cite{A97} or rank-one inhomogeneous random graphs \cite{BHL10,BHL12}, showing the convergence of compensators suffices using \cite[Theorem 1]{Bro81} since the surplus edges can be added independently after we have observed the whole exploration process. However, this is not true for the configuration model because the surplus edges occur precisely at places with jumps $-2$. 
For this reason, we have modified the proof and the identical argument also completes the proof in \cite[Lemma 5.16]{DHLS15}. 
\end{remark}

\begin{theorem} \label{thm::component-sizes-finite-dim}
 For any $m\geq 1$, as $n\to \infty$,
 \begin{equation}
  b_n^{-1}\big( |\mathscr{C}_{\sss (1)}|, |\mathscr{C}_{\sss (2)}|, \dots, |\mathscr{C}_{\sss(m)}|\big) \dto (\gamma_1(\lambda), \gamma_2(\lambda), \dots, \gamma_m(\lambda))
 \end{equation} with respect to the product topology, where $\gamma_i(\lambda)$ is the $i^{th}$ largest excursion of $\bar{\mathbf{S}}_{\infty}$ defined in \eqref{defn::limiting::process}.
\end{theorem}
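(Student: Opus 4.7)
The plan is to derive Theorem~\ref{thm::component-sizes-finite-dim} as a continuous-mapping corollary of the exploration-process convergence, combined with the surplus-edge control and the fact that large components are discovered early. Theorem~\ref{thm:conv:refl:process} gives $\refl{\bar{\mathbf{S}}_n} \dto \refl{\bar{\mathbf{S}}_\infty}$ in $J_1$, and by Algorithm~\ref{algo-expl} the excursion of $\mathbf{S}_n$ between consecutive hitting times $\tau_{k-1}$ and $\tau_k$ encodes the $k$-th explored component $\mathscr{C}_k$, with $\tau_k - \tau_{k-1} - 1$ equal to the number of edges in $\mathscr{C}_k$. Since a connected graph satisfies $|\mathscr{C}_k| = (\text{edges in }\mathscr{C}_k) - \mathrm{SP}(\mathscr{C}_k) + 1$, we have $|\mathscr{C}_k| = \tau_k - \tau_{k-1} - \mathrm{SP}(\mathscr{C}_k)$. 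After rescaling by $b_n$, the surplus term is $\oP(1)$ by Lemma~\ref{lem:surp:poisson-conv}, which in particular implies that the total number of surplus edges accumulated inside any window $[0, Tb_n]$ stays tight.

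To extract the top-$m$ ordered components I would truncate to a finite time window $[0, T]$. Given $m \geq 1$ and $\varepsilon > 0$, Lemma~\ref{lem:no-large-comp-later} furnishes $T$ such that, with probability at least $1 - \varepsilon$, the top-$m$ components are all discovered within $[0, Tb_n]$; consequently the re-scaled top-$m$ component sizes coincide (up to an additive $\oP(1)$ surplus correction) with the top-$m$ excursion lengths of $\bar{\mathbf{S}}_n$ restricted to $[0, T]$. On the limit side, Lemma~\ref{lem:levy-as-good} says that $\bar{\mathbf{S}}_\infty$ is good in the sense of Definition~\ref{defn:good_function-infty}, so its top-$m$ excursions lie in $[0, T]$ for all sufficiently large $T$. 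Now Lemma~\ref{lem::good:function:continuity} renders the top-$m$ excursion-length functional $\Phi_m$ continuous at $\refl{\bar{\mathbf{S}}_\infty}|_{[0,T]}$, and by the continuous-mapping theorem
\begin{equation*}
\Phi_m\bigl(\refl{\bar{\mathbf{S}}_n}|_{[0,T]}\bigr) \dto \Phi_m\bigl(\refl{\bar{\mathbf{S}}_\infty}|_{[0,T]}\bigr) = (\gamma_1(\lambda), \dots, \gamma_m(\lambda)).
\end{equation*}
Combining this with the surplus bound, and then letting $T \to \infty$ using Lemma~\ref{lem:no-large-comp-later} and the third item of Definition~\ref{defn:good_function-infty}, completes the argument.

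The main obstacle is a technical one: strictly speaking $\refl{\bar{\mathbf{S}}_n}$ has negative jumps of size $2/a_n$ coming from the $-2$ in \eqref{defn:exploration:process}, so it does not belong to $\mathbb{D}_+[0, T]$ as required by Lemma~\ref{lem::good:function:continuity}. I would resolve this by replacing $\refl{\bar{\mathbf{S}}_n}$ by a close cousin that smooths out these $-2/a_n$ steps (for instance, by linearly interpolating each unit time-step), producing a process that lies in $\mathbb{D}_+[0, T]$, differs from $\refl{\bar{\mathbf{S}}_n}$ uniformly by $O(a_n^{-1}) = o(1)$, and hence has the same $J_1$ scaling limit $\refl{\bar{\mathbf{S}}_\infty}$. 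The remaining subtlety, namely that Algorithm~\ref{algo-expl} produces components in exploration order rather than size order, is absorbed automatically by extracting the top-$m$ excursion lengths through $\Phi_m$, once Lemma~\ref{lem:no-large-comp-later} ensures that no unexamined large component lurks beyond the truncation window.
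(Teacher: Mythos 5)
Your proposal is correct and follows essentially the same route as the paper's proof: convergence of the exploration process (Theorem~\ref{thm::convegence::exploration_process}), continuity of the ordered-excursion functional at good functions (Lemmas~\ref{lem:levy-as-good} and~\ref{lem::good:function:continuity}), negligibility of the surplus correction (Lemma~\ref{lem:surp:poisson-conv}), and the truncation/$T\to\infty$ step via Lemma~\ref{lem:no-large-comp-later} and Definition~\ref{defn:good_function-infty}; the paper merely phrases the edge count through the total degree $\mathscr{D}_{\sss(i)}^{\sss\mathrm{ord},T}/(2b_n)$ rather than through the hitting times $\tau_k$, and invokes the a.s.\ distinctness of the limiting excursion lengths to pass from ordered edge counts to ordered sizes. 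Your extra remark about the $-2/a_n$ downward steps keeping $\bar{\mathbf{S}}_n$ out of $\mathbb{D}_+[0,T]$, patched by interpolation at a uniform $O(a_n^{-1})$ cost, addresses a technicality the paper glosses over and is a valid fix.
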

\begin{proof}
Fix any $m\geq 1$. Let $\mathscr{C}_{\sss (i)}^{\sss T}$ be the $i^{th}$ largest component explored by Algorithm~\ref{algo-expl} up to time $Tb_n$. Denote by $\mathscr{D}_{\sss (i)}^{\sss \mathrm{ord},T}$ the $i^{th}$ largest value of $(\sum_{k\in \mathscr{C}_{\sss(i)}^{\sss T}}d_k)_{i\geq 1}$. Let $g:\R^m\mapsto \R$ be a bounded continuous function. By Lemma~\ref{lem:levy-as-good}, the sample paths of $\bar{\mathbf{S}}_{\infty}$ are almost surely good. Thus, using Theorem~\ref{thm::convegence::exploration_process}, Lemma~\ref{lem::good:function:continuity} gives
\begin{equation}
 \lim_{n\to\infty} \expt{g\Big((2b_n)^{-1}\big( \mathscr{D}_{\sss(1)}^{\sss\mathrm{ord},T}, \mathscr{D}_{\sss(2)}^{\sss\mathrm{ord},T}, \dots, \mathscr{D}_{\sss(m)}^{\sss\mathrm{ord},T}\big)\Big)} = \expt{g\big(\gamma_1^{\sss T}(\lambda), \gamma_2^{\sss T}(\lambda), \dots, \gamma_m^{\sss T}(\lambda)\big)},
\end{equation}where $\gamma_i^{\sss T}(\lambda)$ is the $i^{th}$ largest excursion of $\bar{\mathbf{S}}_{\infty}$ restricted to the time interval $[0,T]$. Now the support of the joint distribution of $(\gamma_i^{\sss T}(\lambda))_{i\geq 1}$ is concentrated on $\{(x_1,x_2,\dots): x_1>x_2>\dots\}$. Thus, using Lemma~\ref{lem:surp:poisson-conv}, it follows that 
\begin{equation}\label{finite-dim-D}
\lim_{n\to\infty} \expt{g\Big(b_n^{-1}\big( |\mathscr{C}_{\sss (1)}^{\sss T}|, |\mathscr{C}_{\sss (2)}^{\sss T}|, \dots, |\mathscr{C}_{\sss (m)}^{\sss T}|\big)\Big)} = \expt{g\big(\gamma_1^{\sss T}(\lambda), \gamma_2^{\sss T}(\lambda), \dots, \gamma_m^{\sss T}(\lambda)\big)}.
\end{equation}Since $\mathbf{S}_{\infty}^{\lambda}$ satisfies Definition~\ref{defn:good_function-infty}~(b), (c), it follows that
\begin{equation}\label{no-inf-exc}
 \lim_{T\to\infty}\expt{g\big(\gamma_1^{\sss T}(\lambda), \gamma_2^{\sss T}(\lambda), \dots, \gamma_m^{\sss T}(\lambda)\big)}=\expt{g\big(\gamma_1(\lambda), \gamma_2(\lambda), \dots, \gamma_m(\lambda)\big)}
\end{equation}
 Finally, using Lemma~\ref{lem:no-large-comp-later}, the proof of Theorem~\ref{thm::component-sizes-finite-dim} is completed by \eqref{finite-dim-D} and \eqref{no-inf-exc}.

\end{proof}

\begin{proof}[Proof of Theorem~\ref{thm::conv:component:size}]
The proof of Theorem~\ref{thm::conv:component:size}  follows directly from Theorem~\ref{thm::component-sizes-finite-dim} and Proposition~\ref{prop-l2-tightness}.
\end{proof}

\section{Convergence in the $\mathbb{U}^0_{\shortarrow}$ topology}\label{sec:surplus}
 The goal of this section is to prove the joint convergence of the component sizes and the surplus edges as described in Theorem~\ref{thm:spls}. We start with a preparatory lemma:
\begin{lemma}\label{lem:spls:prod} The convergence in \eqref{thm:eqn:spls} holds with respect to the $\ell^2_{\shortarrow}\times \mathbbm{N}^{\infty}$ topology.
\end{lemma}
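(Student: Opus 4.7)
The product topology on $\ell^2_{\shortarrow}\times \mathbbm{N}^{\infty}$ is weaker than the $\mathbb{U}^0_{\shortarrow}$ topology in that it only demands coordinate-wise convergence of the surplus coordinates. The plan is therefore to combine (i) joint finite-dimensional convergence of the first $m$ rescaled component sizes together with their surplus counts, for every fixed $m\geq 1$, with (ii) the $\ell^2_{\shortarrow}$-tail control for the component sizes provided by Proposition~\ref{prop-l2-tightness}. By a standard argument, these two ingredients yield convergence in distribution on $\ell^2_{\shortarrow}\times \mathbbm{N}^{\infty}$: tightness in the first coordinate follows from Proposition~\ref{prop-l2-tightness}, while tightness in the second holds automatically because $\mathbbm{N}^{\infty}$ with the product topology is Polish and componentwise tightness of integer-valued sequences is immediate from the finite-dimensional convergence.

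For the finite-dimensional convergence, I would run the same reduction as in the proof of Theorem~\ref{thm::component-sizes-finite-dim}. Fix $m\geq 1$ and a large $T>0$; by Lemma~\ref{lem:no-large-comp-later}, with high probability the $m$ largest components are all explored by Algorithm~\ref{algo-expl} before time $Tb_n$, so their sizes and surpluses are encoded as excursion lengths of $\bar{\mathbf{S}}_n|_{[0,T]}$ together with the corresponding increments of $\bar{\mathbf{N}}_n^{\lambda}$ on those excursion intervals. Lemma~\ref{lem:surp:poisson-conv} yields $(\bar{\mathbf{S}}_n,\bar{\mathbf{N}}_n^{\lambda})\dto (\bar{\mathbf{S}}_\infty^{\lambda},\mathbf{N}^{\lambda})$. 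It then remains to show that the map
\begin{equation}
\Psi_m:(f,g)\longmapsto \bigl(\mathcal{L}_1(f),\ldots,\mathcal{L}_m(f),\,g(r_1)-g(l_1),\ldots,g(r_m)-g(l_m)\bigr),
\end{equation}
where $(l_i,r_i)$ is the $i$-th longest excursion of $f\in \mathbb{D}_+[0,T]$, is continuous at almost every realization of $(\bar{\mathbf{S}}_\infty^{\lambda},\mathbf{N}^{\lambda})$.

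The length component of $\Psi_m$ is already handled by Lemma~\ref{lem::good:function:continuity}, and the same argument shows that the excursion endpoints $(l_i,r_i)$ of $f_n$ converge to those of $f$ when $f$ is good. For the increment component, given the joint $J_1$-convergence $g_n\to g$ in $\mathbb{D}[0,T]$, it suffices that $g$ is almost surely continuous at each excursion endpoint of $f$; then the standard Skorohod criterion gives $g_n(\Lambda_n(r_i))\to g(r_i)$ and $g_n(\Lambda_n(l_i))\to g(l_i)$. For our limit, $\mathbf{N}^{\lambda}$ is a counting process with stochastic intensity $\refl{\bar{\mathbf{S}}_\infty^{\lambda}(t)}$, which vanishes precisely at the excursion endpoints of $\bar{\mathbf{S}}_\infty^{\lambda}$; hence conditional on $\bar{\mathbf{S}}_\infty^{\lambda}$ the process $\mathbf{N}^{\lambda}$ almost surely has no jump at any fixed excursion endpoint, and this extends to the countable collection of endpoints using Lemma~\ref{lem:levy-as-good}. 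I expect this almost sure continuity of $\mathbf{N}^{\lambda}$ at the endpoints to be the main technical point of the argument; once it is established, continuous mapping gives the desired joint convergence of $\Psi_m(\bar{\mathbf{S}}_n|_{[0,T]},\bar{\mathbf{N}}_n^{\lambda}|_{[0,T]})$.

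Finally, passing from the truncated objects to $\mathbf{Z}_n$ uses Lemma~\ref{lem:no-large-comp-later} to control the contribution of components explored after time $Tb_n$, while letting $T\to\infty$ on the limit side is justified by Definition~\ref{defn:good_function-infty}(b)--(c), since $\bar{\mathbf{S}}_\infty^{\lambda}$ has no infinite excursion and at most finitely many excursions of length exceeding any $\delta>0$. Re-ordering by size gives the finite-dimensional convergence for $\mathbf{Z}_n$, which combined with the $\ell^2_{\shortarrow}$-tightness from Proposition~\ref{prop-l2-tightness} completes the proof.
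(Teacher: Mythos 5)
Your proposal is correct and takes essentially the same route as the paper's (very terse) proof: reduce to finite-dimensional convergence via Lemma~\ref{lem:no-large-comp-later}, apply the joint convergence of Lemma~\ref{lem:surp:poisson-conv}, use the continuity of the excursion maps from Lemma~\ref{lem::good:function:continuity}, let $T\to\infty$ via the goodness of $\bar{\mathbf{S}}_\infty^\lambda$, and handle $\ell^2_\shortarrow$-tightness through Proposition~\ref{prop-l2-tightness} (equivalently, through Theorem~\ref{thm::conv:component:size}). The one place where you diverge in emphasis is the final step of getting the surplus increments $g(r_i)-g(l_i)$ to pass to the limit: you argue this via almost-sure continuity of $\mathbf{N}^\lambda$ at the excursion endpoints of $\bar{\mathbf{S}}_\infty^\lambda$ (since its conditional intensity $\refl{\bar{S}_\infty^\lambda}$ vanishes there and there are countably many such endpoints), whereas the paper instead invokes Remark~\ref{rem:excursion-area-cont}, which says the excursion areas $\mathcal{A}_i(f_n)\to\mathcal{A}_i(f)$ under $J_1$-convergence to a good $f$; the surplus increments then converge because, conditionally, $N(r_i)-N(l_i)$ is Poisson with parameter $\mathcal{A}_i(\refl{\bar{S}_\infty^\lambda})$. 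Both paths close the same gap: yours is a clean continuous-mapping argument for the increment functional and is arguably slightly more transparent, while the paper's keeps the argument tied to the area functional already needed to identify the limiting Poisson marks. Either way, the key technical points you flag (endpoint convergence, no jump of $\mathbf{N}^\lambda$ at endpoints, and the $T\to\infty$ passage) are exactly right.
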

\begin{proof}
 Note that Lemma~\ref{lem:no-large-comp-later} already states that we do not see large components being explored after the time  $Tb_n$ for large $T>0$. Thus the proof is a consequence of Lemmas~\ref{lem::good:function:continuity},~\ref{lem:surp:poisson-conv}, Remark~\ref{rem:excursion-area-cont} and Theorem~\ref{thm::conv:component:size}.
\end{proof}
Recall the definition of the metric $\mathrm{d}_{\sss \mathbb{U}}$ from \eqref{defn_U_metric}. Using Lemma~\ref{lem:spls:prod}, it now remains to obtain a uniform summability condition on the tail of the sum of products of the scaled component sizes and the surplus edges: 
 \begin{proposition}\label{prop-surp-u-0} For any $\varepsilon >0$,
 \begin{equation}
 \lim_{\delta\to 0}\limsup_{n\to\infty} \PR\bigg(\sum_{i: |\mathscr{C}_{(i)}|\leq \delta b_n }|\mathscr{C}_{\sss(i)}|\times \surp{\mathscr{C}_{\sss(i)}}> \varepsilon b_n\bigg)=0.
 \end{equation}
 \end{proposition}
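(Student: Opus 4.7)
The plan is to bound the expectation via Markov's inequality and reduce the problem to a subcritical surplus estimate on the subgraph $\mathcal{G}^{\sss[K]}$ obtained by removing the top-$K$ high-degree vertices. Using the identity $\sum_i |\mathscr{C}_{\sss(i)}|\,\surp{\mathscr{C}_{\sss(i)}} = n\,\E[\surp{\mathscr{C}(V_n)}]$ for a uniformly random vertex $V_n$, Markov's inequality reduces the claim to showing
\[
b_n^{-1}\,n\,\E\bigl[\surp{\mathscr{C}(V_n)}\,\ind{|\mathscr{C}(V_n)|\leq \delta b_n}\bigr] \longrightarrow 0 \quad \text{as } n \to \infty \text{ and then } \delta \to 0.
\]

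First I would argue that, for $K$ chosen large enough (depending on $\delta$) so that $\theta_j > C\delta$ for all $j \leq K$, no small component contains a top-$K$ vertex with probability tending to $1$. The exploration started at such a $j$ has $A_0 = d_j \sim \theta_j a_n$ active half-edges, drift $O(c_n^{-1})$, and a hitting-time estimate analogous to Lemma~\ref{lem:no-large-comp-later} gives $|\mathscr{C}(j)| \geq C' \theta_j b_n > \delta b_n$ w.h.p. Consequently, with high probability,
\[
\sum_{i:|\mathscr{C}_{\sss(i)}|\leq \delta b_n} |\mathscr{C}_{\sss(i)}|\,\surp{\mathscr{C}_{\sss(i)}} \ \leq\ \sum_j |\mathscr{C}_{\sss(j)}^{\sss[K]}|\,\surp{\mathscr{C}_{\sss(j)}^{\sss[K]}}\ =\ n\,\E\bigl[\surp{\mathscr{C}^{\sss[K]}(V_n^{\sss[K]})}\bigr].
\]

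On $\mathcal{G}^{\sss[K]}$, which is subcritical with $1 - \nu_n^{\sss[K]} \geq C_K c_n^{-1}$ and $C_K \to \infty$ by Lemma~\ref{lem::tail_sum_squares}, the active half-edge count $A_l$ during exploration from a vertex $v$ is a random walk with negative drift $\mu_0 = \nu_n^{\sss[K]} - 1$ and step variance $\sigma^2 = O(d_{K+1})$. Applying optional stopping to $A_l^2$ at the hitting time $\tau_v$ of $0$ starting from $A_0 = d_v$ yields
\[
\E\biggl[\sum_{l=1}^{\tau_v} A_{l-1}\biggr] \ \leq\ \frac{d_v^2}{2(1 - \nu_n^{\sss[K]})} + \frac{\sigma^2\, d_v}{2(1 - \nu_n^{\sss[K]})^2}.
\]
Since a surplus edge arises at step $l$ with conditional probability at most $A_{l-1}/\ell_n^{\sss[K]}$, averaging over $v = V_n^{\sss[K]}$ and dividing by $\ell_n^{\sss[K]}$ produces
\[
n\,\E\bigl[\surp{\mathscr{C}^{\sss[K]}(V_n^{\sss[K]})}\bigr] = O\biggl(\frac{c_n}{C_K} + \frac{\theta_{K+1}\,b_n\,c_n}{C_K^2}\biggr),
\]
which is $o(b_n)$ once $K = K(n) \to \infty$ is chosen so that $\theta_{K+1} c_n / C_K^2 \to 0$; the hypothesis $\boldsymbol{\theta} \notin \ell^2_{\shortarrow}$ guarantees $C_K \to \infty$ and enables this choice. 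The main obstacle lies precisely here: since the step variance $\sigma^2 = O(d_{K+1})$ grows with $n$, no bound uniform in $K$ for fixed $n$ is possible, and one must let $K = K(n) \to \infty$ at a suitable polynomial rate and carefully combine the $\ell^2$-tightness of Lemma~\ref{lem::tail_sum_squares} with the random walk estimate above.
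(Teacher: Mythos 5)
Your approach differs from the paper's in a meaningful way: for $\lambda<0$ the paper works directly on the exploration walk of the full graph and establishes the refined probability estimate of Lemma~\ref{lem:sp-cv-n}, $\PR(\surp{\mathscr{C}(V_n)}\geq K,\;|\mathscr{C}(V_n)|\in(\delta_K b_n,2\delta_K b_n))\leq C\sqrt{\delta}/(a_nK^{1.1})$, and sums over dyadic size scales and surplus levels; for $\lambda>0$ it removes a \emph{fixed} finite number $R$ of hubs to create subcriticality, with $R$ independent of $\delta$ and $n$. You instead remove the top $K(\delta)$ hubs, reduce to a second-moment estimate on $\mathcal{G}^{\sss[K]}$, and try to win via Markov. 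That reduction is legitimate, but your proposal has a genuine gap in the step-variance bound, and you have in effect flagged it yourself without resolving it.

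Concretely: bounding the increment variance by $\sigma^2=O(d_{K+1})=O(\theta_{K+1}a_n)$ is far too crude, and it is exactly this crude bound that produces the divergent term $\theta_{K+1}c_n/C_K^2$ after dividing by $b_n$ (so that $\limsup_{n\to\infty}$ is $+\infty$ for any fixed $K$). Your proposed remedy, letting $K=K(n)\to\infty$, is inconsistent with your first step, which requires $\theta_j$ bounded below by a $\delta$-dependent threshold for all $j\leq K$, hence $K$ bounded in $n$. The correct variance bound is $\sigma^2\leq \sum_{i>K}d_i^3/\ell_n'$: the increment is $d_{\sss(l)}J_l-2$ and a degree-biased pick gives $\E[d_{\sss(l)}^2J_l\mid\mathscr{F}_{l-1}]\leq\sum_{i>K}d_i^3/\ell_n'$. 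Writing $\epsilon_K:=\limsup_n a_n^{-3}\sum_{i>K}d_i^3$, Assumption~\ref{assumption1}~\eqref{assumption1-2} gives $\epsilon_K\to 0$ as $K\to\infty$. Plugging $\sigma^2=O(\epsilon_K a_n^3/n)$ into your optional-stopping bound yields $n\E[\surp{\mathscr{C}^{\sss[K]}(V_n)}]/b_n = O(c_n/(C_Kb_n)+\epsilon_K/C_K^2)$, whose $\limsup_n$ is $O(\epsilon_K/C_K^2)$ and tends to $0$ as $K=K(\delta)\to\infty$ (using $\boldsymbol{\theta}\notin\ell^2_\shortarrow$ so that $C_K\to\infty$). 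So the approach closes \emph{without} taking $K$ to depend on $n$, and the ``main obstacle'' you identify is an artifact of the weak variance bound. Two smaller points also need fixing: the exclusion step requires $\theta_j>C\sqrt{\delta}$ (not $C\delta$), since over $\delta b_n$ steps the walk's fluctuation scale is $\sqrt{\delta b_n\cdot a_n^3/n}=\sqrt{\delta}\,a_n$; and one must handle the overshoot and the time-inhomogeneity of the exploration walk when applying optional stopping (the drift becomes more negative as hubs are discovered, which works in your favor, but the variance bound must be shown uniform over the relevant time window).
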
 
\begin{proof}[Proof of Theorem~\ref{thm:spls}]
First $(X_{ni},Y_{ni})_{i\geq 1} \dto  (X_{i},Y_{i})_{i\geq 1} $ in $\mathbb{U}^0_{\shortarrow}$-topology if the following three conditions hold: 
\begin{enumerate}[(i)]
    \item $(X_{ni},Y_{ni})_{i =1}^k \dto  (X_{i},Y_{i})_{i=1}^k$ for all $k\geq 1$.
    \item $\lim_{\delta\to 0}\limsup_{n\to\infty} \PR\big(\sum_{i: X_{ni}\leq \delta  }X_{ni}^2> \varepsilon\big)=0,$ for any $\varepsilon>0$.
    \item $\lim_{\delta\to 0}\limsup_{n\to\infty} \PR\big(\sum_{i: X_{ni}\leq \delta  }X_{ni}Y_{ni}> \varepsilon \big)=0,$ for any $\varepsilon>0$.
\end{enumerate}
To see this, note that $\|(X_{ni},Y_{ni})_{i\geq 1}\|_{\mathbb{U}^0} = \|(X_{ni})_{i\geq 1}\|_{2} + \sum_i X_{ni}Y_{ni}$, where $\|\cdot \|_{\sss \mathbb{U}^0}$ denotes the norm induced by metric in \eqref{defn_U_metric}.
Conditions (i) and (ii) together implies that $(\|(X_{ni})_{i\geq 1}\|_{2})_{n\geq 1}$ is tight, and (i) and (iii) together implies that $(\sum_i X_{ni}Y_{ni})_{n\geq 1}$ is tight.
Thus under (i)--(iii), $((X_{ni},Y_{ni})_{i\geq 1})_{n\geq 1}$ is tight in $\mathbb{U}^0_{\shortarrow}$-topology, and the finite dimensional convergence in (i) yields that $(X_{ni},Y_{ni})_{i\geq 1} \dto  (X_{i},Y_{i})_{i\geq 1} $ in $\mathbb{U}^0_{\shortarrow}$-topology.

To complete the proof of Theorem~\ref{thm:spls}, note that Lemma~\ref{lem:spls:prod} yields conditions (i) and (ii), and Proposition~\ref{prop-surp-u-0} yields condition (iii). Thus the proof follows. 
\end{proof}
 The remainder of this section is devoted to the proof of Proposition~\ref{prop-surp-u-0}.
 The following estimate will be the crucial ingredient to complete the proof of Proposition~\ref{prop-surp-u-0}. The proof of Lemma~\ref{lem:sp-cv-n} is postponed to Appendix~\ref{sec_appendix} since this uses similar ideas as used in~\cite{DHLS15}.
\begin{lemma} \label{lem:sp-cv-n}
Assume that $\limsup_{n\to\infty} c_n(\nu_n-1)<0$. Let $V_n$ denote a vertex chosen uniformly at random, independently of the graph $\mathrm{CM}_n(\boldsymbol{d})$ and let $\mathscr{C}(V_n)$ denote the component containing $V_n$.  Let $\delta_k=\delta k^{-0.12}$. Then, for $\delta > 0$ sufficiently small,
\begin{equation}
 \prob{\surp{\mathscr{C}(V_n)}\geq K,|\mathscr{C}(V_n)|\in (\delta_K b_n,2\delta_Kb_n)}\leq \frac{C\sqrt{\delta}}{a_nK^{1.1}}
\end{equation}
 where $C$ is a fixed constant independent of $n,\delta, K$. 
 \end{lemma}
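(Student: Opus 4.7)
I propose to adapt the argument of \cite{DHLS15} (the finite third-moment analogue) to the heavy-tailed setting, using the exploration-walk machinery from Section~\ref{sec:conv-expl}. Run {\rm Algorithm~\ref{algo-expl}} starting from $V_n$, producing an exploration walk $\tilde{\mathbf{S}}_n$ with $\tilde{S}_n(0) = d_{V_n}$ and increments $d_{(m)} J_m - 2$, and set $T = \inf\{l: \tilde{S}_n(l) = 0\}$. Then $|\mathscr{C}(V_n)| \leq T+1$ and $\surp(\mathscr{C}(V_n)) = T - |\mathscr{C}(V_n)| + 1$ counts the back edges, each arising at step $m$ with conditional probability at most $C\tilde{S}_n(m)/\ell_n$ (cf.\ \eqref{eqn:increment:indicator}).

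The first ingredient is a first-passage tail bound for $T$. Under the hypothesis $\limsup_n c_n(\nu_n - 1) < 0$, the conditional drift of $\tilde{\mathbf{S}}_n$ is at most $-c_1/c_n$, and its per-step variance is of order $\E[D_n^3]/\E[D_n] = \Theta(a_n^3/n)$ by {\rm Assumption~\ref{assumption1}~\eqref{assumption1-2}}. Applying a supermartingale argument to the compensated walk $\tilde{S}_n(l) + c_1 l/c_n$ (in the spirit of \eqref{tail::martingale} and \eqref{eqn:supmg:ineq}) and using the identity $a_n b_n = \Theta(n)$ so that $(\text{drift})^2\cdot s/\text{variance}$ is of order $s/b_n$, I would obtain the diffusive bound
\begin{equation*}
\PR\bigl(T \geq \delta_K b_n\bigr) \leq \frac{C\, \E[d_{V_n}]}{a_n\sqrt{\delta_K}} \leq \frac{C}{a_n\sqrt{\delta_K}},
\end{equation*}
since $\E[d_{V_n}] = \E[D_n] = O(1)$.

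The second ingredient controls the surplus on the event of a long excursion. Since the probability of a back edge at step $m$ is at most $C\tilde{S}_n(m)/\ell_n$, the expected surplus is driven by the area under $\tilde{\mathbf{S}}_n$. A barely subcritical excursion of length $s = \delta_K b_n$ has typical maximum height $\Theta(a_n\sqrt{\delta_K})$, hence area $\Theta(n\delta_K^{3/2})$, yielding $\E[\surp \mid T \geq \delta_K b_n] = O(\delta_K^{3/2})$. More generally, Burkholder-Davis-Gundy applied to the martingale part of $\tilde{\mathbf{S}}_n$, combined with the negative correlation \eqref{neg:correlation} of the indicators $\mathcal{I}_i^n$, should give the higher-moment estimate $\E\bigl[\surp^p\ind{T \geq \delta_K b_n}\bigr] \leq C_p \delta_K^{3p/2-1/2}/a_n$ for suitable $p > 1$. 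Markov's inequality then yields
\begin{equation*}
\PR\bigl(\surp \geq K,\, |\mathscr{C}(V_n)| \in (\delta_K b_n, 2\delta_K b_n)\bigr) \leq \frac{C_p\, \delta_K^{3p/2 - 1/2}}{a_n K^p},
\end{equation*}
and a choice of $p$ slightly larger than $1$ recovers the claimed bound $C\sqrt{\delta}/(a_n K^{1.1})$ (the specific exponents $0.12$ and $1.1$ being the outcome of this optimization).

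The principal obstacle is the sharp $p$-th moment estimate $\E[\surp^p\ind{T \geq \delta_K b_n}] \leq C_p \delta_K^{3p/2-1/2}/a_n$. Its proof must combine three technical ingredients: (a) Burkholder-Davis-Gundy for the martingale part of $\tilde{\mathbf{S}}_n$, with quadratic variation controlled by {\rm Assumption~\ref{assumption1}~\eqref{assumption1-2}} despite $\E[D_n^3] \to \infty$; (b) the negative-correlation estimate \eqref{neg:correlation} for the visit indicators, which prevents unwanted positive covariances; and (c) combining the area bound above with the first-passage estimate from the second paragraph, to weight the excursion area correctly by the tail probability $1/(a_n\sqrt{\delta_K})$. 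The slowly-varying factor $L(n)$ is absorbed automatically via $a_n b_n = \Theta(n)$ and uniform-convergence properties of slowly-varying functions, in the same style as \eqref{eq:3rd-moment-example}.
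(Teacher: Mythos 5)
Your architecture (a diffusive survival-probability estimate for the exploration walk started at $V_n$, followed by a $p$-th moment bound on the surplus and Markov's inequality) is a genuinely different route from the paper's, but both of its pillars are left as heuristics exactly where the work lies. First, the bound $\prob{T\geq \delta_Kb_n}\leq C/(a_n\sqrt{\delta_K})$ does not follow from the supermartingale/optional-stopping argument you invoke: optional stopping of the supermartingale $\mathbf{S}_n$ only yields $\prob{\sup_{l}S_n(l)\geq H}\leq \expt{D_n}/H$, a bound on the \emph{height}, not on the \emph{duration}; the $1/\sqrt{s}$ survival probability requires genuine second-moment control of the walk confined to a strip, and that is precisely the step you would still have to supply. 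Second, your claimed estimate $\E[\mathrm{SP}^p\ind{T\geq\delta_Kb_n}]\leq C_p\delta_K^{3p/2-1/2}/a_n$ is too strong to be true for $p>1$: conditionally on survival to time $\delta_Kb_n$ the surplus is approximately Poisson with small mean $\Theta(\delta_K^{3/2})$, so every fixed low moment is of order the mean, giving $\E[\mathrm{SP}^p\ind{\cdot}]\asymp\delta_K^{3/2}\cdot\prob{T\geq\delta_Kb_n}\asymp\delta_K/a_n$ regardless of $p$ — and even this requires keeping the upper cap $|\mathscr{C}(V_n)|\leq 2\delta_Kb_n$ from the event of the lemma, which your event $\{T\geq\delta_Kb_n\}$ drops (summing over dyadic excursion lengths without the cap inflates the moment to order $1/a_n$ with no power of $\delta_K$). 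The corrected bound $\delta_K/a_n$ would still close the argument after Markov with $p=1.1$, but as written neither ingredient is established.

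For comparison, the paper's proof sidesteps both difficulties using only first-moment information. It stops the walk at height $H=a_nK^{1.1}/\sqrt{\delta}$; optional stopping of the supermartingale gives $\prob{S_n(\gamma)\geq H}\leq\expt{D_n'}/H$, which is already the target bound. On the complementary event the number of active half-edges never exceeds $H$, so each of the $K$ surplus times has conditional probability at most $CH/\ell_n$; summing over the ordered times $l_1<\dots<l_K\leq 2\delta_Kb_n$ produces a factor $(2\delta_Kb_n)^{K-1}/(K-1)!$ that beats $H^{K}$, and the remaining sum over $l_1$ is bounded by $\expt{|\mathscr{C}(V_n)|}\leq Cc_n$ via the path-counting susceptibility estimate of Lemma~\ref{lem:gen-path-count}; the identity $c_n/b_n=1/a_n$ then delivers the factor $1/a_n$ that you were trying to extract from the diffusive first-passage bound. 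If you wish to salvage your route, the two estimates to prove rigorously are the $1/(a_n\sqrt{\delta_K})$ survival bound (second-moment optional stopping in a strip) and the capped moment bound $\E[\mathrm{SP}^p\ind{\delta_Kb_n\leq T\leq C\delta_Kb_n}]\leq C_p\delta_K/a_n$; neither is shorter than the paper's combinatorial count.
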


\begin{proof}[Proof of Proposition~\ref{prop-surp-u-0} using Lemma~\ref{lem:sp-cv-n}]First consider the case $\lambda<0$. Fix any $\varepsilon, \delta >0$. Note that
\begin{equation}\label{eq:u-0-calc}
 \begin{split}
  &\PR\bigg( \sum_{|\mathscr{C}_{\sss (i)}|\leq \delta b_n} |\mathscr{C}_{\sss (i)}|\times \surp{\mathscr{C}_{\sss (i)}}> \varepsilon b_n \bigg)\leq \frac{1}{\varepsilon b_n}\E \bigg[\sum_{i=1}^{\infty}|\mathscr{C}_{\sss (i)}|\times  \surp{\mathscr{C}_{\sss (i)}} \1_{\{ |\mathscr{C}_{\sss (i)}|\leq \delta b_n\}} \bigg]
  \\&= \frac{a_n}{\varepsilon}\expt{\mathrm{SP}(\mathscr{C}(V_n))\1_{\{ |\mathscr{C}(V_n)|\leq \delta b_n\}}}\\
  &= \frac{a_n}{\varepsilon}\sum_{k=1}^{\infty}\sum_{i\geq \log_2(1/(k^{0.12}\delta))}\PR\big(\mathrm{SP}(\mathscr{C}(V_n))\geq k, |\mathscr{C}(V_n)|\in (2^{-(i+1)}k^{-0.12}b_n, 2^{-i}k^{-0.12}b_n] \big)\\
  &\leq \frac{C}{\varepsilon} \sum_{k=1}^{\infty}\frac{1}{k^{1.1}}\sum_{i\geq \log_2(1/(k^{0.12}\delta))} 2^{-i/2} \leq \frac{C}{\varepsilon}\sum_{k=1}^{\infty}\frac{\sqrt{\delta}}{k^{1.04}}  =O(\sqrt{\delta}).
 \end{split}
\end{equation} 
The third step in \eqref{eq:u-0-calc} follows using 
\begin{eq}
&\expt{\mathrm{SP}(\mathscr{C}(V_n))\1_{\{ |\mathscr{C}(V_n)|\leq \delta b_n\}}} = \sum_{k= 1}^\infty \sum_{j\geq k} \PR(\mathrm{SP}(\mathscr{C}(V_n)) = j,  |\mathscr{C}(V_n)|\leq \delta b_n) \\
& = \sum_{k= 1}^\infty \sum_{j\geq k} \sum_{i\geq \log_2(1/(k^{0.12}\delta))}\PR\big(\mathrm{SP}(\mathscr{C}(V_n))= j, |\mathscr{C}(V_n)|\in (2^{-(i+1)}k^{-0.12}b_n, 2^{-i}k^{-0.12}b_n]\big),
\end{eq}
and the second-last step in \eqref{eq:u-0-calc} follows from Lemma~\ref{lem:sp-cv-n}.
 The proof of Proposition~\ref{prop-surp-u-0} now follows for $\lambda <0$. 
\par Next, consider the case $\lambda > 0$. Fix a large integer $R\geq 1$ such that $\lambda - \sum_{i=1}^R\theta_i^2<0$. This can be done because $\boldsymbol{\theta}\notin \ell^{2}_{\shortarrow}$. Using \eqref{eq:CgeqT1}, for any $\delta>0 $, it is possible to choose $T>0$ such that for all sufficiently large $n$,
\begin{equation}\label{eq:early-expl}
 \prob{\text{all the vertices }1,\dots,R \text{ are explored within time }Tb_n }> 1-\delta.
\end{equation} Let $T_e$ denote the first time after $Tb_n$ when we finish exploring a component. By Theorem~\ref{thm::convegence::exploration_process}, $(b_n^{-1}T_e)_{n\geq 1}$ is a tight sequence. Let $\mathcal{G}^*_T$ denote the graph obtained by removing the components explored up to time $T_e$. Then, $\mathcal{G}^*_T$ is again a configuration model conditioned on its degrees. Let $\nu_n^*$ denote the value of the criticality parameter for $\mathcal{G}^*$. Note that 
\begin{equation}
 \sum_{i\notin \mathscr{V}_{\sss T_e}}d_i\geq \ell_n-2Tb_n \implies  \sum_{i\notin \mathscr{V}_{\sss T_e}}d_i = \ell_n+\oP(n),
\end{equation}and thus conditionally on $\mathscr{F}_{\sss T_e}$ and the fact that $(1,\dots, R)$ are explored within time $Tb_n$,
\begin{equation}\label{ub-nu*}
 \begin{split}
 \nu_n^*\leq \frac{\sum_{i\in [n]}d_i(d_i-1)-\sum_{i=1}^Rd_i(d_i-1)}{\sum_{i\notin \mathscr{V}_{\sss T_e}}d_i}-1=1+c_n^{-1}\big(\lambda-\sum_{i=1}^R\theta_i^2\big) +o(c_n^{-1}).
 \end{split}
\end{equation}
Therefore, combining \eqref{eq:early-expl}, \eqref{ub-nu*}, we can use Lemma~\ref{lem:sp-cv-n}  on $\mathcal{G}^*_T$ since $c_n(\nu_n^*-1)<0$. Thus, if $\mathscr{C}_{\sss(i)}^*$ denotes the $i^{th}$ largest component of $\mathcal{G}_T^*$, then
\begin{equation}\label{surplus-positivelamba1} \lim_{T\to\infty}\lim_{\delta\to 0}\limsup_{n\to\infty}\PR\bigg(\sum_{i: |\mathscr{C}_{(i)}^*|\leq \delta b_n }|\mathscr{C}_{\sss(i)}^*|\times \surp{\mathscr{C}_{\sss(i)}^*}> \varepsilon b_n\bigg)=0.
\end{equation} To conclude the proof for the whole graph $\mathrm{CM}_n(\boldsymbol{d})$ (with $\lambda >0$), let $$\mathcal{K}_n^T:=\{i:|\mathscr{C}_{\sss(i)}|\leq \delta b_n, |\mathscr{C}_{\sss(i)}| \text{ is explored before the time }T_e  \}.$$ Note that
\begin{equation}
 \begin{split}
  \sum_{i \in \mathcal{K}_n^T}|\mathscr{C}_{\sss (i)}|\times \mathrm{SP}(\mathscr{C}_{\sss (i)})&\leq \Big( \sum_{i\in \mathcal{K}_n^T}|\mathscr{C}_{\sss (i)}|^2\Big)^{1/2}\times \Big(\sum_{i\in \mathcal{K}_n^T}\mathrm{SP}(\mathscr{C}_{\sss (i)})^2 \Big)^{1/2}\\
  &\leq  \bigg( \sum_{i:|\mathscr{C}_{\sss (i)}|\leq \delta b_n}|\mathscr{C}_{\sss (i)}|^2\bigg)^{1/2}\times \mathrm{SP}(T_e),
 \end{split}
\end{equation}where $\mathrm{SP}(t)$ is the number of surplus edges explored up to time $tb_n$ and we have used the fact that $\sum_{i\in\mathcal{K}_n^T}\mathrm{SP}(\mathscr{C}_{\sss (i)})^2\leq (\sum_{i\in\mathcal{K}_n^T}\mathrm{SP}(\mathscr{C}_{\sss (i)}))^2\leq \mathrm{SP}(T_e)^2$. From Lemma~\ref{lem:surp:poisson-conv} and the $\ell^{2}_\shortarrow$ tightness in Theorem~\ref{thm::conv:component:size}, we can conclude that for any $T>0$,
\begin{equation}\label{surplus-positivelamba2}
 \lim_{\delta\to 0}\limsup_{n\to\infty}\PR\bigg(\sum_{i \in \mathcal{K}_n^T}|\mathscr{C}_{\sss (i)}|\times \mathrm{SP}(\mathscr{C}_{\sss (i)})>\varepsilon b_n\bigg)=0.
\end{equation}
  The proof is now complete for the case $\lambda > 0$ by combining \eqref{surplus-positivelamba1} and \eqref{surplus-positivelamba2}.
\end{proof}
\section{Proof for simple graphs} \label{sec:simple-graphs}
In this section, we give a proof of Theorem~\ref{thm:simple-graph}. Let $\PR_s(\cdot)$ (respectively $\E_s[\cdot]$) denote the probability measure (respectively the expectation) conditionally on the graph $\mathrm{CM}_n(\boldsymbol{d})$ being simple. For any process $\mathbf{X}$ on $\mathbb{D}([0,\infty),\R)$, we define $\mathbf{X}^T:=(X(t))_{t\leq T}$. Thus the truncated process $\mathbf{X}^T$ is $\mathbb{D}([0,T],\R)$-valued. Now, by \cite[Theorem 1.1]{J09c}, $\liminf_{n\to\infty}\PR(\mathrm{CM}_n(\boldsymbol{d})\text{ is simple})>0$. This fact ensures that, under the conditional measure $\PR_s$,  $(b_n^{-1}|\mathscr{C}_{\sss (i)}|)_{i\geq 1}$ is tight with respect to the $\ell^2_{\shortarrow}$-topology.
 Therefore, to conclude Theorem~\ref{thm:simple-graph}, it suffices to show that the exploration process $\bar{\mathbf{S}}_n$, defined in \eqref{eqn::scaled_process},  has the  same limit (in distribution) under $\PR_s$ as obtained in Theorem~\ref{thm::convegence::exploration_process} so that the finite-dimensional limit of $(b_n^{-1}|\mathscr{C}_{\sss (i)}|)_{i\geq 1}$ remains unchanged under $\PR_s$. Thus, it is enough to show that for any bounded continuous function $f:\mathbb{D}([0,T],\R)\mapsto\R$,
\begin{equation}\label{diff:expt:bounded:cont}
 \big| \E[f(\bar{\mathbf{S}}_n^T)]-\E_s[f(\bar{\mathbf{S}}_n^T)]\big|\to 0.
\end{equation}
Let $\ell_n':=\ell_n-2Tb_n-d_{1}+1$. We first estimate the number of multiple edges or self-loops discovered in the graph up to time $Tb_n$. 
Let $v_l$ denote the \emph{exploring} vertex in the breadth-first exploration given by Algorithm~\ref{algo-expl}, and let $d_{v_l}$ denote the degree of $v_l$.
Note that, uniformly over $l\leq Tb_n$, any half-edge of $v_l$ creates a self-loop with probability at most $d_{v_l}/\ell_n'$.
Thus, the expected number of self-loops attached to $v_l$, conditionally on $\mathscr{F}_{l-1}$, is at most $d_{v_l}^2/\ell_n'$.
Moreover, the expected number of multiple edge attached to $v_l$, conditionally on $\mathscr{F}_{l-1}$, is at most
\begin{eq}
\frac{d_{v_l}(d_{v_l}-1) \sum_{i\in[n]} d_i(d_{i}-1)}{\ell_n'(\ell_n'-1)} = (1+o(1))\frac{d_{v_l}^2}{\ell_n'},
\end{eq}where we have used that $\nu_n = 1+o(1)$.
Therefore,
\begin{equation}
 \expt{\#\{\text{self-loops or multiple edges discovered while }v_l \text{ is exploring}\}|\mathscr{F}_{l-1}}\leq \frac{3d_{v_l}^2}{\ell_n'}.
\end{equation}Thus, for any $T>0$,
\begin{equation}
\begin{split}
 &\expt{\#\{\text{self-loops or multiple edges discovered up to time }Tb_n\}}\\
 &\leq \frac{3}{\ell_n'}\E\bigg[\sum_{i\in [n]}d_i^2\mathcal{I}^n_i(Tb_n)\bigg]=\frac{3}{\ell_n'}\E\bigg[\sum_{i=1}^K d_i^2\mathcal{I}^n_i(Tb_n)\bigg]+\frac{3}{\ell_n'}\E\bigg[\sum_{i=K+1}^nd_i^2\mathcal{I}^n_i(Tb_n)\bigg],
 \end{split}
\end{equation}where $\mathcal{I}^n_i(l)=\ind{i\in \mathscr{V}_l}$. Now, using Assumption~\ref{assumption1}~\eqref{assumption1-1}, for every fixed $K\geq 1$,
\begin{equation}
 \frac{3}{\ell_n'}\E\bigg[\sum_{i=1}^K d_i^2\mathcal{I}^n_i(Tb_n)\bigg]\leq \frac{3}{\ell_n'} \sum_{i=1}^K d_i^2 \to 0,
\end{equation} since $2\alpha-1<0$. Moreover, recall from  \eqref{eq:prob-ind} that $\prob{\mathcal{I}^n_i(Tb_n)=1}\leq Tb_nd_i/\ell_n'$. Therefore, for some constant $C>0$,
\begin{equation}
 \begin{split} 
  \frac{3}{\ell_n'}\E\bigg[\sum_{i=K+1}^nd_i^2\mathcal{I}_i^n(Tb_n)\bigg]\leq \frac{3Tb_n}{\ell_n'^2}\sum_{i=K+1}^nd_i^3\leq C \bigg( a_n^{-3}\sum_{i=K+1}^nd_i^3 \bigg), 
 \end{split}
\end{equation}which, by Assumption~\ref{assumption1}~\eqref{assumption1-2}, tends to zero if we first take $\limsup_{n\to\infty}$ and then take $\lim_{K\to\infty}$. Consequently, for any fixed $T>0$, as $n\to\infty$,
\begin{equation}
 \prob{\text{at least one self-loop or multiple edge is discovered before time }Tb_n}\to 0.
\end{equation}
Now,
 \begin{equation}\label{simple-after-Tbn-enough}
 \begin{split}
  &\expt{f(\bar{\mathbf{S}}_n^T) \ind{\CM \text{ is simple}}}\\
  & =\expt{f(\bar{\mathbf{S}}_n^T) \ind{\text{no self-loops or multiple  edges found after time }Tb_n}}+o(1)\\
  &= \expt{f(\bar{\mathbf{S}}_n^T) \prob{\text{no self-loops or multiple  edges found after time }T b_n\vert \mathscr{F}_{\sss Tb_n}}} +o(1),
  \end{split}
 \end{equation}
 Define $T_e=\inf\{l\geq Tb_n: \text{a component is finished exploring at time } l\}$. Using the fact that $(b_n^{-1}T_e)_{n\geq 1}$ is a tight sequence, the limit of the expected number of self-loops or multiple edges discovered between time $Tb_n$ and $T_e$ is again zero. As in the proof of Proposition~\ref{prop-surp-u-0}, consider the graph $\mathcal{G}^*$, obtained by removing the components obtained up to time $T_e$. Thus, $\mathcal{G}^*$ is a configuration model, conditioned on its degree sequence. Let $\nu^*_n$ be the criticality parameter of $\mathcal{G}^*$. Then, we claim that $\nu^*_n\pto 1$. To see this note that $\sum_{i\notin \mathscr{V}_{\sss T_e}}d_i=\ell_n+\oP(n)$. 
Further, note that by Assumption~\ref{assumption1}~\eqref{assumption1-2} \eqref{eqn:increment:indicator}, for any $t>0$,
\begin{equation}\label{finite-expt-di2}
\begin{split}
\limsup_{n\to\infty}  \E\bigg[a_n^{-2}\sum_{i\in [n]}d_i^2\mathcal{I}_i^n(tb_n)\bigg] \leq\limsup_{n\to\infty} a_n^{-2}tb_n\frac{\sum_{i\in [n]}d_i^3}{\ell_n-2tb_n}<\infty,
\end{split}
\end{equation} 
which implies that  
 $\sum_{i\notin \mathscr{V}_{\sss T_e}}d_i^2=\sum_{i\in [n]}d_i^2+\oP(n)$ and thus the claim is proved. Since the degree distribution has finite second moment, using  \cite[Theorem 7.11]{RGCN1} we get \begin{equation}\label{prob-of-simple}
\prob{\mathcal{G}^* \text{ is simple}\Big\vert\mathscr{F}_{\sss T_e}}\pto \e^{-3/4}.
\end{equation}Now using  \eqref{simple-after-Tbn-enough}, \eqref{prob-of-simple} and the dominated convergence theorem, we conclude that
\begin{equation}
 \expt{f(\bar{\mathbf{S}}_n^T) \ind{\CM \text{ is simple}}}=  \expt{f(\bar{\mathbf{S}}_n^T)}\prob{\CM \text{ is simple}}+o(1).
\end{equation}Therefore, \eqref{diff:expt:bounded:cont} follows and the proof of Theorem~\ref{thm:simple-graph} is complete.
\qed
\section{Scaling limits for component functionals}\label{sec:comp-functional}
Suppose that vertex $i$ has an associated weight $w_i$. The total weight of the component $\mathscr{C}_{\sss (i)}$ is denoted by $\mathscr{W}_i = \sum_{k\in \mathscr{C}_{\sss (i)}} w_k$. The goal of this section is to derive the scaling limits for $(\mathscr{W}_i )_{i\geq 1}$ when the weight sequence satisfies some regularity conditions:
\begin{assumption}\label{assumption-weight} \normalfont The weight sequences $\bld{w} = (w_i)_{i\in [n]}$ satisfies
\begin{enumerate}[(i)]
\item \label{assumption-weight-1} $\sum_{i\in [n]}w_i = O(n)$, and  $\lim_{n\to\infty}\frac{1}{\ell_n}\sum_{i\in [n]} d_i w_i = \mu_{w}$.
\item \label{assumption-weight-2} $\max\{\sum_{i\in [n]}d_iw_i^2,\sum_{i\in [n]}d_i^2w_i\} = O(a_n^3)$.
\end{enumerate}
\end{assumption}
\begin{theorem}\label{thm:comp-functionals}Consider $\CM$ satisfying {\rm Assumption~\ref{assumption1}} and a weight sequence $\bld{w}$ satisfying {\rm Assumption~\ref{assumption-weight}}. Denote $\mathbf{Z}^w_n= \ord( b_n^{-1}\mathscr{W}_i,\mathrm{SP}(\mathscr{C}_{\sss (i)}))_{i\geq 1}$ and $\mathbf{Z}^w:=\ord(\mu_{w}\gamma_i(\lambda), N(\gamma_i))_{i\geq 1}$, where $\gamma_i(\lambda)$, and  $N(\gamma_i)$ are defined in {\rm Theorem~\ref{thm:spls}}. As $n\to\infty$,
\begin{equation}\label{eq:weight-conv-amc}
 \mathbf{Z}_n^w \dto \mathbf{Z}^w,
\end{equation}with respect to the $\mathbb{U}^0_{\shortarrow}$ topology.
\end{theorem}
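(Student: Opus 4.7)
}

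The strategy mirrors the derivations of Theorems~\ref{thm::conv:component:size}~and~\ref{thm:spls}: cluster sizes were read off as excursion lengths of the exploration process $\bar{\mathbf{S}}_n$, and we now extract cluster weights from a second additive functional along the \emph{same} exploration. Define
\begin{equation*}
W_n(l) := \sum_{i\in[n]} w_i \mathcal{I}_i^n(l),\qquad \bar{W}_n(t) := b_n^{-1} W_n(\lfloor b_n t\rfloor),
\end{equation*}
with $\mathcal{I}_i^n(l)=\ind{i\in \mathscr{V}_l}$ as in Section~\ref{sec:conv-expl}. I first show that $\bar{\mathbf{W}}_n \pto \mu_w\, \mathrm{id}$ uniformly on compacts, where $\mathrm{id}(t)=t$. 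By \eqref{eqn:increment:indicator}, the mean satisfies $\E[W_n(l)] = (l/\ell_n)\sum_i w_i d_i + R_n(l)$; Assumption~\ref{assumption-weight}~(i) handles the leading term as $l\mu_w(1+o(1))$, while the second-order remainder is $O(l^2 n^{-2}\sum_i w_id_i^2) = O(b_n^2 a_n^3/n^2)=O(a_n)=o(b_n)$ uniformly in $l\le Tb_n$, by Assumption~\ref{assumption-weight}~(ii) together with the identity $b_n a_n^3/n=a_n^2$. For the variance, the negative correlation~\eqref{neg:correlation} and the bound~\eqref{var-ind-ub} give $\var{W_n(l)}\le (l/\ell_n')\sum_i d_i w_i^2 = O(a_n^2)$, so $\var{\bar{W}_n(t)} = O(a_n^2/b_n^2)\to 0$. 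Chebyshev yields pointwise convergence in probability, which upgrades to uniform convergence on $[0,T]$ by monotonicity of $W_n(\cdot)$ and continuity of the linear limit.

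Combining this LLN with Theorem~\ref{thm::convegence::exploration_process} produces the joint convergence $(\bar{\mathbf{S}}_n,\bar{\mathbf{W}}_n)\dto (\bar{\mathbf{S}}_\infty,\mu_w\,\mathrm{id})$. Let $(l_k^n,r_k^n)$ denote the $k$-th excursion interval of $\bar{\mathbf{S}}_n$ in chronological order, so that the weight $\mathscr{W}_k^{\mathrm{chron}}$ of the $k$-th explored component equals $W_n(b_n r_k^n)-W_n(b_n l_k^n)$. Lemma~\ref{lem:levy-as-good} and Lemma~\ref{lem::good:function:continuity} ensure that excursion endpoints are continuous functionals of the exploration process at the limit; evaluating the uniformly convergent $\bar{\mathbf{W}}_n$ at these endpoints thus gives
\begin{equation*}
b_n^{-1}\mathscr{W}_k^{\mathrm{chron}} \dto \mu_w(r_k^\infty-l_k^\infty)
\end{equation*}
jointly for finitely many $k$, where $(l_k^\infty,r_k^\infty)$ are the chronological excursions of $\bar{\mathbf{S}}_\infty$. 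Since $\mu_w>0$, reordering by magnitude commutes with multiplication by $\mu_w$; pairing with Lemma~\ref{lem:surp:poisson-conv} for the surplus edges and sending the cutoff $T\to\infty$ via Lemma~\ref{lem:no-large-comp-later} (whose proof carries over to the weighted setting, because any component started after time $Tb_n$ has weight controlled by its size through Step~1) yields finite-dimensional convergence of the ordered pairs $(b_n^{-1}\mathscr{W}_i,\mathrm{SP}(\mathscr{C}_{\sss(i)}))$ to $(\mu_w\gamma_i(\lambda),N(\gamma_i))$.

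It remains to upgrade finite-dimensional convergence to $\mathbb{U}^0_{\shortarrow}$ convergence, for which we need weighted analogs of Proposition~\ref{prop-l2-tightness} and Proposition~\ref{prop-surp-u-0}. For the $\ell^2_{\shortarrow}$ tail bound, removing the top $K$ vertices makes the residual graph $\mathcal{G}^{[K]}$ subcritical by \eqref{eqn:nu-K}; a weighted version of Janson's susceptibility bound (the weighted variant of \cite[Lemma 5.2]{J09b}) gives $\mathbbm{E}_K[\mathscr{W}^{[K]}(V_n^w)] = O(c_n)$ with constant vanishing as $K\to\infty$, where $V_n^w$ is sampled with probability proportional to $w_v$. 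Combined with the identity $\sum_i (\mathscr{W}_i^{[K]})^2 = (\sum_v w_v)\cdot\mathbbm{E}_K[\mathscr{W}^{[K]}(V_n^w)]$ and Assumption~\ref{assumption-weight}~(i), this yields $\sum_i(\mathscr{W}_i^{[K]})^2 = o(b_n^2)$. For the product-term tightness, a weighted analog of Lemma~\ref{lem:sp-cv-n} suffices, established by repeating its random-walk estimate with a weight factor attached to each increment. The main obstacle is precisely here: adapting the surplus-edge estimate of Appendix~\ref{sec_appendix} to accommodate weights requires careful use of both halves of Assumption~\ref{assumption-weight}~(ii) to control the resulting second-moment terms, but the core argument carries through from the unweighted case.
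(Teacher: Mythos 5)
Your proposal is correct and follows essentially the same route as the paper: a first/second-moment analysis of the weight functional $W_n(l)=\sum_i w_i\mathcal{I}_i^n(l)$ along the exploration (the paper's Lemma~\ref{lem:weight-approx-size}, which uses the supermartingale maximal inequality \eqref{eqn:supmg:ineq} where you use pointwise Chebyshev upgraded by monotonicity --- both work), yielding $\mathscr{W}_i=\mu_w|\mathscr{C}_{\sss(i)}|+\oP(b_n)$ and hence the finite-dimensional limits, followed by the weighted susceptibility identity with a weight-biased root $V_n'$ and the weighted path-counting bound (Lemma~\ref{lem:gen-path-count}) for the $\ell^2_{\shortarrow}$ tail, and a version of Lemma~\ref{lem:sp-cv-n} for $V_n'$ for the product-term tightness. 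One small imprecision: in the weighted analog of Lemma~\ref{lem:sp-cv-n} the weights enter only through the law of the starting vertex $V_n'$ (one needs $\limsup_n\E[D_n']<\infty$, which Assumption~\ref{assumption-weight} supplies), not as ``a weight factor attached to each increment''; the increments of the exploration walk are unchanged.
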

The proof Theorem~\ref{thm:comp-functionals} can be decomposed into two main steps: the first one is to obtain the finite-dimensional limits of $\mathbf{Z}_n^w$ and secondly to  prove the $\mathbb{U}^0_{\shortarrow}$ convergence. The finite-dimensional limit is a consequence of the fact that the total weight of the clusters is approximately equal to the cluster sizes. The argument for the tightness with respect to the $\mathbb{U}^0_{\shortarrow}$ topology is similar to Propositions~\ref{prop-l2-tightness}~and~\ref{prop-surp-u-0} and therefore we only provide a sketch with pointers to all the necessary ingredients. Recall that $\mathcal{I}_i^n(l) = \ind{i\in \mathscr{V}_l}$, where $\mathscr{V}_l$ is the set of discovered vertices by Algorithm~\ref{algo-expl} upto time $l$.
\begin{lemma}\label{lem:weight-approx-size}Under {\rm Assumptions~\ref{assumption1},~\ref{assumption-weight}}, for any $T>0$, 
\begin{equation}\label{weight-expl-prop}
 \sup_{u\leq T}\bigg| \sum_{i\in [n]} w_i\mathcal{I}_i^n(ub_n)-\frac{\sum_{i\in [n]}d_iw_i}{\ell_n}ub_n\bigg|=\OP(a_n).
\end{equation} Consequently, for each fixed $i\geq 1$,
\begin{equation}\label{weight-approx-size}
   \mathscr{W}_i = \mu_{w} \big| \mathscr{C}_{\sss (i)} \big| +\oP(b_n).   
  \end{equation}
\end{lemma}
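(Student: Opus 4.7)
The plan is to adapt the super-martingale argument used in the proof of \eqref{tail::martingale}. Set $d := \sum_{i\in [n]} d_iw_i$ and $\ell_n' := \ell_n - 2Tb_n - 1$, and introduce
\begin{equation*}
\tilde W_n(l) := \sum_{i\in [n]} w_i\,\mathcal{I}_i^n(l), \qquad W_n(l) := \tilde W_n(l) - \frac{d\,l}{\ell_n}, \qquad W_n^-(l) := \tilde W_n(l) - \frac{d\,l}{\ell_n'}.
\end{equation*}
The bound $p_i^n(l) \leq d_i/\ell_n'$ on $\{\mathcal{I}_i^n(l)=0\}$, which follows from \eqref{eqn:increment:indicator} uniformly in $l\leq Tb_n$, gives $\E[\tilde W_n(l+1) - \tilde W_n(l)\mid\mathscr{F}_l]\leq d/\ell_n'$, so $(W_n^-(l))_{l\leq Tb_n}$ is a super-martingale with $W_n^-(0)=0$ (this uses $w_i\geq 0$, implicit in Assumption~\ref{assumption-weight}). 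The discrepancy $\sup_{l\leq Tb_n} |W_n(l) - W_n^-(l)| = dTb_n(1/\ell_n' - 1/\ell_n)$ is of order $c_n = o(a_n)$, so at the end it can be absorbed.

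Next, the plan is to bound $\E[|W_n^-(Tb_n)|]\leq |\E[W_n^-(Tb_n)]|+\sqrt{\var(W_n^-(Tb_n))}$. For the variance, the negative-correlation estimate \eqref{neg:correlation} together with \eqref{var-ind-ub} gives
\begin{equation*}
\var(W_n^-(Tb_n)) = \var(\tilde W_n(Tb_n)) \leq \sum_i w_i^2\var(\mathcal{I}_i^n(Tb_n)) \leq \frac{Tb_n}{\ell_n'}\sum_i w_i^2 d_i = O(a_n^2),
\end{equation*}
using Assumption~\ref{assumption-weight}~\eqref{assumption-weight-2} and the identity $a_nb_n = n^{\alpha+\rho} = n$. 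For the expectation, the crude one-sided estimate $\PR(\mathcal{I}_i^n(l)=1) \leq 1 - (1-d_i/\ell_n')^l \leq ld_i/\ell_n'$ yields $\E[W_n(Tb_n)]\leq dTb_n(1/\ell_n' - 1/\ell_n)= O(c_n)$; the matching lower bound is the main obstacle, because no comparably tight elementary lower bound on $\PR(\mathcal{I}_i^n(l)=1)$ is available. To get it, I would apply Doob's decomposition to each indicator $\mathcal{I}_i^n$, which, combined with $p_i^n(k) \geq d_i/\ell_n$ on $\{\mathcal{I}_i^n(k)=0\}$, gives
\begin{equation*}
\E[\mathcal{I}_i^n(l)] - \frac{d_il}{\ell_n} \geq -\frac{d_i\E[\Sigma_i^n(l)]}{\ell_n}, \qquad \Sigma_i^n(l):=\sum_{k=0}^{l-1}\mathcal{I}_i^n(k).
\end{equation*}
Using $\E[\Sigma_i^n(l)] \leq l^2 d_i/(2\ell_n')$ from \eqref{var-ind-ub}, summing over $i$ with weight $w_i$, and invoking $\sum_i w_i d_i^2 = O(a_n^3)$ from Assumption~\ref{assumption-weight}~\eqref{assumption-weight-2} yields $\E[W_n(Tb_n)] \geq -O(a_n)$, and hence $|\E[W_n(Tb_n)]| = O(a_n)$ and $|\E[W_n^-(Tb_n)]| = O(a_n)$. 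Applying the super-martingale inequality \eqref{eqn:supmg:ineq} to $W_n^-$ then produces $\sup_{l\leq Tb_n}|W_n^-(l)| = \OP(a_n)$, and combining with the discrepancy bound proves \eqref{weight-expl-prop}.

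For \eqref{weight-approx-size}, let $[\tau_1^i,\tau_2^i]$ denote the (random) time interval in which Algorithm~\ref{algo-expl} explores $\mathscr{C}_{\sss(i)}$. By Theorem~\ref{thm::conv:component:size} and Lemma~\ref{lem:no-large-comp-later}, for each fixed $i$ the event $\{\tau_2^i \leq Tb_n\}$ has probability tending to $1$ as $T\to\infty$; on this event only vertices of $\mathscr{C}_{\sss(i)}$ are discovered during $[\tau_1^i,\tau_2^i]$, so $\mathscr{W}_i = \tilde W_n(\tau_2^i) - \tilde W_n(\tau_1^i)$. Applying \eqref{weight-expl-prop} at both endpoints gives $\mathscr{W}_i = (d/\ell_n)(\tau_2^i - \tau_1^i) + \OP(a_n)$. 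Since each stage of Algorithm~\ref{algo-expl} pairs exactly one edge, $\tau_2^i - \tau_1^i$ equals the number of edges in $\mathscr{C}_{\sss(i)}$, which is $|\mathscr{C}_{\sss(i)}| - 1 + \surp{\mathscr{C}_{\sss(i)}} = |\mathscr{C}_{\sss(i)}| + \OP(1)$ by Theorem~\ref{thm:spls}. Combined with $d/\ell_n\to\mu_w$ and $a_n = o(b_n)$, this produces $\mathscr{W}_i = \mu_w|\mathscr{C}_{\sss(i)}| + \oP(b_n)$.
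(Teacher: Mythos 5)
Your proof is correct and follows essentially the same route as the paper: the same martingale decomposition of $\sum_i w_i\mathcal{I}_i^n(l)$, the same expectation and variance bounds via \eqref{prob-ind-lb}, \eqref{neg:correlation} and \eqref{var-ind-ub} together with Assumption~\ref{assumption-weight}~\eqref{assumption-weight-2}, the maximal inequality \eqref{eqn:supmg:ineq}, and the same identification of $\mathscr{W}_i$ with an increment of the weight process over the exploration interval of $\mathscr{C}_{\sss(i)}$. The only (harmless) detour is your Doob-decomposition lower bound on $\E[W_n(l)]$: since $W_n$ is a sub-martingale started at $0$, one already has $\E[W_n(l)]\geq 0$, so only the upper bound is actually needed — which is how the paper argues.
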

\begin{proof}Fix any $T>0$. Let $\ell_n(T)=\ell_n-2Tb_n+1$. Define 
\begin{equation}
W_n(l)=\sum_{i\in [n]}w_i\mathcal{I}_i^n(l) - \frac{\sum_{i\in [n]}d_iw_i}{\ell_n(T)} l.
\end{equation}
 The goal is to use the supermartingale inequality \eqref{eqn:supmg:ineq} in the same spirit as in the proof of \eqref{tail::martingale}. 
 Firstly, observe from \eqref{eq:prob-ind} that
\begin{equation}
 \begin{split}
  \E[W_n(l+1)-W_n(l) \mid \mathscr{F}_l]&=\E\bigg[\sum_{i\in [n]}w_i \left(\mathcal{I}^n_i(l+1)-\mathcal{I}_i^n(l)\right)\Big| \mathscr{F}_l\bigg]-\frac{\sum_{i\in [n]}d_iw_i}{\ell_n(T)}\\
  &= \sum_{i\in [n]} w_i\E\big[\mathcal{I}^n_i(l+1)\big| \mathscr{F}_l\big]\ind{\mathcal{I}_i^n(l)=0} -\frac{\sum_{i\in [n]}d_iw_i}{\ell_n(T)}\leq 0,
  \end{split}
 \end{equation}
uniformly over $l\leq Tb_n$ and therefore, $(W_n(l))_{l=1}^{Tb_n}$ is a supermartingale. Using \eqref{prob-ind-lb}, we compute
\begin{equation}\label{expt-W-n}
\begin{split}
 \big|\E[W_n(l)]\big|&=\sum_{i\in [n]}w_i\left(\frac{d_i}{\ell_n}-\prob{\mathcal{I}_i^n(l)=1}\right)\\
 &\leq \sum_{i\in [n]} w_i\bigg(1-\bigg(1-\frac{d_i}{\ell_n}\bigg)^l-\frac{d_i}{\ell_n}l\bigg) + l\sum_{i\in [n]}w_i \bigg(\frac{d_i}{\ell_n(T)}-\frac{d_i}{\ell_n}\bigg)\\
 &\leq 2(2Tb_n)^2 \frac{\sum_{i\in [n]}d_i^2w_i}{\ell_n(T)^2}=O(b_n^2a_n^3/n^2) = O(a_n),
\end{split}
\end{equation}
uniformly over $l\leq Tb_n$. Also, using \eqref{neg:correlation}, \eqref{var-ind-ub}, and Assumption~\ref{assumption-weight}~\eqref{assumption-weight-2},
\begin{equation}\label{var-W-n}
\mathrm{Var}(W_n(l))\leq \sum_{i\in [n]} w_i^2 \mathrm{var}(\mathcal{I}_i^n(l))\leq Tb_n\frac{\sum_{i\in [n]}d_iw_i^2}{\ell_n(T)} =O(a_n^2),
\end{equation}uniformly over $l\leq Tb_n$. Using \eqref{eqn:supmg:ineq}, \eqref{expt-W-n} and \eqref{var-W-n}, we conclude the proof of \eqref{weight-expl-prop}.
The proof of \eqref{weight-approx-size} follows using Lemma~\ref{lem:no-large-comp-later} and simply observing that $a_n=o(b_n)$. 
\end{proof}
\begin{proof}[Proof of Theorem~\ref{thm:comp-functionals}]
Lemma~\ref{lem:weight-approx-size} ensures the finite-dimensional convergence in \eqref{eq:weight-conv-amc}. Thus, the proof is complete if we can show that, for any $\varepsilon > 0$ 
\begin{subequations}
\begin{equation}\label{eq:suff-U0-conv-1}
 \lim_{K\to \infty}\limsup_{n\to\infty}\PR\bigg(\sum_{i>K}\mathscr{W}_i^2>\varepsilon b_n^2\bigg)=0,
\end{equation}and
\begin{equation}\label{eq:suff-U0-conv-2}
\lim_{\delta \to 0}\limsup_{n\to\infty}\PR\bigg( \sum_{|\mathscr{C}_{\sss (i)}|\leq \delta b_n} \mathscr{W}_i\times \surp{\mathscr{C}_{\sss (i)}}> \varepsilon b_n\bigg) = 0.
\end{equation}
\end{subequations}
The arguments for proving \eqref{eq:suff-U0-conv-1} and \eqref{eq:suff-U0-conv-2} are similar to Propositions~\ref{prop-l2-tightness}~and~\ref{prop-surp-u-0} and thus we only sketch a brief outline. 
Denote $\ell_n^w = \sum_{i\in [n]}w_i$. 
The main ingredient to the proof of Proposition~\ref{prop-l2-tightness} is Lemma~\ref{lem::tail_sum_squares}, and the proof of  Lemma~\ref{lem::tail_sum_squares} uses the fact that the expected sum of squares of the cluster sizes can be written in terms of susceptibility functions in \eqref{comp-vs-rnd-choice} and then we made use of the estimate for the susceptibility function in \eqref{bound::expt-cluster-size}. Let $V_n'$ denote a vertex chosen according to the distribution $(w_i/\ell_n^w)_{i\in [n]}$, independently of the graph. Notice that
\begin{equation}\label{W-vs-WVn}
 \E\bigg[\sum_{i\geq 1 }\mathscr{W}_i^2\bigg]=   \ell_n^w\E\big[ \mathscr{W}(V_n')\big]. 
\end{equation}
Now, \cite[Lemma 5.2]{J09b} can be extended using an identical argument  to compute the weight-based susceptibility function in the right hand side of \eqref{W-vs-WVn}. See Lemma~\ref{lem:gen-path-count} given in Appendix~\ref{sec:appendix-gen-path-counting}. The proof of \eqref{eq:suff-U0-conv-2} can also be completed using an identical argument as in the proof of Proposition~\ref{prop-surp-u-0} by observing that 
\begin{equation}
 \PR\bigg( \sum_{|\mathscr{C}_{\sss (i)}|\leq \delta b_n} \mathscr{W}_i\times\surp{\mathscr{C}_{\sss (i)}}> \varepsilon b_n \bigg)\leq \frac{\ell_n^w}{\varepsilon b_n}\expt{\mathrm{SP}(\mathscr{C}(V_n'))\1_{\{ |\mathscr{C}(V_n')|\leq \delta b_n\}}}.
\end{equation} 
Moreover, an analog of Lemma~\ref{lem:sp-cv-n} also holds for $V_n'$ (see Appendix~\ref{sec_appendix}), and the proof of \eqref{eq:suff-U0-conv-2} can now be completed in an identical manner as in the proof of Proposition~\ref{prop-surp-u-0}.
\end{proof}
While studying percolation in the next section, we will need an estimate for the proportion of degree-one vertices in the large components. In fact, an application of Theorem~\ref{thm:comp-functionals}, yields the following result about the degree composition of the largest clusters:
\begin{corollary}\label{cor:degree-k} Consider $\mathrm{CM}_n(\boldsymbol{d})$ satisfying {\rm Assumption~\ref{assumption1}}. Let $v_k(G)$ denote the number of vertices of degree $k$ in the graph $G$.  Then, for any fixed $i\geq 1$,
\begin{equation} \label{eqn_vertices_of_degree_k}
   v_k \big( \mathscr{C}_{\sss(i)} \big) = \frac{kr_k}{\mu} \big| \mathscr{C}_{\sss (i)} \big| +\oP(b_n),   
  \end{equation}
  where $r_k=\mathbbm{P}(D=k)$. 
  Denote $\mathbf{Z}^k_n= \ord( b_n^{-1} v_k ( \mathscr{C}_{\sss(i)} ),\mathrm{SP}(\mathscr{C}_{\sss (i)}))_{i\geq 1}$, $\mathbf{Z}^k:=\ord(\frac{kr_k}{\mu}\gamma_i(\lambda), N(\gamma_i))_{i\geq 1}$, where $\gamma_i(\lambda)$, and  $N(\gamma_i)$ are defined in {\rm Theorem~\ref{thm:spls}}. As $n\to\infty$,
\begin{equation}\label{eq:conv-amc-k}
 \mathbf{Z}_n^k \dto \mathbf{Z}^k,
\end{equation}with respect to the $\mathbb{U}^0_{\shortarrow}$ topology. 
\end{corollary}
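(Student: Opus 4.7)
\medskip

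\noindent\textbf{Proof proposal.} The plan is to deduce Corollary~\ref{cor:degree-k} as a direct application of Theorem~\ref{thm:comp-functionals} with the weight choice
\[
w_i := \ind{d_i = k}, \qquad i\in [n].
\]
With this choice, $\mathscr{W}_i=\sum_{j\in \mathscr{C}_{\sss(i)}}\ind{d_j=k}=v_k(\mathscr{C}_{\sss(i)})$, so \eqref{weight-approx-size} immediately becomes \eqref{eqn_vertices_of_degree_k} and the $\mathbb{U}^0_{\shortarrow}$-convergence in \eqref{eq:conv-amc-k} is precisely \eqref{eq:weight-conv-amc} with $\mu_w=kr_k/\mu$.

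The only thing to check is that $(w_i)_{i\in [n]}$ satisfies Assumption~\ref{assumption-weight}. For part~\eqref{assumption-weight-1}, the first condition is trivial since $\sum_{i\in [n]}w_i=|\{i\in [n]:d_i=k\}|\leq n$. For the second condition, writing $n_k(n):=|\{i\in [n]: d_i=k\}|$ we have
\[
\frac{1}{\ell_n}\sum_{i\in [n]}d_iw_i=\frac{k\,n_k(n)}{\ell_n}=\frac{k\cdot n_k(n)/n}{\ell_n/n}\longrightarrow \frac{k r_k}{\mu},
\]
using $\ell_n/n\to \mu$ and $n_k(n)/n=\PR(D_n=k)\to r_k$, both of which follow from Assumption~\ref{assumption1}~\eqref{assumption1-2}. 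This identifies the constant $\mu_w$ as $kr_k/\mu$, matching the statement. For part~\eqref{assumption-weight-2}, observe
\[
\sum_{i\in [n]}d_iw_i^2=k\,n_k(n)=O(n),\qquad \sum_{i\in [n]}d_i^2 w_i=k^2 n_k(n)=O(n),
\]
and since $\tau\in(3,4)$ we have $3\alpha=3/(\tau-1)>1$, so $a_n^3=n^{3\alpha}L(n)^3\gg n$; hence both quantities are $o(a_n^3)=O(a_n^3)$.

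With Assumption~\ref{assumption-weight} verified, Theorem~\ref{thm:comp-functionals} applies and delivers both \eqref{eqn_vertices_of_degree_k} (as the finite-dimensional statement \eqref{weight-approx-size}) and the $\mathbb{U}^0_{\shortarrow}$ convergence \eqref{eq:conv-amc-k}. There is no genuine obstacle here; the proof amounts to a bookkeeping check of the weight hypotheses, which is why the corollary is stated as a consequence of Theorem~\ref{thm:comp-functionals} rather than proved from scratch. The only mildly subtle point one should verify is that $w_i\in\{0,1\}$ is still allowed (the assumption has no positivity requirement), which is clear.
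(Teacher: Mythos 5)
Your proof is correct and follows exactly the same route as the paper: apply Theorem~\ref{thm:comp-functionals} with $w_i=\ind{d_i=k}$ and check Assumption~\ref{assumption-weight}. The paper states this verification is "a consequence of Assumption~\ref{assumption1}" without detail; your write-up simply fills in that bookkeeping (using $D_n\dto D$ to get $n_k(n)/n\to r_k$, $\ell_n/n\to\mu$, and $3\alpha>1$ so that $n=o(a_n^3)$), all of which is accurate.
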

\begin{proof}
 The proof follows directly from Theorem~\ref{thm:comp-functionals} by putting $w_i = \ind{d_i=k}$. The fact that this weight sequence satisfies Assumption~\ref{assumption-weight} with $\mu_w = kr_k/\mu$ is a consequence of Assumption~\ref{assumption1}.
\end{proof}

\section{Percolation}\label{sec:perc} 
In this section, we study critical percolation on the configuration model for fixed $\lambda\in\R$ and complete the proof of Theorem~\ref{thm:percolation}. 
As discussed earlier, $\mathrm{CM}_n(\boldsymbol{d},p)$ is obtained by first constructing $\mathrm{CM}_n(\boldsymbol{d})$ and then deleting each edge with probability $1-p$, independently of each other, and of the graph $\CM$. 
An interesting property of the configuration model is that $\mathrm{CM}_n(\boldsymbol{d},p)$ is also  distributed as a configuration model conditional on the degrees \cite{F07}. The rough idea here is to show that the degree distribution of $\mathrm{CM}_n(\bld{d},p_n(\lambda))$ satisfies Assumption~\ref{assumption1}, where $p_n(\lambda)$ is given by Assumption~\ref{assumption2}. This allows us to invoke Theorem~\ref{thm:spls} and complete the proof of Theorem~\ref{thm:percolation}. 
Recall from Assumption~\ref{assumption2} that $\nu=\lim_{n\to\infty}\nu_n>1$, and $p_n=p_n(\lambda)=\nu_n^{-1}(1+\lambda c_n^{-1})$. We start by describing an algorithm due to \citet{J09} that is easier to work with:
\begin{algo}[Construction of $\mathrm{CM}_n(\boldsymbol{d},p_n)$] \label{algo:perc}
\normalfont Initially, vertex $i$ has $d_i$ half-edges incident to it. For each half-edge $e$, let $v_e$ be the vertex to which $e$ is incident.
\begin{itemize}
 \item[(S1)]  With probability $1-\sqrt{p_n}$, one detaches $e$ from $v_e$ and associates $e$ to a new vertex $v'$ of degree-one. Color the new vertex $red$. This is done independently for every existing half-edge and we call this whole process $explosion$. Let $n_+$ be the number of red vertices created by explosion and $\tilde{n}=n+n_+$.  Denote the degree sequence obtained from the above procedure by $\Mtilde{\boldsymbol{d}} = ( \tilde{d}_i )_{i \in [\tilde{n}]}$, i.e., $\tilde{d}_i \sim \text{Bin} (d_i, \sqrt{p_n})$ for $i \in [n]$ and $\tilde{d}_i=1$ for $i \in [\tilde{n}] \setminus [n]$; 
 \item[(S2)] Construct $\mathrm{CM}_{\tilde{n}}(\Mtilde{\boldsymbol{d}})$ independently of (S1);
 \item[(S3)] Delete all the red vertices and the edges attached to them.
 \end{itemize}
 \end{algo}
It was also shown in \cite{J09} that the obtained multigraph has the same distribution as $\mathrm{CM}_{n}(\boldsymbol{d},p)$ if we replace (S3) by
 \begin{itemize}
 \item[(S3$'$)] instead of deleting red vertices, choose $n_+$ degree-one vertices uniformly at random without replacement, independently of (S1) and (S2), and delete them. 
 \end{itemize}

\begin{remark}\normalfont Notice that Algorithm~\ref{algo:perc}~(S1) induces a probability measure $\mathbbm{P}_p^n$ on $\N^\infty$. Denote their product measure by $\mathbbm{P}_p$. In words, for different $n$, (S1) is carried out independently.  All the almost sure statements about the degrees in this section will be with  respect to the probability measure $\mathbbm{P}_p$. 
\end{remark} 
 Let us first show that $\Mtilde{\boldsymbol{d}}$ also satisfies Assumption~\ref{assumption1}~\eqref{assumption1-2}. Note that the total number of half-edges remains unchanged during the explosion in Algorithm~\ref{algo:perc}~(S1) and therefore  $\sum_{i\in [\tilde{n}]}\tilde{d}_i=\sum_{i\in [n]}d_i$ and by Assumption~\ref{assumption2}~\eqref{assumption2-1},
\begin{equation}\label{eq:perc-mu}
 \frac{1}{n}\sum_{i\in [\tilde{n}]}\tilde{d}_i \to \mu \quad \PR_p-\text{ a.s.}
\end{equation}This verifies the first moment condition in Assumption~\ref{assumption1}~\eqref{assumption1-2} for the percolated degree sequence $\PR_p$ a.s.
 Let $I_{ij}$:= the indicator of the $j^{th}$ half-edge corresponding to vertex $i$ being kept after the explosion. Then $I_{ij} \sim \text{Ber} (\sqrt{p_n})$ independently for $i \in [n]$, $j \in [d_i]$. Let 
 \begin{equation} 
 \mathbf{I}:= (I_{ij})_{j \in [d_i], i \in [n]}  \quad\text{and} \quad  f_1(\mathbf{I}):=\sum_{i\in [n]} \tilde{d_i}(\tilde{d}_i-1).
 \end{equation}Note that $f_1(\mathbf{I})=\sum_{i\in [\tilde{n}]}\tilde{d}_i(\tilde{d}_i-1)$ since the degree-one vertices do not contribute to the sum. One can check that by changing the status of one half-edge corresponding to vertex $k$ we can change $f_1$ by at most $2(d_{k}+1)$. Therefore an application  of \cite[Corollary 2.27]{JLR00} yields 
 \begin{equation}\label{eqn::prob:ineq:third}
 \mathbbm{P}_p \Big( \Big|\sum_{i\in [n]} \tilde{d_i}(\tilde{d}_i-1)- p_n \sum_{i\in [n]} d_i(d_i-1) \Big| >t \Big) \leq 2 \exp \bigg( -\frac{t^2}{2\sum_{i \in [n]} d_i (d_{i}+1)^2}\bigg).
 \end{equation}
Now by Assumption~\ref{assumption2}~\eqref{assumption2-1}, $\sum_{i\in [n]}d_i^3=O(a_n^3)$. If we set $t=n^{1-\varepsilon}c_n^{-1}$, then $t^2/(\sum_{i\in [n]}d_i^3)$ is of the order $n^{\alpha-2\varepsilon}/L(n)$. Thus, choosing $\varepsilon<\alpha/2$, using \eqref{eqn::prob:ineq:third} and the Borel-Cantelli lemma we conclude that 
 \begin{equation}\label{perc:2nd-moment}\sum_{i\in [n]} \tilde{d_i}(\tilde{d}_i-1)= p_n \sum_{i\in [n]} d_i(d_i-1) +o(nc_n^{-1}) \quad \mathbbm{P}_p\text{-a.s.} 
 \end{equation}Thus, using Assumption~\ref{assumption2}, the second moment condition in Assumption~\ref{assumption1}~\eqref{assumption1-2} is verified for the percolated degree sequence $\PR_p$ a.s. Let $\tilde{d}_{\sss (i)}$ denote the $i^{th}$ largest value of $(\tilde{d}_i)_{i\in [\tilde{n}]}$. The third-moment condition in Assumption~\ref{assumption1}~\eqref{assumption1-2} is obtained by noting that $\tilde{d}_i\leq d_i$ for all $i\in [n]$ and  
 \begin{equation}\label{eq:third-moment-perc}\begin{split}
  &\lim_{K\to\infty}\limsup_{n\to\infty}a_n^{-3}\sum_{i=K+1}^{\tilde{n}}\tilde{d}_{\sss (i)}^{3}\leq \lim_{K\to\infty}\limsup_{n\to\infty}a_n^{-3}\sum_{i=K+1}^{\tilde{n}}\tilde{d}_{i}^{3}\\ &\leq \lim_{K\to\infty}\limsup_{n\to\infty}a_n^{-3}\Big(\sum_{i=K+1}^{n}\tilde{d}_{i}^{3}+ n_+\Big)\leq  \lim_{K\to\infty}\limsup_{n\to\infty}a_n^{-3}\Big(\sum_{i=K+1}^{n}d_{i}^{3}+ n_+\Big) \to 0 \quad \PR_p\ \text{a.s.,}
\end{split}
\end{equation}where we have used Assumption~\ref{assumption2}~\eqref{assumption2-1} and the fact that $a_n^{-3}n_+\to 0$, $\PR_p$ a.s., which follows by observing that $n_+\sim\mathrm{Bin}(\ell_n,1-\sqrt{p_n})$, and $a_n \gg n^{1/3}$ for $\tau\in (3,4)$. To see that $\Mtilde{\boldsymbol{d}}$ satisfies Assumption~\ref{assumption1}~\eqref{assumption1-3} note that by \eqref{perc:2nd-moment},
 \begin{equation}
  \frac{\sum_{i\in [\tilde{n}]}\tilde{d}_i(\tilde{d}_i-1)}{\sum_{i\in [\tilde{n}]}\tilde{d}_i}=p_n\frac{\sum_{i\in [n] }d_i(d_i-1)}{\sum_{i\in [n] }d_i}+o(c_n^{-1})=1+\lambda c_n^{-1}+o(c_n^{-1}) \quad \PR_p \text{ a.s.,}
 \end{equation}where the last step follows from Assumption~\ref{assumption2}~\eqref{assumption2-2}. Assumption~\ref{assumption1}~\eqref{assumption1-4} is trivially satisfied by $\Mtilde{\boldsymbol{d}}$. 
Finally, in order to verify Assumption~\ref{assumption1}~\eqref{assumption1-1}, it suffices to show that 
 \begin{equation}\label{eq:assump-1-perc}
  \frac{\tilde{d}_{\sss(i)}}{a_n}\to \theta_i\sqrt{p}, \quad \PR_p \text{ a.s.,}
 \end{equation}where $p=1/\nu$.
 Recall that $\Mtilde{d}_i\sim \mathrm{Bin}(d_i,\sqrt{p_n})$. A standard concentration inequality for the binomial distribution \cite[(2.9)]{JLR00} yields that, for any $0<\varepsilon\leq 3/2$, 
\begin{equation} 
 \PR(|\tilde{d}_i-d_i\sqrt{p_n}|>\varepsilon d_i\sqrt{p_n})\leq 2\mathrm{exp}(-\varepsilon^2d_i\sqrt{p_n}/3),
 \end{equation}
  and using the Borel-Cantelli lemma it follows that $\mathbbm{P}_p$ almost surely, $\Mtilde{d}_i=d_i\sqrt{p_n}(1+o(1))$ for all fixed $i$. 
Moreover, an application of \eqref{eq:third-moment-perc} yields that 
 \begin{equation}
  \lim_{K\to\infty}\limsup_{n\to\infty}a_n^{-3}\max_{i>K}\tilde{d}_i^3  =0.
 \end{equation}
  Now, since $\boldsymbol{\theta}$ is an ordered vector, the proof of \eqref{eq:assump-1-perc} follows.
 
To summarize, the above discussion in \eqref{eq:perc-mu},~\eqref{perc:2nd-moment},~\eqref{eq:third-moment-perc},~and~\eqref{eq:assump-1-perc} yields that the degree sequence $\Mtilde{d}$ satisfies all the conditions in Assumption~\ref{assumption1}. Therefore, Theorem~\ref{thm:spls} can be applied to $\mathrm{CM}_{\tilde{n}}(\Mtilde{\boldsymbol{d}})$.
Denote by $\tilde{\mathscr{C}}_{(i)}$ the $i^{th}$ largest component of $\mathrm{CM}_{\tilde{n}}(\Mtilde{\boldsymbol{d}})$. 
Let $\tilde{\mathbf{Z}}_n= \ord( b_n^{-1}|\tilde{\mathscr{C}}_{(i)}|,\mathrm{SP}(\tilde{\mathscr{C}}_{(i)})_{i\geq 1}$ and $\tilde{\mathbf{Z}}:=\ord(\tilde{\gamma}_i(\lambda), N(\tilde{\gamma}_i))_{i\geq 1}$, where $\gamma_i(\lambda)$, and  $N(\gamma_i)$ are defined in {\rm Theorem~\ref{thm:percolation}}. Now,  Theorem~\ref{thm:spls} implies
\begin{equation}
\tilde{\mathbf{Z}}_n \dto \tilde{\mathbf{Z}},
\end{equation}with respect to the $\mathbb{U}^0_{\shortarrow}$ topology.

Since the percolated degree sequence satisfies Assumption~\ref{assumption1} $\mathbbm{P}_p$-a.s., \eqref{eqn_vertices_of_degree_k} holds for $\tilde{\mathscr{C}}_{\sss (i)}$ also. Let $v^d_1(\tilde{\mathscr{C}}_{\sss (i)})$ be the number of degree-one vertices of $\tilde{\mathscr{C}}_{\sss (i)}$ which are deleted while creating the graph $\mathrm{CM}_{n}(\boldsymbol{d},p_{n})$ from  $\mathrm{CM}_{\tilde{n}}(\Mtilde{\boldsymbol{d}})$. 
Recall that $\tilde{n}_1$ is the number of degree-one vertices left after Algorithm~\ref{algo:perc}~(S2).
Since the vertices are to be chosen uniformly from all degree-one vertices as described in (S3$'$), 
\begin{equation} \label{degree_one_vertices}
\begin{split}
 v^d_1(\tilde{\mathscr{C}}_{\sss (i)}) &= \frac{n_+}{\tilde{n}_1}v_1(\tilde{\mathscr{C}}_{\sss (i)})+ \oP(b_n)= \frac{n_+}{\tilde{n}_1} \frac{\tilde{n}_1}{\ell_n} \big| \tilde{\mathscr{C}}_{\sss (i)}\big| + \oP(b_n) = \frac{n_+}{\ell_n}\big| \tilde{\mathscr{C}}_{(i)} \big|+ \oP(b_n)\\
 & = \frac{\mu\big(1-\sqrt{p}_n\big)+o(1)}{\mu+o(1)}\big| \tilde{\mathscr{C}}_{\sss (i)} \big|+ \oP(b_n) = \big(1-\sqrt{p}_n\big) \big| \tilde{\mathscr{C}}_{\sss (i)}\big| + \oP(b_n),
 \end{split}
\end{equation}where the penultimate equality follows by observing that $n_+\sim\mathrm{Bin}(\ell_n,1-\sqrt{p_n})$. Now, notice that by removing degree-one vertices, the components are not broken up, so the  vector of component sizes for percolation can be obtained by just  subtracting the number of red vertices from the component sizes of $\mathrm{CM}_{\tilde{n}}(\Mtilde{\boldsymbol{d}})$. Moreover, the removal of degree-one vertices does not effect the surplus edge counts.
Therefore, the proof of Theorem~\ref{thm:percolation} is complete by using Corollary~\ref{cor:degree-k}. 

\section{Convergence to AMC}
\label{sec:conv-amc}
Let us give an overview of the organization of this section:
In Section~\ref{sec:dyn-cons-coup}, we discuss an alternative dynamic  construction that approximates the  percolated graph process, coupled in a natural way.
This construction enables us to compare the coupled percolated graphs with a dynamic construction.
Then, we describe a modified system that evolves as an exact augmented multiplicative coalescent and the rest of the section is devoted to comparing the exact augmented multiplicative coalescent and the corresponding quantities for the graphs generated by the dynamic construction.
For all the results in this section, we assume that the assumptions of Theorem~\ref{thm:mul:conv} holds.

\subsection{The dynamic construction and the coupling} \label{sec:dyn-cons-coup}
Let us consider graphs generated dynamically as follows: 
\begin{algo}\label{algo:dyn-cons-2} \normalfont Let $s_1(t)$ be the total number of unpaired or \emph{open} half-edges at time $t$, and  $\Xi_n$ be an inhomogeneous Poisson process with rate $s_1(t)$ at time $t$. 
\begin{itemize}
\item[{\rm (S0)}] Initially, $s_1(0)=\ell_n$, and $\mathcal{G}_n(0)$ is the empty graph on vertex set $[n]$. 
\item[{\rm(S1)}] At each event time of $\Xi_n$, choose two open half-edges uniformly at random and pair them. The graph $\mathcal{G}_n(t)$ is obtained by adding this edge to $\mathcal{G}_n(t-)$. Decrease $s_1(t)$ by two. Continue until $s_1(t)$ becomes zero.
\end{itemize} 
\end{algo} 
Notice that $\mathcal{G}_n(\infty)$ is distributed as $\CM$ since an open  half-edge is paired with another uniformly chosen open half-edge. 
The next proposition ensures that the graph process generated by Algorithm~\ref{algo:dyn-cons-2} \emph{sandwiches} the graph process $(\mathrm{CM}_n(\bld{d},p_n(\lambda)))_{\lambda\in\R}$. This result was proved in \cite[Proposition~8.4]{DHLS15}. The proof is identical under Assumption~\ref{assumption2} and therefore is omitted here.
Define
\begin{equation}\label{defn:t-n-lambda}
t_n(\lambda)=\frac{1}{2}\log\bigg(\frac{\nu_n}{\nu_n-1}\bigg)+\frac{1}{2(\nu_n-1)}\frac{\lambda}{c_n}.
\end{equation}
\begin{proposition}\label{prop:coupling-whp} Fix $-\infty<\lambda_\star<\lambda^\star<\infty$. There exists a coupling such that with high probability
\begin{subequations}
\begin{equation}
 \mathcal{G}_n(t_n(\lambda)-\varepsilon_n)\subset \mathrm{CM}_n(\bld{d},p_n(\lambda)) \subset\mathcal{G}_n(t_n(\lambda)+\varepsilon_n),\quad \forall \lambda \in [\lambda_\star,\lambda^\star],
\end{equation} and 
\begin{equation}\label{eq:prop-coup-2}
 \mathrm{CM}_n(\bld{d},p_n(\lambda)-\varepsilon_n)\subset \mathcal{G}_n(t_n(\lambda))\subset \mathrm{CM}_n(\bld{d},p_n(\lambda)+\varepsilon_n),\quad \forall \lambda \in [\lambda_\star,\lambda^\star],
\end{equation}
\end{subequations}where $\varepsilon_{n}=cn^{-\gamma_0}$, for some $\eta<\gamma_0<1/2$ and the constant $c$ does not depend on $\lambda$.
\end{proposition}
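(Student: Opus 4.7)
The plan is to build an explicit joint coupling of $(\mathcal{G}_n(t))_{t\geq 0}$ with $(\mathrm{CM}_n(\bld{d},p))_{p\in[0,1]}$ (with the Harris coupling for percolation) in which both processes read the edges of the same uniform pairing in the same random order, and then reduce each inclusion to a comparison of two integer-valued counts. Concretely, sample independently (a) a uniformly random pairing $\pi$ of the $\ell_n$ half-edges (this is $\CM$); (b) the jump times $T_1<T_2<\cdots$ of a pure-birth process with $T_k-T_{k-1}\sim\mathrm{Exp}(\ell_n-2(k-1))$, and set $M(t):=\#\{k:T_k\leq t\}$; (c) iid uniforms $\{U_e\}_{e\in\pi}$, inducing a uniformly random ordering $\sigma$ of $\pi$. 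Declare
\[
 \mathcal{G}_n(t):=\{e_{\sigma(1)},\dots,e_{\sigma(M(t))}\},\qquad \mathrm{CM}_n(\bld{d},p):=\{e\in\pi:U_e\leq p\}=\{e_{\sigma(1)},\dots,e_{\sigma(K(p))}\},
\]
where $K(p):=\#\{e:U_e\leq p\}\sim\mathrm{Bin}(\ell_n/2,p)$. The combinatorial identity that a uniformly chosen size-$k$ sub-matching of a uniform pairing is itself a uniform size-$k$ partial matching, together with independence of pairing, birth process and uniforms, shows that $\mathcal{G}_n(\cdot)$ has exactly the law of Algorithm~\ref{algo:dyn-cons-2} and $\mathrm{CM}_n(\bld{d},p)$ the correct percolation law.

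Because both graphs are prefixes of the common ordering $\sigma$, the desired inclusions reduce to the event
\[
 M(t_n(\lambda)-\varepsilon_n)\ \leq\ K(p_n(\lambda))\ \leq\ M(t_n(\lambda)+\varepsilon_n)\qquad\forall \lambda\in[\lambda_\star,\lambda^\star],
\]
and \eqref{eq:prop-coup-2} follows analogously by shifting the roles. Both processes concentrate: the elementary ODE for $M(t)$ gives $\E[M(t)]=(\ell_n/2)(1-e^{-2t})$, while a Doob martingale built from the independent gaps $T_k-T_{k-1}$, together with the standard Bernstein bound for $K(p)$, yields $\PR(|M(t)-\E M(t)|>x)+\PR(|K(p)-\ell_np/2|>x)\leq 4\exp(-cx^2/\ell_n)$. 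Taking $x=\sqrt{n}\,\log^2 n$ makes these probabilities decay super-polynomially, so a union bound over a polynomial grid in $\lambda$ is harmless. A Taylor expansion of $t_n(\lambda)$ gives $1-e^{-2t_n(\lambda)}=\nu_n^{-1}(1+\lambda c_n^{-1})+O(c_n^{-2})$, which matches $p_n(\lambda)$ to within $O(c_n^{-2})+o(c_n^{-1})$, so the means differ by at most $o(n\,c_n^{-1})$. On the other hand the buffer gives
\[
 \E[M(t_n(\lambda)\pm\varepsilon_n)]-\E[M(t_n(\lambda))]\ =\ \pm\,\ell_n\,e^{-2t_n(\lambda)}\cdot 2\varepsilon_n\,(1+o(1))\ =\ \Theta(n^{1-\gamma_0}).
\]
Since $\gamma_0<1/2$ this dominates the concentration error, and the constraint $\gamma_0>\eta$ — together with the extra assumption $\ell_n/n=\mu+o(n^{-\zeta})$ with $\zeta>\eta$, which is what sharpens the unquantified $o(c_n^{-1})$ in Assumption~\ref{assumption2}~\eqref{assumption2-2} and the Taylor remainder — ensures the buffer also dominates the matching error.

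Finally, for uniformity in $\lambda$, cover $[\lambda_\star,\lambda^\star]$ with a grid of mesh $\Theta(\varepsilon_n/t_n'(\lambda))=\Theta(n^{-(\gamma_0-\eta)}L(n)^{-2})$ (polynomially many points); verify the edge-count inequality with a buffer $\varepsilon_n/2$ at grid points via the union bound above, and extend to all $\lambda$ using the joint monotonicity of $M(\cdot)$, $K(\cdot)$, $t_n(\cdot)$ and $p_n(\cdot)$: for $\lambda\in(\lambda_i,\lambda_{i+1})$, $M(t_n(\lambda)-\varepsilon_n)\leq M(t_n(\lambda_i)-\varepsilon_n/2)\leq K(p_n(\lambda_i))\leq K(p_n(\lambda))$, and similarly for the upper inclusion. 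The single genuinely delicate step is the comparison of the two expected edge counts: it is the requirement that the $O(c_n^{-2})$ Taylor remainder and the $o(c_n^{-1})$ slack in the definition of $p_n(\lambda)$ are both pushed below the $n^{1-\gamma_0}$ buffer that both forces the relation $\eta<\gamma_0<1/2$ and explains the role of the additional hypothesis $\ell_n/n=\mu+o(n^{-\zeta})$ (used nowhere else in the proof of Theorem~\ref{thm:mul:conv}, as already noted in Remark~\ref{rem:additional-assumption}); all other ingredients are classical concentration and a monotone grid argument.
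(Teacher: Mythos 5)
Your overall architecture is sound and is essentially the standard sandwiching argument (the paper itself only cites \cite[Proposition~28]{DHLS15} here): realize both families as prefixes of a common uniformly ordered perfect matching, reduce each inclusion to an inequality between the prefix lengths $M(t)$ and $K(p)$, and combine concentration with a grid-plus-monotonicity argument. The coupling you describe is legitimate — a uniform ordered perfect matching restricted to its first $k$ edges is conditionally uniform at each step, the gap times match Algorithm~\ref{algo:dyn-cons-2}, and the Harris-coupled percolation clusters are exactly the first $K(p)$ edges of the ordering induced by the uniforms — and the exact identity $\E[M(t)]=(\ell_n/2)(1-\e^{-2t})$ together with sub-Gaussian fluctuations at scale $\sqrt{n}$ is correct.

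The genuine gap is in the comparison of means, where your inequality points the wrong way. The buffer you gain from $\varepsilon_n$ is $\Theta(n\varepsilon_n)=\Theta(n^{1-\gamma_0})$, while you only bound the discrepancy $\bigl|\E[M(t_n(\lambda))]-\E[K(p_n(\lambda))]\bigr|$ by $o(nc_n^{-1})=o(n^{1-\eta}L(n)^2)$. Since the proposition requires $\gamma_0>\eta$, you have $n^{1-\gamma_0}=o\bigl(n^{1-\eta}L(n)^2\bigr)$: the buffer is \emph{smaller} than your bound on the discrepancy, not larger, so "the buffer dominates the matching error" does not follow — indeed an adversarial $o(c_n^{-1})$ slack in $p_n(\lambda)$ (e.g.\ of order $c_n^{-1}/\log n$) would defeat any $\gamma_0>\eta$. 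The fix is to use the exact parametrization $p_n(\lambda)=\nu_n^{-1}(1+\lambda c_n^{-1})$ adopted at the start of Section~\ref{sec:perc}, for which the discrepancy is only the Taylor remainder $O(\ell_n c_n^{-2})=O(n^{1-2\eta}L(n)^4)$, and then choose $\gamma_0\in(\eta,\min(2\eta,1/2))$ so that this remainder is $o(n^{1-\gamma_0})$; the constraint $\gamma_0>\eta$ is not what makes the sandwich close — it is needed so that $\varepsilon_n=o(c_n^{-1}\cdot c_n^{-0})$ in $p$-space, i.e.\ so that the perturbed graphs stay inside the critical window and the sandwich is actually useful downstream. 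Relatedly, your attribution of the hypothesis $\ell_n/n=\mu+o(n^{-\zeta})$ is off: in your own coupling $\ell_n$ appears as a common prefactor in both $\E[M]$ and $\E[K]$ and cancels, and that hypothesis has no bearing on the $o(c_n^{-1})$ in Assumption~\ref{assumption2}~\eqref{assumption2-2} (which concerns $\nu_n$, not $\ell_n/n$); it enters the surrounding arguments (e.g.\ \eqref{eq:asympt-ell-n-o} and the rate computations that convert Poisson times into the $\lambda$-parametrization), not the mean comparison you set up.
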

We next prove an important consequence of Proposition~\ref{prop:coupling-whp} which says that  the rescaled component sizes and surplus edges for $\mathrm{CM}_n(\bld{d},p_n(\lambda))$ and $\mathcal{G}_n(t_n(\lambda))$ are close in $\mathbb{U}^0_{\shortarrow}$. 
This allows us to focus our attention to  $\mathcal{G}_n(t_n(\lambda))$ in the subsequent analysis. 
From here onward, we augment $\lambda$ to a predefined notation to emphasize the dependence on~$\lambda$. Let $\mathscr{C}_{\sss (i)} (\lambda)$ and $\mathscr{C}_{\sss (i)} ^p(\lambda)$ denote respectively the $i$-th largest component of $\mathrm{CM}_n(\bld{d},p_n(\lambda))$ and $\mathcal{G}_n(t_n(\lambda))$. 
Also, let $\mathbf{Z}_n(\lambda)$ (respectively $\mathbf{Z}_n^p(\lambda)$) denote the vector $(b_n^{-1}|\mathscr{C}_{\sss (i)} (\lambda)|,\mathrm{SP}(\mathscr{C}_{\sss (i)} (\lambda)))_{i\geq 1}$ (respectively $(b_n^{-1}|\mathscr{C}_{\sss (i)}^p (\lambda)|,\mathrm{SP}(\mathscr{C}_{\sss (i)}^p (\lambda)))_{i\geq 1}$ ), ordered as an element of $\mathbb{U}^0_{\shortarrow}$. Recall the definition of the metric $\mathrm{d}_{\mathbb{U}}$ from \eqref{defn_U_metric}.
\begin{proposition}\label{prop:perc-dyn-close}
Under the coupling in {\rm Proposition~\ref{prop:coupling-whp}}, as $n\to\infty$, $\mathrm{d}_{\mathbb{U}} (\mathbf{Z}_n(\lambda), \mathbf{Z}_n^p(\lambda)) \pto 0$ for any $\lambda\in\R$.
\end{proposition}
We first prove the following elementary fact that will be crucial in our analysis:
\begin{fact}\label{fact:exc}
$\PR(\forall i\geq 1: \gamma_{i+1} < \gamma_{i}) = 1,$ where $\gamma_i$ denotes the length of the $i$-th largest excursion in \eqref{defn::limiting::process}.
\end{fact}
\begin{claimproof}
It is enough to show that, with probability one, no two excursions of a process in \eqref{defn::limiting::process} have same length. 
For any rational $q$, let $e(q)$ be the excursion containing $q$. 
Thus it is enough to show that for two rationals $q_1,q_2$,
\begin{eq}
\PR(e(q_1) \neq e(q_2), \text{ but }|e(q_1)| = |e(q_2)|) =0.
\end{eq}
Without loss of generality, suppose $e(q_1) $ appears earlier than $e(q_2)$. 
Let $V_{q_2} = \{i: \mathcal{I}_i (q_2) = 1\}$. 
Thus conditional on the sigma-field $\sigma (\bar{S}_\infty^\lambda(s):s\leq q_2)$, the process $(\iS(q_2+t) - \iS(q_2))_{t\geq 0}$ is distributed as $\hat{\mathbf{S}}_{\infty}^\lambda$ given by  
\begin{eq}\label{eq:conditional-process}
\hat{S}_{\infty}^\lambda(t) =  \sum_{i\notin V_{q_2}} \theta_i\left(\mathcal{I}_i(t)- (\theta_i/\mu)t\right)+\lambda t.
\end{eq}
So the process in \eqref{eq:conditional-process} again has the form \eqref{defn::limiting::process}. 
Now, for any $x>0$, the probability that $|e(q_2)| = x$, conditionally on $\sigma (\bar{S}_\infty^\lambda(s):s\leq q_2)$ and $e(q_1) = x$, is zero using Lemma~\ref{lem:no-atom} from Appendix~\ref{sec:appendix-density-existence}. This concludes the proof of Fact~\ref{fact:exc}
\end{claimproof}
\begin{proof}[Proof of Proposition~\ref{prop:perc-dyn-close}]
 Let $\mathscr{C}_{\sss (i)} ^{p,+}(\lambda)$ (respectively  $\mathscr{C}_{\sss (i)} ^{p,-}(\lambda)$) denote the $i$-th largest component of $\mathrm{CM}_n(\bld{d},p_n(\lambda)+\varepsilon_n)$ (respectively $\mathrm{CM}_n(\bld{d},p_n(\lambda)-\varepsilon_n)$) where we have chosen $\varepsilon_n$ according to Proposition~\ref{prop:coupling-whp}.
 Let $\mathbf{Z}_n^{p,+}(\lambda)$ (respectively $\mathbf{Z}_n^{p,-}(\lambda)$) denote the vector $(b_n^{-1}|\mathscr{C}_{\sss (i)}^{p.+} (\lambda)|,\mathrm{SP}(\mathscr{C}_{\sss (i)}^{p,+} (\lambda)))_{i\geq 1}$ (respectively $(b_n^{-1}|\mathscr{C}_{\sss (i)}^{p.-} (\lambda)|,\mathrm{SP}(\mathscr{C}_{\sss (i)}^{p,-} (\lambda)))_{i\geq 1}$), ordered as an element of $\mathbb{U}^0_{\shortarrow}$. 
 Using Theorem~\ref{thm:percolation}, both $\mathbf{Z}_n^{p,+}(\lambda)$ and $\mathbf{Z}_n^{p,-}(\lambda)$ have the same scaling limit. 

Next, for any $K\geq 1$, 
 \begin{eq}\label{eq:sandwich-perc-12}
 \lim_{n\to\infty} \PR(\mathscr{C}_{\sss (i)} ^{p,-}(\lambda) \subset \mathscr{C}_{\sss (i)} ^{p,+}(\lambda), \ \forall i\leq K) = 1.
 \end{eq}
If $\mathscr{C}_{\sss (1)} ^{p,-}(\lambda) $ is not contained in $ \mathscr{C}_{\sss (1)} ^{p,+}(\lambda)$, then $|\mathscr{C}_{\sss (1)} ^{p,-}(\lambda)|\leq |\mathscr{C}_{\sss (j)} ^{p,+}(\lambda)|$ for some $j\geq 2$, which implies that $|\mathscr{C}_{\sss (1)} ^{p,-}(\lambda)|\leq |\mathscr{C}_{\sss (2)} ^{p,+}(\lambda)|$. 
Suppose that there is a subsequence $(n_{0k})_{k\geq 1}$ along which 
\begin{eq}\label{liminf-prob-pos}
\lim_{n_{0k}\to\infty}   \PR(|\mathscr{C}_{\sss (1)}^{p,-}(\lambda)| \leq |\mathscr{C}_{\sss (2)}^{p,+}(\lambda)|) >0.
\end{eq}
If \eqref{liminf-prob-pos} yields a contradiction, then \eqref{eq:sandwich-perc-12} is proved for $K=1$. 
To this end, note that $(b_{n}^{-1}(|\mathscr{C}_{\sss (i)}^{p,-}(\lambda)|, |\mathscr{C}_{\sss (i)}^{p,+}(\lambda)|)_{i\geq 1})_{n\geq 1}$ is tight in $(\ell^2_{\shortarrow})^2$.
Thus taking a convergent subsequence along $(n_k)_{k\geq 1} \subset (n_{0k})_{k\geq 1}$, 
it follows that 
\begin{eq}
b_{n}^{-1}(|\mathscr{C}_{\sss (i)}^{p,-}(\lambda)|, |\mathscr{C}_{\sss (i)}^{p,+}(\lambda)|)_{i\geq 1} \dto (\gamma_i,\bar{\gamma}_i)_{i\geq 1} \quad \text{ in }(\ell^2_{\shortarrow})^2,
\end{eq}
 where $(\gamma_i)_{i\geq 1} \stackrel{\sss d}{=}(\bar{\gamma}_i)_{i\geq 1}$.
Thus, along the subsequence $(n_k)_{k\geq 1}$, 
\begin{eq} \label{eq:prob-second-larger-largest-2}
\lim_{n_k\to\infty} \PR(|\mathscr{C}_{\sss (1)}^{p,-}(\lambda)| \leq |\mathscr{C}_{\sss (2)}^{p,+}(\lambda)|) = \PR(\gamma_1\leq \bar{\gamma}_2).
\end{eq}
\begin{fact}\label{fact:same-dist-coupling-2}
For all $i\geq 1$, $\gamma_i = \bar{\gamma}_i$ almost surely.
\end{fact}
\begin{claimproof}
Under the coupling in Proposition~\ref{prop:coupling-whp}, $\sum_{j\leq i}|\mathscr{C}_{\sss (j)}^{p,-}(\lambda)|\leq \sum_{j\leq i}|\mathscr{C}^{p,+}_{\sss (j)}(\lambda)|$ and therefore $\PR(\sum_{j\leq i}\gamma_j\leq \sum_{j\leq i} \bar{\gamma}_j) =1$, for each fixed $i\geq 1$. 
In particular, $\gamma_1 \leq \bar{\gamma}_1$. But, since $\gamma_1,\bar{\gamma}_1$ have the same distribution, it must be the case that $\gamma_1 = \bar{\gamma}_1$ almost surely. 
Inductively, we can prove $\gamma_i = \bar{\gamma}_i$ almost surely.
\end{claimproof}
\noindent Thus, using Fact~\ref{fact:same-dist-coupling-2}, \eqref{eq:prob-second-larger-largest-2} reduces to
\begin{eq} \label{eq:prob-second-larger-largest-3}
\lim_{n_k\to\infty} \PR(|\mathscr{C}_{\sss (1)}^{p,-}(\lambda)| \leq |\mathscr{C}^{p,+}_{\sss (2)}(\lambda)|) = \PR(\gamma_1\leq \gamma_2) = \PR(\gamma_1= \gamma_2) = 0,
\end{eq}
where the last equality follows from Fact~\ref{fact:exc}. Now, \eqref{eq:prob-second-larger-largest-3} contradicts \eqref{liminf-prob-pos}, and thus \eqref{eq:sandwich-perc-12} follows for $K=1$.
For $K\geq 2$, we can use a similar argument to show that, with high probability, $\cup_{i\leq K}\mathscr{C}_{\sss (i)}^{p,-}(\lambda)\subset \cup_{i\leq K}\mathscr{C}_{\sss (i)}^{p,+}(\lambda)$. 
Now, if both $\mathscr{C}_{\sss (1)}^{p,-}(\lambda)$ and  $\mathscr{C}_{\sss (2)}^{p,-}(\lambda)$ are  contained in $\mathscr{C}^{p,+}_{\sss (1)}(\lambda)$, then $|\mathscr{C}^{p,+}_{\sss (1)}(\lambda)| \geq |\mathscr{C}_{\sss (1)}^{p,-}(\lambda)|+|\mathscr{C}_{\sss (2)}^{p,-}(\lambda)|$, which occurs with probability tending to zero.
This follows using Fact~\ref{fact:same-dist-coupling-2} and $\PR(\gamma_1\geq \gamma_1+\gamma_2) = 0$.
Thus, $\mathscr{C}_{\sss (2)}^{p,-}(\lambda) \subset \mathscr{C}^{p,+}_{\sss (2)}(\lambda)$ with high probability and we can use similar arguments to conclude this for $i\leq K$. Thus, the proof of \eqref{eq:sandwich-perc-12} follows for general $K\geq 1$. 

Next, we show that, for any $K\geq 1$, 
 \begin{eq}\label{eq:sandwich-perc-123}
 \lim_{n\to\infty} \PR\big(\mathscr{C}_{\sss (i)} ^{p,-}(\lambda) \subset \mathscr{C}_{\sss (i)} (\lambda) \subset \mathscr{C}_{\sss (i)} ^{p,+}(\lambda), \ \forall i\leq K\big) = 1.
 \end{eq}
If $\mathscr{C}_{\sss (1)} (\lambda) $ is not contained in $ \mathscr{C}_{\sss (1)} ^{p,+}(\lambda)$, then $|\mathscr{C}_{\sss (1)}(\lambda)|\leq |\mathscr{C}_{\sss (2)} ^{p,+}(\lambda)|$. 
However, since $|\mathscr{C}_{\sss (1)}^{p,-}(\lambda)|\leq |\mathscr{C}_{\sss (1)}(\lambda)|$, it follows that $|\mathscr{C}_{\sss (1)}^{p,-}(\lambda)|\leq |\mathscr{C}_{\sss (2)} ^{p,+}(\lambda)|$.
Now, one can repeat identical argument as in   \eqref{eq:sandwich-perc-12} to prove that $\mathscr{C}_{\sss (i)} (\lambda) \subset \mathscr{C}_{\sss (i)} ^{p,+}(\lambda)$ for all $i\leq K$ with high probability.
Moreover, since $\mathrm{CM}_n(\bld{d},p_n(\lambda)-\varepsilon_n)\subset \mathcal{G}_n(t_n(\lambda))$ and $\mathscr{C}_{\sss (i)} ^{p,-}(\lambda) \subset \mathscr{C}_{\sss (i)} ^{p,+}(\lambda),$ for all $i\leq K$ with high probability, it must also be the case that $\mathscr{C}_{\sss (i)} ^{p,-}(\lambda) \subset \mathscr{C}_{\sss (i)} (\lambda)\subset \mathscr{C}_{\sss (i)} ^{p,+}(\lambda),$ for all $i\leq K$ with high probability. Thus we conclude \eqref{eq:sandwich-perc-123}.

Similarly, one can also show that 
\begin{eq}\label{eq:sandwich-perc-124}
 \lim_{n\to\infty} \PR\big(\mathscr{C}_{\sss (i)} ^{p,-}(\lambda) \subset \mathscr{C}_{\sss (i)}^p (\lambda) \subset \mathscr{C}_{\sss (i)} ^{p,+}(\lambda), \ \forall i\leq K\big) = 1.
 \end{eq}
Finally, since $\mathbf{Z}_n^{p,-}(\lambda)$ and $\mathbf{Z}_n^{p,+}(\lambda)$ have the same distributional limit, it follows using \eqref{eq:sandwich-perc-12} that for all $i\leq K$
\begin{eq}
|\mathscr{C}_{\sss (i)} ^{p,+}(\lambda)| - |\mathscr{C}_{\sss (i)} ^{p,-}(\lambda)| = \oP(b_n) \quad\text{and}\quad  \mathrm{SP}(\mathscr{C}_{\sss (i)} ^{p,+}(\lambda))-\mathrm{SP}(\mathscr{C}_{\sss (i)} ^{p,-}(\lambda)) \to 0.
\end{eq}
Thus, \eqref{eq:sandwich-perc-123} and \eqref{eq:sandwich-perc-124} yields
 \begin{eq}\label{eq:bounds-comp-surp}
\big| |\mathscr{C}_{\sss (i)} ^{p}(\lambda)| - |\mathscr{C}_{\sss (i)} (\lambda)| \big|= \oP(b_n) \quad\text{and}\quad  \big|\mathrm{SP}(\mathscr{C}_{\sss (i)} ^p(\lambda))-\mathrm{SP}(\mathscr{C}_{\sss (i)} (\lambda)) \big|\to 0.
\end{eq} 
Moreover, $(\mathbf{Z}_n^{p}(\lambda))_{n\geq 1}$ is tight in $\mathbb{U}^0_{\shortarrow}$, and
since both $(\mathbf{Z}_n^{p,-}(\lambda))_{n\geq 1}$ and $(\mathbf{Z}_n^{p,+}(\lambda))_{n\geq 1}$ are tight in $\mathbb{U}^0_{\shortarrow}$, it also follows that $(\mathbf{Z}_n(\lambda))_{n\geq 1}$ is tight in $\mathbb{U}^0_{\shortarrow}$.
Let
$\pi_k,T_k:\mathbb{U}^0_{\shortarrow}\mapsto\mathbb{U}^0_{\shortarrow}$ be the functions such that for $\mathbf{z}=((x_i,y_i))_{i\geq 1}$, $\pi_k(\mathbf{z})$ consists of only $(x_i,y_i)$ for $i\leq k$ and zeroes in other coordinates, and $T_k(\mathbf{z})$ consists only of $(x_i,y_i)$ for $i>k$. Thus,
\begin{equation}\label{split-up-fixed-lambda-1}
\begin{split}
 \mathrm{d}_{\mathbb{U}}\left(\mathbf{Z}_n^p(\lambda),\mathbf{Z}_n(\lambda)\right)&\leq\mathrm{d}_{\mathbb{U}}\left(\pi_K(\mathbf{Z}_{n}^p(\lambda)),\pi_K(\mathbf{Z}_n(\lambda))\right)+\|T_K(\mathbf{Z}_n^p(\lambda))\|_{\sss\mathbb{U}}+\|T_K(\mathbf{Z}_n(\lambda))\|_{\sss\mathbb{U}}.
 \end{split}
\end{equation}
 Now, for each fixed $K\geq 1$ the first term in the right hand side of \eqref{split-up-fixed-lambda-1} converges in probability to zero by \eqref{eq:bounds-comp-surp}. Also, using the tightness of both $(\bZ_n(\lambda))_{n\geq 1}$ and $(\bZ_n^p(\lambda))_{n\geq 1}$ with respect to the $\mathbb{U}^0_{\shortarrow}$ topology, it follows that for any $\varepsilon>0$,
\begin{equation}
\lim_{K\to\infty}\lim_{n\to\infty}\prob{\|T_K(\mathbf{Z}_n(\lambda))\|_{\sss \mathbb{U}}>\varepsilon}=\lim_{K\to\infty}\lim_{n\to\infty}\prob{\|T_K(\mathbf{Z}_n^p(\lambda))\|_{\sss \mathbb{U}}>\varepsilon}=0
\end{equation}
Thus, the proof of Proposition~\ref{prop:perc-dyn-close} now follows.

\end{proof}

Let us now describe the evolution of the components under the dynamic construction Algorithm~\ref{algo:dyn-cons-2}.
We write $\mathscr{C}_{\sss (i)}(\lambda)$  for the $i^{th}$ largest component of $\mathcal{G}_n(t_n(\lambda))$ and define 
\begin{equation} \label{defn:open-half-edge}
\mathcal{O}_i(\lambda)=\# \text{ open half-edges in }\mathscr{C}_{\sss (i)}(\lambda).
\end{equation}
Think of $\mathcal{O}_i(\lambda)$ as the \emph{mass} of the component $\mathscr{C}_{\sss (i)}(\lambda)$. 
Let $\mathbf{Z}_n^o(\lambda)$ denote the vector of the number of open half-edges (re-scaled by $b_n$) and surplus edges of $\mathcal{G}_n(t_n(\lambda))$, ordered as an element of $\mathbb{U}^0_{\shortarrow}$. 
For a process $\mathbf{X}$, we will write $\mathbf{X}[\lambda_\star,\lambda^\star]$ to denote the restricted process $(X(\lambda))_{\lambda\in[\lambda_\star,\lambda^\star]}$. 
 Let $\ell_n^o(\lambda) = \sum_{i\geq 1}\mathcal{O}_i(\lambda)$. 
Note that 
\begin{equation}\label{eq:asympt-ell-n-o}
\ell_n^o(\lambda) =  \frac{n\mu(\nu-1)}{\nu}(1+\oP(1)).
\end{equation} 
Indeed, \eqref{eq:asympt-ell-n-o} is a consequence of \cite[Lemma 8.2]{BBSX14} since the proof only uses the facts that $|\ell_n/n-\mu|=o(n^{-\gamma})$ for all $\gamma<1/2$, and $\sum_{i\in [n]}d_i(d_i-1)/\ell_n\to\nu$.  
Now, observe that during the evolution of the graph process generated  by Algorithm~\ref{algo:dyn-cons-2} in the time interval $[t_n(\lambda),t_n(\lambda+\dif \lambda)]$, the $i^{th}$ and $j^{th}$ ($i> j$) largest components merge at rate 
 \begin{equation}\label{rate:function}
2\mathcal{O}_{i}(\lambda) \mathcal{O}_{j}(\lambda)\times\frac{1}{\ell_n^o(\lambda)-1}\times \frac{1}{2(\nu_n-1)c_n}\approx \frac{\nu}{\mu(\nu-1)^2} \big(b_n^{-1}\mathcal{O}_{i}(\lambda)\big)\big(b_n^{-1}\mathcal{O}_{j}(\lambda)\big),
\end{equation}and create a component with open half-edges $\mathcal{O}_{i}(\lambda)+\mathcal{O}_{j}(\lambda)-2$ and surplus edges $\mathrm{SP}(\mathscr{C}_{\sss(i)}(\lambda))+\mathrm{SP}(\mathscr{C}_{\sss(j)}(\lambda))$. 
Also, a surplus edge is created in $\mathscr{C}_{\sss(i)}(\lambda)$ at rate
\begin{equation}\label{rate:function-spls}
\mathcal{O}_i(\lambda)(\mathcal{O}_i(\lambda)-1)\times\frac{1}{\ell_n^o(\lambda)-1}\times \frac{1}{2(\nu_n-1)c_n}\approx \frac{\nu}{2\mu(\nu-1)^2} \big(b_n^{-1}\mathcal{O}_{i}(\lambda)\big)^2,
\end{equation}and $\mathscr{C}_{\sss(i)}(\lambda)$ becomes a component with surplus edges  $\mathrm{SP}(\mathscr{C}_{\sss(i)}(\lambda))+1$  and open half-edges $\mathcal{O}_{i}(\lambda)-2$. Thus $\mathbf{Z}_n^o[\lambda_\star,\lambda^\star]$ does \emph{not} evolve as an AMC process but it is close. 
The fact that two half-edges are killed after pairing, makes the masses (the number of open half-edges) of the components  and the system to deplete. If there were no such depletion of mass, then the vector of open half-edges, along with the surplus edges, would in fact  merge as an augmented multiplicative coalescent. 
Let us define the modified process \cite[Algorithm~7]{DHLS15} that in fact evolves as augmented multiplicative coalescent:

\begin{algo}\label{algo:modify-dyn-cons} \normalfont Initialize $\bar{\mathcal{G}}_n(t_n(\lambda_\star)) = \mathcal{G}_n(t_n(\lambda_\star))$.  Let $\mathscr{O}$ denote the set of open half-edges in the graph $\mathcal{G}_n(t_n(\lambda_\star))$, $\bar{s}_1 = |\mathscr{O}|$ and $\bar{\Xi}_n$ denote a Poisson process with rate $\bar{s}_1$. At each event time of the Poisson process $\bar{\Xi}_n$, select two half-edges from $\mathscr{O}$ and create an edge between the corresponding vertices. However, the selected half-edges are kept alive, so that they can be selected again. Denote the resulting graph by $\bar{\mathcal{G}}_n(t_n(\lambda))$.
\end{algo} 

\begin{remark}\label{rem:modify-AMC}\normalfont The only difference between Algorithms~\ref{algo:dyn-cons-2}~and~\ref{algo:modify-dyn-cons}  is that the \emph{paired} half-edges are not discarded and thus more edges are created by Algorithm~\ref{algo:modify-dyn-cons}. Thus, there is a natural coupling between the graphs generated by Algorithms~\ref{algo:dyn-cons-2}~and~\ref{algo:modify-dyn-cons} such that $\mathcal{G}_n(t_n(\lambda))\subset \bar{\mathcal{G}}_n(t_n(\lambda))$ for all $\lambda\in [\lambda_\star,\lambda^\star]$, with probability one. In the subsequent part of this section, we will always work under this coupling. The extra edges that are created by Algorithm~\ref{algo:modify-dyn-cons} will be called \emph{bad} edges.
\end{remark}
 In the subsequent part of this paper, we will augment a predefined notation with a bar to denote the corresponding quantity for $\bar{\mathcal{G}}_n(t_n(\lambda))$. 
 Denote $\beta_n = (\bar{s}_1(\nu_n-1)c_n)^{1/2}$ and let  $\bar{\mathbf{Z}}_n^{o,{\sss \mathrm{scl}}}(\lambda)$ denote the vector $\ord(\beta_n^{-1}\bar{\mathcal{O}}_i(\lambda),\mathrm{SP}(\bar{\mathscr{C}}_{\sss (i)}(\lambda)))_{i\geq 1}$. 
 Using an argument identical to \eqref{rate:function}~and~\eqref{rate:function-spls}, it follows that $\bar{\mathbf{Z}}_n^{o,{\sss \mathrm{scl}}}[\lambda_\star,\lambda^\star]$ evolves as a standard augmented multiplicative coalescent.
 Note that there exists a constant $c>0$ such that $\beta_n = cb_n(1+\oP(1))$, and therefore the scaling limit of any finite-dimensional distributions of $\bar{\mathbf{Z}}_n^{o}[\lambda_\star,\lambda^\star]$ can be obtained from $\bar{\mathbf{Z}}_n^{o,{\sss \mathrm{scl}}}[\lambda_\star,\lambda^\star]$.

\subsubsection{Multiplicative coalescent with mass and weight}
To deduce the scaling limits involving the components sizes, let us consider a dynamic process that additionally tracks the evolution of some weights of components.
Initially, the system consists of particles (possibly infinitely many) where particle $i$ has mass $x_i$, weight $z_i$.
Think of $x_i$'s as the number of  open half-edges, and $z_i$'s as the component sizes.
Let $(X_i(t),Z_i(t))_{i\geq 1}$ 
denote masses, and weights  at time $t$. 
We always order $(Z_{i} (t))_{i\geq 1}$ in a decreasing manner.
The dynamics of the system is described as follows: At time $t$,
\begin{itemize}
\item[$\rhd$]  particles $i$ and $j$ coalesce at rate $X_i(t)X_j(t)$ and create a particle with mass $X_i(t)+X_j(t)$, weight $Z_i(t)+Z_j(t)$ and attribute $Y_i(t)+Y_j(t)$.
\end{itemize}
For $\bld{x},\bld{z}\in\ell^2_{\shortarrow}$, we 
denote by  $\mathrm{MC}_2(\bld{x},\bld{z},t)$  
the vector $(Z_i(t))_{i\geq 1}$.
We will need the following theorem:
\begin{theorem}\label{thm:AMC-2D}
Suppose that $(\bld{x}_n,\bld{z}_n) \to (\bld{x},\bld{x})$ in $\ell^2\times \ell^2_{\shortarrow}$. Then, for any $t\geq 0$,
 \begin{equation}\label{mul-coal-2d}
  \mathrm{MC}_2(\bld{x}_n,\bld{z}_n,t) \dto \mathrm{MC}_2(\bld{x},\bld{x},t).
 \end{equation}
\end{theorem}

\begin{proof}
For $\bld{x}_n = (x_i^n)_{i\geq 1}$ and $\bld{z}_n = (z_i^n)_{i\geq 1}$,  let $\bld{w}_n^+ = \mathrm{ord}(x_i^n\vee z_i^n)$, $\bld{w}_n^-=\mathrm{ord}(x_i^n\wedge z_i^n)$, where $\mathrm{ord}$ denotes the decreasing ordering of the elements. 
Notice that $\bld{w}_n^+ \to \bld{x}$, and $\bld{w}_n^- \to \bld{x}$ in $\ell^2_{\shortarrow}$.
Using the Feller property of the multiplicative coalescent \cite[Proposition 5]{A97}, it follows that
\begin{equation}\label{limit-ub-lb}
 \mathrm{MC}_2(\bld{w}_n^+,\bld{w}_n^+,t)\dto  \mathrm{MC}_2(\bld{x},\bld{x},t), \quad \text{and} \quad \mathrm{MC}_2(\bld{w}_n^-,\bld{w}_n^-,t)\dto  \mathrm{MC}_2(\bld{x},\bld{x},t),
\end{equation}with respect to the $\ell^2_{\shortarrow}$ topology. 
Suppose that $\mathrm{MC}_2(\bld{w}_n^+,\bld{w}_n^+,t)$ and $\mathrm{MC}_2(\bld{w}_n^-,\bld{w}_n^-,t)$ are coupled through the subgraph coupling (see \cite[Page 838]{A97}). 
Under the subgraph coupling, \eqref{limit-ub-lb} yields
\begin{equation}
\|\mathrm{MC}_2(\bld{w}_n^+,\bld{w}_n^+,t)\|_{\sss 2}^2-\|\mathrm{MC}_2(\bld{w}_n^-,\bld{w}_n^-,t)\|_{\sss 2}^2 \pto 0.
\end{equation}
Moreover,
\begin{equation}
 \|\mathrm{MC}_2(\bld{w}_n^-,\bld{w}_n^-,t)\|_{\sss 2}^2 \leq \|\mathrm{MC}_2(\bld{x}_n,\bld{z}_n,t)\|_{\sss 2}^2 \leq \|\mathrm{MC}_2(\bld{w}_n^+,\bld{w}_n^+,t)\|_{\sss 2}^2.
\end{equation}
Hence, using \cite[Lemma 17]{A97}, under the subgraph coupling,
\begin{equation}
 \|\mathrm{MC}_2(\bld{w}_n^+,\bld{w}_n^+,t) - \mathrm{MC}_2(\bld{x}_n,\bld{z}_n,t)\|_{\sss 2}^2\leq \|\mathrm{MC}_2(\bld{w}_n^+,\bld{w}_n^+,t)\|_{\sss 2}^2 - \|\mathrm{MC}_2(\bld{x}_n,\bld{z}_n,t)\|_{\sss 2}^2 \pto 0,
\end{equation}
and thus the proof of \eqref{mul-coal-2d} follows.
\end{proof}
\subsection{Asymptotics for the open half-edges}
The following lemma shows that the number of open half-edges in $\mathcal{G}_n(t_n(\lambda))$ is \emph{approximately} proportional to the component sizes. 
This will enable us to apply Theorem~\ref{thm:AMC-2D} for deducing the scaling limits of the required quantities for the graph $\bar{\mathcal{G}}_n(t_n(\lambda))$. Let $\mathcal{O}_{\sss (i)}(\lambda)$ to denote the $i$-th largest number in the sequence  $(\mathcal{O}_{j}(\lambda))_{j\geq 1}$. 
Also recall that $\mathbf{Z}_n^o(\lambda)$ denotes the vector $(\mathcal{O}_i(\lambda), \mathrm{SP} (\mathscr{C}_{\sss (i)}(\lambda)))_{i\geq 1}$, ordered as an element of $\mathbb{U}^0_{\shortarrow}$.
\begin{lemma}\label{thm:open-comp}
 There exists a constant $\kappa > 0$ such that, for any $i\geq 1$, 
 \begin{equation}\label{open-he}
  \mathcal{O}_i(\lambda)= \kappa |\mathscr{C}_{\sss (i)}(\lambda)|+o_{\sss \PR}(b_n).
 \end{equation}
Also, for any fixed $K\geq 1$,
\begin{eq}\label{open-he-compare}
\lim_{n\to\infty}\PR(\mathcal{O}_i(\lambda) = \mathcal{O}_{\sss (i)}(\lambda), \  \forall i\leq K).
\end{eq}
 Further, $(\mathbf{Z}_n^o(\lambda))_{n\geq 1}$ is tight in  $\mathbb{U}^0_{\shortarrow}$. 
\end{lemma}
\begin{proof} Let $\bld{d}^\lambda = (d_k^\lambda)_{k\in [n]}$ be the degree sequence of $\cG_n(t_n(\lambda))$.
Now, conditionally on $\bld{d}^\lambda$,  $\cG_n(t_n(\lambda))$ is distributed as a configuration model.
We apply Theorem~\ref{thm:comp-functionals} with $w_i$ being the number of open half-edges incident to vertex $i$. 
To this end, it is enough to show that Assumption~\ref{assumption-weight} holds with high probability.
Since $w_i \leq d_i$ and $d_i^\lambda \leq d_i$,  Assumption~\ref{assumption-weight}~(ii) is obvious. 
Thus it is enough to show that $\sum_{i\in [n]} d_i^\lambda w_i / \sum_{i\in [n]} d_i^\lambda$ converges in probability to some constant. 
This can be verified using the differential equation method. See Appendix~\ref{sec:susceptibility} for details.
Thus, \eqref{open-he} follows from \eqref{weight-approx-size}. 
The $\mathbb{U}^0_{\shortarrow}$-tightness also follows from Theorem~\ref{thm:comp-functionals}.

Next, we prove \eqref{open-he-compare}. 
Using \eqref{eq:suff-U0-conv-1}, for any $\varepsilon,\eta>0$, there exists $M=M(\varepsilon,\eta)\geq 1$ such that
\begin{eq}
\PR \bigg(\sum_{i>M} \mathcal{O}_i(\lambda)^2 > \varepsilon b_n^2\bigg) < \eta.
\end{eq}
Now, if $\mathcal{O}_1(\lambda) \neq \mathcal{O}_{\sss (1)} (\lambda)$, then there exists a $j\geq 2$ such that  $\mathcal{O}_1(\lambda) \leq \mathcal{O}_{j} (\lambda)$.
Thus, 
\begin{eq}
\PR(\mathcal{O}_1(\lambda) \neq \mathcal{O}_{\sss (1)} (\lambda)) \leq \PR(\exists 2\leq j\leq M: \mathcal{O}_1(\lambda) \leq \mathcal{O}_{j} (\lambda)) + \PR(\exists j> M: \mathcal{O}_1(\lambda) \leq \mathcal{O}_{j} (\lambda)).
\end{eq}
Using \eqref{open-he} and Theorem~\ref{thm:percolation} together with Proposition~\ref{prop:perc-dyn-close}, it follows that $(b_n^{-1} \mathcal{O}_i(\lambda))_{i \leq M}$ converge in distribution to $(\gamma_i)_{i\in [M]}$, where $\gamma_i$ denotes the excursion length of a process of the form \eqref{defn::limiting::process}.
An application of Fact~\ref{fact:exc} yields
\begin{eq}\label{O-1-1}
\lim_{n\to\infty} \PR(\exists 2\leq j\leq M: \mathcal{O}_1(\lambda) \leq \mathcal{O}_{j} (\lambda)) \leq \PR(\gamma_1 \leq \gamma_2) = \PR(\gamma_1 = \gamma_2) = 0.
\end{eq}
Moreover, 
\begin{eq}\label{O-1-2}
&\PR(\exists j> M: \mathcal{O}_1(\lambda) \leq \mathcal{O}_{j} (\lambda))\leq \PR \bigg( b_n^{-2}\sum_{i>M} \mathcal{O}_i(\lambda)^2 > b_n^{-2}\mathcal{O}_1(\lambda)^2\bigg) \\ 
& \leq \PR(b_n^{-1}\mathcal{O}_1(\lambda) \leq \varepsilon) + \PR \bigg(\sum_{i>M} \mathcal{O}_i(\lambda)^2 > \varepsilon b_n^2\bigg).
\end{eq}
The limsup as $n\to\infty$ of the first term is at most  $\PR(\gamma_1\leq\varepsilon) $. Since the distribution of $\gamma_1$ does not have an atom at zero, $\PR(\gamma_1\leq\varepsilon) $ can be taken to be at most $\eta$ by choosing $\varepsilon>0$ sufficiently small. 
Combining \eqref{O-1-1} and \eqref{O-1-2} yields that $\PR(\mathcal{O}_1(\lambda) \neq \mathcal{O}_{\sss (1)} (\lambda)) \leq 2\eta$ for all sufficiently large $n$.
Similarly, 
\begin{eq}
\PR(\mathcal{O}_2(\lambda) \neq \mathcal{O}_{\sss (2)} (\lambda) \text{ and } \mathcal{O}_1(\lambda) = \mathcal{O}_{\sss (1)} (\lambda)) \leq \PR(\exists j\geq 3: \mathcal{O}_2(\lambda) \leq \mathcal{O}_{j} (\lambda) ) + \PR(\mathcal{O}_2(\lambda) = \mathcal{O}_1(\lambda)).
\end{eq}
Both of the terms can be bounded using similar arguments as above. The proof for bounding $\PR(\mathcal{O}_i(\lambda) \neq \mathcal{O}_{\sss (i)} (\lambda))$ for $i\geq 2$ is similar.
\end{proof}

For an element $\mathbf{z} = (x_i,y_i)_{i\geq 1} \in \mathbb{U}^0_{\shortarrow}$ and a constant $c>0$, denote $c \mathbf{z} = (c x_i,y_i)_{i\geq 1}$.  Thus, Lemma~\ref{thm:open-comp} states that, for each fixed $\lambda$, $\mathbf{Z}^o_n(\lambda)$ is close to $\kappa \bZ_n(\lambda)$. The following result states that formally:
\begin{corollary}\label{cor-fixed-lambda}  For each fixed $\lambda$, as $n\to\infty$, $\mathrm{d}_{\sss\mathbb{U}}(\mathbf{Z}^o_n(\lambda),\kappa \bZ_n(\lambda))\pto 0$.
\end{corollary}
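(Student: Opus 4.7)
The plan is to unfold the metric $\mathrm{d}_{\sss\mathbb{U}}$ from \eqref{defn_U_metric} into its two pieces -- the $\ell^2$-distance on the first coordinates and the $\ell^1$-distance on the coordinatewise products -- and to control each piece by a head-plus-tail truncation. Write $(x_i,y_i)_{i\geq 1} := \mathbf{Z}^o_n(\lambda)$ and $(x'_i,y'_i)_{i\geq 1} := \kappa\mathbf{Z}_n(\lambda)$. It then suffices to show that for each fixed $K$ the head contribution $\sum_{i\leq K}(x_i-x'_i)^2 + \sum_{i\leq K}|x_iy_i-x'_iy'_i|$ tends to $0$ in probability, while the tail contributions $\sum_{i>K}(x_i-x'_i)^2$ and $\sum_{i>K}|x_iy_i-x'_iy'_i|$ can be made arbitrarily small by letting first $n\to\infty$ and then $K\to\infty$.

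First I would handle the ordering mismatch. Lemma~\ref{thm:open-comp} provides the coordinatewise approximation $\mathcal{O}_i(\lambda) = \kappa|\mathscr{C}_{\sss(i)}(\lambda)| + o_{\sss\PR}(b_n)$ when indices are labeled by component-size ordering, whereas the first coordinate of $\mathbf{Z}^o_n(\lambda)$ is ordered by $\mathcal{O}_i$. Since the limiting excursion lengths $(\gamma_i(\lambda))_{i\geq 1}$ of $\bar{\mathbf{S}}^\lambda_\infty$ are almost surely distinct (a standard consequence of the thinned-L\'evy structure, already used in Lemma~\ref{lem:levy-as-good}), Theorem~\ref{thm::conv:component:size} implies that for every fixed $K$ and every event-dependent $\delta>0$, $\min_{i\leq K}(|\mathscr{C}_{\sss(i)}(\lambda)|-|\mathscr{C}_{\sss(i+1)}(\lambda)|)>\delta b_n$ with high probability. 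Combined with Lemma~\ref{thm:open-comp}, the corresponding macroscopic gap (up to the factor $\kappa$) holds for $(\mathcal{O}_i(\lambda))_{i\leq K+1}$, which forces the sort permutation underlying $\ord(\cdot)$ to fix the indices $1,\dots,K$ w.h.p. On this event, $y_i=y'_i=\mathrm{SP}(\mathscr{C}_{\sss(i)}(\lambda))$ and $|x_i-x'_i|\pto 0$ for each $i\leq K$, so the head contribution to both pieces of $\mathrm{d}_{\sss\mathbb{U}}$ vanishes.

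For the tails I would use the crude bounds $\sum_{i>K}(x_i-x'_i)^2 \leq 2\sum_{i>K}x_i^2 + 2\sum_{i>K}(x'_i)^2$ and $\sum_{i>K}|x_iy_i-x'_iy'_i|\leq \sum_{i>K}x_iy_i+\sum_{i>K}x'_iy'_i$. The tails of $\sum (x'_i)^2$ vanish in the required iterated-limit sense by Proposition~\ref{prop-l2-tightness}, and those of $\sum x'_iy'_i$ by Proposition~\ref{prop-surp-u-0}. The analogous estimates for $(x_i,y_i)$ are precisely the content of the $\mathbb{U}^0_{\shortarrow}$-tightness of $\mathbf{Z}^o_n(\lambda)$ recorded in Lemma~\ref{thm:open-comp}, since any relatively compact subset of $\ell^2_{\shortarrow}$ has uniformly vanishing $\ell^2$-tails and likewise for the product-tails inherited from the definition of $\mathrm{d}_{\sss\mathbb{U}}$. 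Assembling the four pieces yields $\mathrm{d}_{\sss\mathbb{U}}(\mathbf{Z}^o_n(\lambda),\kappa\mathbf{Z}_n(\lambda)) \pto 0$.

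The main delicate point is the ordering-matching step: because the sort operator $\ord(\cdot)$ is not continuous at configurations with ties in the leading coordinate, the coordinatewise convergence of Lemma~\ref{thm:open-comp} cannot by itself be lifted to convergence in $\mathbb{U}^0_{\shortarrow}$ -- one must first exploit the almost-sure distinctness of the limiting excursions to rule out spurious re-orderings among the top $K$ entries. Once that structural fact is in hand, the remainder of the argument is a routine truncation built on the tail estimates already established in Sections~\ref{sec:conv-comp-size}--\ref{sec:surplus}.
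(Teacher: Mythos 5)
Your proof follows essentially the same route as the paper: the identical head-plus-tail truncation of $\mathrm{d}_{\sss\mathbb{U}}$ via $\pi_K,T_K$, with the head controlled by the coordinatewise approximation $\mathcal{O}_i(\lambda)=\kappa|\mathscr{C}_{\sss(i)}(\lambda)|+o_{\sss\PR}(b_n)$ of Lemma~\ref{thm:open-comp} and the tails by the $\mathbb{U}^0_{\shortarrow}$-tightness of both sequences. Your explicit treatment of the ordering mismatch between the sort-by-$\mathcal{O}_i$ and the sort-by-$|\mathscr{C}_{\sss(i)}|$ indexings --- via the a.s.\ distinctness of the limiting excursion lengths, already invoked in the paper in the proof of Theorem~\ref{thm::component-sizes-finite-dim} --- is a valid refinement of a step that the paper's proof leaves implicit, not a different argument.
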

\begin{proof}
Using Lemma~\ref{thm:open-comp}, the proof follows using identical arguments as shown after \eqref{eq:bounds-comp-surp} in the proof of Proposition~\ref{prop:perc-dyn-close}. 
\end{proof}

\subsection{Comparison between the dynamic construction and the modified process}
The goal of this section is to prove that for large components, the modified construction does not change the open half-edge, component sizes and surplus edges considerably.
To this end, recall Algorithm~\ref{algo:modify-dyn-cons} and the associated notation. 
Thus $\bar{\mathscr{C}}_{\sss (i)} (\lambda)$ 
is the $i$-th largest component of $\bar{\mathcal{G}}_n(t_n(\lambda))$, and 
$\bar{\mathcal{O}}_i(\lambda)$ is the number of open half-edges in $\bar{\mathscr{C}}_{\sss (i)} (\lambda)$.
Note that we stop discarding the open half-edges in Algorithm~\ref{algo:modify-dyn-cons}, so that
$\bar{\mathcal{O}}_i(\lambda)$ counts the number of open half-edges associated to vertices at time $t_n(\lambda_\star)$.
Also, we write $\bar{\mathcal{O}}_{\sss (i)}(\lambda)$ and $\mathcal{O}_{\sss (i)}(\lambda)$ to denote the $i$-th largest number in the sequence  $(\bar{\mathcal{O}}_{j}(\lambda))_{j\geq 1}$ and $(\mathcal{O}_{j}(\lambda))_{j\geq 1}$ respectively.
\begin{proposition} \label{prop:comparison-comp-surplus-modified}
For each fixed $i\geq 1$, as $n\to\infty$,
\begin{enumerate}[(a)]
    \item $\bar{\mathcal{O}}_{\sss (i)} (\lambda)- \mathcal{O}_{i}(\lambda)= \oP(b_n)$, 
    \item $\mathrm{SP}(\bar{\mathscr{C}}_{\sss (i)} (\lambda))-\mathrm{SP}(\mathscr{C}_{\sss (i)} (\lambda)) \pto 0.$
\end{enumerate}
\end{proposition}

\subsubsection*{Comparison of component sizes.}

We start by showing that the component sizes and open half-edges in $\mathcal{G}_n(t_n(\lambda))$ and $\bar{\mathcal{G}}_n(t_n(\lambda))$ have identical distributional limit.

\begin{lemma}\label{lem:same-limit-Ois}For any $\lambda>\lambda_\star$, the sequences  $(b_n^{-1}\bar{\mathcal{O}}_{\sss (i)}(\lambda))_{i\geq 1}$ and $(b_n^{-1}|\bar{\mathscr{C}}_{\sss (i)}(\lambda)|)_{i\geq 1}$ converges in distribution with respect to the $\ell^2_{\shortarrow}$-topology, and the  distributional limits are identical to those of $(b_n^{-1}\mathcal{O}_{i}(\lambda))_{i\geq 1}$ and $(b_n^{-1}|\mathscr{C}_{\sss (i)}(\lambda)|)_{i\geq 1}$ respectively.
\end{lemma}
\begin{proof}
Let $\bld{\xi}(\bld{\theta},\lambda)$ denote the ordered vector of excursion lengths of the process \eqref{defn::limiting::process} by putting $\mu = 1$. 
Recall the discussion after Algorithm~\ref{algo:modify-dyn-cons} that, if $\beta_n = (\bar{s}_1(\nu_n-1)c_n)^{1/2} = cb_n(1+
\oP(1))$, then $((\beta_n^{-1} \bar{\mathcal{O}}_{\sss (i)}(\lambda))_{i\geq 1})_{\lambda>\lambda_\star}$ evolves exactly as a standard multiplicative coalescent.

Now, using Lemma~\ref{thm:open-comp}, together with Proposition~\ref{prop:coupling-whp} and the scaling limit from Theorem~\ref{thm:percolation}, we know that
\begin{eq}\label{eq:lim-comp-rescaled}
(\beta_n^{-1}|\mathcal{O}_{i}(\lambda)|)_{i\geq 1} \dto \bld{\xi}(c_1\bld{\theta},c_2\lambda),
\end{eq}with respect to the $\ell^2$-topology, for some constants $c_1,c_2>0$. 
Here we have used the fact from Lemma~\ref{thm:open-comp} that $\mathcal{O}_i(\lambda) = \mathcal{O}_{\sss (i)}(\lambda)$ with high probability for each fixed $i\geq 1$.
Also, to adjust a multiplicative $\sqrt{\nu}$ factor in the scaling limit in Theorem~\ref{thm:percolation}, we are using the fact that, for $\eta_1,\eta_2>0$, $\bld{\theta}\in \ell^3_{\shortarrow}\setminus \ell^2_{\shortarrow}$ and $\lambda\in\R$,
$ \bld{\xi}(\eta_1\bld{\theta},\eta_2\lambda)\stackrel{\sss d}{=} \frac{1}{\eta_1}\bld{\xi}\big(\bld{\theta},\frac{\eta_2}{\eta_1^2}\lambda\big).$
Thus, in particular, $(\beta_n^{-1}|\mathcal{O}_{i}(\lambda_\star)|)_{i\geq 1} \xrightarrow{\sss d} \bld{\xi}(c_1\bld{\theta},c_2\lambda_\star)$.
Using \cite[Theorem~2]{AL98}, there exists a version of the standard multiplicative coalescent $(\mathrm{MC}_1(\lambda))_{-\infty<\lambda<\infty}$ process such that $\mathrm{MC}_1(\lambda)$ has the same distribution as $\bld{\xi}(c_1\bld{\theta},c_2\lambda)$. 
Using the Feller property  of the multiplicative coalescent \cite[Proposition 5]{A97}, it follows that 
\begin{eq}\label{limit-o-bar-ordered}
(\beta_n^{-1}|\bar{\mathcal{O}}_{\sss (i)}(\lambda)|)_{i\geq 1} \dto \bld{\xi}(c_1\bld{\theta},c_2\lambda),
\end{eq}with respect to the $\ell^2_{\shortarrow}$-topology. 
For the component sizes, one can use \eqref{mul-coal-2d} and \eqref{open-he} to conclude the proof. 
\end{proof}
Next we show that with high probability the largest components of $\mathcal{G}_n(t_n(\lambda))$ contains those of $\bar{\mathcal{G}}_n(t_n(\lambda))$. 
\begin{lemma}\label{lem:sandwich-components-whp}
 For any $K\geq 1$, 
 \begin{eq} \label{eq:sandwich}
 \lim_{n\to\infty}\PR\big(\mathscr{C}_{\sss (i)}(\lambda) \subset \bar{\mathscr{C}}_{\sss (i)}(\lambda), \ \forall i\leq K\big) = 1.
 \end{eq}
\end{lemma}
\begin{proof}
Under the coupling in described in Remark~\ref{rem:modify-AMC}, and using Lemma~\ref{lem:same-limit-Ois}, the proof is identical to the proof of \eqref{eq:sandwich-perc-12}. We skip redoing the proof again.
\end{proof} 

\begin{proof}[Proof of Proposition~\ref{prop:comparison-comp-surplus-modified}~(a)]
Let $\mathcal{A}_K$ denote the event in \eqref{eq:sandwich}. On $\mathcal{A}_K$, $\mathcal{O}_1(\lambda) \leq \bar{\mathcal{O}}_1(\lambda) \leq \bar{\mathcal{O}}_{\sss (1)}(\lambda)$. 
Using the fact from Lemma~\ref{lem:same-limit-Ois} that $b_n^{-1}\bar{\mathcal{O}}_{\sss (1)}(\lambda)$ and $b_n^{-1}\mathcal{O}_1(\lambda)$ have the same scaling limit, it follows that  $\bar{\mathcal{O}}_{\sss (1)}(\lambda) - \mathcal{O}_1(\lambda) = \oP(b_n)$. 
Thus, the proof follows for $i=1$ by applying Lemma~\ref{lem:sandwich-components-whp}.
For $i\geq 2$, note that on $\mathcal{A}_K$, $\sum_{j\leq i}\mathcal{O}_j(\lambda) \leq \sum_{j\leq i}\bar{\mathcal{O}}_j(\lambda) \leq \sum_{j\leq i}\bar{\mathcal{O}}_{\sss (j)}(\lambda)$.
Using this relation the proof can be completed inductively as before by applying Lemmas~\ref{lem:same-limit-Ois}, and \ref{lem:sandwich-components-whp}.
\end{proof}

\subsubsection*{Comparison of surplus edges}
Next, we compare the surplus edges. 
Using Lemma~\ref{lem:sandwich-components-whp}, it follows that  $\mathrm{SP}(\mathscr{C}_{\sss (i)} (\lambda)) \leq \mathrm{SP}(\bar{\mathscr{C}}_{\sss (i)} (\lambda))$ with high probability for each fixed $i\geq 1$.
Let $B_{\sss \mathrm{SP}}(\lambda)$ denote the total number of
 surplus edges created during the formation of some bad edges. Then, with high probability, 
 \begin{eq} \label{eq:split-up-surplus}
 \mathrm{SP}(\bar{\mathscr{C}}_{\sss (i)} (\lambda)) \leq \mathrm{SP}(\mathscr{C}_{\sss (i)} (\lambda)) + B_{\sss \mathrm{SP}}(\lambda) + \sum_{j\neq i: \mathscr{C}_{\sss (j)}(\lambda) \subset \bar{\mathscr{C}}_{\sss (i)}(\lambda)} \mathrm{SP}(\mathscr{C}_{\sss (j)} (\lambda)).
 \end{eq}
 Thus, it is enough to show that the last two terms converge in probability to zero.
We first bound $B_{\sss \mathrm{SP}}(\lambda)$:
\begin{lemma} \label{lem:bad-estimation}
For any $\lambda\geq \lambda_\star$, $B_{\sss \mathrm{SP}}(\lambda)\pto 0$.
\end{lemma}
\begin{proof}
Before going into the proof, recall Algorithms~\ref{algo:dyn-cons-2}~and~\ref{algo:modify-dyn-cons}, and all the definitions therein. 
We write $C>0$ for a generic constant whose value can be different in different lines. 
Firstly, let $\mathcal{B}_i(\lambda)$ denote the number of open half-edges in $\bar{\mathscr{C}}_{\sss (i)}(\lambda)$ that caused the creation of at least one edge in  $[\lambda_\star,\lambda)$.
We refer to these as bad half-edges since pairing one of them again would give rise to a bad edge. 
Let $\bld{\mathcal{B}}(\lambda)$ be the corresponding vector obtained by ordering them as an element of $\ell^2_{\shortarrow}$. 
We will show that 
\begin{eq}\label{eq:bad-he-ell-2-zero}
\sup_{\lambda'\in [\lambda_\star,\lambda]} b_n^{-1}\|\bld{\mathcal{B}}(\lambda') \|_{2}=b_n^{-1}\|\bld{\mathcal{B}}(\lambda) \|_{2} \pto 0.
\end{eq}
For any fixed $K\geq 1$, define the event $\mathcal{A}_K$ that $\mathscr{C}_{(\sss i)}(\lambda)\subset \bar{\mathscr{C}}_{(\sss i)}(\lambda)$ for all $i\leq K$. 
Then $\PR(\mathcal{A}_K) \to 1$, by Lemma~\ref{lem:sandwich-components-whp}.
On the event $\mathcal{A}_K$, one can bound $\mathcal{B}_i(\lambda) \leq \bar{\mathcal{O}}_i(\lambda) - \mathcal{O}_i(\lambda)$ for all $i\leq K$ and $\mathcal{B}_i(\lambda) \leq \bar{\mathcal{O}}_i(\lambda)$ for all $i>K$. 
Further, on $\mathcal{A}_K$, $\sum_{j\leq i} \mathcal{O}_j(\lambda) \leq \sum_{j\leq i} \bar{\mathcal{O}}_j(\lambda)\leq \sum_{j\leq i} \bar{\mathcal{O}}_{\sss (j)}(\lambda)$ for all $i\leq K$.
Now, by Lemma~\ref{lem:same-limit-Ois}, $(b_n^{-1}\bar{\mathcal{O}}_{\sss (i)}(\lambda))_{i\geq 1}$ and $(b_n^{-1}\mathcal{O}_i(\lambda))_{i\geq 1}$ have the same distributional limit.
Therefore, $b_n^{-1}\mathcal{O}_j(\lambda)$ and $b_{n}^{-1}\bar{\mathcal{O}}_j(\lambda)$ have the same distributional limit for all $j\leq K$.
Moreover, since $(b_n^{-1}\bar{\mathcal{O}}_{\sss (i)}(\lambda))_{i\geq 1}$ converges in $\ell^2_{\shortarrow}$, we can choose $K$ sufficiently large so that 
$\sum_{i>K} \bar{\mathcal{O}}_i(\lambda)^2 <\varepsilon b_n^2$ with probability at most $\varepsilon$, for any $\varepsilon>0$.
Thus, \eqref{eq:bad-he-ell-2-zero} follows.

 For any semi-martingale $(Y_t)_{t\geq 0}$, we write $\mathrm{D}(Y_t)$ to denote the compensator.
Let us now turn to the asymptotics of $B_{\sss \mathrm{SP}}(\lambda)$. 
Note that $B_{\sss \mathrm{SP}}(\lambda)$ has jump size at most 1. Moreover, a surplus is bad if at least one of the end-points is bad. 
Therefore, 
\begin{eq}
\mathrm{D}(B_{\sss \mathrm{SP}}(\lambda)) \leq C\int_{\lambda_\star}^\lambda \frac{\sum_{i\geq 1} \bar{\mathcal{O}}_i (\lambda')\mathcal{B}_i(\lambda ' )}{\bar{s}_1c_n}\dif \lambda'.
\end{eq}
Now, the Cauchy-Schwarz inequality together with Theorem~\ref{thm:AMC-2D} and \eqref{eq:bad-he-ell-2-zero} shows that $\mathrm{D}(B_{\sss \mathrm{SP}}(\lambda))=\oP(1)$.  
Now, we use Lenglart inequality \cite{Leng77} which says that, for any non-negative process $(X_t)_{t\geq 0}$ with $X_0 = 0$, and $\varepsilon,\delta>0$,
\begin{eq}\label{eq:lenglart}
\PR\Big(\sup_{s\in [0,t]} X_s>\varepsilon\Big)\leq \frac{\E[\min\{\mathrm{D}(X_t),\delta\}]}{\varepsilon} + \PR(\mathrm{D}(X_t)>\varepsilon).
\end{eq}
Now, using \eqref{eq:lenglart}, together with  $\mathrm{D}(B_{\sss \mathrm{SP}}(\lambda))=\oP(1)$, implies the required statement for bad surplus edges. 
\end{proof}

Next, we bound the final term in \eqref{eq:split-up-surplus}. 
Suppose that, at time $\lambda_\star$, we have colored the components $(\mathscr{C}_{\sss (i)}(\lambda_\star))_{i\in [M]}$  blue, say, and then let Algorithms~\ref{algo:dyn-cons-2}~and~\ref{algo:modify-dyn-cons} evolve. 
Additionally, we color all the components blue that get connected to one of the blue components during the evolution. 
Let $\mathcal{C}_M(\lambda)$, $\bar{\mathcal{C}}_M(\lambda)$ denote the union of all such blue components in $\mathcal{G}_n(t_n(\lambda))$ and $\bar{\mathcal{G}}_n(t_n(\lambda))$. 
We track the surplus edges in  $\bar{\mathcal{C}}_M(\lambda)$ that were created when a bad edge caused some component with a surplus edge to merge with  a component in $\bar{\mathcal{C}}_M(\lambda)$. 
Let $F_M(\lambda)$ denote the number of times when such surplus edges were created upto time $\lambda$ (note that $F_M(\lambda)$ does not count the total number of these unwanted surplus edges).
We show that $F_M(\lambda)$ is zero with high probability. 
\begin{lemma}\label{lem:no-bad-large}
For any $\lambda\geq \lambda_\star$ and $M\geq 1$, $F_M(\lambda)\pto 0$.
\end{lemma}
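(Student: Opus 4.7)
The plan is to mirror the compensator argument used in Lemma~\ref{lem:bad-estimation}, this time isolating those bad edges whose two endpoint half-edges fall in the top-$M$ components of $\mathcal{G}_n(t_n(\lambda'))$ at the creation time $\lambda'$. First I express $F_M$ as a counting process driven by $\bar{\Xi}_n$: at each event of $\bar{\Xi}_n$, the event contributes to $F_M$ precisely when both half-edges drawn from $\mathscr{O}$ lie in $\bigcup_{i\leq M}\mathscr{C}_{\sss(i)}(\lambda')$ and the selected pair is a re-use. Exactly the same counting of admissible pairs that produced the factor $2\bar{s}_1-3$ in Lemma~\ref{lem:bad-estimation} yields
\[
\mathrm{D}(F_M)(\lambda)\;\leq\;\int_{\lambda_\star}^{\lambda}\frac{2\bar{s}_1-3}{4(\nu_n-1)^{2}\bar{s}_1^{\,2}c_n^{\,2}}\bigg(\sum_{i=1}^{M}\bar{\mathcal{O}}_i(\lambda')\bigg)^{\!2}\dif\lambda',
\]
since any bad edge contributing to $F_M$ must have both half-edges attached to the top-$M$ components, and under the coupling of Remark~\ref{rem:modify-AMC} the relevant pool of attached half-edges is dominated by $\sum_{i\leq M}\bar{\mathcal{O}}_i(\lambda')$.

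Next I control the integrand uniformly in $\lambda'\in[\lambda_\star,\lambda]$. By Lemma~\ref{thm:open-comp} combined with Theorem~\ref{thm:AMC-2D}, for each fixed $i$ we have $\bar{\mathcal{O}}_i(\lambda')=\OP(b_n)$; moreover, in the augmented multiplicative coalescent the partial sum of the top-$M$ masses is non-decreasing in time (any merger can only shift mass into or among the top-$M$ pool). Consequently, setting $\tau_K:=\inf\{\lambda'\geq\lambda_\star:\sum_{i\leq M}\bar{\mathcal{O}}_i(\lambda')>KMb_n\}$, the tightness of $\sum_{i\leq M}\bar{\mathcal{O}}_i(\lambda^\star)$ gives $\PR(\tau_K<\lambda^\star)\leq\varepsilon$ for $K$ large. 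Using $\bar{s}_1=\thetaP(n)$ (which follows as in \eqref{eq:asympt-ell-n-o}), on $\{\lambda\leq\tau_K\}$ we obtain
\[
\mathrm{D}(F_M)(\lambda\wedge\tau_K)\;\leq\;C\,(\lambda-\lambda_\star)\,\frac{K^{2}M^{2}b_n^{\,2}}{n\,c_n^{\,2}}.
\]

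Finally, I verify the vanishing of the dominant factor:
\[
\frac{b_n^{\,2}}{n\,c_n^{\,2}}\;=\;n^{2\rho-1-2\eta}L(n)^{2}\;=\;n^{(3-\tau)/(\tau-1)}L(n)^{2}\;\longrightarrow\;0
\]
because $\tau\in(3,4)$ and $L$ is slowly varying. Hence $\mathrm{D}(F_M)(\lambda\wedge\tau_K)\to 0$ deterministically on that event. Since $F_M$ is an integer-valued counting process with unit jumps we have $\mathrm{QV}(F_M)\leq\mathrm{D}(F_M)$; so the martingale identity $\E[F_M(\lambda\wedge\tau_K)]=\E[\mathrm{D}(F_M)(\lambda\wedge\tau_K)]$ together with Markov's inequality and the bound $\PR(\tau_K<\lambda)\leq\varepsilon$ gives $\PR(F_M(\lambda)>0)\leq\varepsilon+o(1)$, and letting $K\to\infty$ concludes $F_M(\lambda)\pto 0$. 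The only genuine obstacle is the \emph{uniform-in-$\lambda'$} control of $\sum_{i\leq M}\bar{\mathcal{O}}_i(\lambda')$, which is handled cleanly by the monotonicity of top-$M$ AMC masses and the stopping-time truncation at $\tau_K$.
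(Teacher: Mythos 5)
Your proposal is correct and is essentially the paper's argument: the paper proves this lemma by simply declaring it "identical to Lemma~\ref{lem:bad-estimation}", and your compensator bound $\mathrm{D}(F_M)(\lambda)\lesssim \frac{1}{nc_n^2}\big(\sum_{i\le M}\bar{\mathcal{O}}_i\big)^2(\lambda-\lambda_\star)=\OP\big(K^2M^2\, n/b_n^2\big)\to 0$ is exactly that computation spelled out, with the same key fact $n/b_n^2=n^{(3-\tau)/(\tau-1)}L(n)^2\to 0$. The stopping-time truncation via monotonicity of the top-$M$ masses under coalescence is a clean way to get the uniform-in-$\lambda'$ control that the paper obtains by evaluating the (monotone) norms at $\lambda^\star$.
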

\begin{proof}
The argument is similar to Lemma~\ref{lem:bad-estimation}.
Let $\mathcal{I}_M:= \{i:\bar{\mathscr{C}}_{\sss (i)}(\lambda)\subset \bar{\mathcal{C}}_M(\lambda)\}$. 
Then, the rate at which $F_M(\lambda)$ increases by $1$ is given by 
\begin{eq}
\mathrm{D}(F_M(\lambda)) &\leq C
\int_{\lambda_\star}^\lambda \bigg(\frac{ \sum_{i\in \mathcal{I}_M}\mathcal{B}_i(\lambda ' ) \sum_{j\notin \mathcal{I}_M: \mathrm{SP} (\bar{\mathscr{C}}_{\sss (j)}(\lambda '))\geq 1} \bar{\mathcal{O}}_j (\lambda')}{\bar{s}_1c_n}  \\ & \hspace{1cm}+  \frac{ \sum_{i\in \mathcal{I}_M}\bar{\mathcal{O}}_i (\lambda') \sum_{j\notin \mathcal{I}_M: \mathrm{SP} (\bar{\mathscr{C}}_{\sss (j)}(\lambda '))\geq 1} \mathcal{B}_j(\lambda ' )}{\bar{s}_1c_n}  \bigg) \dif  \lambda'.
\end{eq}
Let us write the two terms above by $(I)$ and $(II)$ respectively. 
Then, using the fact that $|\mathcal{I}_M| \leq M$,
\begin{eq}
(I) \leq \frac{Cb_n^2}{\bar{s}_1c_n}\int_{\lambda_\star}^\lambda b_n^{-2} \sum_{i\leq M} \mathcal{B}_i(\lambda') \times \sum_{j\geq 1}\bar{\mathcal{O}}_j (\lambda') \mathrm{SP} (\bar{\mathscr{C}}_{\sss (j)}(\lambda')) \dif \lambda' \to 0,
\end{eq}where the last step follows using \eqref{eq:bad-he-ell-2-zero}, $\bar{s}_1c_n = \OP(b_n^2)$, and the $\mathbb{U}^0_{\shortarrow}$ tightness of $(\bar{\mathbf{Z}}_n^o(\lambda'))_{n\geq 1}$. 
The last statement is a consequence of the near Feller property of the augmented multiplicative coalescent \cite[Theorem 3.1]{BBW12}.
Similarly, 
\begin{eq}
(II) \leq \frac{Cb_n^2}{\bar{s}_1c_n}\int_{\lambda_\star}^\lambda b_n^{-2} \sum_{i\leq M} \bar{\mathcal{O}}_i (\lambda') \times \sum_{j\geq 1}\mathcal{B}_i(\lambda')\mathrm{SP} (\bar{\mathscr{C}}_{\sss (j)}(\lambda')) \dif \lambda' \to 0,
\end{eq}
where we have used used that $b_n^{-1}\sum_{j\geq 1}\mathcal{B}_i(\lambda')\mathrm{SP} (\bar{\mathscr{C}}_{\sss (j)}(\lambda')) \pto 0$, which can be proved using identical arguments as \eqref{eq:bad-he-ell-2-zero} and the $\mathbb{U}^0_{\shortarrow}$ tightness of $(\bar{\mathbf{Z}}_n^o(\lambda'))_{n\geq 1}$. 
Now we conclude the proof using Lenglart's inequality using similarly as Lemma~\ref{lem:bad-estimation}.
\end{proof}

The following is the last ingredient that will be needed in the proof:
\begin{lemma} \label{lem:large-comp-contain} 
Fix any $\lambda >\lambda_\star$. For any $\varepsilon>0$, and $K\geq 1$, there exists $M = M(\varepsilon,K,\lambda)$ such that
\begin{equation}
\limsup_{n\to\infty}\prob{\bar{\mathscr{C}}_{\sss (1)}(\lambda), \dots,\bar{\mathscr{C}}_{\sss (K)}(\lambda) \text{ are not contained in }\bar{\mathcal{C}}_M(\lambda)}\leq \varepsilon.
\end{equation}
\end{lemma}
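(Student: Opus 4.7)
Since half-edges are preserved by Algorithm~\ref{algo:modify-dyn-cons}, every component of $\bar{\mathcal{G}}_n(t_n(\lambda))$ is a disjoint union of components of $\bar{\mathcal{G}}_n(t_n(\lambda_\star))$; write $I_i \subset \mathbb{N}$ for the indices at time $\lambda_\star$ whose initial components merge to form $\bar{\mathscr{C}}_{\sss (i)}(\lambda)$. The event in the statement fails precisely when $I_i \cap [M] = \emptyset$ for some $i\leq K$, i.e., when some \emph{tail-only} cluster---composed entirely of initial components with index $>M$---appears among the top $K$ at time $\lambda$. First, Theorem~\ref{thm:percolation} (via the coupling of Proposition~\ref{prop:coupling-whp}) together with Lemma~\ref{thm:open-comp} shows that $\beta_n^{-1}\bar{\mathcal{O}}_K(\lambda)$ has a non-degenerate scaling limit, so there exists $\delta=\delta(K,\varepsilon)>0$ such that
\begin{equation}\label{eq:plan-lb}
\prob{\bar{\mathcal{O}}_K(\lambda)\geq \delta\beta_n}\geq 1-\varepsilon/2
\end{equation}
for all large $n$; on the bad event, some tail-only cluster must therefore carry open-half-edge mass at least $\delta\beta_n$.

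The main ingredient is the Aldous-type exponential-clock representation of the multiplicative coalescent: attach to each pair $(a,b)$ of initial components an independent exponential clock of rate proportional to $\bar{\mathcal{O}}_a(\lambda_\star)\bar{\mathcal{O}}_b(\lambda_\star)$, and declare $a,b$ merged as soon as the clock fires. This reproduces the pairwise merging rates of Algorithm~\ref{algo:modify-dyn-cons} (cf.~\eqref{rate:function}), so that the $I_i$'s are the connected components of the resulting merger-graph at time $\lambda$. Restricting that merger-graph to tail indices $\{M+1,M+2,\ldots\}$ is itself a multiplicative coalescent, on initial state $(\bar{\mathcal{O}}_{M+j}(\lambda_\star))_{j\geq 1}$, and every tail-only cluster of the full merger-graph is contained in some connected component of this restricted coalescent. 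Applying Theorem~\ref{thm:AMC-2D} to the restricted system---whose rescaled initial state $(\beta_n^{-1}\bar{\mathcal{O}}_{M+j}(\lambda_\star))_{j\geq 1}$ converges in $\ell^2_{\shortarrow}$ to the tail of the scaling limit of $\bZ_n^o(\lambda_\star)$---and using the near-Feller property \cite[Theorem~3.1]{BBW12} together with the fact that this tail has $\ell^2$-norm tending to $0$ as $M\to\infty$, one obtains
\begin{equation}\label{eq:plan-ub}
\lim_{M\to\infty}\limsup_{n\to\infty}\prob{\textrm{largest cluster of the restricted coalescent has mass } > \delta\beta_n}\leq \varepsilon/2.
\end{equation}
A union bound on \eqref{eq:plan-lb} and \eqref{eq:plan-ub} then yields the lemma.

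The main obstacle will be making the Aldous coupling precise at the level of Algorithm~\ref{algo:modify-dyn-cons}: the algorithm generates edges by drawing half-edges from a shared pool rather than running independent clocks on pairs of initial components, so one has to verify that the induced cluster-merging dynamics on the initial components agree (up to errors of the type already controlled in Lemma~\ref{lem:bad-estimation}) with those of the exponential-clock coalescent, before Theorem~\ref{thm:AMC-2D} can be invoked on the tail subsystem. The corresponding continuity of the multiplicative coalescent at the zero initial state---needed to drive \eqref{eq:plan-ub} as $M\to\infty$---must either be extracted from \cite[Theorem~3.1]{BBW12} or obtained via direct $\ell^2$-moment computations on the coalescent.
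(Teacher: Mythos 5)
Your proposal is correct and follows essentially the same route as the paper: the paper also combines a non-degenerate lower bound on the $K$-th largest cluster with an ``$M$-truncated'' system (Algorithm~\ref{algo:modify-dyn-cons} run with the top $M$ initial components deleted, coupled so that an edge survives iff both half-edges lie outside them), observes that the tail-only clusters are dominated by the components of this truncated coalescent, and then invokes Theorem~\ref{thm:AMC-2D} together with the smallness of the $\ell^2$-tail of the initial state as $M\to\infty$. The only cosmetic differences are that the paper phrases the smallness via $\sum_{i\notin\mathcal{I}_M}|\bar{\mathscr{C}}_{\sss(i)}(\lambda)|^2$ rather than the maximal mass, and sidesteps your flagged ``obstacle'' by defining the restricted system directly on the half-edge dynamics (where the multiplicative rates are exact, since paired half-edges are never removed) instead of via an exponential-clock representation.
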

\begin{proof}
 Let $\mathcal{I}_M:= \{i:\bar{\mathscr{C}}_{\sss (i)}(\lambda)\subset \bar{\mathcal{C}}_M(\lambda)\}$. 
If $\bar{\mathscr{C}}_{\sss (i_0)}(\lambda)$ is not contained in $ \bar{\mathcal{C}}_M(\lambda)$ for some $i_0\leq K$, then that would imply that  $\sum_{i\notin\mathcal{I}_M}|\bar{\mathscr{C}}_{\sss (i)}(\lambda)|^2 \geq |\bar{\mathscr{C}}_{\sss (i_0)}(\lambda)|^2$ is not negligible. 
 Thus, it is enough to show that, for any $\varepsilon>0$, there exists $M$ such that 
 \begin{equation}\label{trunc-large-1}
  \limsup_{n\to\infty}\PR\bigg(\sum_{i\notin\mathcal{I}_M}|\bar{\mathscr{C}}_{\sss (i)}(\lambda)|^2>\varepsilon b_n^2 \bigg)\leq \varepsilon.
 \end{equation} 
For any $M\geq 1$, consider the merging dynamics of Algorithm~\ref{algo:modify-dyn-cons}, where at time $\lambda_\star$, all the components $(\bar{\mathscr{C}}_{\sss (i)}(\lambda_\star))_{i\in [M]}$ are removed. We refer to the above evolution as the $M$-truncated system.
We augment a previously defined notation with a superscript $>M$ to denote the corresponding quantity for the $M$-truncated system.
We assume that the $M$-truncated system and the modified system are coupled in a natural way such that at each event time of the modified truncated system, an edge is created in the $M$-truncated system if both the half-edges are selected from the outside of $\cup_{i=1}^M\bar{\mathscr{C}}_{\sss (i)}(\lambda_\star)$. 
Under this coupling,
\begin{equation}\label{trunc-large-2}
\sum_{i\notin\mathcal{I}_M}|\bar{\mathscr{C}}_{\sss (i)}(\lambda)|^2 \leq \sum_{i\geq 1} |\bar{\mathscr{C}}_{\sss (i)}^{\sss >M}(\lambda)|^2.
\end{equation}
Now, $(\bar{\mathbf{Z}}_n(\lambda_\star))_{n\geq 1}$ is tight in $\mathbb{U}^0_{\shortarrow}$.
Thus, the $\ell^2_{\shortarrow}$-norm of  $(b_n^{-1}|\bar{\mathscr{C}}_{\sss (i)}(\lambda_\star)|)_{i>M}$ can be made aritrarily small. 
Therefore, using the Feller property of the multiplicative coalescent process, the proof now follows.
\end{proof}

\begin{proof}[Proof of Proposition~\ref{prop:comparison-comp-surplus-modified} (b)]
Recall the expression \eqref{eq:split-up-surplus}. The second term goes to zero in probability using Lemma~\ref{lem:bad-estimation} and the third term goes to zero in probability using Lemmas~\ref{lem:no-bad-large}~and~\ref{lem:large-comp-contain}. Thus the proof follows. 
\end{proof}

\subsection{Proof of Theorem~\ref{thm:mul:conv}}
We now have all the ingredients to complete the proof of Theorem~\ref{thm:mul:conv}. 
Take $\lambda_\star = \lambda_1$. 

Recall that, for an element $\mathbf{z} = (x_i,y_i)_{i\geq 1} \in \mathbb{U}^0_{\shortarrow}$ and a constant $c>0$, we denote $c \mathbf{z} = (c x_i,y_i)_{i\geq 1}$. Recall the definition of $\mathbf{Z}_n^o(\lambda)$. Denote $\beta_n = (\bar{s}_1(\nu_n-1)c_n)^{1/2}$ and let  $\mathbf{Z}_n^{o,{\sss \mathrm{scl}}}(\lambda) = \beta_n^{-1}b_n\mathbf{Z}_n^o(\lambda)$. 
We define $\bar{\mathbf{Z}}_n^o(\lambda)$ and $\bar{\mathbf{Z}}_n^{o,{\sss \mathrm{scl}}}(\lambda)$ analogously for the modified graph $\bar{\mathcal{G}}_n(t_n(\lambda))$.
Thus, for any $\lambda\in \R$
\begin{eq}\label{one-dimensional-scaling-limit}
\bar{\mathbf{Z}}_n^{o,{\sss \mathrm{scl}}}(\lambda) \dto c\kappa\mathbf{Z}(\lambda),
\end{eq}where $ b_n \beta_n^{-1} \pto c$ (see Lemma~\ref{lem:total-open-he-heavy}), $\kappa$ is as in Corollary~\ref{cor-fixed-lambda}, and $\mathbf{Z}(\lambda)$ is given by Theorem~\ref{thm:percolation}. \eqref{one-dimensional-scaling-limit} follows by applying Theorem~\ref{thm:percolation}, Corollary~\ref{cor-fixed-lambda}, Proposition~\ref{prop:comparison-comp-surplus-modified}, together with Proposition~\ref{prop:coupling-whp} and \eqref{open-he-compare}. 

Next, we had argued after Remark~\ref{rem:modify-AMC} that $(\bar{\mathbf{Z}}_n^{o,{\sss \mathrm{scl}}}(\lambda))_{\lambda \in [\lambda_\star,\lambda^\star]}$ evolves as a standard augmented multiplicative coalescent.
Let $(\cT_{\lambda})_{\lambda\in \R}$ denote the semigroup associated to the standard augmented multiplicative coalescent process. 
Using the nearly Feller property for the augmented multiplicative coalescent \cite[Theorem~3.1]{BBW12}, it follows from \eqref{one-dimensional-scaling-limit} that 
\begin{equation}\label{eq:conv-modi-open}
(\bar{\mathbf{Z}}_n^{o,{\sss \mathrm{scl}}}(\lambda_1),\bar{\mathbf{Z}}_n^{o,{\sss \mathrm{scl}}}(\lambda_2))\dto (c\kappa\mathbf{Z}(\lambda_1),\cT_{\lambda_2-\lambda_1}(c\kappa\mathbf{Z}(\lambda_1))).
\end{equation}
Define $\cT'_{\lambda} = \cT_{\lambda/(c\kappa)^2}$.
Applying Proposition~\ref{prop:comparison-comp-surplus-modified}, it follows that 
\begin{equation}\label{eq:conv-modi-open-2}
(\mathbf{Z}_n^o(\lambda_1),\mathbf{Z}_n^o(\lambda_2))\dto (\kappa\mathbf{Z}(\lambda_1), \kappa\cT'_{\lambda_2-\lambda_1}(\mathbf{Z}(\lambda_1))).
\end{equation}
Now, Corollary~\ref{cor-fixed-lambda} yields 
that 
\begin{equation}\label{eq:conv-modi-open-3}
(\mathbf{Z}_n(\lambda_1),\mathbf{Z}_n(\lambda_2))\dto (\mathbf{Z}(\lambda_1), \cT'_{\lambda_2-\lambda_1}(\mathbf{Z}(\lambda_1))).
\end{equation}
Now, repeating the above argument inductively, there exists a version of the augmented multiplicative coalescent $\mathbf{AMC} = (\mathrm{AMC}(\lambda))_{\lambda\in\R}$, with the semigroup given by $(\cT'_\lambda)_{\lambda\in\R}$ such that for any $k \geq 1$
\begin{equation}
 (\mathbf{Z}_n(\lambda_1),\dots,\mathbf{Z}_n(\lambda_k))\dto (\mathrm{AMC}(\lambda_1),\dots,\mathrm{AMC}(\lambda_k)).
\end{equation}
Moreover, the scaling limit in Theorem~\ref{thm:spls}  shows that $\cT'_{\lambda-\lambda_1}(\mathbf{Z}(\lambda_1))$ and $\mathbf{Z}(\lambda)$ have identical distribution.
This ensures that there exists a version of the augmented multiplicative coalescent whose distribution at each fixed time $\lambda$ is identical to $\mathbf{Z}(\lambda)$.
Finally, the proof of Theorem~\ref{thm:spls} is completed by using Proposition~\ref{prop:perc-dyn-close}.  \qed

\appendix

\section{Path counting}\label{sec:appendix-gen-path-counting} In this section,  we derive a generalization of \cite[Lemma 5.1]{J09b} by extending the argument therein.  
Let $V_n'$ denote  the vertex chosen according to the distribution $G_n$  on $[n]$, independently of the graph. 
We will later take $G_n$ to be the uniform distribution on $[n]$, and the size-biased distribution with the sizes being proportional to the degrees.
Also, let $D_n'$ denote the degree of $V_n'$, $D_n$ denote the degree of a uniformly chosen vertex (independently of the graph) and  $\mathscr{C}(v)$ denote the connected component containing $v$.
\begin{lemma}\label{lem:gen-path-count}
 Let $\bld{w} = (w_i)_{i\in [n]}$ be a weight sequence and consider $\mathrm{CM}_n(\bld{d})$ such that $\nu_n<1$. Then,
 \begin{equation}
  \E\bigg[\sum_{i\in \mathscr{C}(V_n')}w_i\bigg]\leq \E\big[w_{V_n'}\big]+\frac{ \expt{D_n'}\E\big[ D_nw_{V_n}\big]}{\expt{D_n}(1-\nu_n)}.
 \end{equation}
\end{lemma}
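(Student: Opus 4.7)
The plan is to adapt Janson's path-counting argument from \cite[Lemma 5.1]{J09b} to the weighted setting. First, I condition on $V_n'$ and write
\begin{equation*}
\E\bigg[\sum_{i\in\mathscr{C}(V_n')}w_i\bigg] = \sum_{v\in[n]}F_n(v)\bigg(w_v + \sum_{u\neq v}w_u\PR(u\in\mathscr{C}(v))\bigg) = \E[w_{V_n'}] + \sum_{u\neq v}F_n(v)w_u\PR(u\leftrightarrow v),
\end{equation*}
so it suffices to control $\PR(u\leftrightarrow v)$ for $u\neq v$ via the expected number of self-avoiding paths from $v$ to $u$.

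Next, I estimate the expected number of paths of length $k\geq 1$ from $v$ to $u$ in $\mathrm{CM}_n(\bld{d})$ by standard configuration-model bookkeeping: one chooses one of the $d_v$ half-edges at $v$, an ordered sequence of distinct intermediate vertices $v_1,\ldots,v_{k-1}$ with a pair of (ordered) half-edges $d_{v_j}(d_{v_j}-1)$ at each, and one of the $d_u$ half-edges at $u$, and then requires these $k$ specified half-edge pairs to be matched in the uniform perfect pairing, which has probability $\prod_{j=0}^{k-1}(\ell_n-2j-1)^{-1}$. Dropping the distinctness constraint on intermediates only inflates the count, so
\begin{equation*}
\E[\#\{\text{paths of length }k\text{ from }v\text{ to }u\}] \leq \frac{d_vd_u(\nu_n\ell_n)^{k-1}}{\prod_{j=0}^{k-1}(\ell_n-2j-1)} \leq \frac{d_vd_u\,\nu_n^{k-1}}{\ell_n-2k+1}.
\end{equation*}
Summing over $k$ via a geometric series (valid because $\nu_n<1$), the truncation-at-$\ell_n/2$ issue is handled exactly as in \cite[Lemma 5.1]{J09b}; this yields $\PR(u\leftrightarrow v)\leq d_vd_u/[\ell_n(1-\nu_n)]$.

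Finally, plugging this back in and interchanging the sums gives
\begin{equation*}
\sum_{u\neq v}F_n(v)w_u\PR(u\leftrightarrow v) \leq \frac{1}{\ell_n(1-\nu_n)}\bigg(\sum_v F_n(v)d_v\bigg)\bigg(\sum_u w_ud_u\bigg) = \frac{\E[D_n']\cdot n\E[D_nw_{V_n}]}{\ell_n(1-\nu_n)},
\end{equation*}
and using $\ell_n = n\E[D_n]$ produces the stated bound. The main obstacle is the somewhat delicate passage from the per-$k$ path bound (whose denominator degenerates for $k$ close to $\ell_n/2$) to the clean geometric-sum bound; this is the step where one imports Janson's truncation trick verbatim, since nothing about the weights $w_i$ interferes with that part of the argument.
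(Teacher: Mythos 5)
Your approach is essentially the one the paper uses: condition on $V_n'$, upper-bound $\1\{u\in\mathscr{C}(v)\}$ by the number of self-avoiding paths from $v$ to $u$ in the configuration model, factor the half-edge counts $d_v$, $d_{x_j}(d_{x_j}-1)$, $d_u$ against the pairing probability $\prod_{j=0}^{k-1}(\ell_n-2j-1)^{-1}$, and then defer the summation over path lengths to Janson's argument. The paper carries the weight $w_{x_l}$ on the terminal vertex of each path instead of first isolating $\PR(u\leftrightarrow v)$, but these bookkeepings are equivalent, and both the paper and you land on the same $\expt{D_n'}\E[D_nw_{V_n}]/(\expt{D_n}(1-\nu_n))$ via a geometric series in $\nu_n$.

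One specific inequality you wrote is, however, false in the stated direction: you claim
\begin{equation*}
\frac{d_vd_u(\nu_n\ell_n)^{k-1}}{\prod_{j=0}^{k-1}(\ell_n-2j-1)}\leq\frac{d_vd_u\,\nu_n^{k-1}}{\ell_n-2k+1},
\end{equation*}
but the ratio of the left side to the right side is $\prod_{j=0}^{k-2}\ell_n/(\ell_n-2j-1)\geq 1$, so the inequality runs the other way, and for $k$ a substantial fraction of $\ell_n/2$ this factor is large. This is exactly the difficulty you flag in your last sentence, so you are aware that the naive replacement of the falling-factorial denominator by $\ell_n^{k-1}(\ell_n-2k+1)$ does not work; but as written the chain of displayed inequalities is incorrect, and the burden of repairing it falls entirely on the appeal to Janson's argument. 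The paper's own proof does the same deferral (``using the exactly same arguments as Lemma 5.1 of Janson''), so there is no missing idea relative to the paper --- just be careful not to present the reversed inequality as if it were a valid intermediate step; either cite Janson directly at the display stage or retain the exact falling-factorial denominator until the truncation is invoked.
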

\begin{proof} Consider all possible paths of length $l$ starting from $V_n'$ and the $w$-value at the end of those paths. 
If we sum over all such paths together with a sum over all possible $l$, then we obtain an upper bound on $\sum_{i\in \mathscr{C}(V_n')}w_i$. Write $\E_v[\cdot]$ for the expectation conditional on $V_n'=v$.
 Thus,
 \begin{equation}
  \begin{split}
   \E_v\bigg[\sum_{i\in \mathscr{C}(V_n')}w_i\bigg]\leq w_v+d_v \sum_{l\geq 1}\sum_{\substack{x_1,\cdots,x_l\\x_i\neq x_j, \forall i\neq j}} \frac{\prod_{i=1}^{l-1}d_{x_i}(d_{x_i}-1)d_{x_l}w_{x_l}}{(\ell_n-1)\dots(\ell_n-2l+1)}.
  \end{split}
 \end{equation} 
 Now, using the exactly same arguments as \cite[Lemma 5.1]{J09b}, it follows that
 \begin{equation}
  \E\bigg[\sum_{i\in \mathscr{C}(V_n')}w_i\bigg]\leq \E\big[w_{V_n'}\big] + \frac{ \expt{D_n'}\expt{D_nw_{V_n}}}{\expt{D_n}}\sum_{l\geq 1}\nu_n^{l-1},
 \end{equation}and this completes the proof.
\end{proof}
\section{Proof of Lemma~\ref{lem:sp-cv-n}}
\label{sec_appendix}
\begin{proof}[\nopunct]
The proof is an adaptation of the proof of \cite[Lemma 20]{DHLS15}. 
Let $V_n'$ denote the vertex chosen according to the distribution $G_n$  on $[n]$, independently of the graph and let $D_n'$ denote the degree of $V_n'$. 
Suppose that $\limsup_{n\to\infty}\E[D_n']<\infty$.
We use a generic constant $C$ to denote a positive constant independent of $n,\delta,K$. 
Consider the graph exploration described in Algorithm~\ref{algo-expl}, but now we start by choosing vertex $V_n'$ at Stage 0 and declaring all its half-edges active. 
The exploration process is still given by \eqref{defn:exploration:process} with $S_n(0)=D_n'$.
Note that $\mathscr{C}(V_n')$ is explored when $\mathbf{S}_n$ hits zero. For $H>0$,  let \begin{equation} \label{defn:gamma}
\gamma := \inf \{ l\geq 1: S_n(l)\geq H \text{ or }  S_n(l)= 0 \}\wedge (2\delta_K b_n).
\end{equation} Note that
\begin{equation}\label{exploration:super_martingale}
\begin{split}
 \expt{S_n(l+1)-S_n(l)\vert \left( \mathcal{I}_i^n(l)\right)_{i=1}^n}&= \sum_{i\in [n]}d_i\prob{i\notin \mathscr{V}_l, i\in \mathscr{V}_{l+1}\vert \left( \mathcal{I}_i^n(l)\right)_{i=1}^n} -2\\
 &= \frac{ \sum_{i\notin \mathscr{V}_l}d_i^2}{\ell_n-2l-1}-2\leq \frac{ \sum_{i\in [n]}d_i^2}{\ell_n-2l-1}-2\\:
 & =\lambda c_n^{-1}+o(c_n^{-1})+\frac{2l+1}{\ell_n-2l-1}\times \frac{\sum_{i\in [n]}d_i^2}{\ell_n}   \leq 0
\end{split}
\end{equation} uniformly over $l\leq 2\delta_K b_n$ for all small $\delta >0$ and large $n$, where the last step follows from the fact that $\lambda<0$. Therefore, $\{S_n(l)\}_{l= 1}^{2\delta_Kb_n}$ is a super-martingale. The optional stopping theorem now implies
  \begin{equation}
   \mathbbm{E}\left[D_n'\right] \geq \mathbbm{E}\left[S_n(\gamma)\right] \geq H \mathbbm{P}\left( S_n(\gamma) \geq H \right).
  \end{equation} Thus,
  \begin{equation} \label{eqn::bound_geq_H_at_stopping_time}
    \mathbbm{P}\left( S_n(\gamma) \geq H \right) \leq \frac{\expt{D_n'}}{H}.
  \end{equation}
Put $H=a_nK^{1.1}/\sqrt{\delta}$. To simplify the writing, we  write $S_n[0,t]\in A$ to denote that $S_n(l)\in A,$ for all $ l\in [0,t]$.  Notice that
 \begin{equation}\label{surp:sup:less}\begin{split}
  &\prob{\surp{\mathscr{C}(V_n')}\geq K,|\mathscr{C}(V_n')|\in (\delta_K b_n,2\delta_Kb_n)}\\
  &\leq \prob{S_n(\gamma)\geq H}+\prob{\surp{\mathscr{C}(V_n')}\geq K, S_n[0,2\delta_K b_n]< H, S_n[0,\delta_K b_n]>0}.
  \end{split}
 \end{equation}Now,
 \begin{equation}
  \begin{split}
   &\prob{\surp{\mathscr{C}(V_n')}\geq K, S_n[0,2\delta_K b_n]< H, S_n[0,\delta_K b_n]>0}\\
  &\leq \sum_{1\leq l_1<\dots< l_K\leq 2\delta_K b_n} \prob{\text{surpluses occur at times } l_1,\dots,l_K,  S_n[0,2\delta_K b_n]< H, S_n[0,\delta_K b_n]>0}\\
  &=\sum_{1\leq l_1<\dots<l_K\leq 2\delta_K b_n}\expt{\ind{0<S_n[0,l_K-1]<H, \mathrm{SP}(l_K-1)=K-1}Y},
  \end{split}
 \end{equation}
 where
 \begin{equation}
 \begin{split}
  Y&=\prob{K^{th}\text{ surplus occurs at time }l_K,  S_n[l_K,2\delta_Kb_n]< H, S_n[l_K,\gamma]>0\mid \mathscr{F}_{l_K-1} }\\
  &\leq \frac{CK^{1.1}a_n}{\ell_n\sqrt{\delta}}\leq \frac{CK^{1.1}}{b_n\sqrt{\delta}}.
 \end{split}
 \end{equation}Therefore, using induction, \eqref{surp:sup:less} yields
 \begin{equation}\label{exploration:bounded:surplus}
 \begin{split}
  &\prob{\surp{\mathscr{C}(V_n')}\geq K, S_n[0,2\delta_K b_n]< H, S_n[0,\delta_K b_n]>0}\\
  &\leq C\bigg( \frac{K^{1.1}}{\sqrt{\delta}b_n}\bigg)^K\frac{(2\delta b_n)^{K-1}}{K^{0.12(K-1)}(K-1)!}\sum_{l_1=1}^{2\delta_K b_n}\prob{|\mathscr{C}(V_n')|\geq l_1}\leq C \frac{\delta^{K/2}}{K^{1.1}b_n}  \expt{|\mathscr{C}(V_n')|},
  \end{split}
 \end{equation}where we have used the fact that $\#\{1\leq l_2,\dots,l_K\leq 2\delta b_n\}=(2\delta b_n)^{K-1}/(K-1)!$ and Stirling's approximation for $(K-1)!$, as well as $2\e\sqrt{\delta}<1$ in the last step. Since $\lambda <0$, we can use Lemma~\ref{lem:gen-path-count} to conclude that for all sufficiently large $n$
 \begin{equation} \label{expectation:random:vert:comp}
  \expt{|\mathscr{C}(V_n')|}\leq Cc_n,
 \end{equation} for some constant $C>0$ and we get the desired bound for \eqref{surp:sup:less}.
  The proof of Lemma~\ref{lem:sp-cv-n} is now complete.
\end{proof}

\section{Non-atomic distribution for the excursion process}\label{sec:appendix-density-existence}
Below we prove the scaling limit of the exploration process \eqref{defn:exploration:process} has a density at each fixed time. This was proved in \cite[Lemma 3.5]{BHL12} for $\theta_i = i^{-\alpha}$. However, the argument does not generalize directly for general $\boldsymbol{\theta}\in \ell^3_{\shortarrow}\setminus \ell^2_{\shortarrow}$.
Below we give a proof for the general case $\boldsymbol{\theta}$:
\begin{lemma}\label{lem:no-atom}
Let $S(t) =  \sum_{i=1}^{\infty} \theta_i(\mathcal{I}_i(t)- \theta_it)$, where $\xi_i\sim \mathrm{Exp}(\theta_i)$ independently, and $\boldsymbol{\theta}=(\theta_1,\theta_2,\dots)\in \ell^3_{\shortarrow}\setminus \ell^2_{\shortarrow}$. 
Then the distribution of $S_t$ has no atoms for all $t>0$.
\end{lemma}

\begin{proof}
Let $\phi_t (v) = \E[\e^{\im v S(t)}]$ for $v\in\R$. 
Then, 
\begin{eq}
\phi_t(v) = \prod_{j=1}^{\infty} \e^{- \im v \theta_j^2t} (1+ (\e^{\im v \theta_j} - 1)(1-\e^{- \theta_jt})).
\end{eq}
Therefore, 
\begin{eq}
|\phi_t (v)|^2 &\leq \prod_{j=1}^\infty \big|1+ (\cos (v\theta_j ) - 1) (1-\e^{-t\theta_j })+ \im (1-\e^{-t\theta_j})\sin(v\theta_j)\big|^2\\
&= \prod_{j=1}^\infty \Big(\big(\cos(v\theta_j)+ \e^{-t\theta_j}(1-\cos(v\theta_j))\big)^2+ (1-\e^{-t\theta_j})^2 \sin^2(v\theta_j)\Big) \\
& = \prod_{j=1}^\infty \Big(\e^{-2t\theta_j} (1-2\cos(v\theta_j))+ 2 \e^{-t\theta_j} \cos(v\theta_j) + (1-\e^{-t\theta_j})^2 \Big) \\ 
& = \prod_{j=1}^\infty \Big(1- 2 \e^{-t\theta_j}(1- \e^{-t\theta_j}) (1-\cos(v\theta_j))\Big)\\
&\leq \e^{-\sum_{j=1}^\infty 2 \e^{-t\theta_j}(1- \e^{-t\theta_j}) (1-\cos(v\theta_j))},
\end{eq}where in the last step we have used the fact that $1-x\leq \e^{-x}$ for all $x>0$.
Let $j_0(v,t)\geq 1$ be such that $ \max \{|v|\theta_j,t\theta_j \} \leq 1$ for all $j\geq j_0 (v,t)$.
Now, for $j\geq j_0(v,t)$, we have that $\e^{-t\theta_j} \geq \e^{-1}$, $(1-\e^{-t\theta_j}) \geq t\theta_j/2$ and $1-\cos(v\theta_j) \geq \frac{2}{\pi} v^2\theta_j^2$.
Thus, 
\begin{eq}\label{eq:ub-char-funct}
|\phi_t (v)|\leq \e^{\frac{2t}{\e\pi} v^2\sum_{j\geq j_0(v,t)} \theta_j^3}.
\end{eq}
Let $g(|v|):= \sum_{j\geq j_v} \theta_j^3$. Since $t$ is fixed, we ignored $t$ in the notation here. We need the following facts about $g$:
\begin{fact} \label{fact:int-g-infty}
The function $g:[0,\infty)\mapsto (0.\infty)$ is non-increasing and left continuous. Further, $\int_0^\infty g(v)\dif v = \infty$.
\end{fact}
\begin{claimproof}
The non-increasing and left continuity follow easily from the definition. 
Let $C_0 = \sum_{j=1}^\infty \theta_j^3$ and $f(x) = C_0^{-1/3} \theta_j$ for $j-1\leq x<j$ and $f(x) =0$ for $x<0$. Let $X$ be a random variable with density function $f^3$. Then 
\begin{eq}
\int_0^\infty \PR\bigg(\frac{1}{f(X) C_0^{1/3}}\geq v\bigg) \dif v=\E \bigg[\frac{1}{f(X) C_0^{1/3}}\bigg] = C_0^{-1/3}\int_0^\infty f^2(x) \dif x = C_0^{-1} \sum_{j=1}^\infty \theta_j^2 = \infty.
\end{eq}
Moreover, 
\begin{eq}
 \PR\bigg(\frac{1}{f(X) C_0^{1/3}}\geq v\bigg)  = \sum_{j=1}^\infty C_0^{-1} \theta_j^3 \mathbbm{1}_{\{\frac{1}{\theta_j}\geq v\}} = C_0^{-1} g(v),
\end{eq}for $v\geq t$. Thus, $\int_0^\infty g(v)\dif v = \infty$.
\end{claimproof}
\begin{fact}\label{fact:second-fact}
For every $M>0$, there exists $T_n = T_n(M) \nearrow \infty$ such that 
\begin{eq}
\lim_{n\to\infty}\frac{1}{T_n} \Lambda \big(\{ 0\leq u\leq T_n:u^2g(u) \leq M\} \big) = 0,
\end{eq}where $\Lambda$ denotes the Lebesgue measure.
\end{fact}
\begin{claimproof}
Assume the contrary, i.e., there exists $\varepsilon>0$ and  $T_0>0$ such that $\forall T\geq T_0$,
$\frac{1}{T}  \Lambda \big(\{ 0\leq u\leq T:u^2g(u) \leq M\} \big) \geq \varepsilon.$ Thus, for any $T\geq T_0$,
\begin{eq}
u_T:= \sup\{ 0\leq u\leq T:u^2g(u) \leq M\} \geq \varepsilon T.
\end{eq}
Since $g$ is left continuous, $u_T^2g(u_T) \leq M$ and thus 
\begin{eq}
T^2 g(T) \leq T^2 g(u_T) \leq \frac{T^2 M}{u_T^2} \leq \frac{M}{\varepsilon^2},
\end{eq}where in the first step, we have used that $g$ is non-decreasing from Fact~\ref{fact:int-g-infty}. Thus $g(T) \leq M/(\varepsilon T)^2$ for all $T\geq T_0$ and therefore  
$\int_0^\infty g(v)\dif v <\infty$, which contradicts Fact~\ref{fact:int-g-infty}.
\end{claimproof}
Continuing to the proof of Lemma~\ref{lem:no-atom}, note that for any $a\in\R$ and $M>0$, and $T_n$ chosen according to Fact~\ref{fact:second-fact}, 
\begin{eq}
\PR(S(t) = a) &= \lim_{T\to\infty} \frac{1}{2T} \int_{-T}^T \e^{-\im v a} \phi_t(v) \dif v = \lim_{n\to\infty} \frac{1}{2T_n} \int_{-T_n}^{T_n} \e^{-\im v a} \phi_t(v) \dif v\\
& \leq \limsup_{n\to\infty}\frac{1}{2T_n} \int_{-T_n}^{T_n} \e^{- \frac{2t}{\e\pi}v^2 g(|v|)} \dif v=\limsup_{n\to\infty}\frac{1}{T_n} \int_{0}^{T_n} \e^{- \frac{2t}{\e\pi}v^2 g(v)} \dif v \\
& \leq \limsup_{n\to\infty} \frac{1}{T_n} \Lambda \big(\{ 0\leq u\leq T_n:u^2g(u) \leq M\} \big) + \e^{-\frac{2t}{\e\pi}M} = \e^{-\frac{2t}{\e\pi}M}
\end{eq}where in the third step we have used \eqref{eq:ub-char-funct}, and the last-but-one step follows from Fact~\ref{fact:second-fact}. Since $M>0$ is arbitrary, $\PR(S(t) = a) =0$ and the proof follows.
\end{proof}

\section{Evolution of moments in the dynamic construction} \label{sec:susceptibility}
Let $w_i(t)$ denote the number of unpaired/open half-edges incident to  vertex~$i$ at time $t$ in Algorithm~\ref{algo:dyn-cons-2}. 
Let $s_1(t)$ denote the total number of unpaired half-edges at time $t$. Denote also $s_2(t)=\sum_{i\in [n]} w_i(t)^2$, $s_{d,w}(t)=\sum_{i\in [n]}d_iw_i(t)$. 
Further, we write $\mu_n=\ell_n/n$.
\begin{lemma}\label{lem:total-open-he-heavy}  Under {\rm Assumption~\ref{assumption1}}, the quantities 
 $\sup_{t\leq T}|\frac{1}{n}s_1(t)-\mu_n\e^{-2t}|$, $\sup_{t\leq T}|\frac{1}{n} s_2(t) - \mu_n\e^{-4t}(\nu_n+\e^{2t})|,$ $\sup_{t\leq T}|\frac{1}{n}s_{d,w}(t) - \mu_n(1+\nu_n)\e^{-2t}|$  are all  $\OP(n^{-1/2})$, for any $T>0$.
\end{lemma}
\begin{proof}
 The proof uses the differential equation method \cite{wormald1995differential}. 
 Notice that, after each ring of an exponential clock in Algorithm~\ref{algo:dyn-cons-2}, $s_1(t)$ decreases by two. Let $Y$ denote a unit rate Poisson process. Using the random time change representation \cite{EK86},
 \begin{equation}\label{eq:diff-eqn}
  s_1(t) = \ell_n - 2 Y\bigg(\int_{0}^t s_1(u)\mathrm{d} u\bigg) = \ell_n +M_n(t)-2\int_{0}^t s_1(u)\mathrm{d} u,
 \end{equation}where $\bld{M}_n$ is a martingale. 
 Now, the quadratic variation of $\bld{M}_n$ satisfies $ \langle M_n \rangle (t)\leq 4t\ell_n = O(n),$ which implies that $\sup_{t\leq T}|M_n(t)|= \OP(\sqrt{n}).$ 
 Moreover, notice that the function $f(t)=\mu_n\e^{-2t}$ satisfies $f(t)=\mu_n-2\int_0^tf(u)\mathrm{d}u$. 
  Therefore,
 \begin{equation}
 \begin{split}
  \sup_{t\leq T}\bigg|\frac{1}{n}s_1(t)- \mu_n\e^{-2t}\bigg| &\leq \sup_{t\leq T}\frac{|M_n(t)|}{n}+2\int_0^T\sup_{t\leq u}\bigg|\frac{1}{n}s_1(t)- \mu_n\e^{-2t}\bigg| \mathrm{d}u.
 \end{split}
 \end{equation} Using Gr\H{o}nwall's inequality \cite[Proposition 1.4]{M86}, it follows that
 \begin{equation}\label{eq:diff-eqn-gronwall}
   \sup_{t\leq T}\bigg|\frac{1}{n}s_1(t)- \mu_n\e^{-2t}\bigg|\leq \e^{2T} \sup_{t\leq T}\frac{|M_n(t)|}{n} =\OP(n^{-1/2}),
 \end{equation}as required. 
For $s_2(t)$, note that if half-edges corresponding to vertices $i$ and $j$ are paired, $s_2$ changes by $-2w_i-2w_j+2$ and if two half-edges corresponding to $i$ are paired, $s_2$ changes by $-4w_i+4$. Thus, 
\begin{equation}
\begin{split}
\sum_{i\in [n]}w_i(t)^2&=\sum_{i\in [n]}d_i^2+M_n'(t)+\int_0^t \sum_{i\neq j}\frac{w_i(u)w_j(u)(-2w_i(u)-2w_j(u)+2)}{s_1(u)-1} \dif u\\
 &\hspace{4cm}+\int_0^t\sum_{i\in [n]}\frac{w_i(u)(w_i(u)-1)(-4w_i(u)+4)}{s_1(u)-1}\dif u\\
 & = n\mu_n(1+\nu_n)+M_n'(t)+\int_0^t(-4s_2(u)+2s_1(u))\mathrm{d}u+O(1),
 \end{split}
\end{equation}where $\bld{M}_n'$ is a martingale with quadratic variation given by $\langle M_n'\rangle (t) =O(n)$. 
Again, an estimate equivalent to \eqref{eq:diff-eqn-gronwall} follows using Gr\H{o}nwall's inequality.
Notice also that when a clock corresponding to vertex $i$ rings and it is paired to vertex $j$, then $s_{d,w}$ decreases by $d_i+d_j$. 
Thus,
 \begin{equation}
  \begin{split}
   s_{d,w}(t)&=\sum_{i\in [n]}d_i^2+M_n''(t)-\int_0^t \sum_{i\neq j}\frac{w_i(u)w_j(u)(d_i+d_j)}{s_1(u)-1} \mathrm{d}u-\int_0^t \sum_{i\in [n]}\frac{w_i(u)(w_i(u)-1)2d_i}{s_1(u)-1}\mathrm{d}u\\
   & = n\mu_n(1+\nu_n)+M_n''(t)- 2 \int_0^ts_{d,w}(u)\mathrm{d}u,
  \end{split}
 \end{equation}where $\bld{M}_n''$ is a martingale with quadratic variation given by $\langle M_n''\rangle (t) \leq 2t \sum_{i\in [n]}d_i^2=O(n)$. 
 We can now apply Gr\H{o}nwall's inequality as before. 
 The proof of Lemma~\ref{lem:total-open-he-heavy} is now complete. 
\end{proof}

\section*{Acknowledgement} 
We sincerely thank Shankar Bhamidi for helpful discussions.
This research have been supported by the Netherlands Organisation for Scientific
Research (NWO) through Gravitation Networks grant 024.002.003. In addition, RvdH has been supported by VICI grant 639.033.806, JvL has been supported by the European Research Council (ERC), and SS has been supported by EPSRC grant EP/J019496/1, a CRM-ISM fellowship, and a UGC CAS-II grant, Grant number F.510/25/CAS-II/2018(SAP-I). 
We thank Lorenzo Federico and Tim Hulshof for pointing out an error in the proof of Lemma~\ref{lem:surp:poisson-conv} in the previous arXiv version of this paper. 
\bibliographystyle{apa}
\bibliography{project2}

\end{document}